\documentclass[11pt]{article}
\usepackage{algpseudocode}  
\usepackage{amsmath}  
\usepackage[colorlinks=true,citecolor=blue]{hyperref}
\usepackage{amsfonts}
\usepackage{amssymb,amsmath}
\usepackage{graphicx}
\usepackage{cite}
\usepackage{color}
\usepackage{amsthm}
\usepackage{wrapfig}
\usepackage[figuresright]{rotating}
 \usepackage{subfig}
 
\usepackage{float}
\numberwithin{equation}{section}

\topmargin=0mm \evensidemargin=0mm \oddsidemargin=0mm \headsep=0mm
\textwidth=15.5truecm \textheight=21.8truecm
%\textwidth=17truecm \textheight=23truecm
\parindent=2em
%\maketitle
\newtheorem{theorem}{Theorem}[section]
\newtheorem{definition}{Definition}[section]
\newtheorem{lemma}{Lemma}[section]
\newtheorem{proposition}{Proposition}[section]

\newtheorem{Assumption}{Assumption}[section]
\newtheorem{algorithm}{Algorithm}[section]
\allowdisplaybreaks

\begin{document}
\vspace{1.3cm}
\title
%{A stochastic two-step inertial Bregman proximal alternating linearized minimization algorithm for nonconvex and nonsmooth problems}
{A stochastic two-step inertial Bregman proximal alternating linearized minimization algorithm for nonconvex and nonsmooth problems{\thanks{Supported by Scientific Research Project of Tianjin Municipal Education Commission (2022ZD007).}}}
\author{ {\sc Chenzheng Guo{\thanks{Email: g13526199036@163.com}},
Jing Zhao{\thanks{Corresponding author. Email: zhaojing200103@163.com}},
Qiao-Li Dong{\thanks{Email: dongql@lsec.cc.ac.cn}}
}\\
\small College of Science, Civil Aviation University of China, Tianjin 300300, China\\
}
\date{}
\date{}
\maketitle{}
{\bf Abstract.}
In this paper, for solving a broad class of large-scale nonconvex and nonsmooth optimization problems, we propose a stochastic two-step inertial Bregman proximal alternating linearized minimization (STiBPALM) algorithm  with variance-reduced stochastic gradient estimators. And we show that SAGA and SARAH are variance-reduced gradient estimators. Under expectation conditions with the Kurdyka--{\L}ojasiewicz property and some suitable conditions on the parameters, we obtain that the sequence generated by the proposed algorithm converges to a critical point. And the general convergence rate is also provided. Numerical experiments on sparse nonnegative matrix factorization and blind image-deblurring are presented to demonstrate the performance of the proposed algorithm.

%Finally, we demonstrate the robustness and effectiveness of our algorithm via several numerical examples including sparse nonnegative matrix factorization and blind image-deblurring problems.
%We also apply the STiBPALM algorithm to solve sparse nonnegative matrix factorization and  blind image-deblurring problem, and numerically compare it with other algorithms. The results demonstrate the robustness and effectiveness of the proposed algorithm.

\vskip 0.4 true cm
\noindent AMS Mathematics Subject Classification: 47J06, 49J52, 65K10, 90C26, 90C30.\\
\noindent {\bf Key words}: Nonconvex and nonsmooth optimization; Stochastic; Bregman; Variance-reduced; Kurdyka--{\L}ojasiewicz property.
\section{Introduction}\label{sect1}
%\subsection{Nonconvex and nonsmooth nonseparable optimization}
\ \par In this paper, we are interested in solving the following composite optimization problem:
\begin{equation}
\label{MP}
\min_{x\in \mathbb R^{l} ,y\in\mathbb R^{m}}   \Phi(x,y)=f(x)+H(x,y)+g(y),
\end{equation}
where $f:\mathbb{R}^l\rightarrow {(-\infty,+\infty]}$, $g:\mathbb{R}^m\rightarrow {(-\infty,+\infty]}$ are  proper lower semicontinuous. $H(x,y)=\frac{1}{n} \sum_{i=1}^{n} H_i(x,y)$ has a finite-sum structure, $H_i:\mathbb{R}^l\times \mathbb{R}^m \rightarrow \mathbb{R}$ is continuously differentiable and $\nabla H_i$ is Lipschitz continuous on bounded subsets. Note that here and throughout the paper, no convexity is imposed on $\Phi$. In practical application, numerous problems can be formulated into the form of \eqref{MP}, such as signal and image processing \cite{MDC,XH}, nonnegative matrix facorization \cite{PT,LS,MH}, blind image-deblurring \cite{TS,MH}, sparse principal component analysis \cite{ALM,HR}, compressed sensing \cite{ABSS,DC} and so on. Here we list two applications of \eqref{MP}, which will also be used in the numerical experiments.
\par (1) Sparse nonnegative matrix factorization (S-NMF). The S-NMF has important applications in image processing (face recognition) and bioinformatics (clustering of gene expressions), see \cite{LS} for details. Given a matrix $A\in \mathbb{R} ^{ l\times m}$ and an integer $r>0$, we want to seek a factorization $A \approx XY$, where $X \in \mathbb{R} ^{ l\times r}$, $Y \in \mathbb{R} ^{r\times m}$ are nonnegative with $r \leq \min\left \{l,m\right \}$ and $X$ is sparse. One way to solve this problem is by finding a solution for the non-negative least squares model given by
%Thus sparse-NMF can be formulated as the following optimization problem
\begin{equation}
\label{(1.2)}
\aligned
\underset{X,Y}{\min}  \left \{ \frac{\eta}{2}\left \| A-XY \right \| _{F}^{2} : \ X,Y\ge 0,\ \left \| X_i \right \| _0\le s,\ i=1,2,\dots ,r\right \},
\endaligned
\end{equation}
where $\eta>0$, $X_i$ denotes the $i$th column of $X$, $\left \| X_i \right \| _0$ denotes the number of nonzero elements of the $i$th column of $X$. In this formulation, the sparsity on $X$ is strictly enforced using the nonconvex $l_ 0$ constraint. Let $H(X,Y)=\frac{\eta}{2}\left \| A-XY \right \| _{F}^{2}=\sum_{i=1}^l\frac{\eta}{2}\left \| A_i-X_iY \right \| _{F}^{2}$, $f(X)=\iota_{X\ge0}(X)+\iota_{\left \| X_1\right \| _0\ge s}(X)+\cdots +\iota_{\left \| X_r \right \| _0\ge s}(X)$, $g(Y)=\iota_{Y\ge0}(Y)$, where $A_i$ denotes the $i$th low of $A$, $\iota_C$ is the indicator function on $C$. Then this model \eqref{(1.2)} can be converted to \eqref{MP}. 
%See Section \ref{sect51} for details. $f(X)=\sum_{i=1}^r \left \| X_i \right \| _0+\iota_{X\ge0}(X)$\iota_{X\ge0}(X)+\iota_{\left \| X_1\right \| _0\ge s}(X)+\cdots +\iota_{\left \| X_r \right \| _0\ge s}(X)
%In dictionary learning and sparse coding, $X$ is called the learned dictionary with coefficients $Y$. In this formulation, the sparsity on $X$ is strictly enforced using the nonconvex $l_ 0$ constraint,
\par (2)  Blind image deconvolution (BID). Let $A$ be the observed blurred image, and let $X$ be the unknown sharp image of the same size. Furthermore, let $Y$ denote a small unknown blur kernel, a typical variational formulation of the blind deconvolution problem is given by:
% Let $A \in[0, 1]^{n\times m}$ be the observed blurred image of size $n\times m$, and let $X\in[0, 1]^{n\times m}$ be the unknown sharp image of the same size. Furthermore, let $Y$ denote a small unknown blur kernel of size $n\times m$, A typical variational formulation of the blind deconvolution problem is given by:
\begin{equation}
\label{(1.3)}
\underset{X,Y}{\min}  \left \{\frac{1}{2} \left \| A-X\odot Y \right \| _{F}^{2}+\eta\sum_{r=1}^{2d} R([D(X)]_r) : \ 0\le X\le 1,\ 0\le Y\le 1,\ \left \| Y \right \| _1\le 1\right \},
\end{equation}
where $\eta>0$, $\odot$ is the two-dimensional convolution operator, $X$ is the image to recover, and $Y$ is the blur kernel to estimate. Here $R(\cdot )$ is an image regularization term, that imposes sparsity on the image gradient and hence favors sharp images.  $D(\cdot )$ is the differential operator, computing the horizontal and vertical gradients for each pixel. This model \eqref{(1.3)} can be converted to \eqref{MP}, where $H(X,Y)=\frac{1}{2}\left \| A-X\odot Y \right \| _{F}^{2}+\eta\sum_{r=1}^{2d} R([D(X)]_r)$, $f(X)=\iota_{0\le X\le 1}(X)$, $g(Y)=\iota_{\left \| Y \right \| _1\le 1}(Y)+\iota_{0\le Y\le 1}(Y)$. See \cite{TS} for details.
% we choose $R(x) := \log(1 + \theta x^2)$ as in [29].
\par For solving problem \eqref{MP}, a frequently applied algorithm is the following proximal alternating linearized minimization algorithm (PALM) by Bolte et al. \cite{JSM} based on results in \cite{ABR,AI}:
\begin{equation}
\label{PALM}
\begin{cases}
x_{k+1}\in \arg\min_{ x\in \mathbb{R}^l}\{f(x)+\langle x,\nabla_xH(x_k,y_k)\rangle+\frac{1}{2\lambda_k}\|x-x_k\|^2_2\},\\
y_{k+1}\in \arg\min_{y\in \mathbb{R}^m}\{g(y)+\langle y,\nabla_yH(x_{k+1},y_k)\rangle+\frac{1}{2\mu_k}\|y-y_k\|^2_2\},\\
\end{cases}
\end{equation}
%a natural method is the alternating minimization (AM) algorithm, which alternatively minimizes the objective by fixing one block of the variables. In order to ensure the uniqueness of the minimum in each step, Attouch et al. [2] proposed a proximal alternating minimization (PAM) method. But the subproblems are very expensive if the minimizers of subproblems are not given in a closed form, {Bolte} et al. \cite{BST} considered an approximation method of PAM via proximal linearization, proposed proximal alternating linearized minimization (PALM) algorithm. 
where $\{\lambda_k\}_{k\in\mathbb{N}}$ and $\{\mu_k\}_{k\in\mathbb{N}}$ are positive sequences. To further improve the performance of PALM, {Pock and Sabach \cite{TS}} introduced an inertial step to PALM, and proposed the following inertial proximal alternating linearized minimization (iPALM) algorithm: 
\begin{equation}
\label{iPALM}
\begin{cases}
u_{1k}=x_k+\alpha _{1k}(x_k-x_{k-1}), v_{1k}=x_k+\beta _{1k}(x_k-x_{k-1}),\\
%x_{k+1} \in prox_{\tau _{1k}}^{f}  \left (u_{1k}-\frac{1}{\tau _{1k}}  \nabla_{x} Q\left ( v_{1k} ,y_{k}  \right )\right ) ,\\
x_{k+1}\in \arg\min_{ x\in \mathbb{R}^l}\{f(x)+\langle x,\nabla_xH(v_{1k},y_k)\rangle+\frac{1}{2\lambda_k}\|x-u_{1k}\|^2_2\},\\
u_{2k}=y_k+\alpha _{2k}(y_k-y_{k-1}), v_{2k}=y_k+\beta _{2k}(y_k-y_{k-1}),\\
%y_{k+1} \in prox_{\tau _{2k}}^{g}  \left (u_{2k}-\frac{1}{\tau _{2k}}  \nabla_{y} Q\left ( x_{k+1} ,v_{2k}  \right )\right ) ,\\
y_{k+1}\in \arg\min_{y\in \mathbb{R}^m}\{g(y)+\langle y,\nabla_yH(x_{k+1},v_{2k})\rangle+\frac{1}{2\mu_k}\|y-u_{2k}\|^2_2\},\\
\end{cases}
\end{equation}
where $\alpha _{1k},\alpha _{2k},\beta _{1k},\beta _{2k}\in \left [ 0,1 \right ] $. Then {Gao} et al. \cite{GCH} presented a Gauss--Seidel type inertial proximal alternating linearized minimization (GiPALM) algorithm, in which the inertial step is performed whenever the $x$ or $y$-subproblem is updated. In order to use the existing information as much as possible to further improve the numerical performance, {Wang} et al. \cite{XX} proposed a new inertial version of proximal alternating linearized minimization (NiPALM) algorithm, which inherits both advantages of iPALM and GiPALM. 
%It should be noted that all these works are obtained for deterministic PALM, i.e. no randomness involved.
\par Bregman distance regularization is an effective way to improve the numerical results of the algorithm. In \cite{ZQ}, the authors constructed the following two-step inertial Bregman alternating minimization (TiBAM) algorithm using the information of the previous three iterates:
\begin{equation}
\label{TiBAM}
\begin{cases}
x_{k+1}\in \arg\min_{ x\in \mathbb{R}^l}\{\Phi(x,y_k)+D_{\phi_1}(x,x_k)+\alpha_{1k} \langle x,x_{k-1}-x_k\rangle+\alpha_{2k} \langle x,x_{k-2}-x_{k-1}\rangle\},\\
y_{k+1}\in \arg\min_{y\in \mathbb{R}^m}\{\Phi(x_{k+1},y)+D_{\phi_2}(y,y_k)+\beta_{1k} \langle y,y_{k-1}-y_k\rangle+\beta_{2k} \langle y,y_{k-2}-y_{k-1}\},
\end{cases}
\end{equation}
%the Bregman function, as a generalization of the 2-parametric distance, has many properties similar to the 2-parametric distance, and although the Bregman distance is not a true distance, the use of the Bregman distance makes the algorithm more widely applicable.
where $D_{\phi_i}(i=1,2)$ denotes the Bregman distance with respect to  $\phi_i(i=1,2)$. By linearizing $H(x,y)$ in TiBAM algorithm, the authors \cite{GZ} proposed the following two-step inertial Bregman proximal alternating linearized minimization (TiBPALM) algorithm:
\begin{equation}
\label{TiBPALM}
\begin{cases}
\aligned
x_{k+1}\in \arg\min_{ x\in \mathbb{R}^l}\{&f(x)+\langle x,\nabla_xH(x_k,y_k)\rangle+D_{\phi_1}(x,x_k)+\alpha_{1k} \langle x,x_{k-1}-x_k\rangle\\
&+\alpha_{2k} \langle x,x_{k-2}-x_{k-1}\rangle\},\\
y_{k+1}\in \arg\min_{y\in \mathbb{R}^m}\{&g(y)+\langle y,\nabla_yH(x_{k+1},y_k)\rangle+D_{\phi_2}(y,y_k)+\beta_{1k} \langle y,y_{k-1}-y_k\rangle\\
&+\beta_{2k} \langle y,y_{k-2}-y_{k-1}\rangle\}.
\endaligned
\end{cases}
\end{equation}
If we take $\phi_1(x)=\frac{1}{2\lambda}\|x\|^2_2$ and $\phi_2(y)=\frac{1}{2\mu}\|y\|^2_2$ for all $x\in \mathbb{R}^l$ and $y\in \mathbb{R}^m$, then \eqref{TiBPALM} becomes two-step inertial proximal alternating linearized minimization (TiPALM) algorithm. Then based on alternating minimization algorithm, {Chao} et al. \cite{Z} proposed inertial alternating minimization with Bregman distance (BIAM) algorithm. Other related work can be found in \cite{MP,ML} and their references.
%Mukkamala et al. [38] developed an inertial version of the Bregman proximal gradient method with convex-concave backtracking for the nonconvex composite optimization problem. Teboulle and Vaisbourd [51] considered a novel Bregman proximal gradient method for matrix factorization problems with nonnegative or sparsity constraints. There are many other papers that considered the Bregman gradient method frameworks to solve the convex or nonconvex optimization problems, see [1,2,6,24,42] and references therein. 
\par It should be noted that all these works are obtained for deterministic methods, i.e. no randomness involved. But when the dimension of data is very large, the computing cost of the full gradient of the function $H(x,y)$ is often prohibitively expensive. In order to overcome this difficulty, stochastic gradient approximations were applied, see, e.g. \cite{B} and the references therein. A block stochastic gradient iteration combining simple stochastic gradient descent (SGD) estimator with PALM  was first proposed by Xu and Yin \cite{XW}. 
%Then Davis et al. introduced an asynchronous variant of PALM with stochastic noise in the gradient and called it SAPALM. 
To weaken the assumptions on the objective function in \cite{XW} and improve the estimates on the convergence rate of a stochastic PALM algorithm, Driggs et al. \cite{DT} used more sophisticated so-called variance-reduced gradient estimators instead of the simple stochastic gradient descent estimators, and proposed the following stochastic proximal alternating linearized minimization (SPRING) algorithm:
\begin{equation}
\label{SPRING}
\begin{cases}
x_{k+1}\in \arg\min_{ x\in \mathbb{R}^l}\{f(x)+\langle x,\widetilde{\nabla}_x(x_k,y_k)\rangle+\frac{1}{2\lambda_k}\|x-x_k\|^2_2\},\\
y_{k+1}\in \arg\min_{y\in \mathbb{R}^m}\{g(y)+\langle y,\widetilde{\nabla}_y(x_{k+1},y_k)\rangle+\frac{1}{2\mu_k}\|y-y_k\|^2_2\}.\\
\end{cases}
\end{equation}
The key of SPRING algorithm is replacing the full gradient computations $\nabla_x H(x_k,y_k)$ and $\nabla_yH(x_{k+1},y_k)$ with stochastic estimations  $\widetilde{\nabla}_x(x_k,y_k)$ and $\widetilde{\nabla}_y(x_{k+1},y_k)$, respectively. Then Hertrich et al. \cite{JG} introduced the following inertial variant of a stochastic PALM algorithm with variance-reduced gradient estimator, called SiPALM: 
\begin{equation}
\label{SiPALM}
\begin{cases}
u_{1k}=x_k+\alpha _{1k}(x_k-x_{k-1}), v_{1k}=x_k+\beta _{1k}(x_k-x_{k-1}),\\
%x_{k+1} \in prox_{\tau _{1k}}^{f}  \left (u_{1k}-\frac{1}{\tau _{1k}}  \nabla_{x} Q\left ( v_{1k} ,y_{k}  \right )\right ) ,\\
x_{k+1}\in \arg\min_{ x\in \mathbb{R}^l}\{f(x)+\langle x,\widetilde{\nabla}_x(v_{1k},y_k)\rangle+\frac{1}{2\lambda_k}\|x-u_{1k}\|^2_2\},\\
u_{2k}=y_k+\alpha _{2k}(y_k-y_{k-1}), v_{2k}=y_k+\beta _{2k}(y_k-y_{k-1}),\\
%y_{k+1} \in prox_{\tau _{2k}}^{g}  \left (u_{2k}-\frac{1}{\tau _{2k}}  \nabla_{y} Q\left ( x_{k+1} ,v_{2k}  \right )\right ) ,\\
y_{k+1}\in \arg\min_{y\in \mathbb{R}^m}\{g(y)+\langle y,\widetilde{\nabla}_y(x_{k+1},v_{2k})\rangle+\frac{1}{2\mu_k}\|y-u_{2k}\|^2_2\},\\
\end{cases}
\end{equation}
where $\alpha _{1k},\alpha _{2k},\beta _{1k},\beta _{2k}\in \left [ 0,1 \right ] $. Also, some variance-reduced gradient estimators are proposed to solve the nonconvex
optimization problem. The classical stochastic gradient direction is modified in various ways so as to drive the variance of the gradient estimator towards zero.
Such as SAG \cite{SR}, SVRG \cite{KL,JZ}, SAGA \cite{AFS} and SARAH \cite{LM,NL}. 
\par In this paper, we combine inertial technique, Bregman distance and stochastic gradient estimators to develop a stochastic two-step inertial Bregman proximal alternating linearized minimization (STiBPALM) algorithm to solve the nonconvex optimization problem \eqref{MP}. Our contributions are listed as follows.

(1) We propose the STiBPALM algorithm with variance-reduced stochastic gradient estimators to solve the nonconvex optimization problem \eqref{MP}. And we show that SAGA and SARAH are variance-reduced gradient estimators (Definition \ref{Def211}) in the appendix.

(2) We provide theoretical analysis to show that the proposed algorithm with the variance-reduced stochastic gradient estimator has global convergence under expectation conditions. %Firstly, we present the convergence rate of the generalized gradient mapping \eqref{(0.0)} in Section \ref{sect3}. 
Under the expectation version of Kurdyka--{\L}ojasiewicz (K{\L}) property, the sequence generated by the proposed algorithm converges to a critical point and the general convergence rate is also obtained.

(3) We use several well studied stochastic gradient estimators (e.g. SGD, SAGA and SARAH) to test the performance of STiBPALM for sparse nonnegative matrix factorization and blind image-deblurring problems. And comparing with some existing algorithms (e.g. PALM, iPALM, SPRING and SiPALM) in the literature, we report some preliminary numerical results to demonstrate the effectiveness of the proposed algorithm.
\par This paper is organized as follows. In Section \ref{sect2}, we recall some concepts and important lemmas which will be used in the proof of main results. Section \ref{sect0} introduces our STiBPALM algorithm in detail. %In Section \ref{sect3}, we present our analysis of the convergence rate of the generalized gradient mapping. 
We discuss the convergence behavior of STiBPALM in Section \ref{sect4}. In Section \ref{sect5}, we perform some numerical experiments and compare the results with other algorithms. We give the specific theoretical analysis to show that SAGA and SARAH have variance-reduced stochastic gradient estimators in the appendix.
%In Section \ref{sect6},
\section{Preliminaries}\label{sect2}
\ \par In this section, we summarize some useful definitions and lemmas.
\begin{definition}
\rm
\label{Def25}(Kurdyka--{\L}ojasiewicz property \cite{ABR})
Let $F: \mathbb{R}^d \rightarrow(-\infty,+\infty]$ be a proper and lower semicontinuous function.

(i) The function  $F: \mathbb{R}^d \rightarrow(-\infty,+\infty]$  is said to have the Kurdyka--{\L}ojasiewicz (K{\L}) property at $x^\ast\in$dom$F$ if there exist $\eta\in (0,+\infty]$, a neighborhood $U$ of $x^\ast$ and a continuous concave function $\varphi:[0,\eta)\rightarrow \mathbb{R}_{+}$ such that $\varphi(0)=0$, $\varphi$ is $C^1$ on $(0,\eta)$, for all $s\in(0,\eta)$ it is $\varphi'(s)>0$ and for all $x$ in $U\cap[F(x^\ast)<F<F(x^\ast)+\eta]$ the Kurdyka--{\L}ojasiewicz  inequality holds, $$\varphi'(F(x)-F(x^\ast)){\rm dist}(0,\partial F(x))\geq 1.$$

(ii) Proper lower semicontinuous functions which satisfy the Kurdyka--{\L}ojasiewicz inequality at each point of {its domain} are called Kurdyka--{\L}ojasiewicz (K{\L}) functions.
\end{definition}
Roughly speaking, K{\L} functions become sharp up to reparameterization via $\varphi$, a desingularizing function for $F$. Typical K{\L} functions include the class of semialgebraic functions \cite{JAA,JAO}. For instance, the $l _0$ pseudonorm and the rank function are K{\L}. Semialgebraic functions admit desingularizing functions of the form $\varphi (r)=ar^{1-\vartheta } $ for $a > 0$, and $\vartheta \in [0, 1)$ is known as the K{\L} exponent of the function \cite{JAA,JSM}. For these functions, the K{\L} inequality reads
\begin{equation}
\label{(2.011)}
(F(x)-F(x^\ast))^\vartheta \le C\left \| \xi  \right \| ,\ \ \forall \xi \in \partial F(x)
\end{equation}
for some $C>0$.
\begin{definition}
\rm
{ A function $F$ is said convex if dom$F$ is a convex set and if, for all $x$, $y\in$dom$F$, $\alpha\in[0,1]$,
$$F(\alpha x+(1-\alpha)y)\leq \alpha F(x)+(1-\alpha)F(y).$$
$F$ is said $\theta$-strongly convex with $\theta> 0$ if $F-\frac{\theta}{2}\|\cdot\|^2$ is convex, i.e.,
$$F(\alpha x+(1-\alpha)y)\leq \alpha F(x)+(1-\alpha)F(y)-\frac{1}{2}\theta\alpha(1-\alpha)\|x-y\|^2$$
for all $x$, $y\in$dom$F$ and  $\alpha\in[0,1]$.}
\end{definition}
Suppose that the function $F$ is differentiable. Then $F$ is convex if and only if dom$F$ is a convex set and
$$F(x)\ge F(y)+\langle \nabla F(y),x-y\rangle$$
holds for all $x$, $y\in$dom$F$. Moreover, $F$ is $\theta$-strongly convex with $\theta> 0$ if and only if
$$F(x)\ge F(y)+\langle \nabla F(y),x-y\rangle+\frac{\theta}{2}\|x-y\|^2$$
for all $x$, $y\in$dom$F$.
\begin{definition}
\rm
\label{Defbreg}
Let $\phi:\mathbb{R}^d \rightarrow(-\infty,+\infty]$ be a convex and G\^{a}teaux differentiable function.
The function $D_\phi :$ dom$\phi\,\,\times$ intdom$\phi \rightarrow [0,+\infty)$, defined by
$$D_\phi(x,y)=\phi(x)-\phi(y)-\langle \nabla\phi(y),x-y\rangle,$$
is called the Bregman distance with respect to $\phi$.
\end{definition}
{From the above definition,  it follows  that
\begin{equation}
\label{(2.4)}
D_\phi(x,y)\geq\frac{\theta}{2}\|x-y\|^2,
\end{equation}
if $\phi$ is $\theta$-strongly convex. 
\begin{lemma}{\rm (Descent lemma\cite{BT})}
\label{lem20}
Let $F: \mathbb{R}^{d}\rightarrow \mathbb{R}$ be a continuously differentiable function with gradient $\nabla F$ assumed $L$-Lipschitz continuous. Then
\begin{equation}
\label{(2.0)}
\left | F(y)-F(x)-\left \langle y-x,\nabla F(x) \right \rangle  \right | \le \frac{L }{2}\left \| x-y \right \| ^{2},\ \forall x,y\in \mathbb R^{d}. 
\end{equation}
%$$F\left ( u \right ) \le F\left ( v \right ) +   \left \langle u-v,\nabla F\left ( v \right )  \right \rangle+\frac{L }2{\left \| u-v \right \| ^{2},\ \forall u,v\in \mathbb R^{d}. }\eqno {\rm(2.5)} $$
\end{lemma}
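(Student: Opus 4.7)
The plan is to reduce the statement to a one-dimensional fact via the fundamental theorem of calculus on the segment joining $x$ and $y$. Since $F$ is continuously differentiable on $\mathbb{R}^d$, the scalar function $\varphi(t):=F(x+t(y-x))$ is of class $C^1$ on $[0,1]$ with $\varphi'(t)=\langle \nabla F(x+t(y-x)),\,y-x\rangle$ by the chain rule. Integrating from $0$ to $1$ gives
\begin{equation*}
F(y)-F(x)=\int_0^1 \langle \nabla F(x+t(y-x)),\,y-x\rangle\,dt,
\end{equation*}
and writing $\langle y-x,\nabla F(x)\rangle=\int_0^1 \langle \nabla F(x),\,y-x\rangle\,dt$ lets me rewrite the quantity of interest as
\begin{equation*}
F(y)-F(x)-\langle y-x,\nabla F(x)\rangle=\int_0^1\langle \nabla F(x+t(y-x))-\nabla F(x),\,y-x\rangle\,dt.
\end{equation*}

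From this integral representation I would take absolute values and pull them inside the integral, then apply the Cauchy--Schwarz inequality to bound the integrand by $\|\nabla F(x+t(y-x))-\nabla F(x)\|\cdot\|y-x\|$. Invoking the $L$-Lipschitz continuity of $\nabla F$ yields the pointwise estimate $\|\nabla F(x+t(y-x))-\nabla F(x)\|\leq Lt\|y-x\|$, and the elementary integral $\int_0^1 t\,dt=\tfrac{1}{2}$ produces the desired bound $\tfrac{L}{2}\|y-x\|^2$.

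The argument is entirely classical and I do not anticipate a substantive obstacle; the only detail requiring any care is the justification for applying the fundamental theorem of calculus to $\varphi$, which is immediate from the $C^1$ hypothesis on $F$ together with the chain rule. Neither convexity of $F$, nor any smoothness beyond continuous differentiability of $F$ (and Lipschitzness of $\nabla F$), nor finiteness of the dimension in any essential way, is used beyond what is already assumed in the hypotheses.
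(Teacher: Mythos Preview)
Your proof is correct and is the standard argument for the descent lemma. The paper does not supply its own proof of this statement; it simply quotes the result as the classical ``Descent lemma'' with a citation to \cite{BT}, so there is nothing to compare against beyond noting that your integral-representation proof via $\varphi(t)=F(x+t(y-x))$, Cauchy--Schwarz, and the Lipschitz bound on $\nabla F$ is exactly the textbook derivation one would expect from that reference.
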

%\subsection{Elementary lemmas}
\begin{lemma}
\label{lem21}
Let $F:\mathbb R^{d}\to  \mathbb R$ be a function with $L$-Lipschitz continuous gradient, $G:\mathbb R^{d}\to  \mathbb R$ a proper lower semicontinuous function, and $z\in \arg\min_{v\in \mathbb{R}^d}\{G(v)+\langle d,v-x\rangle+D_{\phi}(v,x)+\gamma\langle v,u\rangle+\mu \langle v,w\rangle\}$, where $D_{\phi}$ denotes the Bregman distance with respect to $\phi$, and $x$, $d$, $u$, $w\in \mathbb R^{d}$. Then, for all $y\in \mathbb R^{d}$,

\begin{equation}
\label{(2.5)}
\aligned
F(z)+G(z)\le &F(y)+G(y)+\left \langle \nabla F(x)-d,z-y \right \rangle +\frac{L}{2} \left \| x-y \right \|^2+D_{\phi}(y,x)\\
&+\frac{L}{2} \left \| z-x \right \|^2-D_{\phi}(z,x)+\gamma\langle y-z,u\rangle+\mu \langle y-z,w\rangle.
%+\gamma _{1k} \langle z-y,v_{k}-v_{k-1}\rangle+\gamma _{2k} \langle z-y,v_{k-1}-v_{k-2}\rangle.
\endaligned
\end{equation}
\begin{proof}
%By the Lipschitz continuity of $\nabla F$, we have the inequalities
By Lemma \ref{lem20}, we have the inequalities
\begin{equation*}
%\label{(2.4)}
\aligned
&F(x)-F(y)\le \left \langle \nabla F(x), x-y\right \rangle+\frac{L}{2} \left \| x-y \right \|^2,\\
&F(z)-F(x)\le \left \langle \nabla F(x), z-x\right \rangle+\frac{L}{2} \left \| z-x \right \|^2,
\endaligned
\end{equation*}
which implies that\\
\begin{equation}
\label{(2.6)}
F(z)\le F(y)+\left \langle \nabla F(x), z-y\right \rangle+\frac{L}{2} \left \| x-y \right \|^2+\frac{L}{2} \left \| z-x \right \|^2.
\end{equation}
Furthermore, by the definition of $z$, taking $v=y$, we obtain
\begin{equation*}
%\label{(2.4)}
\aligned
&G(z)+\langle d,z-x\rangle+D_{\phi}(z,x)+\gamma\langle z,u\rangle+\mu \langle z,w\rangle \\
\le &G(y)+\langle d,y-x\rangle+D_{\phi}(y,x)+\gamma\langle y,u\rangle+\mu \langle y,w\rangle,
\endaligned
\end{equation*}
which implies that\\
\begin{equation}
\label{(2.7)}
\aligned
G(z)\le G(y)+\langle d,y-z\rangle+D_{\phi}(y,x)-D_{\phi}(z,x)+\gamma\langle y-z,u\rangle+\mu \langle y-z,w\rangle.
\endaligned
\end{equation}
Adding \eqref{(2.6)} and \eqref{(2.7)} completes the proof.
\end{proof}
\end{lemma}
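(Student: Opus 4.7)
The plan is to combine a descent-lemma bound on $F$ with the first-order optimality of $z$ as a minimizer of the given subproblem, then add the two estimates. First I would apply Lemma \ref{lem20} twice at the linearization point $x$: once comparing $F(x)$ to $F(y)$ to get $F(x)-F(y)\le \langle \nabla F(x),x-y\rangle+\tfrac{L}{2}\|x-y\|^2$, and once comparing $F(z)$ to $F(x)$ to get $F(z)-F(x)\le \langle \nabla F(x),z-x\rangle+\tfrac{L}{2}\|z-x\|^2$. Adding these cancels the $F(x)$ term and collapses the two gradient inner products into $\langle \nabla F(x),z-y\rangle$, producing the pure $F$ estimate
\begin{equation*}
F(z)\le F(y)+\langle \nabla F(x),z-y\rangle+\tfrac{L}{2}\|x-y\|^2+\tfrac{L}{2}\|z-x\|^2.
\end{equation*}

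Next I would use that $z$ minimizes $v\mapsto G(v)+\langle d,v-x\rangle+D_\phi(v,x)+\gamma\langle v,u\rangle+\mu\langle v,w\rangle$; in particular, plugging in the feasible trial point $v=y$ gives an inequality whose value at $z$ is at most the value at $y$. Rearranging to isolate $G(z)$, the terms $\langle d,\cdot-x\rangle$, $\gamma\langle\cdot,u\rangle$, $\mu\langle\cdot,w\rangle$, which are linear in $v$, difference cleanly to $\langle d,y-z\rangle$, $\gamma\langle y-z,u\rangle$, $\mu\langle y-z,w\rangle$, while the Bregman terms appear as $D_\phi(y,x)-D_\phi(z,x)$, yielding
\begin{equation*}
G(z)\le G(y)+\langle d,y-z\rangle+D_\phi(y,x)-D_\phi(z,x)+\gamma\langle y-z,u\rangle+\mu\langle y-z,w\rangle.
\end{equation*}

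Finally, I would add the two displays and group $\langle \nabla F(x),z-y\rangle+\langle d,y-z\rangle$ as the single cross term $\langle \nabla F(x)-d,z-y\rangle$, producing exactly the right-hand side in the statement. The main obstacle is essentially bookkeeping rather than mathematical depth: one has to keep the signs of $z-y$ versus $y-z$ straight across the two inequalities, and make sure the inertial quantities $\gamma,\mu,u,w$ are carried through on the correct side so that no symmetrization or convexity of $\phi$ beyond what \textsc{Definition}~\ref{Defbreg} already supplies is silently invoked; in particular, this lemma does not require $\phi$ to be strongly convex, so \eqref{(2.4)} is deliberately not used here.
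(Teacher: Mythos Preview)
Your proposal is correct and follows essentially the same approach as the paper: apply the descent lemma twice at $x$ to bound $F(z)-F(y)$, use the minimizing property of $z$ with trial point $v=y$ to bound $G(z)-G(y)$, then add and combine the inner products. Your remark that strong convexity of $\phi$ is not needed here is also accurate and matches the paper's usage.
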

%If the full gradient estimator is used, Lemma \ref{lem21} implies the well-known sufficient decrease property of proximal gradient descent. Using a gradient estimator, this decrease is offset by the estimator's MSE. The following lemma quantifies this relationship.
\begin{lemma}{\rm (sufficient decrease property)}
\label{lem22}
Let $F$, $G$, and $z$ be defined as in Lemma \ref{lem21}, where $x$, $d$, $u$, $w\in \mathbb R^{d}$. Assum that $\phi$ is $\theta$-strongly convex. Then the following inequality holds, for any $\lambda>0$,
\begin{equation}
\label{(2.8)}
\aligned
F(z)+G(z)\le &F(x)+G(x)+\frac{1}{2L\lambda }\left \| d-\nabla F(x) \right \| ^2 +\frac{L(\lambda+1) -\theta }{2} \left \| x-z \right \|^2\\
&+\gamma\langle x-z,u\rangle+\mu \langle x-z,w\rangle.
\endaligned
\end{equation}
\begin{proof}
From Lemma \ref{lem21} with $y=x$, we have
\begin{equation*}
%\label{(2.8)}
\aligned
F(z)+G(z)\le &F(x)+G(x)+\left \langle \nabla F(x)-d,z-x \right \rangle+\frac{L}{2} \left \| x-z \right \|^2\\
&-D_{\phi}(z,x)+\gamma\langle x-z,u\rangle+\mu \langle x-z,w\rangle.
\endaligned
\end{equation*}
Using Young's inequality $\left \langle \nabla F(x)-d,z-x \right \rangle\le \frac{1}{2L\lambda }\left \| d-\nabla F(x) \right \| ^2 +\frac{L\lambda}{2} \left \| x-z \right \|^2$ and \eqref{(2.4)} we can obtain
\begin{equation*}
%\label{(2.8)}
\aligned
F(z)+G(z)\le &F(x)+G(x)+\frac{1}{2L\lambda }\left \| d-\nabla F(x) \right \| ^2 +\frac{L\lambda}{2} \left \| x-z \right \|^2+\frac{L}{2} \left \| x-z \right \|^2\\
&-\frac{\theta}{2} \left \| z-x \right \|^2+\gamma\langle x-z,u\rangle+\mu \langle x-z,w\rangle,
\endaligned
\end{equation*}
which can be abbreviated as the desired result.
\end{proof}
\end{lemma}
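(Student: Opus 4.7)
The plan is to specialize Lemma \ref{lem21} by taking $y = x$, which annihilates two of the terms on its right-hand side. Indeed, with this choice $\frac{L}{2}\|x-y\|^2 = 0$ and $D_\phi(y,x) = D_\phi(x,x) = 0$, so Lemma \ref{lem21} collapses to
\[
F(z)+G(z) \le F(x)+G(x) + \langle \nabla F(x)-d,\, z-x\rangle + \frac{L}{2}\|z-x\|^2 - D_\phi(z,x) + \gamma\langle x-z,u\rangle + \mu\langle x-z,w\rangle.
\]
This is the natural substitution: Lemma \ref{lem21} is a general two-point descent estimate, whereas the target \eqref{(2.8)} is a genuine one-step sufficient-decrease bound anchored at the current iterate $x$, so collapsing the ``comparison point'' $y$ onto $x$ is the right move.

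Next, I would dispatch the cross term $\langle \nabla F(x)-d,\, z-x\rangle$ via Young's inequality, but using a weighted split: for any $\lambda > 0$,
\[
\langle \nabla F(x)-d,\, z-x\rangle \le \frac{1}{2L\lambda}\|\nabla F(x)-d\|^2 + \frac{L\lambda}{2}\|z-x\|^2.
\]
This asymmetric weighting is precisely what introduces the free parameter $\lambda$ appearing in the statement and casts the gradient-mismatch contribution into the advertised form $\frac{1}{2L\lambda}\|\nabla F(x)-d\|^2$. Simultaneously, the $\theta$-strong convexity of $\phi$ combined with \eqref{(2.4)} upgrades $-D_\phi(z,x)$ to $-\frac{\theta}{2}\|z-x\|^2$.

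Collecting terms, the three quadratic pieces $\frac{L\lambda}{2}$, $\frac{L}{2}$ and $-\frac{\theta}{2}$ in front of $\|z-x\|^2$ combine into $\frac{L(\lambda+1)-\theta}{2}$, reproducing the coefficient in \eqref{(2.8)}, while the inertial inner products $\gamma\langle x-z,u\rangle$ and $\mu\langle x-z,w\rangle$ pass through unchanged. There is no serious obstacle here: the argument is essentially a specialization-plus-assembly exercise on top of Lemma \ref{lem21}, and the only mild subtlety worth flagging is the deliberate $L$-weighted choice in Young's inequality, which is what allows the gradient-error term and the quadratic term to balance against the strong-convexity gain in a way that preserves the parameter $\lambda$ as a genuine tunable knob for later use in the convergence analysis.
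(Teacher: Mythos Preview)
Your proposal is correct and matches the paper's proof essentially step for step: specialize Lemma~\ref{lem21} with $y=x$, apply the $L\lambda$-weighted Young's inequality to the cross term, invoke \eqref{(2.4)} to replace $-D_\phi(z,x)$ by $-\tfrac{\theta}{2}\|z-x\|^2$, and collect the quadratic coefficients. There is nothing to add.
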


\section{Stochastic two-step inertial Bregman proximal alternating linearized minimization algorithm}\label{sect0}
Throughout this paper, we impose the following assumptions.
\begin{Assumption}
\label{Assumption21}
\rm
%(i) $\inf_{\mathbb{R}^l\times \mathbb{R}^m}L>-\infty$, $\inf_{\mathbb{R}^l}f>-\infty$, $\inf_{\mathbb{R}^m}g>-\infty$.
(i) The function $\Phi$ is bounded from below, i.e., $\Phi(x,y)\ge \underline{\Phi}.$

(ii) For any fixed $y$, %the function $x\to H\left ( x,y \right ) $ is $C_{L_x}^{1,1} $, namely 
the partial gradient $\nabla_{x} H_i(\cdot,y)$ is globally Lipschitz with module $L_y$ for all $i\in \left \{ 1,\dots ,n \right \}$, that is,
$$\left \|\nabla_{x} H_i\left ( x_{1}  ,y  \right ) - \nabla_{x} H_i\left ( x_{2},y  \right ) \right \|\le L_y\left \| x_{1}-x_{2}   \right \|, \ \forall x_{1} ,x_{2} \in \mathbb R^{l}.  $$
Likewise, for any fixed $x$, the partial gradient $\nabla_{y} H_i(x,\cdot)$ is globally Lipschitz with module $L_x$,
$$\left \|\nabla_{y} H_i\left ( x,y_{1} \right ) - \nabla_{y} H_i\left ( x,y_{2} \right ) \right \|\le L_x\left \| y_{1}-y_{2}   \right \| , \ \forall y_{1} ,y_{2} \in \mathbb R^{m}.  $$

(iii) $\nabla H$ is Lipschitz continuous on bounded subsets of $\mathbb R^{l}\times \mathbb R^{m}$. In other words, for each bounded subset $B_1\times B_2$ of $\mathbb R^{l}\times \mathbb R^{m}$, there exists $M_{B_1\times B_2} > 0$ such that
%$$\left \|\left ( \nabla_{x} H\left ( x_{1}  ,y  \right ) - \nabla_{x} H\left ( x_{2},y  \right ) ,\nabla_{y} H\left ( x,y_{1} \right ) - \nabla_{y} H\left ( x,y_{2} \right ) \right ) \right \|\le M_{B_1\times B_2}\left \| \left ( x_{1}-y_{1},x_{2}-y_{2} \right )    \right \|.$$
$$\left \| \nabla_{x} H\left ( x_{1}  ,y_1  \right ) - \nabla_{x} H\left ( x_{2},y_2 \right )  \right \|\le M_{B_1\times B_2}\left \| \left ( x_{1}-x_{2},y_{1}-y_{2} \right )    \right \|$$
for all $( x_{1}  ,y_1), ( x_{2}  ,y_2)\in B_1\times B_2$.

(iv) $\phi_i(i=1,2)$ is $\theta_i$-strongly convex differentiable function. And the gradient $\nabla \phi_i$ is $\eta_i$-Lipschitz continuous, i.e.,
\begin{equation*}
%\label{(33.2)}
\aligned
&\left \| \nabla{\phi_1}(x_1) -\nabla{\phi_1}(x_2)\right \|\le \eta_1 \|x_{1}-x_{2}\|,\ \forall x_{1} ,x_{2}\in \mathbb R^{l},\\
&\left \| \nabla{\phi_2}(y_1) -\nabla{\phi_2}(y_2)\right \|\le \eta_2 \|y_{1} -y_{2}\|,\ \  \forall y_{1} ,y_{2}\in \mathbb R^{m}. 
\endaligned
\end{equation*}
\end{Assumption}
%\subsection{Stochastic TiBPALM algorithm}
\ \par We now introduce a stochastic version of the two-step inertial Bregman proximal alternating linearized minimization algorithm. The key of our algorithm is replacing the full gradient computations $\nabla_x H(u_k,y_k)$ and $\nabla_y(x_{k+1},v_k)$ with stochastic estimations $\widetilde{\nabla}_x(u_k,y_k)$ and $\widetilde{\nabla}_y(x_{k+1},v_k)$, respectively. We describe the resulted algorithm as follows.
\begin{algorithm}\label{alg1}
\rm
Choose  $(x_0,y_0)\in$dom$\Phi$ and set  $(x_{-i},y_{-i})=(x_0,y_0)$, $i=1, 2$. Take the sequences $\{\gamma_{1k}\}$, $\{\mu_{1k}\}\subseteq[0,\gamma_1]$, $\{\gamma_{2k}\}$, $\{\mu_{2k}\}\subseteq[0,\gamma_2]$, $\{\alpha_{1k}\}$, $\{\beta_{1k}\}\subseteq[0,\alpha_1]$ and $\{\alpha_{2k}\}$, $\{\beta_{2k}\}\subseteq[0,\alpha_2]$, where $\gamma_1\geq0$, $\gamma_2\geq0$, $\alpha_1\geq0$ and $\alpha_2\geq0$. For $k\geq 0$, let
\begin{equation}
\label{TiBSPALM}
\begin{cases}
\aligned
u_k=x_k+\gamma_{1k}(x_k&-x_{k-1})+\gamma_{2k}(x_{k-1}-x_{k-2}),\\
x_{k+1}\in \arg\min_{ x\in \mathbb{R}^l}\{&f(x)+\langle x,\widetilde{\nabla}_x(u_k,y_k)\rangle+D_{\phi_1}(x,x_k)+\alpha_{1k} \langle x,x_{k-1}-x_k\rangle\\
&+\alpha_{2k} \langle x,x_{k-2}-x_{k-1}\rangle\},\\
v_k=y_k+\mu_{1k}(y_k&-y_{k-1})+\mu_{2k}(y_{k-1}-y_{k-2}),\\
y_{k+1}\in \arg\min_{y\in \mathbb{R}^m}\{&g(y)+\langle y,\widetilde{\nabla}_y(x_{k+1},v_k)\rangle+D_{\phi_2}(y,y_k)+\beta_{1k} \langle y,y_{k-1}-y_k\rangle\\
&+\beta_{2k} \langle y,y_{k-2}-y_{k-1}\rangle\},
\endaligned
\end{cases}
\end{equation}
where $D_{\phi_1}$ and $D_{\phi_2}$ denote the Bregman distance with respect to  $\phi_1$ and $\phi_2$, respectively.
\end{algorithm}
%{\begin{remark}\rm
%Convergence rate of generalized gradient map of PALM/SPRING is given in \cite{DT}. Similarly, given a point $z = (x, y)$, the generalized gradient map of TiBPALM is defined as
%\begin{equation}
%\label{(0.0)}
%\mathcal{G}_{\frac{1}{\theta _{1}},\frac{1}{\theta _{2}}} (z )=\begin{pmatrix}
% \theta _{1}(x-\overline{x} _{k+1})\\\theta _{2}(y-\overline{y} _{k+1})
%\end{pmatrix},
%\end{equation}
%where $z=(x,y)^T$, $\theta_1, \theta_2 > 0$ are $\phi_1, \phi_2$ strongly convex parameters (not necessarily equal to the step-sizes in Algorithm \ref{alg1}), and
%\begin{equation*}
%\begin{cases}
%\aligned
%%u_k=x_k+\gamma_{1k}(x_k&-x_{k-1})+\gamma_{2k}(x_{k-1}-x_{k-2}),\\
%\overline{x}_{k+1}\in \arg\min_{ x\in \mathbb{R}^n}\{&f(x)+\langle x,\nabla_xH(x_k,y_k)\rangle+D_{\phi_1}(x,x_k)+\alpha_{1k} \langle x,x_{k-1}-x_k\rangle\\
%&+\alpha_{2k} \langle x,x_{k-2}-x_{k-1}\rangle\},\\
%%v_k=y_k+\mu_{1k}(y_k&-y_{k-1})+\mu_{2k}(y_{k-1}-y_{k-2}),\\
%\overline{y}_{k+1}\in \arg\min_{y\in \mathbb{R}^m}\{&g(y)+\langle y,\nabla_yH(x_{k+1},y_k)\rangle+D_{\phi_2}(y,y_k)+\beta_{1k} \langle y,y_{k-1}-y_k\rangle\\
%&+\beta_{2k} \langle y,y_{k-2}-y_{k-1}\rangle\}.
%\endaligned
%\end{cases}
%\end{equation*}
%In section \ref{sect3}, we show that
%$$\mathbb{E}\left [{\rm dist} (0,\mathcal{G}_{\frac{1}{2\theta _{1}},\frac{1}{2\theta _{2}} } (z_\alpha ))^2 \right ]\le  \mathcal{ O}\left ( \frac{1}{k} \right ), $$
%where $\alpha$ is chosen uniformly at random from the set $\left \{ 1,2,\dots ,k \right \}$.
%\end{remark}}
\par Stochastic gradients $\widetilde{\nabla}_x(u_k,y_k)$ and $\widetilde{\nabla}_y(x_{k+1},v_k)$ use the gradients of only a few indices $\nabla _xH_i(u_k,y_k)$ and $\nabla _yH_i(x_{k+1},v_k)$ for $i \in B_k \subset \left \{ 1,2,\dots , n \right \}$. The minibatch $B_k$ is chosen uniformly at random from all subsets of $\left \{ 1,2,\dots , n \right \}$ with cardinality $b$. The simplest one is the stochastic gradient descent (SGD) estimator \cite{RS}. While the SGD estimator is not variance-reduced, many popular gradient estimators as the SAGA \cite{AFS} and SARAH \cite{LM,NL} estimators have this property. In this paper, we mainly consider SAGA (Appendix \ref{71}) and SARAH (Appendix \ref{72}) gradient estimators.
\begin{definition}
\rm
\label{DefA1}(SGD \cite{RS})
The SGD gradient estimator $\widetilde{\nabla}_x^{SGD}(x_k,y_k)$ is defined as follows,
\begin{equation*}
%\label{(l.1)}
\aligned
\widetilde{\nabla}_x^{SGD}(x_k,y_k)=\frac{1}{b}\sum_{i\in B_k} \nabla _xH_i(x_k,y_k),
\endaligned
\end{equation*}
where $B_k$ are mini-batches containing $b$ indices. 
\end{definition}
The SGD gradient estimator uses the gradient of a randomly sampled batch to represent the full gradient.
\begin{definition}
\rm
\label{DefA1}(SAGA \cite{AFS})
The SAGA gradient estimator $\widetilde{\nabla}_x^{SAGA}(x_k,y_k)$ is defined as follows,
\begin{equation*}
%\label{(l.1)}
\aligned
\widetilde{\nabla}_x^{SAGA}(x_k,y_k)=\frac{1}{b}\sum_{i\in B_k}\left (  \nabla _xH_i(x_k,y_k)- \nabla _xH_i(\varphi _{k}^{i},y_{k}) \right ) + \frac{1}{n}\sum_{j=1}^n\nabla _xH_j(\varphi _{k}^{j},y_{k}),
\endaligned
\end{equation*}
where $B_k$ are mini-batches containing $b$ indices. The variables $\varphi _{k}^{i}$ follow the update rules $\varphi _{k+1}^{i}=x_k$ if $i\in B_k$ and $\varphi _{k+1}^{i}=\varphi _{k}^{i}$ otherwise.
\end{definition}

\begin{definition}
\rm
\label{DefA2}(SARAH \cite{LM,NL})
The SARAH gradient estimator reads for $k = 0$ as $$\widetilde{\nabla}_x^{SARAH}(x_0,y_0)=\nabla_xH(x_0,y_0).$$ 
For $k = 1, 2,\dots$, we define random variables $p_k\in\left \{ 0,1 \right \}$ with $P(p_k=0)=\frac{1}{p}$ and $P(p_k=1)=1-\frac{1}{p}$, where $p \in(1,\infty   )$ is a fixed chosen parameter. Let $B_k$ be a random subset uniformly drawn from $\left \{ 1,\dots , n \right \}$ of fixed batch size $b$. Then for $k= 1, 2,\dots$, the SARAH gradient estimator reads as
\begin{equation*}
%\label{(l.1)}
\aligned
&\widetilde{\nabla}_x^{SARAH}(x_{k},y_{k})\\
=&
\begin{cases}
\nabla_xH(x_k,y_k),&\text{ if } p_k=0, \\
\frac{1}{b}\sum_{i\in B_k}\left (  \nabla _xH_i(x_k,y_k)- \nabla _xH_i(x_{k-1},y_{k-1}) \right ) +\widetilde{\nabla}_x^{SARAH}(x_{k-1},y_{k-1}),& \text{ if } p_k=1.
\end{cases}
\endaligned
\end{equation*}
\end{definition}
%Other variance-reduced estimators can be used in Algorithm \ref{alg1}, including the SAG \cite{SR} and SVRG \cite{JZ} estimators, for example.
In our analysis, we assume that stochastic gradient estimator used in Algorithm \ref{alg1} is variance-reduced, which is a quite general assumption in stochastic gradient algorithms \cite{DT,JG}. The following definition is analogous to Definition 2.1 in \cite{DT}.
\begin{definition}
\rm
\label{Def211}(variance-reduced gradient estimator)
Let $\left \{ z_k  \right \} _{k\in \mathbb{N} }=\left \{ (x_k,y_k)\right \} _{k\in \mathbb{N} }$ be the sequence generated by Algorithm \ref{alg1} with some gradient estimator $\widetilde{\nabla}$. This gradient estimator is called variance-reduced with constants $V_1,V_2,V_\Upsilon \ge 0$, and $\rho \in (0,1]$ if it satisfies the following conditions:

(i) (MSE bound) There exists a sequence of random variables $\left \{ \Upsilon _k \right \} _{k\in \mathbb{N} }$ of the form $\Upsilon _k=\sum_{i=1}^{s} (v_{k}^{i} )^2$ for some nonnegative random variables $v_{k}^{i}\in \mathbb{R} $ such that
\begin{equation}
\label{(2.1)}
\aligned
&\mathbb{E}_k\left [  \left \| \widetilde{\nabla}_x(u_k,y_k)-\nabla _xH(u_k,y_k) \right \| ^2+\left \| \widetilde{\nabla}_y(x_{k+1},v_k)-\nabla _yH(x_{k+1},v_k) \right \| ^2\right ]\\
\le &\Upsilon _k+V_1\left (\mathbb{E}_k\left \| z_{k+1}-z_{k} \right \| ^{2}+\left \| z_{k}-z_{k-1} \right \| ^{2} +\left \| z_{k-1}-z_{k-2} \right \| ^{2}+\left \| z_{k-2}-z_{k-3} \right \| ^{2}\right ),
\endaligned
\end{equation}
and, with $\Gamma _k=\sum_{i=1}^{s} v_{k}^{i} $
\begin{equation}
\label{(2.2)}
\aligned
&\mathbb{E}_k\left [  \left \| \widetilde{\nabla}_x(u_k,y_k)-\nabla _xH(u_k,y_k) \right \| +\left \| \widetilde{\nabla}_y(x_{k+1},v_k)-\nabla _yH(x_{k+1},v_k) \right \| \right ]\\
\le& \Gamma_k+V_2\left (\mathbb{E}_k\left \| z_{k+1}-z_{k} \right \|+\left \| z_{k}-z_{k-1} \right \| +\left \| z_{k-1}-z_{k-2} \right \|+\left \| z_{k-2}-z_{k-3} \right \|\right ).
\endaligned
\end{equation}

(ii) (Geometric decay) The sequence $\left \{ \Upsilon _k \right \} _{k\in \mathbb{N} }$ decays geometrically:
\begin{equation}
\label{(2.3)}
\aligned
\mathbb{E}_k\Upsilon _{k+1}\le &(1-\rho )\Upsilon _k+V_\Upsilon \left (\mathbb{E}_k\left \| z_{k+1}-z_{k} \right \| ^{2}+\left \| z_{k}-z_{k-1} \right \| ^{2}+\left \| z_{k-1}-z_{k-2} \right \| ^{2}\right.\\
&\left.+\left \| z_{k-2}-z_{k-3} \right \| ^{2}\right ).
\endaligned
\end{equation}

(iii) (Convergence of estimator) If $\left \{ z_k  \right \} _{k\in \mathbb{N} }$ satisfies $\lim_{k \to \infty } \mathbb{E}\left \| z_{k}-z_{k-1} \right \| ^{2}=0$, then $ \mathbb{E}\Upsilon _k\to 0$ and $\mathbb{E}\Gamma _k\to 0$.
\end{definition}
In the following, if $\left \{ z_k  \right \} _{k\in \mathbb{N} }=\left \{ (x_k,y_k)\right \} _{k\in \mathbb{N} }$ be the bounded sequence generated by Algorithm \ref{alg1}, we assume $\nabla H$ is $M$-Lipschitz continuous on $\left \{ (x_k,y_k)\right \} _{k\in \mathbb{N} }$.
\begin{Assumption}
\label{Assumption41}
\rm
For the sequences $\left \{ x_k \right \}_{k\in \mathbb{N} } $ and  $\left \{ y_k \right \}_{k\in \mathbb{N} } $ generated by Algorithm \ref{alg1}, there exist $L> 0$ such that
$$\sup\left \{ L _{y_k}:k\in \mathbb N  \right \} \le L\ \ {\rm and}\ \sup\left \{  L _{x_k}:k\in \mathbb N  \right \} \le L, $$
where $L _{y_k}$ and $L _{x_k}$ are the Lipschitz constants for $\nabla_{x} H_i(\cdot,y_k)$ and $\nabla_{y} H_i(x_k,\cdot)$, respectively.
\end{Assumption}

\begin{proposition}
\label{pro1}
If $\left \{ z_k  \right \} _{k\in \mathbb{N} }=\left \{ (x_k,y_k)\right \} _{k\in \mathbb{N} }$ be the bounded sequence generated by Algorithm \ref{alg1}. Then the SAGA gradient estimator is variance-reduced with parameters $V_{1}=\frac{16N^2\gamma^2}{b}$, $V_{2}=\frac{4N\gamma}{\sqrt{b}}$, $V_{\Upsilon}=\frac{408nN^2(1+2\gamma_1^2+\gamma_2^2)}{b^2}$ and $\rho=\frac{b}{2n}$, where $N=\max\left \{M,L \right \} $, $\gamma=\max\left \{ \gamma_1,\gamma_2 \right \} $. The SARAH estimator is variance-reduced with parameters $V_{1}=6\left ( 1-\frac{1}{p}  \right )M^2(1+2\gamma_{1}^2+\gamma_{2}^2)$, $V_{2}=M\sqrt{6(1-\frac{1}{p})(1 +2\gamma_{1}^2+\gamma_{2}^2) }$, $V_{\Upsilon}=6\left ( 1-\frac{1}{p}  \right )M^2(1+2\gamma_{1}^2+\gamma_{2}^2)$ and $\rho= \frac{1}{p}$.
\end{proposition}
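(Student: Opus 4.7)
The plan is to verify the three conditions of Definition \ref{Def211} --- the MSE bound \eqref{(2.1)}, the $\ell_1$-version \eqref{(2.2)}, the geometric decay \eqref{(2.3)}, and the convergence of the estimator --- separately for SAGA and SARAH, in each case handling the $x$-update (gradient sampled at $u_k$) and the $y$-update (gradient sampled at $v_k$) in parallel. The key preliminary observation is that by the definition of the inertial points,
$$u_k-x_k=\gamma_{1k}(x_k-x_{k-1})+\gamma_{2k}(x_{k-1}-x_{k-2}),\quad v_k-y_k=\mu_{1k}(y_k-y_{k-1})+\mu_{2k}(y_{k-1}-y_{k-2}),$$
so Cauchy--Schwarz gives $\|u_k-x_k\|^2\le 2\gamma_1^2\|z_k-z_{k-1}\|^2+2\gamma_2^2\|z_{k-1}-z_{k-2}\|^2$ and analogously for $v_k-y_k$. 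This single estimate is what produces the additional $\|z_{k-1}-z_{k-2}\|^2$ and $\|z_{k-2}-z_{k-3}\|^2$ terms in \eqref{(2.1)}--\eqref{(2.3)}, as opposed to the one-step SPRING analysis of \cite{DT}.

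For SAGA, I would first use unbiasedness of the uniform minibatch to rewrite $\mathbb{E}_k\|\widetilde{\nabla}_x^{SAGA}(u_k,y_k)-\nabla_xH(u_k,y_k)\|^2$ as $\tfrac1b\,\mathrm{Var}_i[\nabla_xH_i(u_k,y_k)-\nabla_xH_i(\varphi_k^i,y_k)]$ and bound it by $\tfrac{L^2}{bn}\sum_{i=1}^n\|u_k-\varphi_k^i\|^2$ via Assumption \ref{Assumption41}; splitting $u_k-\varphi_k^i=(u_k-x_k)+(x_k-\varphi_k^i)$ and substituting the inertial decomposition above produces \eqref{(2.1)} with
$$\Upsilon_k=\frac{C}{n}\sum_{i=1}^n\bigl(\|x_k-\varphi_k^i\|^2+\|y_k-\psi_k^i\|^2\bigr)$$
for a suitable constant $C$, where $\psi_k^i$ is the analogous SAGA memory for the $y$-block. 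Condition \eqref{(2.3)} exploits the update rule $\varphi_{k+1}^i=x_k$ with probability $b/n$, giving $\mathbb{E}_k\|x_{k+1}-\varphi_{k+1}^i\|^2=\tfrac{b}{n}\|x_{k+1}-x_k\|^2+(1-\tfrac{b}{n})\|x_{k+1}-\varphi_k^i\|^2$; expanding the last term with Young's inequality using a parameter tuned so that $(1-\tfrac{b}{n})(1+\varepsilon)\le 1-\tfrac{b}{2n}$ yields the contraction rate $\rho=b/(2n)$ and a perturbation proportional to the successive differences of $z_k$, fixing $V_\Upsilon$ to the stated value. Condition (iii) is then immediate by iterating this recursion, and the $\ell_1$-bound \eqref{(2.2)} follows from \eqref{(2.1)} by Jensen/Cauchy--Schwarz, producing $V_2=\sqrt{V_1}$ up to numerical constants and the announced $V_1=16N^2\gamma^2/b$, $V_2=4N\gamma/\sqrt b$.

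For SARAH the estimator is biased, so the cleanest choice is to let $\Upsilon_k$ equal the squared estimation error itself; then \eqref{(2.1)} holds trivially with $V_1=0$ and only \eqref{(2.3)} requires real work. Conditioning on $p_k$: in the case $p_k=0$ (probability $1/p$) the full gradient is evaluated and the error resets to zero, while in the case $p_k=1$ (probability $1-\tfrac1p$) the recursive definition combined with the standard SARAH telescoping identity for the variance and the Lipschitz bound $\|\nabla_xH_i(u_k,y_k)-\nabla_xH_i(u_{k-1},y_{k-1})\|\le M\|(u_k,y_k)-(u_{k-1},y_{k-1})\|$ gives $\mathbb{E}_k\Upsilon_{k+1}\le(1-\tfrac1p)\Upsilon_k+(1-\tfrac1p)M^2\|(u_k,y_k)-(u_{k-1},y_{k-1})\|^2$, and expanding the inertial difference exactly produces the factor $1+2\gamma_1^2+\gamma_2^2$ in $V_\Upsilon$, with $V_2$ emerging after taking square roots. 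The main obstacle in both analyses is the careful bookkeeping of the inertial displacements, so that every Lipschitz expansion of $u_k$ or $v_k$ is unpacked into the four consecutive differences $\|z_{k+1}-z_k\|,\ldots,\|z_{k-2}-z_{k-3}\|$, and so that the Young's-inequality parameters in the SAGA recursion are chosen to preserve a strictly positive contraction rate $\rho$ while keeping the numerical constants as small as those claimed in the proposition.
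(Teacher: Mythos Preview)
Your overall strategy matches the paper's Appendix, but there is a genuine gap in the SARAH argument. You propose to ``let $\Upsilon_k$ equal the squared estimation error itself'' so that \eqref{(2.1)} holds ``trivially with $V_1=0$.'' This fails on measurability: the quantity $\|\widetilde{\nabla}_x(u_k,y_k)-\nabla_xH(u_k,y_k)\|^2+\|\widetilde{\nabla}_y(x_{k+1},v_k)-\nabla_yH(x_{k+1},v_k)\|^2$ depends on the step-$k$ minibatch and on the coin flip $p_k$, so it is not determined by the first $k$ iterations and cannot appear bare on the right-hand side of an inequality whose left side carries $\mathbb{E}_k[\cdot]$. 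The paper resolves this by an index shift, defining $\Upsilon_{k+1}$ to be the squared error produced at step $k$; then $\Upsilon_k$ is the \emph{previous} step's error, which is measurable, and the MSE bound \eqref{(2.1)} and the geometric decay \eqref{(2.3)} become the \emph{same} recursion, yielding $V_1=V_\Upsilon=6(1-\tfrac1p)M^2(1+2\gamma_1^2+\gamma_2^2)$ exactly as the proposition claims --- not zero. Your derivation of that recursion itself (condition on $p_k$, SARAH variance telescoping, Lipschitz expansion of $\|(u_k,y_k)-(u_{k-1},y_{k-1})\|^2$ via $3(a^2+b^2+c^2)$) is correct and is precisely what the paper does in Lemma~\ref{lem54}.

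For SAGA your plan is sound but uses a different Lyapunov than the paper: you take $\Upsilon_k$ proportional to $\tfrac1n\sum_i(\|x_k-\varphi_k^i\|^2+\|y_k-\psi_k^i\|^2)$, while the paper keeps it in gradient form, $\Upsilon_k=\tfrac{1}{bn}\sum_j\bigl(\|\nabla_xH_j(u_k,y_k)-\nabla_xH_j(\varphi_k^j,y_k)\|^2+4\|\nabla_yH_j(x_k,v_k)-\nabla_yH_j(x_k,\xi_k^j)\|^2\bigr)$. Both choices work and are related by Lipschitz bounds; the paper's version postpones all Lipschitz expansions to the decay step, using three nested applications of $\|a-c\|^2\le(1+\varepsilon)\|a-b\|^2+(1+\varepsilon^{-1})\|b-c\|^2$ with $\varepsilon=b/(6n)$, rather than splitting $u_k-\varphi_k^i$ already at the MSE stage. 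Note also that in the Appendix the SAGA memory is updated to the inertial point, $\varphi_{k+1}^i=u_k$ and $\xi_{k+1}^i=v_k$, not to $x_k,y_k$ as you write; this changes where the inertial displacement enters the decay recursion and is part of how the specific constant $408$ arises.
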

See the detailed proof of Proposition \ref{pro1} in Appendix \ref{71} and \ref{72}. And the conclusion that SVRG gradient estimator is variance-reduced can be obtained similarly. 
\par Below, we give the supermartingale convergence theorem that will be applied to obtain almost sure convergence of sequences generated by STiBPALM (Algorithm \ref{alg1}). 
%Below, we present the implications of this result in light of our context.
\begin{lemma}{\rm (supermartingale convergence)}
\label{lem23}
%Let $\left \{ X_k \right \} _{k=0}^{\infty } $ and $\left \{ Y_k \right \} _{k=0}^{\infty } $ be sequences of bounded nonnegative random variables such that $X_k$, $Y_k$ depend only on the first $k$ iterations of Algorithm \ref{alg1}. If
Let $\left \{ X_k \right \} _{k\in \mathbb{N} } $ and $\left \{ Y_k \right \}  _{k\in \mathbb{N} } $ be sequences of bounded nonnegative random variables such that $X_k$, $Y_k$ depend only on the first $k$ iterations of Algorithm \ref{alg1}. If
\begin{equation}
\label{(2.9)}
\mathbb{E} _kX_{k+1}+Y_k\le X_k
\end{equation}
for all $k$, then $\sum_{k=0}^{\infty } Y_k<+\infty $ a.s. and $\left \{ X_k \right \}$ converges a.s.
\end{lemma}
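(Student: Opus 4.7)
The plan is to recognize this as a special case of the classical Robbins--Siegmund / Doob supermartingale convergence theorem, specialized to the situation where there are no additive noise terms and the ``good'' term $Y_k$ is directly subtracted. Everything follows by interpreting the recursion in the filtration naturally attached to Algorithm~\ref{alg1}.

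First I would set up the probabilistic framework. Let $\mathcal{F}_k$ denote the $\sigma$-algebra generated by the first $k$ iterations of Algorithm~\ref{alg1}, so that the hypothesis ``$X_k,Y_k$ depend only on the first $k$ iterations'' translates to $X_k,Y_k$ being $\mathcal{F}_k$-measurable, and $\mathbb{E}_k[\,\cdot\,]=\mathbb{E}[\,\cdot\mid\mathcal{F}_k]$. Since $Y_k\ge 0$, the assumed recursion
\begin{equation*}
\mathbb{E}[X_{k+1}\mid\mathcal{F}_k] \;\le\; X_k - Y_k \;\le\; X_k
\end{equation*}
shows that $\{X_k\}$ is a nonnegative supermartingale adapted to $\{\mathcal{F}_k\}$. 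Because the $X_k$ are bounded (in particular integrable) by hypothesis, Doob's martingale convergence theorem immediately yields the existence of an integrable random variable $X_\infty\ge 0$ such that $X_k\to X_\infty$ almost surely.

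Next I would extract summability of $\{Y_k\}$ by taking unconditional expectations and telescoping. Applying $\mathbb{E}[\,\cdot\,]$ to the recursion gives $\mathbb{E}[X_{k+1}] + \mathbb{E}[Y_k]\le \mathbb{E}[X_k]$, so summing from $k=0$ to $K$ and dropping the nonnegative term $\mathbb{E}[X_{K+1}]\ge 0$ yields
\begin{equation*}
\sum_{k=0}^{K}\mathbb{E}[Y_k] \;\le\; \mathbb{E}[X_0] - \mathbb{E}[X_{K+1}] \;\le\; \mathbb{E}[X_0] \;<\; +\infty.
\end{equation*}
Letting $K\to\infty$ and invoking Tonelli's theorem (legal since $Y_k\ge 0$) gives $\mathbb{E}\bigl[\sum_{k=0}^{\infty} Y_k\bigr]=\sum_{k=0}^{\infty}\mathbb{E}[Y_k]\le \mathbb{E}[X_0]<+\infty$, which forces $\sum_{k=0}^{\infty} Y_k<+\infty$ almost surely.

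There is essentially no hard step here: the only subtlety is making sure one is entitled to invoke Doob's theorem, which needs $\{X_k\}$ to be an $L^1$-bounded nonnegative supermartingale, and this is guaranteed by the boundedness hypothesis on $X_k$. If desired, one can avoid an explicit appeal to Doob's theorem and give a self-contained proof via the upcrossing inequality, but in the setting of this paper citing the classical result (as done, e.g., in Robbins--Siegmund) is the natural choice and keeps the argument to a few lines.
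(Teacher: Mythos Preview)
Your proof is correct and is the standard Robbins--Siegmund argument specialized to this setting. Note, however, that the paper itself does not supply a proof for this lemma: it is stated in the preliminaries as a known result (the supermartingale convergence theorem, with the Robbins--Siegmund reference in the bibliography), so there is no paper proof to compare against. Your write-up would serve perfectly well as the omitted justification.
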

\section{Convergence analysis under the KL property}\label{sect4}
%The results from the previous section require only Assumption \ref{Assumption21}. 
\ \par In this section, under Assumption \ref{Assumption21} and \ref{Assumption41}, we prove convergence of the sequence and extend the convergence rates of SPRING to Algorithm \ref{alg1}, for semialgebraic function $\Phi$. Given $k\in \mathbb{N}$, define the quantity
\begin{equation}
\label{(3.1)}
\aligned
%\Psi _k=&\Phi (z_k)+\frac{1}{2\rho \sqrt{2(V_1+V_\Upsilon /\rho )} } \Upsilon _k+\left (\sqrt{2(V_1+V_\Upsilon /\rho )}+\frac{\alpha_1+\alpha_2}{2}  \right )\left \| z_k-z_{k-1} \right \|^2\\
%&+\left ( \frac{\sqrt{V_1+V_\Upsilon /\rho } }{\sqrt{2}}+\frac{\alpha _2}{2}\right  )\left \| z_{k-1}-z_{k-2} \right \|^2.
\Psi _k= &\Phi (z_{k})+ \frac{1}{L\lambda\rho  } \Upsilon _k+\left (\frac{V_1+V_\Upsilon /\rho }{L\lambda }+\frac{\alpha_1+\alpha_2}{2}+\frac{2L(\gamma_{1}^2+\gamma_{2}^2)}{\lambda }+3Z  \right )\|z_{k}-z_{k-1}\|^2 \\
&+\left ( \frac{V_1+V_\Upsilon /\rho }{L\lambda }+\frac{\alpha_2}{2}+\frac{2L\gamma_{2}^2}{\lambda }+2Z \right )\|z_{k-1}-z_{k-2}\|^2+\left ( \frac{V_1+V_\Upsilon /\rho }{L\lambda } +Z \right ) \|z_{k-2}-z_{k-3}\|^2,\\
\endaligned  
\end{equation}
where $\lambda=\sqrt{\frac{10(V_1+V_\Upsilon /\rho)+4L^2(\gamma_{1}^2+\gamma_{2}^2)}{L^2}}$, $Z=\frac{V_1+V_\Upsilon /\rho }{\sqrt{10(V_1+V_\Upsilon /\rho)+4L^2(\gamma_{1}^2+\gamma_{2}^2)} }+\epsilon>0$, $\epsilon>0$ is small enough. Our first result guarantees that $\Psi _k$ is decreasing in expectation.
\begin{lemma}{\rm ($\mathit{l} _2$ summability)}
\label{lem31}
Suppose Assumption \ref{Assumption21} and \ref{Assumption41} hold. Let $\left \{ z_k \right \}_{k\in \mathbb{N} } $ be the sequence generated by Algorithm \ref{alg1} with variance-reduced gradient estimator, and let 
\begin{equation*}
%\label{(33.2)}
\theta\overset{\bigtriangleup }{=}\min\left \{ \theta_1,\theta_2 \right \}> L+2\alpha _1+2\alpha _2+2\sqrt{10(V_1+V_\Upsilon /\rho)+4L^2(\gamma_{1}^2+\gamma_{2}^2)}+6\epsilon,  
\end{equation*}
then the following conclusions hold.
%$\theta \ge 6\sqrt{2}\sqrt{V_1+V_\Upsilon /\rho}+L+\frac{4L(\gamma_{1}^2+\gamma_{2}^2)}{\lambda }${\color{blue}$\theta \ge L(\lambda +1)+\frac{2(V_1+V_\Upsilon /\rho) }{L\lambda }+4Z+2\alpha _1+2\alpha _2$, setting $\lambda =\frac{\sqrt{2(V_1+V_\Upsilon /\rho) } }{L }$} for all $k$, then

{\rm(i)} $\Psi _k$ satisfies
\begin{equation}
\label{(3.2)}
\aligned 
\mathbb{E}_k\left [\Psi _{k+1} +\kappa \left \| z_{k+1}-z_{k} \right \|^2+\epsilon \left \| z_{k}-z_{k-1} \right \|^2+ \epsilon \left \| z_{k-1}-z_{k-2} \right \|^2+ Z \left \| z_{k-2}-z_{k-3} \right \|^2  \right ] \le \Psi _k,
\endaligned  
\end{equation}
where $\kappa=-\frac{L -\theta}{2}-\alpha_1-\alpha_2-\sqrt{10(V_1+V_\Upsilon /\rho)+4L^2(\gamma_{1}^2+\gamma_{2}^2)}-3\epsilon>0$. 
%$Z=\frac{V_1+V_\Upsilon /\rho }{L\lambda }+\varepsilon>0$, $Z_1=\varepsilon>0$.

{\rm(ii)} The expectation of the squared distance between the iterates is summable:
$$\sum_{k=0}^{\infty } \mathbb{E} [\left \| x_{k+1}-x_{k} \right \|^2+\left \| y_{k+1}-y_{k} \right \|^2]=\sum_{k=0}^{\infty } \mathbb{E}\left \| z_{k+1}-z_{k} \right \|^2<\infty.$$
%{\begin{remark}\rm
%In order to expand the selection range of $\theta$, we take $Z=\frac{V_1+V_\Upsilon /\rho }{L\lambda }$, $\lambda =-\left ( \frac{1}{2}+\frac{\alpha _1+\alpha _2}{L}   \right ) +\sqrt{\frac{\left ( L/2+\alpha _1+\alpha _2 \right )^2-10(V_1+V_\Upsilon /\rho) }{L^2}+4(\gamma _1+\gamma _2 ) }$ to obtain the minimum value of $\theta$.
%\end{remark}}
%{\rm The proof of the Lemma \ref{lem31} is given below.}
\begin{proof}
\par(i) Applying Lemma \ref{lem22} with $F(\cdot )=H(\cdot ,y_k)$, $G(\cdot )=f(\cdot )$, $z=x_{k+1}$, $x= x_k$, $d =\widetilde{\nabla}_x(u_k,y_k)$, $u = x_{k-1}-x_{k}$ and $w = x_{k-2}-x_{k-1}$, for any $\lambda>0$, we have
\begin{equation}
\label{(3.3)}
\aligned
&H(x_{k+1},y_k)+f(x_{k+1})\\
\le &H(x_{k},y_k)+f(x_{k})+\frac{1}{2L\lambda }\left \| \widetilde{\nabla}_x(u_k,y_k)-\nabla_x H(x_k,y_k) \right \| ^2+\frac{L(\lambda+1) -\theta_1 }{2} \left \| x_{k+1}-x_k \right \|^2 \\
&+\alpha _{1k} \langle x_{k+1}-x_k,x_{k}-x_{k-1}\rangle+\alpha _{2k} \langle x_{k+1}-x_k,x_{k-1}-x_{k-2}\rangle\\
\overset{(1)}{\le } &H(x_{k},y_k)+f(x_{k})+\frac{1}{L\lambda }\left \| \widetilde{\nabla}_x(u_k,y_k)-\nabla_x H(u_k,y_k) \right \| ^2+\frac{1}{L\lambda }\left \| \nabla_x H(u_k,y_k)-\nabla_x H(x_k,y_k) \right \| ^2\\
&+\frac{L(\lambda+1) -\theta_1 }{2} \left \| x_{k+1}-x_k \right \|^2 +\frac{\alpha_{1k}}{2} (\|x_{k+1}-x_k\|^2+\|x_k-x_{k-1}\|^2)\\
&+\frac{\alpha_{2k}}{2}(\|x_{k+1}-x_k\|^2+\|x_{k-1}-x_{k-2}\|^2)\\
\overset{(2)}{\le } &H(x_{k},y_k)+f(x_{k})+\frac{1}{L\lambda }\left \| \widetilde{\nabla}_x(u_k,y_k)-\nabla_x H(u_k,y_k) \right \| ^2+\frac{L}{\lambda }\left \| u_k-x_k \right \| ^2\\
&+\left (\frac{L(\lambda+1) -\theta_1 }{2} +\frac{\alpha_{1}+\alpha_{2}}{2} \right ) \left \| x_{k+1}-x_k \right \|^2+\frac{\alpha_{1}}{2} \|x_k-x_{k-1}\|^2+\frac{\alpha_{2}}{2}\|x_{k-1}-x_{k-2}\|^2\\
\le &H(x_{k},y_k)+f(x_{k})+\frac{1}{L\lambda }\left \| \widetilde{\nabla}_x(u_k,y_k)-\nabla_x H(u_k,y_k) \right \| ^2+\left (\frac{2L\gamma_{1k}^2}{\lambda }+\frac{\alpha_{1}}{2}  \right )\left \| x_k-x_{k-1} \right \| ^2 \\
&+\left ( \frac{2L\gamma_{2k}^2}{\lambda }+\frac{\alpha_{2}}{2} \right )\left \| x_{k-1}-x_{k-2} \right \| ^2 +\left (\frac{L(\lambda+1) -\theta_1 }{2}+\frac{\alpha_{1}+\alpha_{2}}{2} \right )\left \| x_{k+1}-x_k \right \|^2.
\endaligned  
\end{equation}
Inequality (1) is the standard inequality $\left \| a-c\right \|^2\le2\left \| a-b\right \|^2+2\left \| b-c\right \|^2$, and (2) use Assumption \ref{Assumption21} (ii) and Assumption \ref{Assumption41}. Analogously, for the updates in $y_k$, we use Lemma \ref{lem22} with $F(\cdot )=H( x_{k+1},\cdot)$, $G(\cdot )=g(\cdot )$, $z=y_{k+1}$, $x= y_k$, $d =\widetilde{\nabla}_y(x_{k+1},v_k)$, $u = y_{k-1}-y_{k}$ and $w = y_{k-2}-y_{k-1}$, we have
\begin{equation}
\label{(3.4)}
\aligned
&H(x_{k+1},y_{k+1})+g(y_{k+1})\\
%\le &H(x_{k+1},y_k)+g(y_{k})+\frac{1}{2L\lambda }\left \| \widetilde{\nabla}_y(x_{k+1},v_k)-\nabla_y H(x_{k+1},y_k) \right \| ^2+\frac{L(\lambda+1) -\theta_2 }{2} \left \| y_{k+1}-y_k \right \|^2 \\
%&+\beta _{1k} \langle y_{k+1}-y_k,y_{k}-y_{k-1}\rangle+\beta _{2k} \langle y_{k+1}-y_k,y_{k-1}-y_{k-2}\rangle\\
%\overset{(1)}{\le } &H(x_{k+1},y_k)+g(y_{k})+\frac{1}{L\lambda }\left \| \widetilde{\nabla}_y(x_{k+1},v_k)-\nabla_y H(x_{k+1},v_k) \right \| ^2 \\
%&+\frac{1}{L\lambda }\left \| \nabla_yH(x_{k+1},v_k)-\nabla_y H(x_{k+1},y_k) \right \| ^2+\frac{L(\lambda+1) -\theta_2 }{2} \left \| y_{k+1}-y_k \right \|^2\\
%&+\beta _{1k} \langle y_{k+1}-y_k,y_{k}-y_{k-1}\rangle+\beta _{2k} \langle y_{k+1}-y_k,y_{k-1}-y_{k-2}\rangle\\
%\overset{(2)}{\le } &H(x_{k+1},y_k)+g(y_{k})+\frac{1}{L\lambda }\left \| \widetilde{\nabla}_y(x_{k+1},v_k)-\nabla_y H(x_{k+1},v_k) \right \| ^2+\frac{L_y^2}{L\lambda }\left \| v_k-y_k \right \| ^2 \\
%&+\frac{L(\lambda+1) -\theta_2 }{2} \left \| y_{k+1}-y_k \right \|^2+\beta _{1k} \langle y_{k+1}-y_k,y_{k}-y_{k-1}\rangle+\beta _{2k} \langle y_{k+1}-y_k,y_{k-1}-y_{k-2}\rangle\\
\le &H(x_{k+1},y_k)+g(y_{k})+\frac{1}{L\lambda }\left \| \widetilde{\nabla}_y(x_{k+1},v_k)-\nabla_y H(x_{k+1},v_k) \right \| ^2+\left (\frac{2L\mu_{1k}^2}{\lambda }+\frac{\alpha_{1}}{2}  \right )\left \| y_k-y_{k-1} \right \| ^2 \\
&+\left ( \frac{2L\mu_{2k}^2}{\lambda }+\frac{\alpha_{2}}{2} \right )\left \| y_{k-1}-y_{k-2} \right \| ^2+\left (\frac{L(\lambda+1) -\theta_2 }{2} +\frac{\alpha_{1}+\alpha_{2}}{2} \right )\left \| y_{k+1}-y_k \right \|^2.
\endaligned  
\end{equation}
%Inequality (1) is the standard inequality $\left \| a-c\right \|^2\le2\left \| a-b\right \|^2+2\left \| b-c\right \|^2$, and (2) use Assumption \ref{Assumption21} (iii). 
Adding \eqref{(3.3)} and \eqref{(3.4)}, we have\\
\begin{equation*}
%\label{(3.4)}
\aligned
&\Phi (x_{k+1},y_{k+1})\\
\le &\Phi (x_{k},y_k)+\frac{1}{L\lambda }\left ( \left \| \widetilde{\nabla}_x(u_k,y_k)-\nabla_x H(u_k,y_k) \right \| ^2 +\left \| \widetilde{\nabla}_y(x_{k+1},v_k)-\nabla_y H(x_{k+1},v_k) \right \| ^2  \right )\\
&+\left ( \frac{L(\lambda+1) -\theta}{2}+\frac{\alpha_1+\alpha_2}{2}  \right )\|z_{k+1}-z_k\|^2+\left ( \frac{2L\gamma_{1}^2}{\lambda }+\frac{\alpha_1}{2}\right )\|z_{k}-z_{k-1}\|^2\\
&+\left ( \frac{2L\gamma_{2}^2}{\lambda }+\frac{\alpha_2}{2}\right )\|z_{k-1}-z_{k-2}\|^2,
\endaligned 
\end{equation*}
where $\theta=\min\left \{ \theta_1,\theta_2 \right \}$. Applying the conditional expectation operator $\mathbb{E} _k$, we can bound the MSE terms using \eqref{(2.1)}. This gives
\begin{equation}
\label{(3.5)}
\aligned
&\mathbb{E} _k\left [ \Phi (z_{k+1})+\left ( -\frac{L(\lambda+1) -\theta}{2}-\frac{\alpha_1+\alpha_2}{2}-\frac{V_1}{L\lambda }  \right ) \|z_{k+1}-z_k\|^2\right ] \\
\le& \Phi (z_{k})+ \frac{1}{L\lambda } \Upsilon _k+\left ( \frac{V_1}{L\lambda }+\frac{2L\gamma_{1}^2}{\lambda }+\frac{\alpha_1}{2}  \right )\|z_{k}-z_{k-1}\|^2+\left( \frac{V_1}{L\lambda }+  \frac{2L\gamma_{2}^2}{\lambda }+\frac{\alpha_2}{2}\right )\|z_{k-1}-z_{k-2}\|^2\\
&+\frac{V_1}{L\lambda }\|z_{k-2}-z_{k-3}\|^2.
\endaligned  
\end{equation}
Next, we use \eqref{(2.3)} to say that
\begin{equation*}
\aligned
\frac{1}{L\lambda} \Upsilon _k\le \frac{1}{L\lambda\rho  } &\left ( -\mathbb{E}_k\Upsilon _{k+1}+\Upsilon _{k}+V_\Upsilon \left (\mathbb{E}_k\left \| z_{k+1}-z_{k} \right \| ^{2}+\left \| z_{k}-z_{k-1} \right \| ^{2} \right.\right .\\
&\left. \left. +\left \| z_{k-1}-z_{k-2} \right \| ^{2} +\left \| z_{k-2}-z_{k-3} \right \| ^{2}\right ) \right ).
\endaligned  
\end{equation*}
Combining these inequalities, we have
\begin{equation*}
%\label{(3.4)}
\aligned
&\mathbb{E} _k\left [ \Phi (z_{k+1})+ \frac{1}{L\lambda\rho } \Upsilon _{k+1} +\left ( -\frac{L(\lambda+1) -\theta}{2}-\frac{\alpha_1+\alpha_2}{2} -\frac{V_1+V_\Upsilon /\rho }{L\lambda } \right ) \|z_{k+1}-z_k\|^2\right ] \\
\le& \Phi (z_{k})+ \frac{1}{L\lambda\rho } \Upsilon_k+\left ( \frac{V_1+V_\Upsilon /\rho }{L\lambda }+\frac{2L\gamma_{1}^2}{\lambda }+\frac{\alpha_1}{2}  \right )\|z_{k}-z_{k-1}\|^2\\
&+\left( \frac{V_1+V_\Upsilon /\rho }{L\lambda }+ \frac{2L\gamma_{2}^2}{\lambda }+\frac{\alpha_2}{2}\right )\|z_{k-1}-z_{k-2}\|^2+\frac{V_1+V_\Upsilon /\rho }{L\lambda }\|z_{k-2}-z_{k-3}\|^2.
\endaligned  
\end{equation*}
This is equivalent to
\begin{equation}
\label{(3.6)}
\aligned
&\mathbb{E} _k\left [ \Phi (z_{k+1})+ \frac{1}{L\lambda\rho  } \Upsilon _{k+1}+\left (\frac{V_1+V_\Upsilon /\rho }{L\lambda }+\frac{\alpha_1+\alpha_2}{2}+\frac{2L(\gamma_{1}^2+\gamma_{2}^2)}{\lambda }+3Z  \right ) \|z_{k+1}-z_k\|^2  \right .\\
&\left. +\left ( \frac{V_1+V_\Upsilon /\rho }{L\lambda } +\frac{\alpha_2}{2}+\frac{2L\gamma_{2}^2}{\lambda }+2Z \right ) \|z_{k}-z_{k-1}\|^2+\left ( \frac{V_1+V_\Upsilon /\rho }{L\lambda } +Z \right ) \|z_{k-1}-z_{k-2}\|^2 \right .\\
&\left. +\left ( -\frac{L(\lambda+1) -\theta}{2}- \frac{2(V_1+V_\Upsilon /\rho) }{L\lambda }-\alpha_1-\alpha_2-\frac{2L(\gamma_{1}^2+\gamma_{2}^2)}{\lambda }-3Z\right )\|z_{k+1}-z_k\|^2\right ] \\
\le& \Phi (z_{k})+ \frac{1}{L\lambda\rho  } \Upsilon _k+\left (\frac{V_1+V_\Upsilon /\rho }{L\lambda }+\frac{\alpha_1+\alpha_2}{2}+\frac{2L(\gamma_{1}^2+\gamma_{2}^2)}{\lambda }+3Z  \right )\|z_{k}-z_{k-1}\|^2 \\
&+\left ( \frac{V_1+V_\Upsilon /\rho }{L\lambda }+\frac{\alpha_2}{2}+\frac{2L\gamma_{2}^2}{\lambda }+2Z \right )\|z_{k-1}-z_{k-2}\|^2+\left ( \frac{V_1+V_\Upsilon /\rho }{L\lambda } +Z \right ) \|z_{k-2}-z_{k-3}\|^2\\
&-\left (Z-\frac{V_1+V_\Upsilon /\rho }{L\lambda }\right )\|z_{k}-z_{k-1}\|^2-\left (Z-\frac{V_1+V_\Upsilon /\rho }{L\lambda }\right )\|z_{k-1}-z_{k-2}\|^2-Z\|z_{k-2}-z_{k-3}\|^2.
\endaligned  
\end{equation}
We have 
%We use the choice $Z=\frac{\sqrt{V_1+V_\Upsilon /\rho } }{\sqrt{2} }- \frac{\alpha _1+\alpha _2}{3}$ to simplify later arguments. Setting $\theta \ge L(\lambda +1)+\frac{4(V_1+V_\Upsilon /\rho) }{L\lambda }+2\alpha _1+2\alpha _2+\frac{4L(\gamma_{1}^2+\gamma_{2}^2)}{\lambda }+6Z$, setting $\lambda =\frac{\sqrt{2(V_1+V_\Upsilon /\rho) } }{L }$ to approximately minimize this bound on $\theta$, 
\begin{equation}
\label{(3.7)}
\aligned
&\mathbb{E} _k\left [\Psi _{k+1}+\left ( -\frac{L(\lambda+1) -\theta}{2}- \frac{2(V_1+V_\Upsilon /\rho) }{L\lambda }-\alpha_1-\alpha_2-\frac{2L(\gamma_{1}^2+\gamma_{2}^2)}{\lambda }-3Z\right )\|z_{k+1}-z_k\|^2\right ] \\
\le& \Psi _k-\left (Z-\frac{V_1+V_\Upsilon /\rho }{L\lambda }\right )\|z_{k}-z_{k-1}\|^2-\left (Z-\frac{V_1+V_\Upsilon /\rho }{L\lambda }\right )\|z_{k-1}-z_{k-2}\|^2-Z\|z_{k-2}-z_{k-3}\|^2.
\endaligned  
\end{equation}
By $\lambda= \sqrt{\frac{10(V_1+V_\Upsilon /\rho)+4L^2(\gamma_{1}^2+\gamma_{2}^2)}{L^2}}$, we have $-\frac{L(\lambda+1) -\theta}{2}- \frac{2(V_1+V_\Upsilon /\rho) }{L\lambda }-\alpha_1-\alpha_2-\frac{2L(\gamma_{1}^2+\gamma_{2}^2)}{\lambda }-3Z=-\frac{L -\theta}{2}-\alpha_1-\alpha_2-\sqrt{10(V_1+V_\Upsilon /\rho)+4L^2(\gamma_{1}^2+\gamma_{2}^2)}-3\epsilon=\kappa$. Hence \eqref{(3.7)} becomes 
%which implies that
\begin{equation}
\label{(3.07)}
\mathbb{E}_k\left [\Psi _{k+1} +\kappa \left \| z_{k+1}-z_{k} \right \|^2+\epsilon \left \| z_{k}-z_{k-1} \right \|^2+ \epsilon \left \| z_{k-1}-z_{k-2} \right \|^2 + Z \left \| z_{k-2}-z_{k-3} \right \|^2\right ] \le \Psi _k. 
\end{equation}
According to $\theta> L+2\alpha _1+2\alpha _2+2\sqrt{10(V_1+V_\Upsilon /\rho)+4L^2(\gamma_{1}^2+\gamma_{2}^2)}+6\epsilon$, we have $\kappa>0$. So we prove the first claim.
%where $\kappa=-\frac{L(\lambda+1) -\theta}{2}- \frac{2(V_1+V_\Upsilon /\rho) }{L\lambda }-\alpha_1-\alpha_2-\frac{2L(\gamma_{1}^2+\gamma_{2}^2)}{\lambda }-3Z$, $Z_1=Z-\frac{V_1+V_\Upsilon /\rho }{L\lambda }$. For any constant $\theta >L(\lambda +1)+\frac{4(V_1+V_\Upsilon /\rho) }{L\lambda }+2\alpha _1+2\alpha _2+\frac{4L(\gamma_{1}^2+\gamma_{2}^2)}{\lambda }+6Z$, $Z>\frac{V_1+V_\Upsilon /\rho }{L\lambda }$, we have $Z=\frac{V_1+V_\Upsilon /\rho }{L\lambda }+\varepsilon>0$, $\lambda= \sqrt{\frac{10(V_1+V_\Upsilon /\rho)+4L^2(\gamma_{1}^2+\gamma_{2}^2)}{L^2}}>0$, $Z_1=\varepsilon>0$ and $\kappa=-\frac{L -\theta}{2}-\alpha_1-\alpha_2-\sqrt{10(V_1+V_\Upsilon /\rho)+4L^2(\gamma_{1}^2+\gamma_{2}^2)}-3\varepsilon>0$. So we prove the first claim that $\Psi _k$ is decreasing in expectation.
\par(ii) We apply the full expectation operator to \eqref{(3.07)} and sum the resulting inequality from $k=0$ to $k=T-1$,
\begin{equation*}
%\label{(3.7)}
\aligned
&\mathbb{E}\Psi _{T}+\kappa\sum_{k=0}^{T-1}  \mathbb{E}\left \| z_{k+1}-z_{k} \right \|^2+\epsilon\sum_{k=0}^{T-1}\mathbb{E} \left \| z_{k}-z_{k-1} \right \|^2+ \epsilon \sum_{k=0}^{T-1}\mathbb{E}\left \| z_{k-1}-z_{k-2} \right \|^2\\
&+ Z \sum_{k=0}^{T-1}\mathbb{E}\left \| z_{k-2}-z_{k-3} \right \|^2\\
\le& \Psi _0, 
\endaligned  
\end{equation*}
Using the facts that $\underline{\Phi } \le \Psi_T$,
%and \gamma k is nonincreasing,
\begin{equation}
\label{(3.8)}
\aligned
&\kappa\sum_{k=0}^{T-1}  \mathbb{E}\left \| z_{k+1}-z_{k} \right \|^2+\epsilon\sum_{k=0}^{T-1}\mathbb{E} \left \| z_{k}-z_{k-1} \right \|^2+ \epsilon \sum_{k=0}^{T-1}\mathbb{E}\left \| z_{k-1}-z_{k-2} \right \|^2\\
&+ Z \sum_{k=0}^{T-1}\mathbb{E}\left \| z_{k-2}-z_{k-3} \right \|^2\\
\le& \Psi _0-\underline{\Phi }.
\endaligned  
\end{equation}
Taking the limit $T \rightarrow +\infty$, we have the sequence $\left \{ \mathbb{E}\left \| z_{k+1}-z_{k} \right \|^2 \right \}$ is summable.
\end{proof}
\end{lemma}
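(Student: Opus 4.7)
The plan is to construct the Lyapunov functional $\Psi_k$ by running the sufficient-decrease inequality of Lemma 2.3 once for the $x$-update and once for the $y$-update, then absorbing the gradient-estimation error via the variance-reduction properties in Definition 2.5.

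For the $x$-block, I would apply Lemma 2.3 with $F(\cdot)=H(\cdot,y_k)$, $G=f$, $z=x_{k+1}$, $x=x_k$, $d=\widetilde{\nabla}_x(u_k,y_k)$, $\gamma=\alpha_{1k}$, $\mu=\alpha_{2k}$, $u=x_{k-1}-x_k$, $w=x_{k-2}-x_{k-1}$, choosing the free parameter $\lambda$ to match the quantity appearing later in the definition of $\Psi_k$. The delicate point is that Lemma 2.3 produces the error $\|\widetilde{\nabla}_x(u_k,y_k)-\nabla_x H(x_k,y_k)\|^2$, whereas the MSE bound (2.1) controls the error at $u_k$, not at $x_k$. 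I would therefore split via $\|a-c\|^2\le 2\|a-b\|^2+2\|b-c\|^2$, inserting $\nabla_x H(u_k,y_k)$ as an intermediate, and then invoke Assumption 4.1 to bound $\|\nabla_x H(u_k,y_k)-\nabla_x H(x_k,y_k)\|^2\le L^2\|u_k-x_k\|^2\le 2L^2(\gamma_{1k}^2\|x_k-x_{k-1}\|^2+\gamma_{2k}^2\|x_{k-1}-x_{k-2}\|^2)$ using the definition of $u_k$. The two inertial inner products are handled by a straightforward Young's inequality. Mirroring the argument for the $y$-update with $v_k$ and summing produces a one-step recursion on $\Phi(z_{k+1})$ that already exposes four squared-displacement terms with explicit coefficients depending on $L,\lambda,\alpha_1,\alpha_2,\gamma_1,\gamma_2$.

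Next I would take the conditional expectation $\mathbb{E}_k$ and apply the MSE bound (2.1) to bound the combined variance term by $\Upsilon_k$ plus a linear combination of four recent displacements. To turn the appearance of $\Upsilon_k$ into a telescoping quantity I would invoke the geometric decay inequality (2.3): writing $\Upsilon_k\le\rho^{-1}(\Upsilon_k-\mathbb{E}_k\Upsilon_{k+1})+V_\Upsilon\rho^{-1}(\cdots)$ introduces $\Upsilon_{k+1}$ on the left-hand side (with the coefficient $\frac{1}{L\lambda\rho}$ that appears in $\Psi_k$) while feeding the extra displacement terms into the coefficients of the three historical steps. Then I would add and subtract the "shifted" coefficients $3Z\|z_{k+1}-z_k\|^2+2Z\|z_k-z_{k-1}\|^2+Z\|z_{k-1}-z_{k-2}\|^2$ so that the right-hand side is exactly $\Psi_k$; the leftover contribution on the left-hand side becomes $\kappa\|z_{k+1}-z_k\|^2$ together with $\epsilon$-gap terms $\epsilon\|z_k-z_{k-1}\|^2+\epsilon\|z_{k-1}-z_{k-2}\|^2+Z\|z_{k-2}-z_{k-3}\|^2$. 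The specific choice $\lambda=\sqrt{(10(V_1+V_\Upsilon/\rho)+4L^2(\gamma_1^2+\gamma_2^2))/L^2}$ together with the chosen $Z$ is tuned precisely so that $\frac{2(V_1+V_\Upsilon/\rho)}{L\lambda}+\frac{2L(\gamma_1^2+\gamma_2^2)}{\lambda}+3Z=\frac{L\lambda}{2}+3\epsilon$, which is the identity that collapses the expression into the compact form of $\kappa$, and the assumed lower bound on $\theta$ then guarantees $\kappa>0$.

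For (ii), I would take the full expectation of (3.2), drop the nonnegative $\epsilon$- and $Z$-terms on the left if convenient, and telescope from $k=0$ to $k=T-1$. Because every summand in $\Psi_k$ other than $\Phi(z_k)$ is nonnegative (each $\Upsilon_k$ is a sum of squares, and each coefficient in front of the displacement terms is positive under the chosen parameters), we have $\mathbb{E}\Psi_T\ge\underline{\Phi}$, yielding $\kappa\sum_{k=0}^{T-1}\mathbb{E}\|z_{k+1}-z_k\|^2\le\Psi_0-\underline{\Phi}$, from which letting $T\to\infty$ gives square-summability. The main obstacle, and the one requiring the most bookkeeping, is the algebraic balancing in the previous paragraph: one must simultaneously ensure that after inserting $\Upsilon_{k+1}$ via (2.3) and matching the coefficients of the three historical displacements, the remaining coefficient of $\|z_{k+1}-z_k\|^2$ stays strictly positive, which forces the precise choice of $\lambda$ and pushes the requirement on $\theta$ into the form stated in the hypothesis.
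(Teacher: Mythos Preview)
Your proposal is correct and follows essentially the same route as the paper: apply Lemma~2.3 to each block, split the gradient error through $\nabla_x H(u_k,y_k)$ (resp.\ $\nabla_y H(x_{k+1},v_k)$), take $\mathbb{E}_k$ and invoke the MSE bound (2.1), then use the geometric-decay inequality (2.3) to trade $\Upsilon_k$ for $\Upsilon_{k+1}$ and telescope. Your identification of the key algebraic identity $\frac{2(V_1+V_\Upsilon/\rho)}{L\lambda}+\frac{2L(\gamma_1^2+\gamma_2^2)}{\lambda}+3Z=\frac{L\lambda}{2}+3\epsilon$ (which the paper uses implicitly when collapsing the coefficient of $\|z_{k+1}-z_k\|^2$ to $\kappa$) is exactly the bookkeeping step that makes the Lyapunov recursion close, and part~(ii) via telescoping and $\mathbb{E}\Psi_T\ge\underline{\Phi}$ matches the paper as well.
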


\par The next lemma establishes a bound on the norm of the subgradients of $\Phi(z_k)$.

\begin{lemma}{\rm (subgradient bound)}
\label{lem32}
Suppose Assumption \ref{Assumption21} and \ref{Assumption41} hold. Let $\{z_k\}_{k\in \mathbb{N}}$ be a bounded sequence, which is generated by Algorithm \ref{alg1} with variance-reduced gradient estimator. For $k\geq 0$, define
%Let the sequence $\{z_k\}_{k\in \mathbb{N}}$ be generated by Algorithm \ref{alg1}. For $k\geq 0$, define
\begin{equation*}
\aligned
A_{x}^{k} =&\nabla _xH(x_{k},y_{k})-\widetilde{\nabla} _x(u_{k-1},y_{k-1})+\nabla\phi_1(x_{k-1})- \nabla\phi_1(x_{k})+\alpha_{1,k-1}(x_{k-1}-x_{k-2})\\
&+\alpha_{2,k-1}(x_{k-2}-x_{k-3}),\\
A_{y}^{k} =&\nabla _yH(x_{k},y_{k})-\widetilde{\nabla} _y(x_{k},v_{k-1})+\nabla\phi_2(y_{k-1})- \nabla\phi_2(y_{k})+\beta_{1,k-1}(y_{k-1}-y_{k-2})\\
&+\beta_{2,k-1}(y_{k-2}-y_{k-3}).
\endaligned
\end{equation*}
Then $(A_{x}^{k},A_{y}^{k} )\in \partial \Phi(x_k,y_k)$ and
\begin{equation}
\label{(3.9)}
\aligned
&\mathbb{E}_{k-1}\left \| (A_{x}^{k},A_{y}^{k} ) \right \|\\
\le& p\left (\mathbb{E}_{k-1}\left \| z_{k}-z_{k-1} \right \|+\left \| z_{k-1}-z_{k-2} \right \| +\left \| z_{k-2}-z_{k-3} \right \|+\left \| z_{k-3}-z_{k-4} \right \|\right )+\Gamma_{k-1},
\endaligned
\end{equation}
where $p=2(2N+\eta+N\gamma_1+N\gamma_2+\alpha _{1}+\alpha _{2})+V_2$, $N=\max\left \{ M,L \right \} $, $\eta =\max\left \{ \eta _1,\eta _2 \right \} $.
\begin{proof}
%The fact that $(A_{x}^{k},A_{y}^{k} )\in \partial \Phi (x_k,y_k)$ is clear from 
By the definition of $x_{k}$, we have that $0$ must lie in the subdifferential at point $x_{k}$ of the function
 $$x\longmapsto f(x)+\langle x,\widetilde{\nabla}_x(u_{k-1},y_{k-1})\rangle+D_{\phi_1}(x,x_{k-1})+\alpha_{1,k-1} \langle x,x_{k-2}-x_{k-1}\rangle+\alpha_{2,k-1} \langle x,x_{k-3}-x_{k-2}\rangle.$$
 Since $\phi$ are differential, we have
$$0\in \partial f(x_{k})+\widetilde{\nabla}_x(u_{k-1},y_{k-1})+\nabla\phi_1(x_{k})- \nabla\phi_1(x_{k-1})+\alpha_{1,k-1}(x_{k-2}-x_{k-1})+\alpha_{2,k-1}(x_{k-3}-x_{k-2}),$$
which implies that
%$$-\widetilde{\nabla}_x(x_{k},y_k)+\nabla\phi_1(x_k)-\nabla\phi_1(x_{k+1}) +\alpha_{1,k-1}(x_{k-1}-x_{k-2})+\alpha_{2,k-1}(x_{k-2}-x_{k-3})\in \partial f(x_{k}).$$
\begin{equation}
\label{(3.10)}
\aligned
&\nabla _xH(x_{k},y_{k})-\widetilde{\nabla}_x(u_{k-1},y_{k-1})+\nabla\phi_1(x_{k-1})-\nabla\phi_1(x_{k}) \\
&+\alpha_{1,k-1}(x_{k-1}-x_{k-2})+\alpha_{2,k-1}(x_{k-2}-x_{k-3})\\
&\in  \nabla _xH(x_{k},y_{k})+\partial f(x_{k}).
\endaligned  
\end{equation}
Similarly, we have
\begin{equation}
\label{(3.11)}
\aligned
&\nabla _yH(x_{k},y_{k})-\widetilde{\nabla} _y(x_{k},v_{k-1})+\nabla\phi_2(y_{k-1})- \nabla\phi_2(y_{k})\\
&+\beta_{1,k-1}(y_{k-1}-y_{k-2})+\beta_{2,k-1}(y_{k-2}-y_{k-3})\\
&\in  \nabla _yH(x_{k},y_{k})+\partial g(y_{k}).
\endaligned  
\end{equation}
Because of the structure of $\Phi$, from \eqref{(3.10)} and \eqref{(3.11)}, we have
$(A_{x}^{k},A_{y}^{k} )\in \partial \Phi(x_k,y_k).$
 All that remains is to bound the norms of $A_{x}^{k}$ and $A_{y}^{k}$. Because $\nabla H$ is $M$-Lipschitz continuous on bounded sets, then from Assumption \ref{Assumption21} (iii) and (iv), we have\\
\begin{equation}
\label{(3.12)}
\aligned
&\left \| A_{x}^{k} \right \|\\
\le& \left \| \nabla _xH(x_{k},y_{k})-\widetilde{\nabla}_x(u_{k-1},y_{k-1}) \right \|+\left \| \nabla\phi_1(x_{k-1})-\nabla\phi_1(x_{k})\right \| \\
&+\alpha_{1,k-1}\left \| x_{k-1}-x_{k-2}\right \| +\alpha_{2,k-1}\left \| x_{k-2}-x_{k-3}\right \|  \\
\le&\left \| \nabla _xH(x_{k},y_{k})-\nabla _xH(u_{k-1},y_{k-1}) \right \|+\left \| \nabla _xH(u_{k-1},y_{k-1})-\widetilde{\nabla}_x(u_{k-1},y_{k-1}) \right \| \\
&+\eta _1\left \| x_{k-1}-x_{k}\right \|+\alpha_{1,k-1}\left \| x_{k-1}-x_{k-2}\right \| +\alpha_{2,k-1}\left \| x_{k-2}-x_{k-3}\right \|  \\
\le&\left \| \nabla _xH(u_{k-1},y_{k-1})-\widetilde{\nabla}_x(u_{k-1},y_{k-1}) \right \|+M\left \| x_{k}-u_{k-1}\right \|+M\left \| y_{k}-y_{k-1}\right \|\\
&+\eta _1\left \| x_{k-1}-x_{k}\right \|+\alpha_{1,k-1}\left \| x_{k-1}-x_{k-2}\right \| +\alpha_{2,k-1}\left \| x_{k-2}-x_{k-3}\right \|\\
\le&\left \| \nabla _xH(u_{k-1},y_{k-1})-\widetilde{\nabla}_x(u_{k-1},y_{k-1}) \right \|+(M+\eta _1)\left \| x_{k}-x_{k-1}\right \|+M\left \| y_{k}-y_{k-1}\right \|\\
&(M\gamma_{1}+\alpha_{1})\left \| x_{k-1}-x_{k-2}\right \| +(M\gamma_{2}+\alpha_{2})\left \| x_{k-2}-x_{k-3}\right \|.
\endaligned  
\end{equation}
A similar argument holds for $A_{y}^{k}$: 
\begin{equation}
\label{(3.13)}
\aligned
&\left \| A_{y}^{k} \right \|\\
%\le& \left \| \nabla _yH(x_{k},y_{k})-\widetilde{\nabla}_y(x_{k},v_{k-1}) \right \|+\left \| \nabla\phi_2(y_{k-1})-\nabla\phi_2(y_{k})\right \| \\
%&+\beta_{1,k-1}\left \| y_{k-1}-y_{k-2}\right \| +\beta_{2,k-1}\left \| y_{k-2}-x_{y-3}\right \|  \\
\le& \left \| \nabla _yH(x_{k},y_{k})-\nabla _yH(x_{k},v_{k-1}) \right \|+\left \| \nabla _yH(x_{k},v_{k-1})-\widetilde{\nabla}_y(x_{k},v_{k-1}) \right \| \\
&+\eta _2\left \| y_{k-1}-y_{k}\right \|+\beta_{1,k-1}\left \| y_{k-1}-y_{k-2}\right \| +\beta_{2,k-1}\left \| y_{k-2}-y_{k-3}\right \|  \\
%\le&\left \| \nabla _yH(x_{k},v_{k-1})-\widetilde{\nabla}_y(x_{k},v_{k-1}) \right \|+L\left \| y_{k}-v_{k-1}\right \|+\eta _2\left \| y_{k}-y_{k-1}\right \|\\
%&+\beta_{1,k-1}\left \| y_{k-1}-y_{k-2}\right \| +\beta_{2,k-1}\left \| y_{k-2}-x_{y-3}\right \|\\
\le&\left \| \nabla _yH(x_{k},v_{k-1})-\widetilde{\nabla}_y(x_{k},v_{k-1}) \right \|+(L+\eta _2)\left \| y_{k}-y_{k-1}\right \|\\
&(L\gamma_{1}+\alpha_{1})\left \| y_{k-1}-y_{k-2}\right \| +(L\gamma_{2}+\alpha_{2})\left \| y_{k-2}-y_{k-3}\right \|.
\endaligned  
\end{equation}
Adding \eqref{(3.12)} and \eqref{(3.13)}, we get
\begin{equation*}
%\label{(3.10)}
\aligned
&\left \| A_{x}^{k}\right \|+\left \| A_{y}^{k}\right \| \\
%\le&\left \| \nabla _xH(u_{k-1},y_{k-1})-\widetilde{\nabla}_x(u_{k-1},y_{k-1}) \right \|+\left \| \nabla _yH(x_{k},v_{k-1})-\widetilde{\nabla}_y(x_{k},v_{k-1}) \right \|\\
%&+(M+\eta _1)\left \| x_{k}-x_{k-1}\right \|+(M+L+\eta _2)\left \| y_{k}-y_{k-1}\right \|\\
%&+2(N\gamma_1+\alpha _{1})\left \| z_{k-1}-z_{k-2}\right \| +2(N\gamma_2+\alpha _{2})\left \| z_{k-2}-z_{k-3}\right \|\\
\le&\left \| \nabla _xH(u_{k-1},y_{k-1})-\widetilde{\nabla}_x(u_{k-1},y_{k-1}) \right \|+\left \| \nabla _yH(x_{k},v_{k-1})-\widetilde{\nabla}_y(x_{k},v_{k-1}) \right \|\\
&+2( 2N+\eta)\left \| z_{k}-z_{k-1}\right \|+2(N\gamma_1+\alpha _{1})\left \| z_{k-1}-z_{k-2}\right \| +2(N\gamma_2+\alpha _{2})\left \| z_{k-2}-z_{k-3}\right \|,
\endaligned  
\end{equation*}
where $N=\max\left \{ M,L \right \} $, $\eta =\max\left \{ \eta _1,\eta _2 \right \} $. Applying the conditional expectation operator and using \eqref{(2.2)} to bound the MSE terms, we can obtain
\begin{equation*}
%\label{(3.10)}
\aligned
&\mathbb{E}_{k-1}\left \| (A_{x}^{k},A_{y}^{k} ) \right \|\le \mathbb{E}_{k-1}\left [ \left \| A_{x}^{k}\right \|+\left \| A_{y}^{k}\right \| \right ] \\
%\le&\mathbb{E} _{k-1}\left \| \nabla _xH(u_{k-1},y_{k-1})-\widetilde{\nabla}_x(u_{k-1},y_{k-1}) \right \|+\mathbb{E} _{k-1}\left \| \nabla _yH(x_{k},v_{k-1})-\widetilde{\nabla}_y(x_{k},v_{k-1}) \right \|\\
%&+2( 2N+\eta)\mathbb{E} _{k-1}\left \| z_{k}-z_{k-1}\right \| +2(N\gamma_1+\alpha _{1})\left \| z_{k-1}-z_{k-2}\right \| +2(N\gamma_2+\alpha _{2})\left \| z_{k-2}-z_{k-3}\right \|\\
\le& (4N+2\eta+V_2)\mathbb{E} _{k-1}\left \| z_{k}-z_{k-1}\right \| +(2N\gamma_1+2\alpha _{1}+V_2)\left \| z_{k-1}-z_{k-2}\right \| \\
&+(2N\gamma_2+2\alpha _{2}+V_2)\left \| z_{k-2}-z_{k-3}\right \|+V_2\left \| z_{k-3}-z_{k-4}\right \|+\Gamma_{k-1}\\
\le& p\left ( \mathbb{E}_{k-1}\left \| z_{k}-z_{k-1} \right \|+\left \| z_{k-1}-z_{k-2} \right \| +\left \| z_{k-2}-z_{k-3} \right \|+\left \| z_{k-3}-z_{k-4}\right \|\right )+\Gamma_{k-1},\\
\endaligned  
\end{equation*}
where $p=2(2N+\eta+N\gamma_1+N\gamma_2+\alpha _{1}+\alpha _{2})+V_2$.
\end{proof}
\end{lemma}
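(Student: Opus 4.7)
The plan is to split the proof into two logical pieces: first establishing the inclusion $(A_x^k, A_y^k) \in \partial \Phi(x_k, y_k)$, and then bounding the expected norm.

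First, I would write down the first-order optimality condition for the $x$-subproblem at iteration $k-1$. Since $\phi_1$ is differentiable, the subdifferential calculus gives
\[
0 \in \partial f(x_k) + \widetilde{\nabla}_x(u_{k-1}, y_{k-1}) + \nabla\phi_1(x_k) - \nabla\phi_1(x_{k-1}) + \alpha_{1,k-1}(x_{k-2}-x_{k-1}) + \alpha_{2,k-1}(x_{k-3}-x_{k-2}).
\]
Rearranging and adding $\nabla_x H(x_k, y_k)$ to both sides, I get exactly $A_x^k \in \nabla_x H(x_k, y_k) + \partial f(x_k)$. The analogous optimality condition for the $y$-subproblem yields $A_y^k \in \nabla_y H(x_k, y_k) + \partial g(y_k)$. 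Since $\Phi$ is the sum of $f$, $g$, and the smooth coupling $H$, the block structure of $\partial \Phi$ yields $(A_x^k, A_y^k) \in \partial \Phi(x_k, y_k)$.

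Next I would bound $\|A_x^k\|$ via the triangle inequality, grouping terms as follows: (a) the stochastic error $\|\nabla_x H(u_{k-1}, y_{k-1}) - \widetilde{\nabla}_x(u_{k-1}, y_{k-1})\|$, which I will leave as a raw MSE term for now; (b) the gradient change $\|\nabla_x H(x_k, y_k) - \nabla_x H(u_{k-1}, y_{k-1})\|$, which I bound by $M\|x_k - u_{k-1}\| + M\|y_k - y_{k-1}\|$ using Assumption \ref{Assumption21}(iii) together with Assumption \ref{Assumption41}; (c) the Bregman term $\|\nabla\phi_1(x_k) - \nabla\phi_1(x_{k-1})\| \le \eta_1 \|x_k - x_{k-1}\|$; and (d) the two inertial displacements. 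The key calculation is expanding $u_{k-1} = x_{k-1} + \gamma_{1,k-1}(x_{k-1}-x_{k-2}) + \gamma_{2,k-1}(x_{k-2}-x_{k-3})$ so that $\|x_k - u_{k-1}\| \le \|x_k - x_{k-1}\| + \gamma_1\|x_{k-1}-x_{k-2}\| + \gamma_2\|x_{k-2}-x_{k-3}\|$. Collecting coefficients and using $N = \max\{M, L\}$ gives a clean bound in terms of four consecutive differences. The symmetric computation for $\|A_y^k\|$ proceeds identically, using the fact that $v_{k-1}$ unfolds into three $y$-differences.

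Finally I would sum the two bounds, apply the conditional expectation $\mathbb{E}_{k-1}$, and control the two residual stochastic terms via the MSE bound \eqref{(2.2)} in Definition \ref{Def211}. The right-hand side of \eqref{(2.2)} already has exactly the right shape, contributing $\Gamma_{k-1}$ plus $V_2$ times a sum of four consecutive $\|z_j - z_{j-1}\|$ differences, so the final coefficient $p$ emerges by simply collecting everything. The only delicate point, and the step I would double-check, is the bookkeeping across the index shift (the subgradient at step $k$ uses gradient estimators from step $k-1$, so the bound in the statement includes a $\|z_{k-3}-z_{k-4}\|$ term coming from the $V_2$ contribution on $\|z_{k-2}-z_{k-3}\|$ of the estimator); everything else is careful but routine triangle-inequality accounting. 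No single step is a serious obstacle — the whole proof is essentially an exercise in applying the right Lipschitz bound to each term produced by the optimality condition.
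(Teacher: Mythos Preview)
Your proposal is correct and follows essentially the same route as the paper: derive the optimality conditions of the two subproblems to obtain $A_x^k\in\nabla_xH(x_k,y_k)+\partial f(x_k)$ and $A_y^k\in\nabla_yH(x_k,y_k)+\partial g(y_k)$, then bound each $\|A_\cdot^k\|$ by the triangle inequality, the $M$-Lipschitz property of $\nabla H$, the $\eta_i$-Lipschitz property of $\nabla\phi_i$, and the expansion of $u_{k-1},v_{k-1}$, and finally apply the MSE bound \eqref{(2.2)} at index $k-1$ to collect the coefficient $p$. The bookkeeping you flag on the index shift is handled exactly as you describe, and there is no missing idea.
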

Define the set of limit points of $\left \{ z_k\right \}_{k\in \mathbb{N} }$ as
$$\Omega:=\{\hat{ z}:  {\rm \ there\ exists\ a\ subsequence \ }\left \{ z_{k_l}\right \} {\rm \ of}\  \left \{ z_{k}\right \} {\rm \ such\ that\ } z_{k_l}\to\hat{ z} {\rm \ as}\ l\to \infty \}.$$
The following lemma describes properties of $\Omega$.
\begin{lemma}{\rm (limit points of $\left \{ z_k\right \}_{k\in \mathbb{N} }$)}
\label{lem33}
Suppose Assumption \ref{Assumption21} and \ref{Assumption41} hold. %Let $\left \{ z_k\right \}_{k\in \mathbb{N} }$ be a bounded sequence of iterates of Algorithm \ref{alg1} using a variance-reduced gradient estimator and the step-sizes of Algorithm \ref{alg1} satisfy the following conditions:
Let $\{z_k\}_{k\in \mathbb{N}}$ be a bounded sequence, which is generated by Algorithm \ref{alg1} with variance-reduced gradient estimator, let
$$\theta > L+2\alpha _1+2\alpha _2+2\sqrt{10(V_1+V_\Upsilon /\rho)+4L^2(\gamma_{1}^2+\gamma_{2}^2)}+6\epsilon.$$
where $\epsilon>0$ is small enough. Then

{\rm(1)} $\sum_{k=1}^{\infty }  \left \| z_{k}-z_{k-1} \right \|^2<\infty $ a.s., and $\left \| z_{k}-z_{k-1} \right \| \rightarrow 0$ a.s.;

{\rm(2)} $\mathbb{E} \Phi (z_k)\to \Phi ^\ast$, where $\Phi ^\ast \in [\underline{\Phi},\infty )$;

{\rm(3)} $\mathbb{E} {\rm dist}(0,\partial \Phi (z_k)) \to 0$;

{\rm(4)} the set $\Omega$ is nonempty, and for all $z^\ast \in \Omega$, $\mathbb{E} {\rm dist}(0,\partial \Phi (z^\ast)) = 0$;

{\rm(5)} ${\rm dist}(z_k ,\Omega)\to 0$ a.s.;

{\rm(6)} $\Omega$ is a.s. compact and connected;

{\rm(7)} $\mathbb{E} \Phi (z^\ast)= \Phi ^\ast$ for all $z^\ast \in \Omega$.
\begin{proof}
By Lemma \ref{lem31}, we have claim (1) holds.
%and the following \eqref{(3.2)} hold, i.e.,
%take $Z=\frac{V_1+V_\Upsilon /\rho }{L\lambda }+\varepsilon$, $\lambda=\sqrt{\frac{10(V_1+V_\Upsilon /\rho)+4L^2(\gamma_{1}^2+\gamma_{2}^2)}{L^2}}$. From Corollary \ref{cor41}, we have that \eqref{(3.2)} holds, i.e.,
%$$\mathbb{E}_k\left [ \Psi _{k+1}+\kappa \left\| z_{k+1}-z_{k} \right \|^2 + Z_1 \left \| z_{k}-z_{k-1} \right \|^2+ Z_1 \left \| z_{k-1}-z_{k-2} \right \|^2+ Z \left \| z_{k-2}-z_{k-3} \right \|^2\right ]\le \Psi _{k},$$
%where $\kappa=-\frac{L -\theta}{2}-\alpha_1-\alpha_2-\sqrt{10(V_1+V_\Upsilon /\rho)+4L^2(\gamma_{1}^2+\gamma_{2}^2)}-3\varepsilon$, $Z_1=\varepsilon$.
%The supermartingale convergence theorem implies that $\sum_{k=1}^{\infty } \left \| z_{k}-z_{k-1} \right \|^2 <+\infty,$ a.s, it follows that $\left \| z_{k}-z_{k-1} \right \| \rightarrow 0$ a.s. This proves claim (1).
%By Corollary \ref{cor41}, we have that \eqref{(3.200)} holds, i.e.,

%$$\mathbb{E}_k\left [ \Psi _{k+1}+\kappa \left\| z_{k+1}-z_{k} \right \|^2 + Z_1 \left \| z_{k}-z_{k-1} \right \|^2+ Z_1 \left \| z_{k-1}-z_{k-2} \right \|^2+ Z \left \| z_{k-2}-z_{k-3} \right \|^2\right ]\le \Psi _{k},$$
%where $\kappa=-\frac{L -\theta}{2}-\alpha_1-\alpha_2-\sqrt{10(V_1+V_\Upsilon /\rho)+4L^2(\gamma_{1}^2+\gamma_{2}^2)}-3\varepsilon>0$, $Z=\frac{V_1+V_\Upsilon /\rho }{L\lambda }+\varepsilon>0$, $Z_1=\varepsilon>0$. 
\par According to \eqref{(3.2)}, the supermartingale convergence theorem ensures $\left \{ \Psi _{k}\right \} $ converges to a finite, positive random variable. Because $\left \| z_{k}-z_{k-1} \right \| \rightarrow 0$ a.s., $\left \| z_{k-1}-z_{k-2} \right \| \rightarrow 0$ a.s., $\left \| z_{k-2}-z_{k-3} \right \| \rightarrow 0$ a.s. and $\widetilde{\nabla}$ is variance-reduced so $\mathbb{E} \Upsilon_k \to 0$,
we can say 
$$\lim_{k \to \infty} \mathbb{E}\Psi _{k}=\lim_{k \to \infty} \mathbb{E}\Phi (z_{k}) \in [\underline{\Phi},\infty ),$$ 
which implys claim (2).
\par Claim (3) holds because, by Lemma \ref{lem32},
\begin{equation*}
\aligned
&\mathbb{E}\left \| (A_{x}^{k},A_{y}^{k} ) \right \|\\
\le& p \mathbb{E}\left(\left \| z_{k}-z_{k-1} \right \|+\left \| z_{k-1}-z_{k-2} \right \| +\left \| z_{k-2}-z_{k-3} \right \|+\left \| z_{k-3}-z_{k-4} \right \|\right )+\mathbb{E}\Gamma_{k-1}.
\endaligned  
\end{equation*}
We have that $\mathbb{E}\left \| z_{k}-z_{k-1} \right \| \rightarrow 0$ and $\mathbb{E} \Gamma_{k-1} \to 0$. This ensures that $\mathbb{E}\left \| (A_{x}^{k},A_{y}^{k} )\right \|\to 0$. Since $(A_{x}^{k},A_{y}^{k} )$ is one element of $\partial \Phi (z_k)$, we obtain $\mathbb{E} {\rm dist}(0,\partial \Phi (z_k))\le\mathbb{E}\left \| (A_{x}^{k},A_{y}^{k} )\right \| \to 0$. 
\par To prove claim (4), suppose $z^\ast =(x^\ast,y^\ast)$ is a limit point of the sequence $\left \{ z_k\right \}_{k\in \mathbb{N} }$ (a limit point must exist because we suppose the sequence $\left \{ z_k\right \}_{k\in \mathbb{N} }$ is bounded). This means there exists a subsequence $\left \{z_{k_j}\right \}$
satisfying $\lim_{j\to \infty} z_{k_j}= z^\ast $. Furthermore, by the variance-reduced property of $\widetilde{\nabla}(u_{k_j-1},y_{k_j-1})$, we have $\mathbb{E} \left \| \widetilde{\nabla}_x(u_{k_j-1},y_{k_j-1})-\nabla_x H(u_{k_j-1},y_{k_j-1}) \right \| ^2\to 0$.
% which implies that
%there exists a subsequence of $\left \{ z_{k_j} \right \} _{j\in \mathbb{N} }$ (call it $\left \{ z_{k_j} \right \} _{j\in \mathbb{M} }$ for some index set $\mathbb{M}\subset \mathbb{N}$) such that $\widetilde{\nabla}_x(u_{k_j},y_{k_j})-\nabla_x H(u_{k_j},y_{k_j}) \to 0$ a.s. 
Because $f$ and $g$ are lower semicontinuous, we have
\begin{equation}
\label{(3.14)}
\aligned
&\liminf_{j\rightarrow\infty}f(x_{k_j})\ge  f(x^\ast ),\\
&\liminf_{j\rightarrow\infty}g(y_{k_j})\ge  g(y^\ast ).
\endaligned  
\end{equation}
By the update rule for $x_{k_j}$,
%\begin{equation*}
%\aligned
%x_{k+1}\in \arg\min_{ x\in \mathbb{R}^n}\{&f(x)+\langle x,\widetilde{\nabla}_x(x_k,y_k)\rangle+D_{\phi_1}(x,x_k)+\alpha_{1k} \langle x,x_{k-1}-x_k\rangle\\
%&+\alpha_{2k} \langle x,x_{k-2}-x_{k-1}\rangle\}.
%\endaligned
%\end{equation*}
letting $x=x^\ast$, we have
\begin{equation*}
\aligned
&f(x_{k_j})+\langle x _{k_j},\widetilde{\nabla}_x(u_{k_j-1},y_{k_j-1})\rangle+D_{\phi_1}(x_{k_j},x_{k_j-1})+\alpha_{1,k_j-1} \langle x_{k_j},x_{k_j-2}-x_{k_j-1}\rangle\\
&+\alpha_{2,k_j-1} \langle x_{k_j},x_{k_j-3}-x_{k_j-2}\rangle\\
\leq& f(x^\ast)+\langle x^\ast,\widetilde{\nabla}_x(u_{k_j-1},y_{k_j-1})\rangle+D_{\phi_1}(x^\ast,x_{k_j-1})+\alpha_{1,k_j-1} \langle x^\ast,x_{k_j-2}-x_{k_j-1}\rangle\\
&+\alpha_{2,k_j-1} \langle x^\ast,x_{k_j-3}-x_{k_j-2}\rangle.
\endaligned
\end{equation*}
Taking the expectation and taking the limit $j \to\infty$,
\begin{equation*}
\aligned
&\limsup_{j\rightarrow\infty}f(x_{k_j})\\
\le&\limsup_{j\rightarrow\infty}f(x^\ast)+\langle x^\ast-x_{k_j},\nabla_xH(u_{k_j-1},y_{k_j-1})\rangle+\langle x^\ast-x_{k_j},\widetilde{\nabla}_x(u_{k_j-1},y_{k_j-1})\\
&-\nabla_xH(u_{k_j-1},y_{k_j-1})\rangle+\phi_1(x^\ast)-\phi_1(x_{k_j})+\left \langle \nabla  \phi_1(x_{k_j-1}),x^\ast-x_{k_j-1} \right \rangle \\
&+\alpha_{1,k_j-1} \langle x^\ast-x_{k_j},x_{k_j-2}-x_{k_j-1}\rangle+\alpha_{2,k_j-1} \langle x^\ast-x_{k_j},x_{k_j-3}-x_{k_j-2}\rangle.
\endaligned
\end{equation*}
The second term on the right goes to zero because $x_{k_j}\to x^\ast$ and $\left \{ \nabla_xH(u_{k_j-1},y_{k_j-1})\right \}$ is bounded.  The
thrid term is zero almost surely because it is bounded above by $\left \| x^\ast-x_{k_j} \right \|^2$, and $\widetilde{\nabla}_x(u_{k_j-1},y_{k_j-1})-\nabla_xH(u_{k_j-1},y_{k_j-1})$ $\to 0$ a.s. Noting that $\phi_1$ is differentiable, so $\limsup_{j\rightarrow\infty}f(x_{k_j})\le  f(x^\ast )$ a.s., which, together with \eqref{(3.14)}, implies that $\lim_{j\rightarrow\infty}f(x_{k_j})= f(x^\ast)$ a.s. Similarly, we have
$\lim_{j\rightarrow\infty}g(y_{k_j})= g(y^\ast)$ a.s., and hence 
\begin{equation}
\label{(3.15)}
\lim_{j\rightarrow\infty}\Phi (x_{k_j},y_{k_j})=\Phi (x^\ast ,y^\ast)\ \ {\rm a.s}.
%\Phi (x_{k_j},y_{k_j})\to\Phi (x^\ast ,y^\ast)\ \ a.s.. 
\end{equation}
Claim (3) ensures that $\mathbb{E} {\rm dist}(0,\partial \Phi (z_k)) \to 0$. Combining \eqref{(3.15)} and the fact that the subdifferential of $\Phi$ is closed, we have $\mathbb{E} {\rm dist}(0,\partial \Phi (z^\ast)) = 0$.
\par Claims (5) and (6) hold for any sequence satisfying $\left \| z_{k}-z_{k-1} \right \| \rightarrow 0$ a.s. (this fact is used in the same context in \cite{JSM,D})
%  (this fact is used in the same context in {\color{blue}[6, Remark 5] and [14, Remark 4.1]).}
\par Finally, we must show that $\Phi$ has constant expectation over $\Omega$. From claim (2), we have $\mathbb{E} \Phi (z_k)\to \Phi ^\ast$, which implies $\mathbb{E} \Phi (z_{k_j})\to \Phi ^\ast$ for every subsequence $\left \{ z_{k_j}\right \}_{j\in \mathbb{N} }$ converging to some $z^\ast \in \Omega$. In the proof of claim (4), we show that $\Phi (z_{k_j})\to \Phi(z ^\ast)$ a.s., so $\mathbb{E} \Phi (z^\ast)= \Phi ^\ast$ for all $z^\ast \in \Omega$.
\end{proof}
\end{lemma}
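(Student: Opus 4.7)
The plan is to process the seven claims in the natural order they are listed, leveraging the supermartingale/descent inequality of Lemma \ref{lem31} and the subgradient bound of Lemma \ref{lem32} as the two engines, plus the supermartingale convergence theorem (Lemma \ref{lem23}).

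First, I would handle claim (1) by applying Lemma \ref{lem23} directly to the descent inequality \eqref{(3.2)}: setting $X_k = \Psi_k$ (plus the lagged increment terms lumped in) and $Y_k$ equal to the sum of nonnegative quantities $\kappa\|z_{k+1}-z_k\|^2$, $\epsilon\|z_k-z_{k-1}\|^2$, $\epsilon\|z_{k-1}-z_{k-2}\|^2$, $Z\|z_{k-2}-z_{k-3}\|^2$, yields $\sum_k Y_k < \infty$ almost surely, and in particular $\sum_k \|z_{k+1}-z_k\|^2 < \infty$ a.s., forcing $\|z_k - z_{k-1}\| \to 0$ a.s. The same application also gives a.s. convergence of the $\Psi_k$-like supermartingale to a finite random variable. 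For claim (2), since $\|z_k - z_{k-1}\| \to 0$ a.s., since $\mathbb{E}\Upsilon_k \to 0$ by the geometric-decay condition (iii) of Definition \ref{Def211} together with claim (1), and since all extra terms in $\Psi_k$ vanish, the a.s. limit of $\Psi_k$ coincides with $\lim_k \Phi(z_k)$; boundedness below of $\Phi$ then forces $\mathbb{E}\Phi(z_k) \to \Phi^* \in [\underline{\Phi},\infty)$. For claim (3), I would invoke Lemma \ref{lem32}: the right-hand side of \eqref{(3.9)} tends to zero in expectation because each of the four increment norms does (by claim (1)) and because $\mathbb{E}\Gamma_k \to 0$ by Definition \ref{Def211}(iii); since $(A_x^k,A_y^k) \in \partial\Phi(z_k)$, we obtain $\mathbb{E}\,\mathrm{dist}(0,\partial\Phi(z_k)) \to 0$.

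Claim (4) is the key step and the main obstacle. Boundedness of $\{z_k\}$ guarantees $\Omega \neq \emptyset$. Fix $z^* = (x^*,y^*) \in \Omega$ and a subsequence $z_{k_j} \to z^*$. Lower semicontinuity of $f$ and $g$ gives $\liminf_j f(x_{k_j}) \ge f(x^*)$ and similarly for $g$. For the reverse $\limsup$ direction, I would test the minimality property defining $x_{k_j}$ against the competitor $x = x^*$, obtaining an inequality in which $f(x_{k_j})$ is bounded above by $f(x^*)$ plus terms of three types: (a) inner products with $\widetilde{\nabla}_x(u_{k_j-1},y_{k_j-1}) - \nabla_xH(u_{k_j-1},y_{k_j-1})$ which vanish by the MSE bound \eqref{(2.1)} and claim (1); (b) Bregman-distance and inertial cross terms that vanish continuously because $\nabla\phi_1$ is continuous and all increments go to zero; (c) an inner product with the true gradient $\nabla_xH$ which vanishes in the limit by boundedness of the gradient on bounded sets. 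This yields $\limsup_j f(x_{k_j}) \le f(x^*)$ a.s., hence $\Phi(z_{k_j}) \to \Phi(z^*)$ a.s. Combined with claim (3) and closedness of $\partial \Phi$ (applied to the a.s.-convergent subsequential selection from $\partial \Phi(z_{k_j})$), we obtain $\mathbb{E}\,\mathrm{dist}(0,\partial\Phi(z^*)) = 0$.

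The remaining claims follow from standard arguments. Claims (5) and (6) are purely topological consequences of $\|z_k - z_{k-1}\| \to 0$ together with boundedness of $\{z_k\}$: any sequence with these properties has a connected, compact set of limit points whose distance from the iterates tends to zero (this is the same argument used in \cite{JSM}). Claim (7) combines claim (2), which says $\mathbb{E}\Phi(z_k) \to \Phi^*$, with the already-proven a.s. continuity $\Phi(z_{k_j}) \to \Phi(z^*)$ for any subsequence converging to $z^* \in \Omega$, yielding $\mathbb{E}\Phi(z^*) = \Phi^*$. The trickiest part of the whole proof is really the passage to the limit in the $x$-subproblem minimality inequality in claim (4): making sure the stochastic gradient error term is handled correctly via the variance-reduced MSE bound and that the Bregman-regularizer terms $D_{\phi_1}$ can be expanded and taken to the limit using the Lipschitz-continuity of $\nabla\phi_1$ in Assumption \ref{Assumption21}(iv).
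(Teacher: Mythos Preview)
Your proposal is correct and follows essentially the same route as the paper: both use the supermartingale inequality \eqref{(3.2)} with Lemma~\ref{lem23} for claim~(1), the vanishing of the auxiliary terms in $\Psi_k$ for claim~(2), the subgradient bound of Lemma~\ref{lem32} for claim~(3), the minimality inequality of the $x$-subproblem tested at $x=x^\ast$ to obtain $\limsup_j f(x_{k_j})\le f(x^\ast)$ in claim~(4), the standard topological fact from \cite{JSM,D} for claims~(5)--(6), and the combination of (2) and (4) for claim~(7). Your write-up is in fact a bit more explicit than the paper's in justifying the a.s.\ summability in claim~(1) via Lemma~\ref{lem23} (the paper merely cites Lemma~\ref{lem31}, whose part~(ii) gives only the summability in expectation).
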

The following lemma is analogous to the uniformized Kurdyka--{\L}ojasiewicz property \cite{JSM}. It is a slight generalization of the K{\L} property showing that $z_k$ eventually enters a region of $\tilde{z} $ for some $\tilde{z} $ satisfying $\Phi (\tilde{z} )= \Phi(z ^\ast)$, and in this region, the K{\L} inequality holds.
\begin{lemma}
\label{lem34}
Assume that the conditions of Lemma \ref{lem33} hold and that $z_k$ is not a critical point of $\Phi$ after a finite number of iterations. Let $\Phi$ be a semialgebraic function with K{\L} exponent $\vartheta$. Then there exist an index $m$ and a desingularizing function $\varphi$ so that the following bound holds:
$$\varphi'(\mathbb{E} [\Phi (z_k)-\Phi_k ^\ast])\mathbb{E}{\rm dist}(0,\partial \Phi (z_k))\geq 1,\ \ \forall k>m,$$
where $\Phi_k ^\ast$ is a nondecreasing sequence converging to $\mathbb{E} \Phi (z^\ast)$ for all $z^\ast \in \Omega$.
\end{lemma}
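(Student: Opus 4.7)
The plan is to adapt the deterministic uniformized Kurdyka--{\L}ojasiewicz machinery of Bolte--Sabach--Teboulle \cite{JSM} to the stochastic setting by reading the geometric information off the compact limit-point set $\Omega$ produced by Lemma \ref{lem33}, and then translating the resulting pointwise KL inequality into an expectation inequality using the convexity of $\varphi'$ that accompanies a semialgebraic desingularizing function.

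First I would collect from Lemma \ref{lem33} parts (4), (6) and (7) that $\Omega$ is nonempty, compact and connected, that $\mathbb{E}\Phi(z^*)=\Phi^*$ for every $z^*\in\Omega$, and that $\mathrm{dist}(z_k,\Omega)\to 0$ almost surely (part (5)) while $\mathbb{E}\Phi(z_k)\to\Phi^*$ (part (2)). Since $\Phi$ is semialgebraic with KL exponent $\vartheta\in[0,1)$, a direct application of the uniformized KL lemma of \cite{JSM} on $\Omega$ supplies constants $\delta,\eta>0$ and a concave function $\varphi(s)=Cs^{1-\vartheta}$ such that
$$\varphi'(\Phi(u)-\Phi^*)\,\mathrm{dist}(0,\partial\Phi(u))\ge 1$$
for every $u$ with $\mathrm{dist}(u,\Omega)<\delta$ and $\Phi^*<\Phi(u)<\Phi^*+\eta$. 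I would then introduce the nondecreasing envelope $\Phi_k^*:=\inf_{j\ge k}\mathbb{E}\Phi(z_j)$, which is nondecreasing by construction and converges to $\Phi^*$ by Lemma \ref{lem33}(2); the hypothesis that no $z_k$ is critical after finitely many steps and Lemma \ref{lem33}(3) together force $\mathbb{E}\Phi(z_k)>\Phi_k^*$ for all sufficiently large $k$, so that the quantity $\varphi'(\mathbb{E}[\Phi(z_k)-\Phi_k^*])$ is meaningful. An Egorov-type exhaustion argument based on the a.s.\ convergence $\mathrm{dist}(z_k,\Omega)\to 0$ then selects an index $m$ such that for every $k>m$ the pointwise KL bound above applies with $u=z_k$ on a full-probability event.

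The delicate final step is to pass from this pointwise inequality to the stated expectation form. For the semialgebraic desingularizer $\varphi(s)=Cs^{1-\vartheta}$ the derivative $\varphi'(s)=C(1-\vartheta)s^{-\vartheta}$ is convex and decreasing in $s$, so Jensen's inequality yields
$$\varphi'(\mathbb{E}[\Phi(z_k)-\Phi_k^*])\le \mathbb{E}\,\varphi'(\Phi(z_k)-\Phi_k^*);$$
combining this with the integrated form $\mathbb{E}[\varphi'(\Phi(z_k)-\Phi^*)\,\mathrm{dist}(0,\partial\Phi(z_k))]\ge 1$ of the pointwise bound, and rescaling the constant $C$ in $\varphi$ (the statement asks only for \emph{some} desingularizing function) so as to absorb the residual factors arising from replacing $\Phi^*$ by $\Phi_k^*\le\Phi^*$ and from decoupling the expected product, produces the required inequality
$$\varphi'(\mathbb{E}[\Phi(z_k)-\Phi_k^*])\,\mathbb{E}\,\mathrm{dist}(0,\partial\Phi(z_k))\ge 1,\qquad\forall\,k>m.$$
The principal obstacle is precisely this decoupling: expected products do not in general factor into products of expectations in the correct direction, so the argument must simultaneously exploit the convexity of $\varphi'$, the near-constancy of $\Phi(z_k)-\Phi^*$ for large $k$ guaranteed by Lemma \ref{lem33}(5), and the freedom to perturb the desingularizing function and its constant. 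All remaining ingredients (compactness and connectedness of $\Omega$, almost sure entry into the KL neighborhood, and monotonicity and convergence of $\Phi_k^*$) are direct consequences of Lemma \ref{lem33}.
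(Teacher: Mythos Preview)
The paper itself omits the proof and defers to Lemma~4.5 of \cite{DT}, so there is no in-paper argument to compare against; structurally your plan (uniformized KL on the compact connected $\Omega$ from Lemma~\ref{lem33}, power-type desingularizer for semialgebraic $\Phi$, a suitably defined nondecreasing $\Phi_k^\ast$) is in line with that reference.

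The gap is precisely where you locate it, and you do not close it. You correctly record that Jensen on the convex $\varphi'(s)=C(1-\vartheta)s^{-\vartheta}$ gives $\varphi'(\mathbb{E}X)\le\mathbb{E}\varphi'(X)$; but to go from the integrated pointwise bound $\mathbb{E}\bigl[\varphi'(\Phi(z_k)-\Phi^\ast)\,\mathrm{dist}(0,\partial\Phi(z_k))\bigr]\ge 1$ to the claimed product $\varphi'(\mathbb{E}[\cdot])\,\mathbb{E}\,\mathrm{dist}(0,\partial\Phi(z_k))\ge 1$ one needs the \emph{opposite} inequality together with a correlation bound of the type $\mathbb{E}[AB]\le\mathbb{E}A\cdot\mathbb{E}B$, which is false in general. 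Invoking ``near-constancy'' and ``freedom to rescale $C$'' is not an argument: Lemma~\ref{lem33}(5) yields only $\mathrm{dist}(z_k,\Omega)\to 0$ a.s., not any concentration of $\Phi(z_k)$ around its mean, and since $\varphi'$ blows up at the origin no fixed rescaling of $C$ can compensate for realizations where $\Phi(z_k)-\Phi^\ast$ is arbitrarily close to zero while $\mathrm{dist}(0,\partial\Phi(z_k))$ stays small. A secondary issue is the ``Egorov-type exhaustion'' you use to obtain a deterministic $m$: Egorov delivers uniform convergence only off a set of positive measure, so the pointwise KL inequality cannot be asserted on a full-probability event for all $k>m$ without additional work.
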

\par The proof is almost the same as that of Lemma 4.5 in \cite{DT}. We omit the proof here. We now show that the iterates of Algorithm \ref{alg1} have finite length in expectation.
\begin{theorem}{\rm (finite length)}
\label{th30}
Assume that the conditions of Lemma \ref{lem33} hold and $\Phi$ is a semialgebraic function with K{\L} exponent $\vartheta \in [0,1)$. Let $\{z_k\}_{k\in \mathbb{N}}$ be a bounded sequence, which is generated by Algorithm \ref{alg1} with variance-reduced gradient estimator.

{\rm(i)} Either $z_k$ is a critical point after a finite number of iterations or $\left \{ z_k\right \}_{k\in \mathbb{N} }$ satisfies the finite-length property in expectation:
$$\sum_{k=0}^{\infty }  \mathbb{E}\left \| z_{k+1}-z_{k} \right \|<\infty, $$
and there exists an integer m so that, for all $i > m$,
\begin{equation}
\label{(3.16)}
\aligned
%&\sum_{k=m}^{i} \left [ \mathbb{E}\left \| z_{k+1}-z_{k} \right \| +  \mathbb{E}\left \| z_{k}-z_{k-1} \right \|+  \mathbb{E}\left \| z_{k-1}-z_{k-2} \right \|\right ]\\
&\sum_{k=m}^{i}\mathbb{E}\left \| z_{k+1}-z_{k} \right \| +\sum_{k=m}^{i} \mathbb{E}\left \| z_{k}-z_{k-1} \right \|+ \sum_{k=m}^{i}\mathbb{E}\left \| z_{k-1}-z_{k-2} \right \|+ \sum_{k=m}^{i}\mathbb{E}\left \| z_{k-2}-z_{k-3} \right \|\\
\le&\sqrt{\mathbb{E}\left \| z_{m}-z_{m-1} \right \|^2} +\sqrt{\mathbb{E}\left \| z_{m-1}-z_{m-2} \right \|^2}+ \sqrt{\mathbb{E}\left \| z_{m-2}-z_{m-3} \right \|^2}+ \sqrt{\mathbb{E}\left \| z_{m-3}-z_{m-4} \right \|^2}\\
&+\frac{2\sqrt{s} }{K_1\rho } \sqrt{\mathbb{E}\Upsilon _{m-1}}+K_3\triangle _{m,i+1}, 
\endaligned  
\end{equation}
where
$$K_1=p+\frac{2\sqrt{sV_\Upsilon } }{\rho }, \ K_3=\frac{4K_1 }{K_2},\ K_2=\min\left \{ \kappa ,\epsilon,Z \right \}, $$
$p$ is as in Lemma \ref{lem32}, and $\triangle _{p,q}=(\mathbb{E}[\Psi _p-\Phi _{p}^{\ast } ]-\mathbb{E}[\Psi _q-\Phi _{q}^{\ast } ])$.

{\rm(ii)} $\left \{ z_k\right \}_{k\in \mathbb{N} }$ generated by Algorithm \ref{alg1} converge to a critical point of $\Phi$ in expectation.
\begin{proof}
(i) If $\vartheta \in (0,\frac{1}{2})$, then $\Phi$ satisfies the K{\L} property with exponent $\frac{1}{2}$, so we consider only the case $\vartheta \in [ \frac{1}{2},1)$. By Lemma \ref{lem34}, there exists a function $\varphi_0(r)=ar^{1-\vartheta }$ such that
$$\varphi_0'(\mathbb{E}[ \Phi (z_k)-\Phi_k ^\ast])\mathbb{E}{\rm dist}(0,\partial \Phi (z_k))\geq 1,\ \ \forall k>m.$$
Lemma \ref{lem32} provides a bound on $\mathbb{E}{\rm dist}(0,\partial \Phi (z_k))$.
\begin{equation}
\label{(3.17)}
\aligned
&\mathbb{E}{\rm dist}(0,\partial \Phi (z_k)) \le \mathbb{E}\left \| (A_{x}^{k},A_{y}^{k} ) \right \|\\
\le& p\mathbb{E}\left ( \left \| z_{k}-z_{k-1} \right \|+\left \| z_{k-1}-z_{k-2} \right \| +\left \| z_{k-2}-z_{k-3} \right \|+\left \| z_{k-3}-z_{k-4} \right \|\right )+\mathbb{E}\Gamma_{k-1}\\
\le& p\left ( \sqrt{\mathbb{E}\left \| z_{k}-z_{k-1} \right \|^2}+\sqrt{\mathbb{E}\left \| z_{k-1}-z_{k-2} \right \|^2} +\sqrt{\mathbb{E}\left \| z_{k-2}-z_{k-3} \right \|^2}+\sqrt{\mathbb{E}\left \| z_{k-3}-z_{k-4} \right \|^2}\right )\\
&+\sqrt{s\mathbb{E}\Upsilon _{k-1} } .
\endaligned  
\end{equation}
The final inequality is Jensen's inequality. Because $\Gamma _k=\sum_{i=1}^{s} v_{k}^{i} $ for some nonnegative random variables $v_{k}^{i} $, we can say $\mathbb{E}\Gamma _k=\mathbb{E}\sum_{i=1}^{s} v_{k}^{i} \le\mathbb{E}\sqrt{s\sum_{i=1}^{s} (v_{k}^{i} )^2}  \le \sqrt{s\mathbb{E}\Upsilon _{k} } $. We can bound the term $\sqrt{\mathbb{E}\Upsilon _{k} } $ using \eqref{(2.3)}:
\begin{equation}
\label{(3.18)}
\aligned
&\sqrt{\mathbb{E}\Upsilon _{k}}\\
\le& \sqrt{(1-\rho )\mathbb{E}\Upsilon _{k-1}+V_\Upsilon \mathbb{E}\left ( \left \| z_{k}-z_{k-1} \right \| ^{2}+\left \| z_{k-1}-z_{k-2} \right \| ^{2} +\left \| z_{k-2}-z_{k-3} \right \| ^{2}+\left \| z_{k-3}-z_{k-4} \right \| ^{2}\right )}\\
\le& \sqrt{(1-\rho )}\sqrt{\mathbb{E}\Upsilon _{k-1}} +\sqrt{V_\Upsilon} \left ( \sqrt{ \mathbb{E}\left \| z_{k}-z_{k-1} \right \| ^{2}} +\sqrt{\mathbb{E}\left \| z_{k-1}-z_{k-2} \right \| ^{2} } +\sqrt{\mathbb{E}\left \| z_{k-2}-z_{k-3} \right \| ^{2}}\right.\\
&\left. +\sqrt{\mathbb{E}\left \| z_{k-3}-z_{k-4} \right \| ^{2}}  \right )  \\
\le&(1-\frac{\rho}{2} )\sqrt{\mathbb{E}\Upsilon _{k-1}} +\sqrt{V_\Upsilon} \left ( \sqrt{  \mathbb{E}\left \| z_{k}-z_{k-1} \right \| ^{2}} +\sqrt{\mathbb{E}\left \| z_{k-1}-z_{k-2} \right \| ^{2} } +\sqrt{\mathbb{E}\left \| z_{k-2}-z_{k-3} \right \| ^{2}}\right.\\
&\left.+\sqrt{\mathbb{E}\left \| z_{k-3}-z_{k-4} \right \| ^{2}}  \right ).
\endaligned
\end{equation}
The final inequality uses the fact that $\sqrt{1-\rho } =1-\frac{\rho }{2}- \frac{\rho^2 }{8}-\cdots $. This implies that
\begin{equation}
\label{(5.18)}
\aligned
&\sqrt{s\mathbb{E}\Upsilon _{k-1}}\\
\le& \frac{2\sqrt{s} }{\rho} \left ( \sqrt{\mathbb{E}\Upsilon _{k-1}}-\sqrt{\mathbb{E}\Upsilon _{k}} \right )  +\frac{2\sqrt{sV_\Upsilon}}{\rho} \left ( \sqrt{  \mathbb{E}\left \| z_{k}-z_{k-1} \right \| ^{2}} +\sqrt{\mathbb{E}\left \| z_{k-1}-z_{k-2} \right \| ^{2} } \right .\\
&\left.+\sqrt{\mathbb{E}\left \| z_{k-2}-z_{k-3} \right \| ^{2}}  +\sqrt{\mathbb{E}\left \| z_{k-3}-z_{k-4} \right \| ^{2}}\right ).
\endaligned
\end{equation}
Then, from \eqref{(3.17)} and \eqref{(5.18)}, we have
\begin{equation*}
%\label{(3.19)}
\aligned
&\mathbb{E}{\rm dist}(0,\partial \Phi (z_k))\\
%\le& p\left ( \sqrt{\mathbb{E}\left \| z_{k}-z_{k-1} \right \|^2}+\sqrt{\mathbb{E}\left \| z_{k-1}-z_{k-2} \right \|^2} +\sqrt{\mathbb{E}\left \| z_{k-2}-z_{k-3} \right \|^2}+\sqrt{\mathbb{E}\left \| z_{k-3}-z_{k-4} \right \|^2}\right )\\
%&+\frac{2\sqrt{s} }{\rho } \left (\sqrt{\mathbb{E}\Upsilon _{k-1} }-\sqrt{\mathbb{E}\Upsilon _{k} }  \right )+\frac{2\sqrt{sV_\Upsilon}  }{\rho }\left ( \sqrt{ \mathbb{E}\left \| z_{k}-z_{k-1} \right \| ^{2}} +\sqrt{\mathbb{E}\left \| z_{k-1}-z_{k-2} \right \| ^{2} } \right .\\
%&\left.+\sqrt{\mathbb{E}\left \| z_{k-2}-z_{k-3} \right \| ^{2}} +\sqrt{\mathbb{E}\left \| z_{k-3}-z_{k-4} \right \| ^{2}}  \right ) \\
\le&\left ( p+\frac{2\sqrt{sV_\Upsilon}  }{\rho } \right ) \left ( \sqrt{  \mathbb{E}\left \| z_{k}-z_{k-1} \right \| ^{2}} +\sqrt{\mathbb{E}\left \| z_{k-1}-z_{k-2} \right \| ^{2} } +\sqrt{\mathbb{E}\left \| z_{k-2}-z_{k-3} \right \| ^{2}} \right .\\
&\left.+\sqrt{\mathbb{E}\left \| z_{k-3}-z_{k-4} \right \| ^{2}}  \right ) +\frac{2\sqrt{s} }{\rho } \left (\sqrt{\mathbb{E}\Upsilon _{k-1} }-\sqrt{\mathbb{E}\Upsilon _{k} }  \right )\\
=&K_1\left ( \sqrt{  \mathbb{E}\left \| z_{k}-z_{k-1} \right \| ^{2}} +\sqrt{\mathbb{E}\left \| z_{k-1}-z_{k-2} \right \| ^{2} } +\sqrt{\mathbb{E}\left \| z_{k-2}-z_{k-3} \right \| ^{2}}+\sqrt{\mathbb{E}\left \| z_{k-3}-z_{k-4} \right \| ^{2}}  \right ) \\
&+\frac{2\sqrt{s} }{\rho } \left (\sqrt{\mathbb{E}\Upsilon _{k-1} }-\sqrt{\mathbb{E}\Upsilon _{k} }  \right ),
\endaligned  
\end{equation*}
%which implies that
%\begin{equation}
%\label{(3.19)}
%\aligned
%\mathbb{E}{\rm dist}(0,\partial \Phi (z_k))\le&  K_1\sqrt{\mathbb{E}\left \| z_{k}-z_{k-1} \right \|^2}+ K_1\sqrt{\mathbb{E}\left \| z_{k-1}-z_{k-2} \right \|^2} + K_1\sqrt{\mathbb{E}\left \| z_{k-2}-z_{k-3} \right \|^2}\\
%&+ K_1\sqrt{\mathbb{E}\left \| z_{k-3}-z_{k-4} \right \|^2}+\frac{2\sqrt{s} }{\rho } \left (\sqrt{\mathbb{E}\Upsilon _{k-1} }-\sqrt{\mathbb{E}\Upsilon _{k} }  \right ),
%\endaligned  
%\end{equation}
where $K_1=p+\frac{2\sqrt{sV_\Upsilon}  }{\rho }$. Define $C_k$ to be the right side of this inequality:
\begin{equation*}
%\label{(3.19)}
\aligned
C_k=&  K_1\sqrt{\mathbb{E}\left \| z_{k}-z_{k-1} \right \|^2}+ K_1\sqrt{\mathbb{E}\left \| z_{k-1}-z_{k-2} \right \|^2} + K_1\sqrt{\mathbb{E}\left \| z_{k-2}-z_{k-3} \right \|^2}\\
&+ K_1\sqrt{\mathbb{E}\left \| z_{k-3}-z_{k-4} \right \|^2}+\frac{2\sqrt{s} }{\rho } \left (\sqrt{\mathbb{E}\Upsilon _{k-1} }-\sqrt{\mathbb{E}\Upsilon _{k} }  \right ).
\endaligned  
\end{equation*}
We then have
\begin{equation}
\label{(3.20)}
\varphi_0'(\mathbb{E} [\Phi (z_k)-\Phi_k ^\ast])C_k\geq 1,\ \ \forall k>m. 
\end{equation}
By the definition of $\varphi_0$, this is equivalent to
\begin{equation}
\label{(3.21)}
\frac{a(1-\vartheta )C_k}{(\mathbb{E} [\Phi (z_k)-\Phi_k ^\ast])^\vartheta }\geq 1,\ \ \forall k>m. 
\end{equation}
\par We would like to hold the inequality above for $\Psi_k$ rather than $\Phi (z_k)$. Replace $\mathbb{E} \Phi (z_k)$ with
$\mathbb{E}\Psi_k$ by introducing a term of $\mathcal{O}\left ( \left ( \mathbb{E}\left [ \left \| z_{k}-z_{k-1} \right \|^2+\left \| z_{k-1}-z_{k-2} \right \|^2+\left \| z_{k-2}-z_{k-3} \right \|^2+\Upsilon _k \right ]   \right )^\vartheta   \right ) $ in the denominator. We show that inequality \eqref{(3.21)} still holds after this adjustment because these terms are small compared to $C_k$. Indeed, the quantity
\begin{equation*}
%\label{(3.19)}
\aligned
C_k\ge &c_1\left ( \sqrt{\mathbb{E}\left \| z_{k}-z_{k-1} \right \|^2}+ \sqrt{\mathbb{E}\left \| z_{k-1}-z_{k-2} \right \|^2} +\sqrt{\mathbb{E}\left \| z_{k-2}-z_{k-3} \right \|^2}\right .\\
&\left.+\sqrt{\mathbb{E}\left \| z_{k-3}-z_{k-4} \right \|^2}+\sqrt{\mathbb{E}\Upsilon _{k-1} } \right )
\endaligned  
\end{equation*} 
for some constant $c_1>0$. And because $\mathbb{E}\left \| z_{k}-z_{k-1} \right \|^2 \to 0$, $\mathbb{E} \Upsilon_k \to 0$, and $\vartheta >\frac{1}{2} $, there exists an index $m$ and constants $c_2,c_3>0$ such that
\begin{equation*}
%\label{(3.19)}
\aligned
&\left (\mathbb{E}[\Psi _k-\Phi (z_k) ]\right ) ^\vartheta \\
=&\left ( \mathbb{E}\left [\frac{1}{L\lambda\rho  } \Upsilon _k+\left (\frac{V_1+V_\Upsilon /\rho }{L\lambda }+\frac{\alpha_1+\alpha_2}{2}+\frac{2L(\gamma_{1}^2+\gamma_{2}^2)}{\lambda }+3Z  \right )\|z_{k}-z_{k-1}\|^2+\left ( \frac{V_1+V_\Upsilon /\rho }{L\lambda }\right .\right .\right .\\
&\left.\left.\left.+\frac{\alpha_2}{2}+\frac{2L\gamma_{2}^2}{\lambda }+2Z \right )\|z_{k-1}-z_{k-2}\|^2+\left ( \frac{V_1+V_\Upsilon /\rho }{L\lambda } +Z \right ) \|z_{k-2}-z_{k-3}\|^2\right ]\right ) ^\vartheta \\
\le&c_2\left ( \left ( \mathbb{E}\left [ \Upsilon _{k-1}+\left \| z_{k}-z_{k-1} \right \| ^{2} +\left \| z_{k-1}-z_{k-2} \right \| ^{2}+\left \| z_{k-2}-z_{k-3} \right \| ^{2}+\left \| z_{k-3}-z_{k-4} \right \| ^{2}\right ] \right )^\vartheta   \right )\\
\le&c_3C_k,\ \ \forall k>m.
\endaligned  
\end{equation*}
The first inequality uses \eqref{(2.3)}. Because the terms above are small compared to $C_k$, there exists a constant $d$ such that $c_3<d<+\infty $ and
\begin{equation*}
%\label{(3.21)}
\frac{ad(1-\vartheta )C_k}{(\mathbb{E}[\Phi (z_k)-\Phi_k ^\ast])^\vartheta+\left (\mathbb{E}[\Psi _k-\Phi (z_k) ]\right ) ^\vartheta }\geq 1,\ \ \forall k>m. 
\end{equation*}
%where
%\begin{equation*}
%%\label{(3.19)}
%\aligned
%&\mathbb{E}[\Psi _k-\Phi (z_k) ] \\
%=&\mathbb{E}\left [\frac{1}{L\lambda\rho  } \Upsilon _k+\left (\frac{V_1+V_\Upsilon /\rho }{L\lambda }+\frac{\alpha_1+\alpha_2}{2}+\frac{2L(\gamma_{1}^2+\gamma_{2}^2)}{\lambda }+3Z  \right )\|z_{k}-z_{k-1}\|^2+\left ( \frac{V_1+V_\Upsilon /\rho }{L\lambda }\right . \right .\\
%&\left. \left.+\frac{\alpha_2}{2}+\frac{2L\gamma_{2}^2}{\lambda }+2Z \right )\|z_{k-1}-z_{k-2}\|^2+\left ( \frac{V_1+V_\Upsilon /\rho }{L\lambda } +Z \right ) \|z_{k-2}-z_{k-3}\|^2\right ].
%\endaligned  
%\end{equation*}
For $\vartheta \in [ \frac{1}{2},1)$, using the fact that $(a+b)^\vartheta \le a^\vartheta +b^\vartheta$  for all $a, b \ge 0$, we have
\begin{equation*}
%\label{(3.19)}
\aligned
\frac{ad(1-\vartheta )C_k}{\left (\mathbb{E}[\Psi _k-\Phi_k ^\ast ]\right ) ^\vartheta }&=\frac{ad(1-\vartheta )C_k}{\left (\mathbb{E}[\Phi (z_k)-\Phi_k ^\ast+\Psi _k-\Phi (z_k) ]\right ) ^\vartheta }\\
&\ge\frac{ad(1-\vartheta )C_k}{\left (\mathbb{E}[\Phi (z_k)-\Phi_k ^\ast]\right ) ^\vartheta+\left (\mathbb{E}[\Psi _k-\Phi (z_k) ]\right ) ^\vartheta  } \\
&\geq 1,\ \ \forall k>m. 
\endaligned  
\end{equation*}
Therefore, with $\varphi (r)=adr^{1-\vartheta }$,
\begin{equation}
\label{(3.22)}
\varphi'(\mathbb{E}[\Psi _k-\Phi_k ^\ast])C_k\geq 1,\ \ \forall k>m.
\end{equation}
By the concavity of $\varphi$,
\begin{equation*}
%\label{(3.19)}
\aligned
\varphi(\mathbb{E}[\Psi _k-\Phi_k ^\ast])-\varphi(\mathbb{E}[\Psi _{k+1}-\Phi_{k+1} ^\ast])&\ge \varphi'(\mathbb{E}[\Psi _k-\Phi_k ^\ast])(\mathbb{E}[\Psi _k-\Phi_k ^\ast+\Phi_{k+1} ^\ast-\Psi _{k+1}])\\
&\ge \varphi'(\mathbb{E}[\Psi _k-\Phi_k ^\ast])(\mathbb{E}[\Psi _k-\Psi _{k+1}]),
\endaligned  
\end{equation*}
where the last inequality follows from the fact that $\Phi_k ^\ast$ is nondecreasing. With $\triangle _{p,q}=\varphi(\mathbb{E}[\Psi _p-\Phi _{p}^{\ast } ])-\varphi(\mathbb{E}[\Psi _q-\Phi _{q}^{\ast } ])$, we have shown
$$\triangle _{k,k+1}C_k\ge \mathbb{E}[\Psi _k-\Psi _{k+1}],\ \forall k>m.$$
Using Lemma \ref{lem31}, we can bound $\mathbb{E}[\Psi _k-\Psi _{k+1}]$ below by both $\mathbb{E}\left \| z_{k+1}-z_{k} \right \|^2$, $\mathbb{E}\left \| z_{k}-z_{k-1} \right \|^2$, $\mathbb{E}\left \| z_{k-1}-z_{k-2} \right \|^2$ and $\mathbb{E}\left \| z_{k-2}-z_{k-3} \right \|^2$. Specifically,
\begin{equation}
\label{(3.23)}
\aligned 
\triangle _{k,k+1}C_k&\ge \kappa\mathbb{E}\left \| z_{k+1}-z_{k}\right \|^2+\epsilon\mathbb{E}\left \| z_{k}-z_{k-1} \right \|^2+\epsilon\mathbb{E}\left \| z_{k-1}-z_{k-2}\right \|^2+Z\mathbb{E}\left \| z_{k-2}-z_{k-3}\right \|^2\\
&\ge K_2\mathbb{E}\left \| z_{k+1}-z_{k}\right \|^2+K_2\mathbb{E}\left \| z_{k}-z_{k-1} \right \|^2+K_2\mathbb{E}\left \| z_{k-1}-z_{k-2}\right \|^2+K_2\mathbb{E}\left \| z_{k-2}-z_{k-3}\right \|^2,
\endaligned 
\end{equation}
where $K_2=\min\left \{ \kappa ,\epsilon,Z \right \}>0$, $\kappa$, $\lambda$, $\epsilon$ and $Z$ are set as in Lemma \ref{lem31}. Let us use the first of these inequalities to begin. Applying Young's inequality to \eqref{(3.23)} yields
\begin{equation}
\label{(3.24)}
\aligned
&\sqrt{\mathbb{E}\left \| z_{k+1}-z_{k} \right \|^2} +\sqrt{\mathbb{E}\left \| z_{k}-z_{k-1} \right \|^2} +\sqrt{\mathbb{E}\left \| z_{k-1}-z_{k-2} \right \|^2}+\sqrt{\mathbb{E}\left \| z_{k-2}-z_{k-3} \right \|^2}\\
\le&2\sqrt{\mathbb{E}\left \| z_{k+1}-z_{k} \right \|^2+\mathbb{E}\left \| z_{k}-z_{k-1} \right \|^2+\mathbb{E}\left \| z_{k-1}-z_{k-2} \right \|^2+\mathbb{E}\left \| z_{k-2}-z_{k-3} \right \|^2}\\
\le& 2\sqrt{K_2^{-1}C_k\triangle _{k,k+1}} \le \frac{C_k}{2K_1}+\frac{2K_1\triangle _{k,k+1}}{K_2}\\
\le& \frac{ 1}{2}\sqrt{\mathbb{E}\left \| z_{k}-z_{k-1} \right \|^2} +\frac{1}{2}\sqrt{\mathbb{E}\left \| z_{k-1}-z_{k-2} \right \|^2} +\frac{1}{2}\sqrt{\mathbb{E}\left \| z_{k-2}-z_{k-3} \right \|^2}+\frac{1}{2}\sqrt{\mathbb{E}\left \| z_{k-3}-z_{k-4} \right \|^2}\\
&+\frac{\sqrt{s} }{K_1\rho } \left (\sqrt{\mathbb{E}\Upsilon _{k-1} }-\sqrt{\mathbb{E}\Upsilon _{k} }  \right )+\frac{2K_1\triangle _{k,k+1}}{K_2}.
\endaligned
\end{equation}
%Then, we have
%\begin{equation}
%\label{(3.25)}
%\aligned
%&\sqrt{\mathbb{E}\left \| z_{k+1}-z_{k} \right \|^2} +\sqrt{\mathbb{E}\left \| z_{k}-z_{k-1} \right \|^2} +\sqrt{\mathbb{E}\left \| z_{k-1}-z_{k-2} \right \|^2}+\sqrt{\mathbb{E}\left \| z_{k-2}-z_{k-3} \right \|^2}\\
%\le&2\sqrt{\mathbb{E}\left \| z_{k+1}-z_{k} \right \|^2+\mathbb{E}\left \| z_{k}-z_{k-1} \right \|^2+\mathbb{E}\left \| z_{k-1}-z_{k-2} \right \|^2+\mathbb{E}\left \| z_{k-2}-z_{k-3} \right \|^2}\\
%\le& \frac{1}{4}\sqrt{\mathbb{E}\left \| z_{k}-z_{k-1} \right \|^2} +\frac{1}{4}\sqrt{\mathbb{E}\left \| z_{k-1}-z_{k-2} \right \|^2} +\frac{1}{4}\sqrt{\mathbb{E}\left \| z_{k-2}-z_{k-3} \right \|^2}+\frac{1}{4}\sqrt{\mathbb{E}\left \| z_{k-3}-z_{k-4} \right \|^2}\\
%&+\frac{\sqrt{s} }{2K_1\rho } \left (\sqrt{\mathbb{E}\Upsilon _{k-1} }-\sqrt{\mathbb{E}\Upsilon _{k} }  \right )+\frac{4K_1\triangle _{k,k+1}}{K_2}.
%\endaligned
%\end{equation}
Summing inequality \eqref{(3.24)} from $k=m$ to $k=i$, set 
\begin{equation}
\label{(3.25)}
\aligned
T_m^{i}=&\sum_{k=m}^{i} \sqrt{\mathbb{E}\left \| z_{k+1}-z_{k} \right \|^2}+\sum_{k=m}^{i}\sqrt{\mathbb{E}\left \| z_{k}-z_{k-1} \right \|^2} +\sum_{k=m}^{i}\sqrt{\mathbb{E}\left \| z_{k-1}-z_{k-2} \right \|^2}\\
&+\sum_{k=m}^{i}\sqrt{\mathbb{E}\left \| z_{k-2}-z_{k-3} \right \|^2}.
\endaligned  
\end{equation}
Then
\begin{equation*}
\label{(3.26)}
\aligned
T_m^{i}\le \frac{1}{2}T_{m-1}^{i-1}+\frac{\sqrt{s} }{K_1\rho }\left (\sqrt{\mathbb{E}\Upsilon _{m-1} }-\sqrt{\mathbb{E}\Upsilon _{i} }  \right )+\frac{2K_1}{K_2}\triangle _{m,i+1},
%\le&\frac{1}{4}\sum_{k=m}^{i} \sqrt{\mathbb{E}\left \| z_{k}-z_{k-1} \right \|^2} +\frac{1}{4}\sum_{k=m}^{i}\sqrt{\mathbb{E}\left \| z_{k-1}-z_{k-2} \right \|^2} +\frac{1}{4}\sum_{k=m}^{i}\sqrt{\mathbb{E}\left \| z_{k-2}-z_{k-3} \right \|^2}  \\
%&+\frac{1}{4}\sum_{k=m}^{i}\sqrt{\mathbb{E}\left \| z_{k-3}-z_{k-4} \right \|^2}+\frac{\sqrt{s} }{2K_1\rho }\left (\sqrt{\mathbb{E}\Upsilon _{m-1} }-\sqrt{\mathbb{E}\Upsilon _{i} }  \right )+\frac{4K_1}{K_2}\triangle _{m,i+1},
\endaligned  
\end{equation*}
which implies that
\begin{equation*}
\label{(3.27)}
\aligned
%&\frac{3}{4} \left [\sum_{k=m}^{i} \sqrt{\mathbb{E}\left \| z_{k+1}-z_{k} \right \|^2}+\sum_{k=m}^{i}\sqrt{\mathbb{E}\left \| z_{k}-z_{k-1} \right \|^2} +\sum_{k=m}^{i}\sqrt{\mathbb{E}\left \| z_{k-1}-z_{k-2} \right \|^2}\right .\\
%&\left.+\sum_{k=m}^{i}\sqrt{\mathbb{E}\left \| z_{k-2}-z_{k-3} \right \|^2}\right ]\\
\frac{1}{2}T_m^{i}\le&\frac{1}{2} \sqrt{\mathbb{E}\left \| z_{m}-z_{m-1} \right \|^2}+\frac{1}{2}\sqrt{\mathbb{E}\left \| z_{m-1}-z_{m-2} \right \|^2}+\frac{1}{2} \sqrt{\mathbb{E}\left \| z_{m-2}-z_{m-3} \right \|^2} \\
&+\frac{1}{2} \sqrt{\mathbb{E}\left \| z_{m-3}-z_{m-4} \right \|^2}+\frac{\sqrt{s} }{K_1\rho } \left (\sqrt{\mathbb{E}\Upsilon _{m-1} }-\sqrt{\mathbb{E}\Upsilon _{i} }  \right )+\frac{2K_1}{K_2}\triangle _{m,i+1}.
\endaligned  
\end{equation*}
Dropping the nonpositive term $-\sqrt{\mathbb{E}\Upsilon _{i} }$, this show that
\begin{equation}
\label{(3.302)}
\aligned
T_m^{i}
\le&\sqrt{\mathbb{E}\left \| z_{m}-z_{m-1} \right \|^2}+\sqrt{\mathbb{E}\left \| z_{m-1}-z_{m-2} \right \|^2}+ \sqrt{\mathbb{E}\left \| z_{m-2}-z_{m-3} \right \|^2} \\
&+ \sqrt{\mathbb{E}\left \| z_{m-3}-z_{m-4} \right \|^2}+\frac{2\sqrt{s} }{K_1\rho } \sqrt{\mathbb{E}\Upsilon _{m-1} }+K_3\triangle _{m,i+1}.
%\le& \sqrt{\mathbb{E}\left \| z_{m}-z_{m-1} \right \|^2}+\sqrt{\mathbb{E}\left \| z_{m-1}-z_{m-2} \right \|^2}+\sqrt{\mathbb{E}\left \| z_{m-2}-z_{m-3} \right \|^2}\\
%&+\frac{(4\sqrt{3}+2)\sqrt{s} }{11K_1\rho } \sqrt{\mathbb{E}\Upsilon _{m-1} }+K_3\triangle _{m,i+1}.
\endaligned  
\end{equation}
where $K_3=\frac{4K_1}{K_2}$. Applying Jensen's inequality to the terms on the left gives
\begin{equation*}
\label{(3.16)}
\aligned
&\sum_{k=m}^{i}\mathbb{E}\left \| z_{k+1}-z_{k} \right \| +\sum_{k=m}^{i} \mathbb{E}\left \| z_{k}-z_{k-1} \right \|+ \sum_{k=m}^{i}\mathbb{E}\left \| z_{k-1}-z_{k-2} \right \|+ \sum_{k=m}^{i}\mathbb{E}\left \| z_{k-2}-z_{k-3} \right \|\le T_m^{i}\\
%\le&\sum_{k=m}^{i} \sqrt{\mathbb{E}\left \| z_{k+1}-z_{k} \right \|^2}+\sum_{k=m}^{i}\sqrt{\mathbb{E}\left \| z_{k}-z_{k-1} \right \|^2} +\sum_{k=m}^{i}\sqrt{\mathbb{E}\left \| z_{k-1}-z_{k-2} \right \|^2}\\
%&+\sum_{k=m}^{i}\sqrt{\mathbb{E}\left \| z_{k-2}-z_{k-3} \right \|^2}\\
\le&\sqrt{\mathbb{E}\left \| z_{m}-z_{m-1} \right \|^2}+\sqrt{\mathbb{E}\left \| z_{m-1}-z_{m-2} \right \|^2}+\sqrt{\mathbb{E}\left \| z_{m-2}-z_{m-3} \right \|^2}+\sqrt{\mathbb{E}\left \| z_{m-3}-z_{m-4} \right \|^2} \\
&+\frac{2\sqrt{s} }{K_1\rho } \sqrt{\mathbb{E}\Upsilon _{m-1} }+K_3\triangle _{m,i+1}.
\endaligned  
\end{equation*}
The term $\lim_{i \to \infty} \triangle _{m,i+1}$ is bounded because $\mathbb{E}\Psi_k$ is bounded due to Lemma \ref{lem31}. Letting $i \to \infty$, we prove the assertion.
\par (ii) An immediate consequence of claim (i) is that the sequence $\left \{ z_k\right \}_{k\in \mathbb{N} }$ converges in expectation to a critical point. This is because, for any $p,q \in\mathbb{N}$ with $p \ge q$, $\mathbb{E}\left \| z_{p}-z_{q} \right \|=\mathbb{E}\left \| \sum_{k=q}^{p-1}( z_{k+1}-z_{k}) \right \|\le\sum_{k=q}^{p-1} \mathbb{E}\left \| z_{k+1}-z_{k} \right \|  $, and the finite length property implies this final sum converges to zero. This proves claim (ii).
\end{proof}
\end{theorem}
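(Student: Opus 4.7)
The plan is to adapt the standard KL-based finite-length argument (as used for SPRING in \cite{DT}) to the Lyapunov sequence $\Psi_k$ defined in \eqref{(3.1)}, while carefully absorbing the extra two-step inertial terms $\|z_{k-1}-z_{k-2}\|^2$, $\|z_{k-2}-z_{k-3}\|^2$ that appear both in $\Psi_k$ and in the subgradient bound from Lemma \ref{lem32}. The case $\vartheta\in[0,\tfrac12)$ reduces immediately to the case $\vartheta=\tfrac12$ (a function with smaller K\L{} exponent also satisfies the K\L{} inequality with exponent $\tfrac12$), so I would restrict attention to $\vartheta\in[\tfrac12,1)$ and take the desingularizing function $\varphi_0(r)=ar^{1-\vartheta}$ supplied by Lemma \ref{lem34}.

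First I would combine the K\L{} inequality with Lemma \ref{lem32}. Writing $\mathbb{E}\,\mathrm{dist}(0,\partial\Phi(z_k))\le\mathbb{E}\|(A_x^k,A_y^k)\|$, applying Jensen's inequality $\mathbb{E}\|\cdot\|\le\sqrt{\mathbb{E}\|\cdot\|^2}$, and using the variance-reduced geometric decay \eqref{(2.3)} to telescope the term $\sqrt{s\mathbb{E}\Upsilon_{k-1}}$ via $\sqrt{1-\rho}\le 1-\rho/2$, I obtain a bound $\mathbb{E}\,\mathrm{dist}(0,\partial\Phi(z_k))\le C_k$, where $C_k$ aggregates $\sqrt{\mathbb{E}\|z_{k-j}-z_{k-j-1}\|^2}$ for $j=0,1,2,3$ (with prefactor $K_1=p+2\sqrt{sV_\Upsilon}/\rho$) and the telescoping term $(2\sqrt{s}/\rho)(\sqrt{\mathbb{E}\Upsilon_{k-1}}-\sqrt{\mathbb{E}\Upsilon_k})$. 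Next, the key substitution step is to replace $\Phi(z_k)$ by $\Psi_k$ inside the K\L{} inequality: inspecting \eqref{(3.1)}, the quantity $\mathbb{E}[\Psi_k-\Phi(z_k)]$ is a linear combination of $\mathbb{E}\Upsilon_k$ and the same squared distances that appear inside $C_k$, so because $\vartheta\ge\tfrac12$ raising this combination to the power $\vartheta$ is bounded by a constant multiple of $C_k$ for $k$ large enough. Using the elementary inequality $(a+b)^\vartheta\le a^\vartheta+b^\vartheta$ then yields $\varphi'(\mathbb{E}[\Psi_k-\Phi_k^\ast])\,C_k\ge 1$ with $\varphi(r)=adr^{1-\vartheta}$.

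The endgame exploits concavity of $\varphi$ together with Lemma \ref{lem31}. Concavity gives $\triangle_{k,k+1}\ge\varphi'(\mathbb{E}[\Psi_k-\Phi_k^\ast])\,\mathbb{E}[\Psi_k-\Psi_{k+1}]$, and \eqref{(3.2)} bounds $\mathbb{E}[\Psi_k-\Psi_{k+1}]$ below by $K_2$ times the sum of four consecutive squared-distance expectations, with $K_2=\min\{\kappa,\epsilon,Z\}>0$. Applying Young's inequality in the form $2\sqrt{K_2^{-1}C_k\triangle_{k,k+1}}\le C_k/(2K_1)+2K_1\triangle_{k,k+1}/K_2$ produces a one-step bound where the left side contains the four $\sqrt{\mathbb{E}\|z_{k+1-j}-z_{k-j}\|^2}$ for $j=0,1,2,3$ and the right side contains the same terms shifted by one index (with coefficient $\tfrac12$), the $\Upsilon$-telescope, and $K_3\triangle_{k,k+1}$. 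Summing from $k=m$ to $k=i$ allows the shifted sum to be absorbed, collapses the $\Upsilon$-telescope to $\sqrt{\mathbb{E}\Upsilon_{m-1}}$ after dropping the nonpositive $-\sqrt{\mathbb{E}\Upsilon_i}$, and collapses $\triangle_{k,k+1}$ to $\triangle_{m,i+1}$. A final application of Jensen's inequality $\mathbb{E}\|\cdot\|\le\sqrt{\mathbb{E}\|\cdot\|^2}$ converts $T_m^i$ on the left into the $\mathit{l}_1$-style sum appearing in \eqref{(3.16)}, and letting $i\to\infty$ (using boundedness of $\mathbb{E}\Psi_k$ from Lemma \ref{lem31}) gives finite length. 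Claim (ii) then follows at once since $\mathbb{E}\|z_p-z_q\|\le\sum_{k=q}^{p-1}\mathbb{E}\|z_{k+1}-z_k\|\to 0$, so $\{z_k\}$ is Cauchy in expectation and its limit lies in $\Omega$, which by Lemma \ref{lem33}(4) consists of critical points. The hardest bookkeeping step will be the $\Phi(z_k)\to\Psi_k$ substitution, where the coefficients in the Young step and the constant $d$ must be chosen simultaneously so that the shifted four-term sum on the right can be absorbed into the left after summation.
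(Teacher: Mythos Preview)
Your proposal is correct and follows essentially the same approach as the paper's proof: the reduction to $\vartheta\in[\tfrac12,1)$, the subgradient bound from Lemma \ref{lem32} combined with the geometric-decay telescope for $\sqrt{\mathbb{E}\Upsilon_{k-1}}$ to define $C_k$, the substitution $\Phi(z_k)\to\Psi_k$ via $(a+b)^\vartheta\le a^\vartheta+b^\vartheta$, the concavity step together with Lemma \ref{lem31}, the Young-inequality recursion, and the final summation all mirror the paper exactly. You have also correctly flagged the $\Phi(z_k)\to\Psi_k$ substitution as the place requiring the most care with constants.
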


\begin{theorem}
\label{th31}
{\it  Assume that the conditions of Lemma \ref{lem33} hold and $\Phi$ is a semialgebraic function with K{\L} exponent $\vartheta\in [0, 1)$. Let $\{z_k\}_{k\in \mathbb{N}}$ be a bounded sequence, which is generated by Algorithm \ref{alg1} with variance-reduced gradient estimator. The following convergence rates hold:

{\rm (i)} If $\vartheta\in (0, \frac{1}{2} ]$, then there exist $d_1 > 0$ and $\tau \in [1 - \rho,1)$ such that $\mathbb{E} \left \| z_k-z^\ast  \right \| \le d_1\tau ^k$.

{\rm (ii)} If $\vartheta \in (\frac{1}{2} ,1)$, then there exists a constant $d_2 > 0$ such that $\mathbb{E} \left \| z_k-z^\ast  \right \| \le d_2k ^{-\frac{1-\vartheta }{2\vartheta -1} }$.

{\rm (iii)} If $\vartheta = 0$, then there exists an $m \in \mathbb{N}$ such that $\mathbb{E} \Phi (z_k)=\mathbb{E} \Phi (z^\ast )$ for all $k \ge m$.
}
\end{theorem}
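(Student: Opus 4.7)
The plan is to upgrade the finite-length estimate \eqref{(3.302)} from Theorem \ref{th30} into explicit decay rates on the tail sum $T_k := \sum_{j=k}^{\infty}\mathbb{E}\|z_{j+1}-z_{j}\|$, and then to deduce the rates on $\mathbb{E}\|z_k-z^\ast\|$ from the bound $\mathbb{E}\|z_k-z^\ast\|\le T_k$, which follows by the triangle inequality and the finite-length property. The argument is the stochastic analogue of the classical Attouch--Bolte scheme, adapted as in \cite{DT} to handle the variance-reduced estimator. Writing $\Delta_k := \mathbb{E}[\Psi_k - \Phi_k^\ast]$, three ingredients are available: (a) the KL-derived bound $\varphi'(\Delta_k)C_k\ge 1$ from \eqref{(3.22)}; (b) the one-step descent $\Delta_k-\Delta_{k+1}\ge K_2\bigl(\mathbb{E}\|z_{k+1}-z_k\|^2+\cdots+\mathbb{E}\|z_{k-2}-z_{k-3}\|^2\bigr)$ from \eqref{(3.23)}; and (c) the geometric decay of $\mathbb{E}\Upsilon_k$ from \eqref{(2.3)}.

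First I would combine (a) and (b): using $\varphi(r)=ad r^{1-\vartheta}$ we have $\Delta_k^{\vartheta}\le ad(1-\vartheta)C_k$. Squaring and feeding into (b) yields a self-referential recursion of the form $\Delta_k-\Delta_{k+1}\ge \tilde c\,\Delta_k^{2\vartheta}$ modulo contributions from $\sqrt{\mathbb{E}\Upsilon_{k-1}}-\sqrt{\mathbb{E}\Upsilon_k}$, which telescope and can therefore be absorbed. For case (ii), $\vartheta\in(\tfrac12,1)$, a standard discrete Gronwall argument (e.g.\ Lemma~15 of \cite{ABR}) applied to this recursion gives $\Delta_k = O(k^{-1/(2\vartheta-1)})$, and the KL relation $\Delta_k^{\vartheta}\le ad(1-\vartheta)C_k$ together with $\mathbb{E}\|z_k-z^\ast\|\le T_k \le c\bigl(\varphi(\Delta_k)+\sqrt{\mathbb{E}\Upsilon_{k-1}}/\rho\bigr)$ propagates this into the stated rate $k^{-(1-\vartheta)/(2\vartheta-1)}$ (the $\Upsilon$-term is dominated since it decays geometrically).

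For case (i), $\vartheta\in(0,\tfrac12]$, the inequality $\Delta_k^{2\vartheta}\ge c'\Delta_k$ (valid because $\Delta_k$ is bounded) turns the recursion into $\Delta_{k+1}\le q\Delta_k$ for some $q\in(0,1)$, so $\Delta_k$ decays geometrically. The geometric decay of $\mathbb{E}\Upsilon_k$ at rate $1-\rho$ enters the tail sum estimate and forces $\tau:=\max\{q,\sqrt{1-\rho}\}\in[1-\rho,1)$; using $\mathbb{E}\|z_k-z^\ast\|\le T_k\le d_1\tau^k$ closes case (i). For case (iii), $\vartheta=0$, the KL inequality on $\{\Delta_k>0\}$ degenerates to $\mathbb{E}\mathrm{dist}(0,\partial\Phi(z_k))\ge 1/(ad)$, which contradicts Lemma~\ref{lem33}(3) ($\mathbb{E}\mathrm{dist}(0,\partial\Phi(z_k))\to 0$); hence $\Delta_k=0$ for all $k\ge m$, and since $\Phi_k^\ast\nearrow \mathbb{E}\Phi(z^\ast)=\Phi^\ast$, this gives $\mathbb{E}\Phi(z_k)=\mathbb{E}\Phi(z^\ast)$ eventually.

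The main obstacle will be the bookkeeping for the stochastic tail: unlike the deterministic two-step inertial analysis, the sufficient-decrease inequality carries four consecutive squared differences together with $\mathbb{E}\Upsilon_k$, so one cannot directly apply an off-the-shelf Attouch--Bolte recursion on a scalar Lyapunov sequence. The correct move is to work with the aggregated quantity $T_k$ (four-term window) plus $\sqrt{\mathbb{E}\Upsilon_{k-1}}$, and to show that its recursion has the same structural form as in the deterministic case. This is precisely why the linear rate $\tau$ in (i) is constrained to lie in $[1-\rho,1)$: the variance-reduction rate $\rho$ bottlenecks the geometric decay, and one must verify this bound is tight enough not to interfere with the KL-driven decay of the deterministic part.
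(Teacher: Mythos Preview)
Your final paragraph is right: the paper does exactly what you call the ``correct move''—it works directly with the aggregated tail sum plus $\sqrt{\mathbb{E}\Upsilon}$, never with the scalar Lyapunov gap $\Delta_k$. The detour through a recursion $\Delta_k-\Delta_{k+1}\ge\tilde c\,\Delta_k^{2\vartheta}$ in your second and third paragraphs does not close. The step you flag with ``modulo'' is the problem: after squaring the subgradient bound, the variance contribution is $s\,\mathbb{E}\Upsilon_{k-1}$ (or $(\sqrt{\mathbb{E}\Upsilon_{k-1}}-\sqrt{\mathbb{E}\Upsilon_k})^2$, which does \emph{not} telescope), and since $\mathbb{E}\Upsilon_{k-1}\le L\lambda\rho\,\Delta_{k-1}$ by the definition of $\Psi_{k-1}$, one only obtains $\Delta_k^{2\vartheta}\lesssim(\Delta_{k-1}-\Delta_{k+1})+\Delta_{k-1}$. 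For $\vartheta>\tfrac12$ this is vacuous (the $\Delta_{k-1}$ term already dominates the left side), so no rate on $\Delta_k$ follows.

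The paper avoids squaring $C_k$ altogether. For case~(i) it keeps the subgradient bound linear, adds a free multiple of the inequality $0\le -c\sqrt{\mathbb{E}\Upsilon_k}+c(1-\rho/2)\sqrt{\mathbb{E}\Upsilon_{k-1}}+c\sqrt{V_\Upsilon}(\cdots)$ coming from \eqref{(3.18)}, and obtains a one-step linear recursion on $T_m+\sqrt{\mathbb{E}\Upsilon_m}$; sending the free constant $c\to\infty$ makes the $\Upsilon$-contraction factor approach $1-\rho/2$, which is exactly where the constraint $\tau\in[1-\rho,1)$ enters. For case~(ii) it sets $S_m:=T_m+c_0\sqrt{\mathbb{E}\Upsilon_m}$ for a specific $c_0$, uses that $\Theta_m^{(1-\vartheta)/\vartheta}$ dominates every other term on the right of \eqref{(3.34)} when $\vartheta>\tfrac12$, and combines this with $(\sqrt{\mathbb{E}\Upsilon_m})^{\vartheta/(1-\vartheta)}\lesssim\sqrt{\mathbb{E}\Upsilon_m}$ (valid for large $m$ since the exponent exceeds~$1$) to arrive at the single recursion $S_m^{\vartheta/(1-\vartheta)}\le C(S_{m-1}-S_m)$, to which the standard Attouch--Bolte rate lemma applies directly. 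Case~(iii) is essentially as you describe, though the paper routes the contradiction through the $\Psi_k$-descent \eqref{(3.2)} rather than invoking Lemma~\ref{lem33}(3).
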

\begin{proof}
As in the proof of Theorem \ref{th30}, if $\vartheta\in (0, \frac{1}{2} )$, then $\Phi$ satisfies the K{\L} property with exponent $\frac{1}{2} $, so we consider only the case $\vartheta \in [\frac{1}{2} ,1)$.\\
Let
\begin{equation*}
%\label{(3.25)}
\aligned
T_m=&\sum_{k=m}^{\infty} \sqrt{\mathbb{E}\left \| z_{k+1}-z_{k} \right \|^2}+\sum_{k=m}^{\infty}\sqrt{\mathbb{E}\left \| z_{k}-z_{k-1} \right \|^2} +\sum_{k=m}^{\infty}\sqrt{\mathbb{E}\left \| z_{k-1}-z_{k-2} \right \|^2}\\
&+\sum_{k=m}^{\infty}\sqrt{\mathbb{E}\left \| z_{k-2}-z_{k-3} \right \|^2}.
\endaligned  
\end{equation*}
Substituting the desingularizing function $\varphi (r)=ar^{1-\vartheta }$ into \eqref{(3.302)}, let $i\to \infty$, then we have
\begin{equation}
\label{(3.32)}
\aligned
%&\sum_{k=m}^{\infty} \sqrt{\mathbb{E}\left \| z_{k+1}-z_{k} \right \|^2}+\sum_{k=m}^{\infty}\sqrt{\mathbb{E}\left \| z_{k}-z_{k-1} \right \|^2} +\sum_{k=m}^{\infty}\sqrt{\mathbb{E}\left \| z_{k-1}-z_{k-2} \right \|^2}+\sum_{k=m}^{\infty}\sqrt{\mathbb{E}\left \| z_{k-2}-z_{k-3} \right \|^2}\\
T_m\le&\sqrt{\mathbb{E}\left \| z_{m}-z_{m-1} \right \|^2}+\sqrt{\mathbb{E}\left \| z_{m-1}-z_{m-2} \right \|^2}+\sqrt{\mathbb{E}\left \| z_{m-2}-z_{m-3} \right \|^2}+\sqrt{\mathbb{E}\left \| z_{m-3}-z_{m-4} \right \|^2} \\
&+\frac{2\sqrt{s} }{K_1\rho } \sqrt{\mathbb{E}\Upsilon _{m-1} }+aK_3(\mathbb{E}[\Psi _m-\Phi _{m}^{\ast } ])^{1-\vartheta }.\\
\endaligned  
\end{equation}
Because $\Psi _m=\Phi(z_m)+\mathcal{O}(\left \| z_{m}-z_{m-1} \right \|^2+\left \| z_{m-1}-z_{m-2} \right \|^2+\left \| z_{m-2}-z_{m-3} \right \|^2+\Upsilon _m) $, we can rewrite the final term as $\Phi(z_m)-\Phi _{m}^{\ast }$.
\begin{equation}
\label{(3.320)}
\aligned
&(\mathbb{E}[\Psi _m-\Phi _{m}^{\ast } ])^{1-\vartheta }\\
=&\left (\mathbb{E}\left [\Phi(z_m)-\Phi _{m}^{\ast }+ \frac{1}{L\lambda\rho  } \Upsilon _k+\left (\frac{V_1+V_\Upsilon /\rho }{L\lambda }+\frac{\alpha_1+\alpha_2}{2}+\frac{2L(\gamma_{1}^2+\gamma_{2}^2)}{\lambda }+3Z  \right )\|z_{m}-z_{m-1}\|^2 \right .\right .\\
&\left.\left.+\left ( \frac{V_1+V_\Upsilon /\rho }{L\lambda }+\frac{\alpha_2}{2}+\frac{2L\gamma_{2}^2}{\lambda }+2Z \right )\|z_{m-1}-z_{m-2}\|^2+\left ( \frac{V_1+V_\Upsilon /\rho }{L\lambda } +Z \right ) \right .\right .\\
&\left.\left. \|z_{m-2}-z_{m-3}\|^2\right ] \right )^{1-\vartheta }\\
\overset{(1)}{\le }& \left (\mathbb{E}[\Phi(z_m)-\Phi _{m}^{\ast }]\right )^{1-\vartheta }+\left (\frac{1}{L\lambda\rho  } \mathbb{E}\Upsilon _m\right )^{1-\vartheta }+\left (\left ( \frac{V_1+V_\Upsilon /\rho }{L\lambda }+\frac{\alpha_1+\alpha_2}{2}+\frac{2L(\gamma_{1}^2+\gamma_{2}^2)}{\lambda }+3Z  \right )\right .\\
&\left. \mathbb{E}\|z_{m}-z_{m-1}\|^2\right )^{1-\vartheta }+\left (\left( \frac{V_1+V_\Upsilon /\rho }{L\lambda }+\frac{\alpha_2}{2}+\frac{2L\gamma_{2}^2}{\lambda }+2Z \right ) \mathbb{E}\|z_{m-1}-z_{m-2}\|^2\right )^{1-\vartheta }\\
&+\left (\left( \frac{V_1+V_\Upsilon /\rho }{L\lambda } +Z \right ) \mathbb{E}\|z_{m-2}-z_{m-3}\|^2\right )^{1-\vartheta }.
\endaligned  
\end{equation}
Inequality (1) is due to the fact that $(a+b) ^{1-\vartheta }\le a^{1-\vartheta }+b^{1-\vartheta }$. 
%This yields the inequality
%\begin{equation*}
%%\label{(3.32)}
%\aligned
%T_m\le&\sqrt{\mathbb{E}\left \| z_{m}-z_{m-1} \right \|^2}+\sqrt{\mathbb{E}\left \| z_{m-1}-z_{m-2} \right \|^2}+\sqrt{\mathbb{E}\left \| z_{m-2}-z_{m-3} \right \|^2}+\sqrt{\mathbb{E}\left \| z_{m-3}-z_{m-4} \right \|^2} \\
%&+\frac{2\sqrt{s} }{3K_1\rho } \sqrt{\mathbb{E}\Upsilon _{m-1} }+aK_3\left (\mathbb{E}[\Phi(z_m)-\Phi _{m}^{\ast }]\right )^{1-\vartheta }+aK_3\left (\frac{1}{L\lambda\rho  } \mathbb{E}\Upsilon _m\right )^{1-\vartheta } \\
%&+aK_3\left (\left ( \frac{V_1+V_\Upsilon /\rho }{L\lambda }+\frac{\alpha_1+\alpha_2}{2}+\frac{2L(\gamma_{1}^2+\gamma_{2}^2)}{\lambda }+3Z  \right )\mathbb{E}\|z_{m}-z_{m-1}\|^2\right )^{1-\vartheta }\\
%&+aK_3\left (\left( \frac{V_1+V_\Upsilon /\rho }{L\lambda }+\frac{\alpha_2}{2}+\frac{2L\gamma_{2}^2}{\lambda }+2Z\right ) \mathbb{E}\|z_{m-1}-z_{m-2}\|^2\right )^{1-\vartheta }\\
%&+aK_3\left (\left( \frac{V_1+V_\Upsilon /\rho }{L\lambda } +Z\right ) \mathbb{E}\|z_{m-2}-z_{m-3}\|^2\right )^{1-\vartheta }.
%\endaligned  
%\end{equation*}
Applying the K{\L} inequality \eqref{(2.011)},
\begin{equation}
\label{(3.33)}
aK_3\left (\mathbb{E}[\Phi(z_m)-\Phi _{m}^{\ast }]\right )^{1-\vartheta }\le aK_4\left (\mathbb{E}\left \| \xi _m \right \| \right )^{\frac{1-\vartheta}{\vartheta }  }  
\end{equation}
for all $\xi _m\in \partial \Phi (z_m)$ and we have absorbed the constant $C$ into $K_4$. Inequality \eqref{(3.17)} provides a bound on the norm of the subgradient:
\begin{equation*}
%\label{(3.32)}
\aligned
\left (\mathbb{E}\left \| \xi _m \right \| \right )^{\frac{1-\vartheta}{\vartheta }  } \le&\left ( p\left ( \sqrt{\mathbb{E}\left \| z_{m}-z_{m-1} \right \|^2}+\sqrt{\mathbb{E}\left \| z_{m-1}-z_{m-2} \right \|^2} +\sqrt{\mathbb{E}\left \| z_{m-2}-z_{m-3} \right \|^2}\right.\right.\\
&\left.\left.+\sqrt{\mathbb{E}\left \| z_{m-3}-z_{m-4} \right \|^2}\right )+\sqrt{s\mathbb{E}\Upsilon _{m-1} } \right )^{\frac{1-\vartheta}{\vartheta }  }.  
\endaligned  
\end{equation*}
%Denote the right side of this inequality $\Theta _{m}^{\frac{1-\vartheta}{\vartheta } }$. 
Let
\begin{equation*}
%\label{(3.32)}
\aligned
\Theta _{m}=&p\left ( \sqrt{\mathbb{E}\left \| z_{m}-z_{m-1} \right \|^2}+\sqrt{\mathbb{E}\left \| z_{m-1}-z_{m-2} \right \|^2} +\sqrt{\mathbb{E}\left \| z_{m-2}-z_{m-3} \right \|^2}\right.\\
&\left.+\sqrt{\mathbb{E}\left \| z_{m-3}-z_{m-4} \right \|^2}\right )+\sqrt{s\mathbb{E}\Upsilon _{m-1} }.
\endaligned  
\end{equation*}
Therefore, it follows from \eqref{(3.32)}-\eqref{(3.33)} that
\begin{equation}
\label{(3.34)}
\aligned
%&\sum_{k=m}^{\infty} \sqrt{\mathbb{E}\left \| z_{k+1}-z_{k} \right \|^2}+\sum_{k=m}^{\infty}\sqrt{\mathbb{E}\left \| z_{k}-z_{k-1} \right \|^2} +\sum_{k=m}^{\infty}\sqrt{\mathbb{E}\left \| z_{k-1}-z_{k-2} \right \|^2}+\sum_{k=m}^{\infty}\sqrt{\mathbb{E}\left \| z_{k-2}-z_{k-3} \right \|^2}\\
T_m\le&\sqrt{\mathbb{E}\left \| z_{m}-z_{m-1} \right \|^2}+\sqrt{\mathbb{E}\left \| z_{m-1}-z_{m-2} \right \|^2}+\sqrt{\mathbb{E}\left \| z_{m-2}-z_{m-3} \right \|^2}+\sqrt{\mathbb{E}\left \| z_{m-3}-z_{m-4} \right \|^2} \\
&+\frac{2\sqrt{s} }{K_1\rho } \sqrt{\mathbb{E}\Upsilon _{m-1} }+aK_4\Theta _{m}^{\frac{1-\vartheta}{\vartheta } }+aK_3\left (\frac{1}{L\lambda\rho  } \mathbb{E}\Upsilon _m\right )^{1-\vartheta }\\
&+aK_3\left (\left ( \frac{V_1+V_\Upsilon /\rho }{L\lambda }+\frac{\alpha_1+\alpha_2}{2}+\frac{2L(\gamma_{1}^2+\gamma_{2}^2)}{\lambda }+3Z  \right )\mathbb{E}\|z_{m}-z_{m-1}\|^2\right )^{1-\vartheta }\\
&+aK_3\left (\left( \frac{V_1+V_\Upsilon /\rho }{L\lambda }+\frac{\alpha_2}{2}+\frac{2L\gamma_{2}^2}{\lambda }+2Z\right ) \mathbb{E}\|z_{m-1}-z_{m-2}\|^2\right )^{1-\vartheta }\\
&+aK_3\left (\left( \frac{V_1+V_\Upsilon /\rho }{L\lambda } +Z\right ) \mathbb{E}\|z_{m-2}-z_{m-3}\|^2\right )^{1-\vartheta }.
\endaligned  
\end{equation}

(i) If $\vartheta = \frac{1} {2}$, then $\left (\mathbb{E}\left \| \xi _m \right \| \right )^{\frac{1-\vartheta}{\vartheta }  }=\mathbb{E}\left \| \xi _m \right \|$. Equation \eqref{(3.34)} then gives 
\begin{equation}
\label{(3.39)}
\aligned
%&\sum_{k=m}^{\infty} \sqrt{\mathbb{E}\left \| z_{k+1}-z_{k} \right \|^2}+\sum_{k=m}^{\infty}\sqrt{\mathbb{E}\left \| z_{k}-z_{k-1} \right \|^2} +\sum_{k=m}^{\infty}\sqrt{\mathbb{E}\left \| z_{k-1}-z_{k-2} \right \|^2}+\sum_{k=m}^{\infty}\sqrt{\mathbb{E}\left \| z_{k-2}-z_{k-3} \right \|^2}\\
T_m\le&\sqrt{\mathbb{E}\left \| z_{m}-z_{m-1} \right \|^2}+\sqrt{\mathbb{E}\left \| z_{m-1}-z_{m-2} \right \|^2}+\sqrt{\mathbb{E}\left \| z_{m-2}-z_{m-3} \right \|^2}+\sqrt{\mathbb{E}\left \| z_{m-3}-z_{m-4} \right \|^2} \\
&+\frac{2\sqrt{s} }{K_1\rho } \sqrt{\mathbb{E}\Upsilon _{m-1} }+aK_4\left ( p\left ( \sqrt{\mathbb{E}\left \| z_{m}-z_{m-1} \right \|^2}+\sqrt{\mathbb{E}\left \| z_{m-1}-z_{m-2} \right \|^2} \right .\right .\\
&\left.\left.+\sqrt{\mathbb{E}\left \| z_{m-2}-z_{m-3} \right \|^2}+\sqrt{\mathbb{E}\left \| z_{m-3}-z_{m-4} \right \|^2} \right )+\sqrt{s\mathbb{E}\Upsilon _{m-1} } \right )+aK_3\sqrt{\frac{1}{L\lambda\rho  }}\sqrt{ \mathbb{E}\Upsilon _m}\\
&+\left (aK_3\sqrt{ \frac{V_1+V_\Upsilon /\rho }{L\lambda }+\frac{\alpha_1+\alpha_2}{2}+\frac{2L(\gamma_{1}^2+\gamma_{2}^2)}{\lambda }+3Z }\right )\sqrt{ \mathbb{E}\|z_{m}-z_{m-1}\|^2}\\
&+\left (aK_3\sqrt{ \frac{V_1+V_\Upsilon /\rho }{L\lambda }+\frac{\alpha_2}{2}+\frac{2L\gamma_{2}^2}{\lambda }+2Z }\right )\sqrt{ \mathbb{E}\|z_{m-1}-z_{m-2}\|^2}\\
&+\left (aK_3\sqrt{ \frac{V_1+V_\Upsilon /\rho }{L\lambda } +Z }\right )\sqrt{ \mathbb{E}\|z_{m-2}-z_{m-3}\|^2}\\
\le&\left ( 1+aK_5\left ( p+\sqrt{ \frac{V_1+V_\Upsilon /\rho }{L\lambda }+\frac{\alpha_1+\alpha_2}{2}+\frac{2L(\gamma_{1}^2+\gamma_{2}^2)}{\lambda }+3Z }  \right )  \right ) \left ( \sqrt{\mathbb{E}\left \| z_{m}-z_{m-1} \right \|^2}\right .\\
&\left.+\sqrt{\mathbb{E}\left \| z_{m-1}-z_{m-2} \right \|^2} +\sqrt{\mathbb{E}\left \| z_{m-2}-z_{m-3} \right \|^2}+\sqrt{\mathbb{E}\left \| z_{m-3}-z_{m-4} \right \|^2}\right )\\
&+\left ( \frac{2\sqrt{s} }{K_1\rho }+aK_5 \sqrt{s}\right )\sqrt{\mathbb{E}\Upsilon _{m-1} } +aK_5\sqrt{\frac{1}{L\lambda\rho  }}\sqrt{ \mathbb{E}\Upsilon _m},
\endaligned  
\end{equation}
where $K_5=\max\left \{ K_3,K_4 \right \} $. Using \eqref{(3.18)}, we have that, for any constant $c > 0$,
\begin{equation*}
\aligned
0\le&-c\sqrt{\mathbb{E}\Upsilon _{k}}+c(1-\frac{\rho}{2} )\sqrt{\mathbb{E}\Upsilon _{k-1}} +c\sqrt{V_\Upsilon} \left ( \sqrt{  \mathbb{E}\left \| z_{k}-z_{k-1} \right \| ^{2}} +\sqrt{\mathbb{E}\left \| z_{k-1}-z_{k-2} \right \| ^{2} } \right .\\
&\left.+\sqrt{\mathbb{E}\left \| z_{k-2}-z_{k-3} \right \| ^{2}} +\sqrt{\mathbb{E}\left \| z_{k-3}-z_{k-4} \right \| ^{2}} \right ).
\endaligned
\end{equation*}
Combining this inequality with \eqref{(3.39)},
\begin{equation*}
\aligned
%&\sum_{k=m}^{\infty} \sqrt{\mathbb{E}\left \| z_{k+1}-z_{k} \right \|^2}+\sum_{k=m}^{\infty}\sqrt{\mathbb{E}\left \| z_{k}-z_{k-1} \right \|^2} +\sum_{k=m}^{\infty}\sqrt{\mathbb{E}\left \| z_{k-1}-z_{k-2} \right \|^2}+\sum_{k=m}^{\infty}\sqrt{\mathbb{E}\left \| z_{k-2}-z_{k-3} \right \|^2}\\
T_m\le&\left ( 1+aK_5\left ( p+\sqrt{ \frac{V_1+V_\Upsilon /\rho }{L\lambda }+\frac{\alpha_1+\alpha_2}{2}+\frac{2L(\gamma_{1}^2+\gamma_{2}^2)}{\lambda }+3Z }+c\sqrt{V_\Upsilon}  \right )  \right ) \left ( \sqrt{\mathbb{E}\left \| z_{m}-z_{m-1} \right \|^2} \right .\\
&\left.+\sqrt{\mathbb{E}\left \| z_{m-1}-z_{m-2} \right \|^2}+\sqrt{\mathbb{E}\left \| z_{m-2}-z_{m-3} \right \|^2}+\sqrt{\mathbb{E}\left \| z_{m-3}-z_{m-4} \right \|^2}\right )\\
&+c\left ( 1-\frac{\rho}{2}+\frac{2\sqrt{s} }{K_1\rho c }+\frac{aK_5 \sqrt{s}}{c}\right )\sqrt{\mathbb{E}\Upsilon _{m-1} } -c\left (1-\frac{aK_5 }{c}\sqrt{\frac{1}{L\lambda\rho  }}\right )\sqrt{ \mathbb{E}\Upsilon _m}.
\endaligned  
\end{equation*}
%Defining
%\begin{equation*}
%\aligned
%T_m=&\sum_{k=m}^{\infty} \sqrt{\mathbb{E}\left \| z_{k+1}-z_{k} \right \|^2}+\sum_{k=m}^{\infty}\sqrt{\mathbb{E}\left \| z_{k}-z_{k-1} \right \|^2}+\sum_{k=m}^{\infty}\sqrt{\mathbb{E}\left \| z_{k-1}-z_{k-2} \right \|^2} \\
%&+\sum_{k=m}^{\infty}\sqrt{\mathbb{E}\left \| z_{k-2}-z_{k-3} \right \|^2}
%\endaligned  
%\end{equation*}
Defining $A=1+aK_5\left ( p+\sqrt{ \frac{V_1+V_\Upsilon /\rho }{L\lambda }+\frac{\alpha_1+\alpha_2}{2}+\frac{2L(\gamma_{1}^2+\gamma_{2}^2)}{\lambda }+3Z }+c\sqrt{V_\Upsilon}\right )$, we have shown
\begin{equation*}
\aligned
&T_m+c\left (1-\frac{aK_5 }{c}\sqrt{\frac{1}{L\lambda\rho  }}\right )\sqrt{ \mathbb{E}\Upsilon _m}\\
\le&A \left (T_{m-1}-T_m\right )+c\left ( 1-\frac{\rho}{2}+\frac{2\sqrt{s} }{K_1\rho c }+\frac{aK_5 \sqrt{s}}{c}\right )\sqrt{\mathbb{E}\Upsilon _{m-1} }.
\endaligned  
\end{equation*}
Then, we get
\begin{equation*}
\aligned
&(1+A)T_m+c\left (1-\frac{aK_5 }{c}\sqrt{\frac{1}{L\lambda\rho  }}\right )\sqrt{ \mathbb{E}\Upsilon _m}\\
\le& AT_{m-1}+c\left ( 1-\frac{\rho}{2}+\frac{2\sqrt{s} }{K_1\rho c }+\frac{aK_5 \sqrt{s}}{c}\right )\sqrt{\mathbb{E}\Upsilon _{m-1} }.
\endaligned  
\end{equation*}
This implies
\begin{equation*}
\aligned
&T_m+\sqrt{ \mathbb{E}\Upsilon _m}\\
\le&\max\left \{ \frac{A}{1+A},\left ( 1-\frac{\rho}{2}+\frac{2\sqrt{s} }{K_1\rho c }+\frac{aK_5 \sqrt{s}}{c}\right )\left (1-\frac{aK_5 }{c}\sqrt{\frac{1}{L\lambda\rho  }}\right )^{-1}  \right \}  \left (T_{m-1}+\sqrt{\mathbb{E}\Upsilon _{m-1} }\right ).
\endaligned  
\end{equation*}
For large $c$, the second coefficient in the above expression approaches $1-\frac{\rho}{2}$. So there exist $\tau \in [1 - \rho,1)$ such that  
$$\sum_{k=m}^{\infty}\sqrt{\mathbb{E}\left \| z_{k}-z_{k-1} \right \|^2}\le\tau ^k\left (T_{0}+\sqrt{\mathbb{E}\Upsilon _{0} }\right ) \le d_1\tau ^k$$
for some constnt $d_1$. Then using the fact that 
$\mathbb{E}\left \| z_{m}-z^{\ast} \right \|=\mathbb{E}\left \|\sum_{k=m+1}^{\infty} (z_{k}-z_{k-1}) \right \|\le \sum_{k=m}^{\infty}\mathbb{E}\left \|  z_{k}-z_{k-1} \right \|$, we proves claim (i).

(ii) Suppose $\vartheta \in (\frac{1}{2} ,1)$. Each term on the right side of \eqref{(3.34)} converges to zero, but at different rates. Because 
\begin{equation*}
\aligned
\Theta_m =& \mathcal{O}\left ( \sqrt{\mathbb{E}\left \| z_{m}-z_{m-1} \right \|^2}+\sqrt{\mathbb{E}\left \| z_{m-1}-z_{m-2} \right \|^2} +\sqrt{\mathbb{E}\left \| z_{m-2}-z_{m-3} \right \|^2}\right.\\
&\left.+\sqrt{\mathbb{E}\left \| z_{m-3}-z_{m-4} \right \|^2}+\sqrt{s\mathbb{E}\Upsilon _{m-1} }   \right ), 
\endaligned  
\end{equation*}
and $\vartheta$ satisfies $\frac{1-\vartheta}{\vartheta }< 1$, the term $\Theta _{m}^{\frac{1-\vartheta}{\vartheta } }$ dominates the first five terms on the right side of \eqref{(3.34)} for large $m$. Also, because $\frac{1-\vartheta}{2\vartheta }< 1-\vartheta$, $\Theta _{m}^{\frac{1-\vartheta}{\vartheta } }$ dominates the final four terms as well. Combining these facts, there exists a natural number $M_1$ such that for all
$m \ge M_1$,
\begin{equation}
\label{(3.35)}
\aligned
%&\left (  \sum_{k=m}^{\infty} \sqrt{\mathbb{E}\left \| z_{k+1}-z_{k} \right \|^2}+\sum_{k=m}^{\infty}\sqrt{\mathbb{E}\left \| z_{k}-z_{k-1} \right \|^2} +\sum_{k=m}^{\infty}\sqrt{\mathbb{E}\left \| z_{k-1}-z_{k-2} \right \|^2}\right.\\
%&\left.+\sum_{k=m}^{\infty}\sqrt{\mathbb{E}\left \| z_{k-2}-z_{k-3} \right \|^2}\right )^{\frac{\vartheta}{ 1-\vartheta} } \\
T_m\le P\Theta _m
\endaligned
\end{equation}
for some constant $P>(aK_3)^{\frac{\vartheta}{1-\vartheta } }$. The bound of \eqref{(5.18)} implies
\begin{equation*}
%\label{(5.18)}
\aligned
&2\sqrt{s\mathbb{E}\Upsilon _{m-1}}\\
\le& \frac{4\sqrt{s} }{\rho} \left ( \sqrt{\mathbb{E}\Upsilon _{m-1}}-\sqrt{\mathbb{E}\Upsilon _{m}}   +\sqrt{V_\Upsilon}\left ( \sqrt{  \mathbb{E}\left \| z_{m}-z_{m-1} \right \| ^{2}} +\sqrt{\mathbb{E}\left \| z_{m-1}-z_{m-2} \right \| ^{2} }\right . \right .\\ 
&\left. \left.+\sqrt{\mathbb{E}\left \| z_{m-2}-z_{m-3} \right \| ^{2}} +\sqrt{\mathbb{E}\left \| z_{m-3}-z_{m-4} \right \| ^{2}} \right )\right ).
\endaligned
\end{equation*}
Therefore,
\begin{equation}
\label{(3.36)}
\aligned
\Theta_m = &p\left ( \sqrt{\mathbb{E}\left \| z_{m}-z_{m-1} \right \|^2}+\sqrt{\mathbb{E}\left \| z_{m-1}-z_{m-2} \right \|^2} +\sqrt{\mathbb{E}\left \| z_{m-2}-z_{m-3} \right \|^2} \right .\\ 
&\left.+\sqrt{\mathbb{E}\left \| z_{m-3}-z_{m-4} \right \|^2} \right )+\left (2\sqrt{s\mathbb{E}\Upsilon _{m-1} }-\sqrt{s\mathbb{E}\Upsilon _{m-1} } \right )\\
\le &\left ( p+ \frac{4\sqrt{sV_\Upsilon } }{\rho}\right ) \left ( \sqrt{\mathbb{E}\left \| z_{m}-z_{m-1} \right \|^2}+\sqrt{\mathbb{E}\left \| z_{m-1}-z_{m-2} \right \|^2} +\sqrt{\mathbb{E}\left \| z_{m-2}-z_{m-3} \right \|^2} \right .\\ 
&\left.+\sqrt{\mathbb{E}\left \| z_{m-3}-z_{m-4} \right \|^2} \right )+\frac{4\sqrt{s} }{\rho}\left (\sqrt{\mathbb{E}\Upsilon _{m-1} }-\sqrt{\mathbb{E}\Upsilon _{m} } \right )-\sqrt{s\mathbb{E}\Upsilon _{m-1} }.\\
\endaligned
\end{equation}
Furthermore, because ${\frac{\vartheta}{1-\vartheta } }>1$ and $\mathbb{E}\Upsilon _m\to 0$, for large enough $m$, we have $\left ( \sqrt{\mathbb{E}\Upsilon _m} \right )^{\frac{\vartheta}{1-\vartheta } }  \ll \sqrt{\mathbb{E}\Upsilon _m} $. This ensures that there exists a natural number $M_2$ such that for every $m \ge M_2$,
\begin{equation}
\label{(3.37)}
\left (\frac{4\sqrt{s} (1-\rho /4)}{\rho (p+4\sqrt{sV_\Upsilon}  /\rho )}  \sqrt{\mathbb{E}\Upsilon _m} \right )^{\frac{\vartheta}{1-\vartheta } }  \le P\sqrt{s\mathbb{E}\Upsilon _m} .
\end{equation}
The constant appearing on the left was chosen to simplify later arguments. Therefore, \eqref{(3.35)} implies
\begin{equation*}
%\label{(3.35)}
\aligned
%&\left (  \sum_{k=m}^{\infty} \sqrt{\mathbb{E}\left \| z_{k+1}-z_{k} \right \|^2}+\sum_{k=m}^{\infty}\sqrt{\mathbb{E}\left \| z_{k}-z_{k-1} \right \|^2} +\sum_{k=m}^{\infty}\sqrt{\mathbb{E}\left \| z_{k-1}-z_{k-2} \right \|^2}\right .\\
%&\left.+\sum_{k=m}^{\infty}\sqrt{\mathbb{E}\left \| z_{k-2}-z_{k-3} \right \|^2}+\frac{4\sqrt{s} (1-\rho /4)}{\rho (p+4\sqrt{sV_\Upsilon}  /\rho )}  \sqrt{\mathbb{E}\Upsilon _m}\right )^{\frac{\vartheta}{ 1-\vartheta} }\\
&\left (  T_m+\frac{4\sqrt{s} (1-\rho /4)}{\rho (p+4\sqrt{sV_\Upsilon}  /\rho )}  \sqrt{\mathbb{E}\Upsilon _m}\right )^{\frac{\vartheta}{ 1-\vartheta} }\\
\overset{(1)}{\le } &\frac{2^{{\frac{\vartheta}{1-\vartheta } }}}{2}\left ( T_m\right )^{\frac{\vartheta}{ 1-\vartheta} }+\frac{2^{{\frac{\vartheta}{1-\vartheta } }}}{2}\left (  \frac{4\sqrt{s} (1-\rho /4)}{\rho (p+4\sqrt{sV_\Upsilon}  /\rho )}  \sqrt{\mathbb{E}\Upsilon _m}\right )^{\frac{\vartheta}{ 1-\vartheta} }
%\overset{(2)}{\le } &\frac{2^{{\frac{\vartheta}{1-\vartheta } }}}{2}\left (  \sum_{k=m}^{\infty} \sqrt{\mathbb{E}\left \| z_{k+1}-z_{k} \right \|^2}+\sum_{k=m}^{\infty}\sqrt{\mathbb{E}\left \| z_{k}-z_{k-1} \right \|^2} +\sum_{k=m}^{\infty}\sqrt{\mathbb{E}\left \| z_{k-1}-z_{k-2} \right \|^2}\right .\\
%&\left.+\sum_{k=m}^{\infty}\sqrt{\mathbb{E}\left \| z_{k-2}-z_{k-3} \right \|^2}\right )^{\frac{\vartheta}{ 1-\vartheta} }+\frac{2^{{\frac{\vartheta}{1-\vartheta } }}}{2}\left ( P\sqrt{s\mathbb{E}\Upsilon _m}\right )\\
\overset{(2)}{\le } \frac{2^{{\frac{\vartheta}{1-\vartheta } }}}{2}\left (  T_m\right )^{\frac{\vartheta}{ 1-\vartheta} }+\frac{2^{{\frac{\vartheta}{1-\vartheta } }}}{2}\left ( P\sqrt{s\mathbb{E}\Upsilon _m}\right )\\
\overset{(3)}{\le } &\frac{2^{{\frac{\vartheta}{1-\vartheta } }}}{2}\left ( P\left ( p+ \frac{4\sqrt{sV_\Upsilon } }{\rho}\right ) \left ( \sqrt{\mathbb{E}\left \| z_{m}-z_{m-1} \right \|^2}+\sqrt{\mathbb{E}\left \| z_{m-1}-z_{m-2} \right \|^2} +\sqrt{\mathbb{E}\left \| z_{m-2}-z_{m-3} \right \|^2} \right . \right .\\
&\left.\left.+\sqrt{\mathbb{E}\left \| z_{m-3}-z_{m-4} \right \|^2} \right )+\frac{4\sqrt{s}P }{\rho}\left (\sqrt{\mathbb{E}\Upsilon _{m-1} }-\sqrt{\mathbb{E}\Upsilon _{m} } \right )-P\sqrt{s\mathbb{E}\Upsilon _{m-1} }  \right )+\frac{2^{{\frac{\vartheta}{1-\vartheta } }}}{2}\left ( P\sqrt{s\mathbb{E}\Upsilon _m}\right )\\
\le&\frac{2^{{\frac{\vartheta}{1-\vartheta } }}}{2}\left ( P\left ( p+ \frac{4\sqrt{sV_\Upsilon } }{\rho}\right ) \left ( \sqrt{\mathbb{E}\left \| z_{m}-z_{m-1} \right \|^2}+\sqrt{\mathbb{E}\left \| z_{m-1}-z_{m-2} \right \|^2} +\sqrt{\mathbb{E}\left \| z_{m-2}-z_{m-3} \right \|^2} \right . \right .\\
&\left.\left.+\sqrt{\mathbb{E}\left \| z_{m-3}-z_{m-4} \right \|^2} \right )+\frac{4\sqrt{s}P(1-\rho/4) }{\rho}\left (\sqrt{\mathbb{E}\Upsilon _{m-1} }-\sqrt{\mathbb{E}\Upsilon _{m} } \right ) \right ).\\
\endaligned
\end{equation*}
Here, (1) follows by convexity of the function $x^{\frac{\vartheta}{1-\vartheta }}$ for $\vartheta \in [1/2, 1)$ and $x \ge  0$, (2) is \eqref{(3.37)}, and (3) is \eqref{(3.35)} combined with \eqref{(3.36)}. We absorb the constant $\frac{2^{{\frac{\vartheta}{1-\vartheta } }}}{2}$ into $P$. Define
\begin{equation*}
\aligned
%S_m=&\sum_{k=m}^{\infty} \sqrt{\mathbb{E}\left \| z_{k+1}-z_{k} \right \|^2}+\sum_{k=m}^{\infty}\sqrt{\mathbb{E}\left \| z_{k}-z_{k-1} \right \|^2} +\sum_{k=m}^{\infty}\sqrt{\mathbb{E}\left \| z_{k-1}-z_{k-2} \right \|^2}\\
%&+\sum_{k=m}^{\infty}\sqrt{\mathbb{E}\left \| z_{k-2}-z_{k-3} \right \|^2}+\frac{4\sqrt{s} (1-\rho /4)}{\rho (p+4\sqrt{sV_\Upsilon}  /\rho )}  \sqrt{\mathbb{E}\Upsilon _m}.
S_m=T_m+\frac{4\sqrt{s} (1-\rho /4)}{\rho (p+4\sqrt{sV_\Upsilon}  /\rho )}  \sqrt{\mathbb{E}\Upsilon _m}.
\endaligned
\end{equation*}
$S_m$ is bounded for all $m$ because $\sum_{k=m}^{\infty} \sqrt{\mathbb{E}\left \| z_{k+1}-z_{k} \right \|^2}$ is bounded by \eqref{(3.32)}. Hence, we have shown
\begin{equation}
\label{(3.38)}
S_{m}^{\frac{\vartheta}{1-\vartheta }} \le  P\left ( p+ \frac{4\sqrt{sV_\Upsilon } }{\rho}\right )(S_{m-1}-S_m).
\end{equation}
The rest of the proof is almost the same as it mentioned in \cite{AB,DT}. We omit the proof here.

(iii) When $\vartheta = 0$, the K{\L} property \eqref{(2.011)} implies that exactly one of the following two scenarios holds: either $\mathbb{E} \Phi (z_k)\ne \Phi _{k}^{\ast }$ and
\begin{equation}
\label{(3.50)}
0<C\le\mathbb{E}\left \| \xi _k \right \| ,\ \ \forall \xi _k\in \partial \Phi (z_k)
\end{equation}
or $\mathbb{E} \Phi (z_k)= \Phi _{k}^{\ast }$. We show that the above inequality can hold only for a finite number of iterations.
\par Using the subgradient bound \eqref{(3.9)}, the first scenario implies
\begin{equation*}
\aligned
C^2\le&\left ( \mathbb{E}\left \| \xi _k \right \|  \right )^2 \\
\le &\left ( p\left (\mathbb{E}\left \| z_{k}-z_{k-1} \right \|+\mathbb{E}\left \| z_{k-1}-z_{k-2} \right \| +\mathbb{E}\left \| z_{k-2}-z_{k-3} \right \|+\mathbb{E}\left \| z_{k-3}-z_{k-4} \right \|\right )+\Gamma_{k-1} \right )^2\\
\le&5p^2 \left ( \mathbb{E}\left \| z_{k}-z_{k-1} \right \|  \right ) ^2+5p^2 \left ( \mathbb{E}\left \| z_{k-1}-z_{k-2} \right \|  \right ) ^2+5p^2 \left ( \mathbb{E}\left \| z_{k-2}-z_{k-3} \right \|  \right ) ^2\\
&+5p^2 \left ( \mathbb{E}\left \| z_{k-3}-z_{k-4} \right \|  \right ) ^2+5(\mathbb{E} \Gamma_{k-1})^2\\
\le&5p^2 \left ( \mathbb{E}\left \| z_{k}-z_{k-1} \right \|  \right ) ^2+5p^2 \left ( \mathbb{E}\left \| z_{k-1}-z_{k-2} \right \|  \right ) ^2+5p^2 \left ( \mathbb{E}\left \| z_{k-2}-z_{k-3} \right \|  \right ) ^2\\
&+5p^2 \left ( \mathbb{E}\left \| z_{k-3}-z_{k-4} \right \|  \right ) ^2+5s\mathbb{E} \Upsilon_{k-1},
\endaligned
\end{equation*}
where we have used the inequality $(a_1+a_2+\cdots +a_s)^2\le s (a_1^2+a_2^2+\cdots +a_s^2)$ and Jensen's inequality. Applying this inequality to the decrease of $\Psi_ k$ \eqref{(3.2)}, we obtain
\begin{equation*}
\aligned
&\mathbb{E}_k\Psi _{k} \\
\le& \mathbb{E}_k\Psi _{k-1}-\kappa \left \| z_{k+1}-z_{k} \right \|^2-\epsilon \left \| z_{k}-z_{k-1} \right \|^2- \epsilon \left \| z_{k-1}-z_{k-2} \right \|^2- Z \left \| z_{k-2}-z_{k-3} \right \|^2\\
\le&\mathbb{E}_k\Psi _{k-1}-C^2+\mathcal{O}\left (  \left \| z_{k+1}-z_{k} \right \|^2 \right )+\mathcal{O} \left ( \left \| z_{k}-z_{k-1} \right \|^2 \right ) +\mathcal{O} \left ( \left \| z_{k-1}-z_{k-2} \right \|^2 \right )\\
&+\mathcal{O} \left ( \left \| z_{k-2}-z_{k-3} \right \|^2 \right )+\mathcal{O} \left ( \mathbb{E} \Upsilon_{k-1} \right ) 
\endaligned
\end{equation*}
for some constant $C^2$. Because the final five terms go to zero as $k \to \infty$, there exists an index $M_4$ so that the sum of these five terms is bounded above by $\frac{C^2}{2}$ for all $k \ge M_4$. Therefore,
$$\mathbb{E}_k\Psi _{k}\le\mathbb{E}_k\Psi-\frac{C^2}{2},\ \ \forall k\ge M_4.$$
Because $\Psi_k$ is bounded below for all $k$, this inequality can only hold for $N < \infty$ steps. After $N$ steps, it is no longer possible for the bound \eqref{(3.50)} to hold, so it must be that $\mathbb{E} \Phi (z_k)= \Phi _{k}^{\ast }$. Because $\Phi _{k}^{\ast }<\Phi(z^{\ast })$, $\Phi _{k}^{\ast }<\mathbb{E} \Phi (z_k)$, and both $\mathbb{E} \Phi (z_k)$, $\Phi _{k}^{\ast }$ converge to $\mathbb{E}\Phi(z^{\ast })$, we must have $\Phi _{k}^{\ast }=\mathbb{E} \Phi (z_k)=\mathbb{E}\Phi(z^{\ast })$.
\end{proof}
\section{Numerical experiments}\label{sect5}
\ \par In this section, to demonstrate the advantages of STiBPALM (Algorithm \ref{alg1}), we present our numerical study on the practical performance of the proposed STiBPALM with three different stochastic gradient estimators, i.e. SGD estimator \cite{RS} (STiBPALM-SGD), SAGA gradient \cite{AFS} estimator (STiBPALM-SAGA), and SARAH gradient \cite{LM} estimator (STiBPALM-SARAH), compared with PALM \cite{JSM}, iPALM \cite{TS}, TiPALM \cite{GZ}, SPRING \cite{DT} and SiPALM \cite{JG} algorithms. We refer to SPRING with SGD, SAGA, and SARAH gradient estimators as SPRING-SGD, SPRING-SAGA, and SPRING-SARAH; SiPALM using the SGD, SAGA, and SARAH gradient estimators as SiPALM-SGD, SiPALM-SAGA, and SiPALM-SARAH, respectively. Two applications are considered here for comparison: sparse nonnegative matrix factorization (S-NMF) and blind image-deblurring (BID). 
%\par We set the parameter of the compared algorithms for all numerical experiments (Subsections \ref{sect51} and \ref{sect52}) as follows:
\par Since the proposed algorithm is based on the stochastic gradient estimator, we report the average results (over 10 independent runs) of objective values for all algorithms. The initial point is also the same for all algorithms. In addition, we choose step-size which is suggested in \cite{JSM} for PALM and in \cite{TS} for iPALM, respectively and the same step-size based on \cite{DT} for all stochastic algorithms for simplicity.
\subsection{Sparse nonnegative matrix factorization}\label{sect51}
%\ \par Given a matrix $A$, the target of sparse nonnegative matrix factorization (S-NMF) \cite{DH,JN,FF} is to decompose database $A$ into a number of sparse basis, such that each database can be approximated by using a small number of those parts. In order to enforce sparsity on the basis, we additionally consider $l_0$ sparsity constraint \cite{RF} in this problem. Thus S-NMF can be formulated as the following model
\ \par Given a matrix $A$, sparse nonnegative matrix factorization (S-NMF) \cite{DH,JN,FF} problem can be formulated as the following model
\begin{equation}
\label{(5.1)}
\aligned
\underset{X,Y}{\min}  \left \{ \frac{\eta}{2}\left \| A-XY \right \| _{F}^{2} : \ X,Y\ge 0,\ \left \| X_i \right \| _0\le s,\ i=1,2,\dots ,r\right \}.
\endaligned
\end{equation}
In dictionary learning and sparse coding, $X$ is called the learned dictionary with coefficients $Y$. In this formulation, the sparsity on $X$ is restricted $75\%$ of the entries to be 0.
%In this formulation, the sparsity on $X$ is strictly enforced using the nonconvex $l_ 0$ constraint, restricting $75\%$ of the entries to be 0. This model \eqref{(5.1)} can be converted to \eqref{MP}, where $H(X,Y)=\frac{\eta}{2}\left \| A-XY \right \| _{F}^{2}$, $f(X)=\sum_{i=1}^r \left \| X_i \right \| _0+\iota_{X\ge0}(X)$, $g(Y)=\iota_{Y\ge0}(Y)$, $\iota_C$ is the indicator function on $C$. 
\par We use the extended Yale-B dataset and the ORL dataset, which are standard facial recognition benchmarks consisting of human face images\footnote{ \href{http://www.cad.zju.edu.cn/home/dengcai/Data/FaceData.html}{http://www.cad.zju.edu.cn/home/dengcai/Data/FaceData.html.}}. For solving this S-NMF problem \eqref{(5.1)}, \cite{GCH,TS} gave the details on how to solve the X-subproblems and Y-subproblems. The extended Yale-B dataset contains 2414 cropped images of size $32 \times 32$, while the ORL dataset contains 400 images sized $64 \times 64$, see Figure \ref{fig1}. In the experiment for the Yale dataset, we extract 49 sparse basis images for the dataset. For the ORL dataset we extract 25 sparse basis images. In each iteration of the stochastic algorithms, we randomly subsample $5\%$ of the full batch as a minibatch. Here for SARAH gradient estimator we set $p=\frac{1}{20}$.
\ \par In STiBPALM, let $\phi_1(X)=\frac{\theta_1 }{2} \left \| X\right \|^{2}$, $\phi_2(Y)=\frac{\theta_2 }{2} \left \|Y \right \|^{2}$. In numerical experiment, we choose $\eta=3$ and calculate $\theta_1$ and $\theta_2$ by computing the largest eigenvalues of $\eta YY^T$ and $\eta X^TX$ at $k$-th iteration, respectively. We choose $\alpha_{1k}=\beta _{1k}=\gamma_{1k}=\mu_{1k}=\frac{k-1}{k+2}$, $\alpha_{2k}=\beta _{2k}=\gamma_{2k}=\mu_{2k}=\frac{k-1}{k+2}$ in TiPALM and STiBPALM and $\alpha_{1k}=\beta_{1k}=\gamma_{1k}=\mu_{1k}=\frac{k-1}{k+2}$ in iPALM and SiPALM. We use BTiPALM and BSTiPALM to denote TiPALM and STiBPALM with $\phi_1(X)=\frac{\theta_1^2 }{4} \left \| X\right \|^{4}$, $\phi_2(Y)=\frac{\theta_2 }{2} \left \|Y \right \|^{2}$, respectively. We refer to BSTiPALM using the SGD, SAGA, and SARAH gradient estimators as BSTiPALM-SGD, BSTiPALM-SAGA, and BSTiPALM-SARAH, respectively.
\par In Figure \ref{fig2} and Figure \ref{fig4}, we report the numerical results for Yale-B dataset. A similar result for the ORL dataset is plotted in Figure \ref{fig3} and Figure \ref{fig5}. One can observe from these four figures that the STiBPALM can get slightly lower values than the other algorithms within almost the same computation time. In addition, STiBPALM can get better performance than the SPRING and SiPALM stochastic algorithm with epoch changes. %That is, the two-step inertial stochastic algorithm can obtain better numerical results than the one-step inertial and original stochastic algorithms.  STiBPALM using SAGA and SARAH estimators achieves better performance than SGD gradient estimator.
The stochastic algorithms can improve the numerical results compared with the corresponding deterministic method. Furthermore, compared with the stochastic gradient algorithm without variance reduction (SGD), the variance reduced stochastic gradient (SAGA, SARAH) algorithm can get better numerical results.
\par The numerical results applying different Bregman distances under Yale-B dataset and ORL dataset are reported in Figure \ref{fig6} and Figure \ref{fig7}, respectively. We can observe that BSTiPALM algorithm can obtain better numerical results compared to STiBPALM algorithm, where SARAH gradient estimator can get the best performance with epoch changes. 
\par We also compare STiBPALM with SGD, SAGA, and SARAH for different sparsity settings (the value of $s$). The results of the basis images are shown in Figure \ref{fig8}. One can observe from Figure \ref{fig8} that for smaller values of $s$, the four algorithms lead to more compact representations. This might improve the generalization capabilities of the representation.
\begin{figure*}[!t]
    \centering
    \subfloat{\includegraphics[width=6.0in]{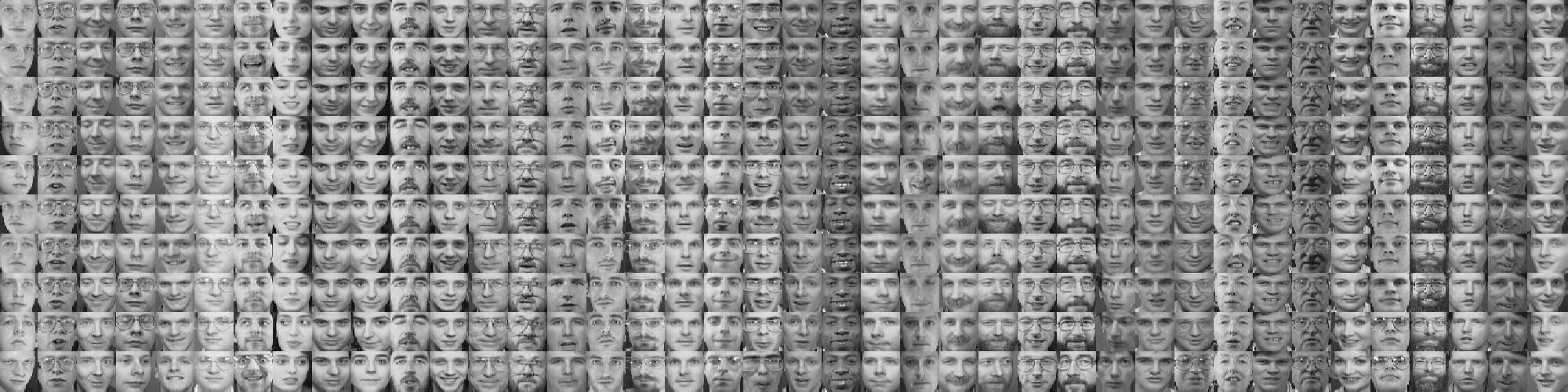}
    \label{400 normalized cropped frontal faces}}
    \caption{ORL face database which includes 400 normalized cropped frontal faces which we used in our S-NMF example.}
    \label{fig1}
\end{figure*}

\begin{figure*}[!t]
    \centering
    \subfloat[Epoch counts comparison]{\includegraphics[width=2.0in]{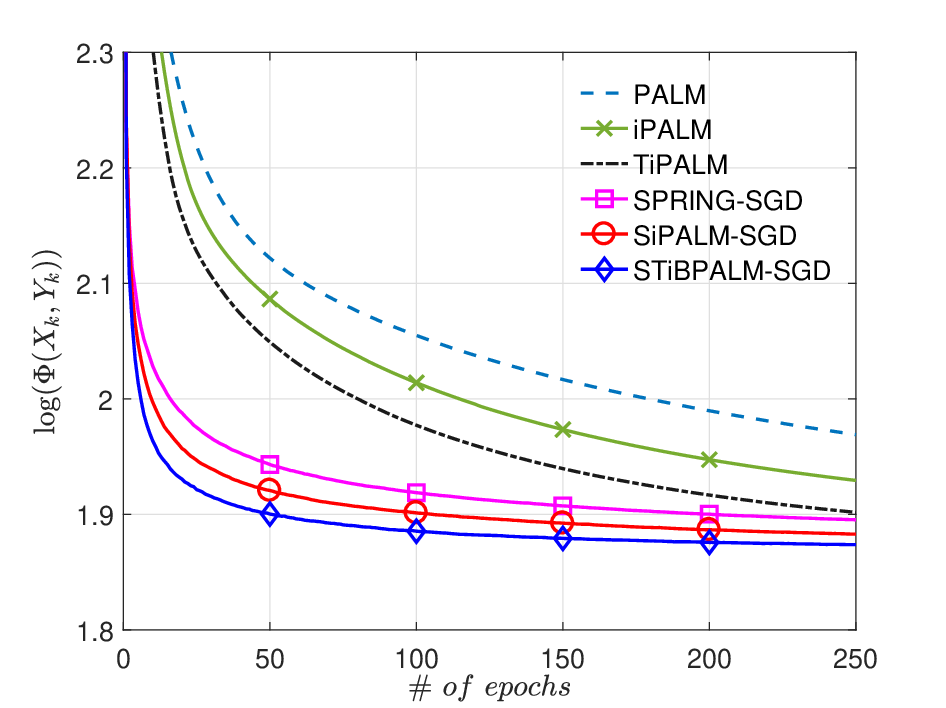}
    \label{Epoch counts comparison}}
\hfil
\subfloat[Epoch counts comparison]{\includegraphics[width=2.0in]{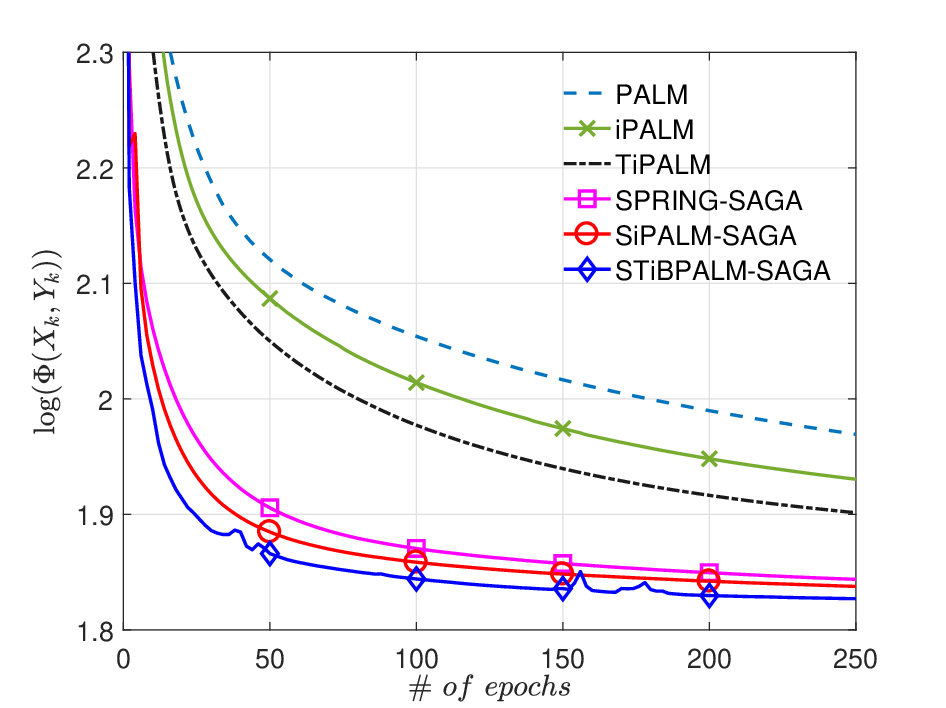}%
    \label{Wall-clock time comparison}}
\hfil
    \subfloat[Epoch counts comparison]{\includegraphics[width=2.0in]{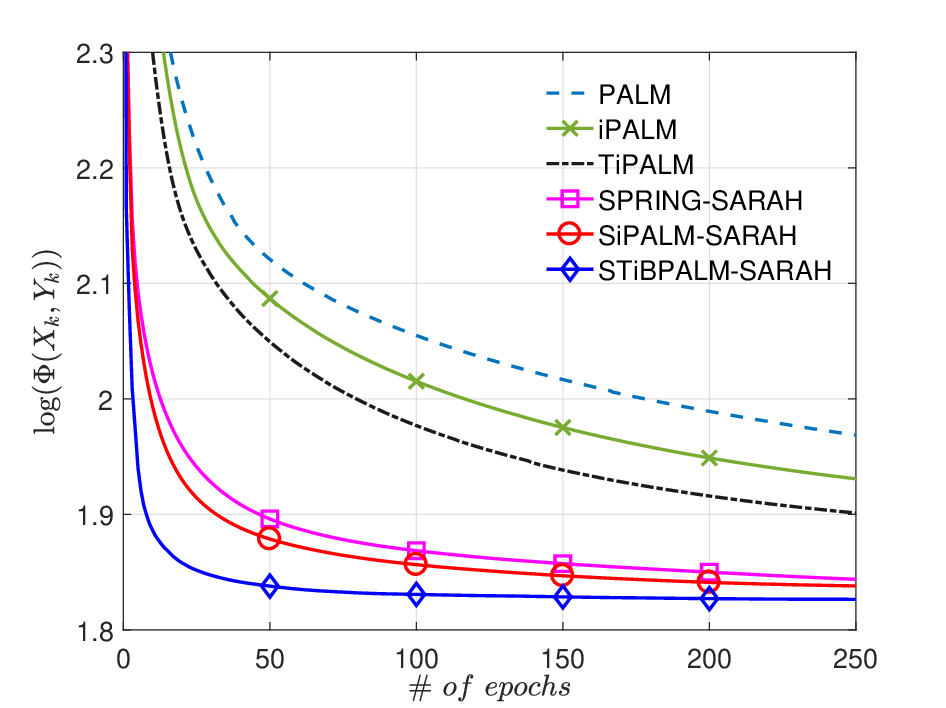}
    \label{Epoch counts comparison}}
\hfil
\subfloat[Wall-clock time comparison]{\includegraphics[width=2.0in]{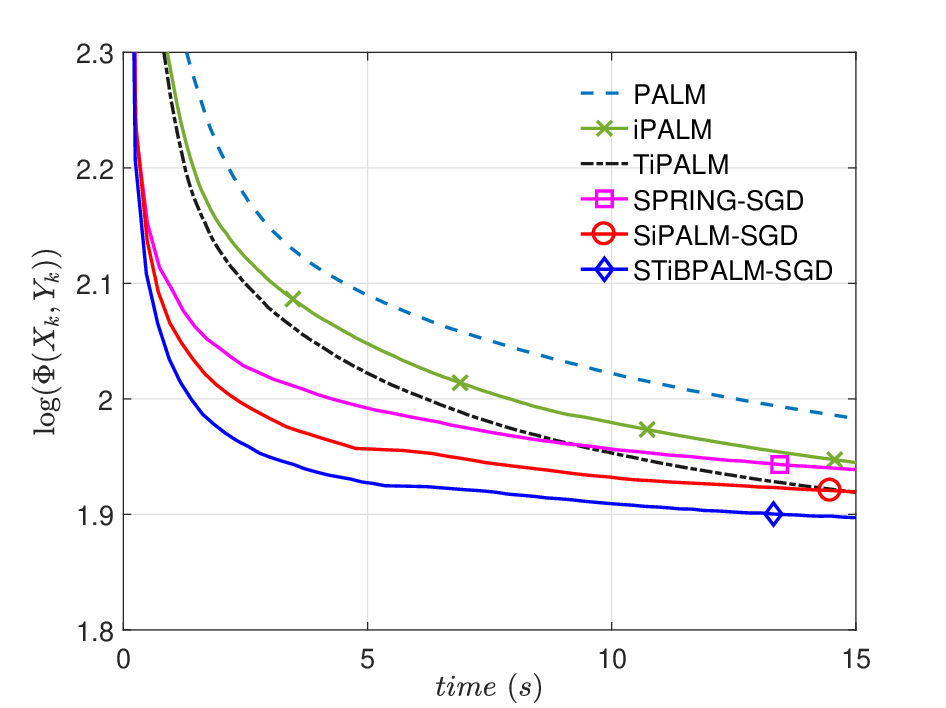}%
    \label{Wall-clock time comparison}}
\hfil
    \subfloat[Wall-clock time comparison]{\includegraphics[width=2.0in]{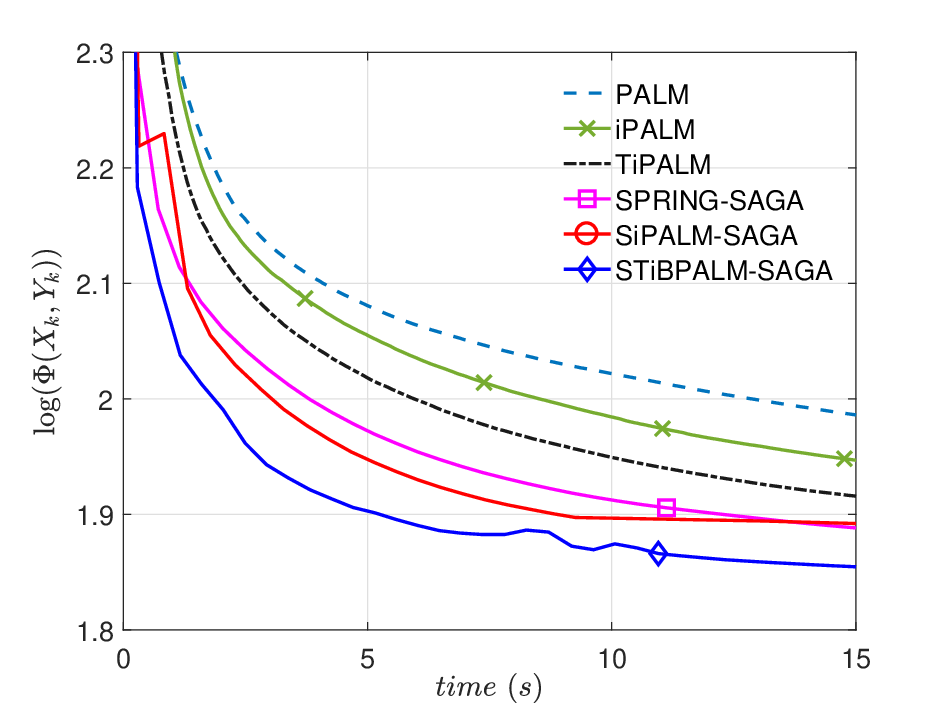}
    \label{Epoch counts comparison}}
\hfil
\subfloat[Wall-clock time comparison]{\includegraphics[width=2.0in]{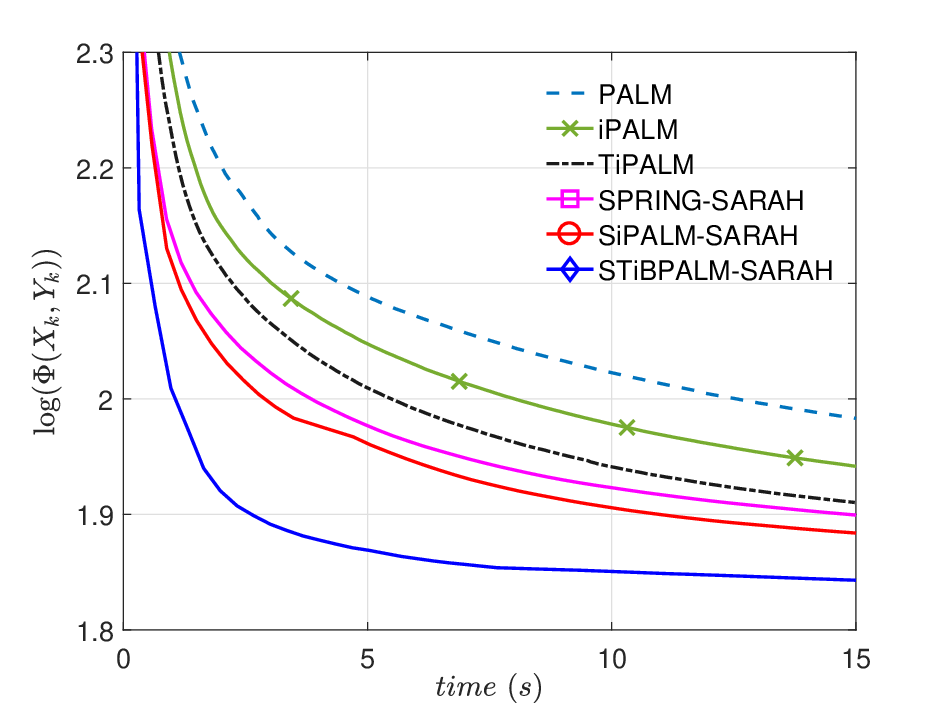}%
    \label{Wall-clock time comparison}}
    \caption{Objective decrease comparison of S-NMF with $s = 25\%$ on Yale dataset. From left column to right column are the results of SGD, SAGA and SARAH, respectively.}
    \label{fig2}
\end{figure*}

\begin{figure*}[!t]
    \centering
    \subfloat[Epoch counts comparison]{\includegraphics[width=3.0in]{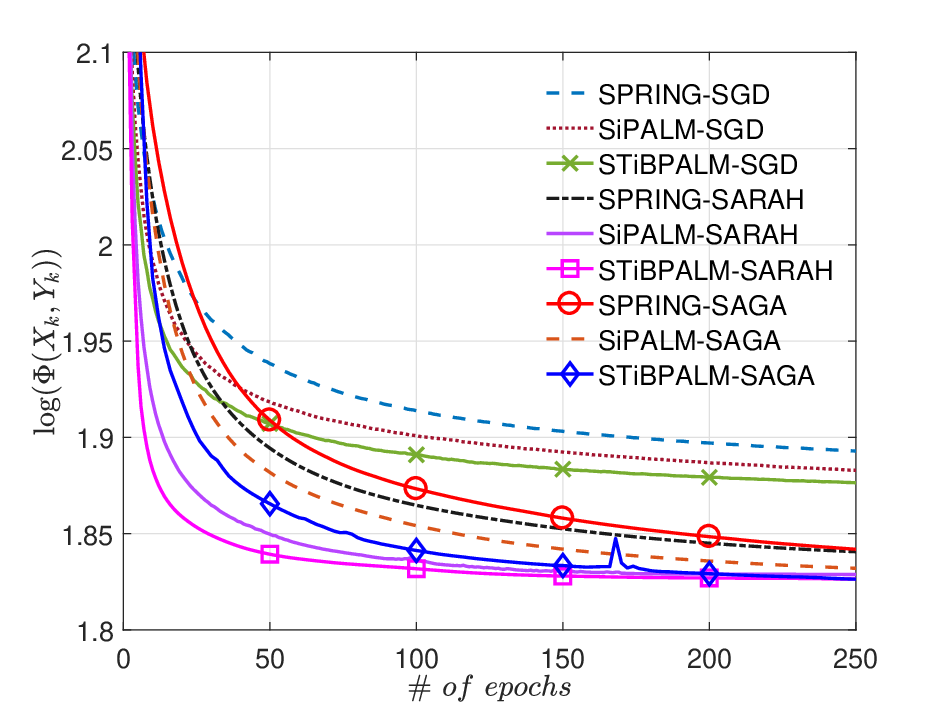}
    \label{Epoch counts comparison}}
\hfil
\subfloat[Wall-clock time comparison]{\includegraphics[width=3.0in]{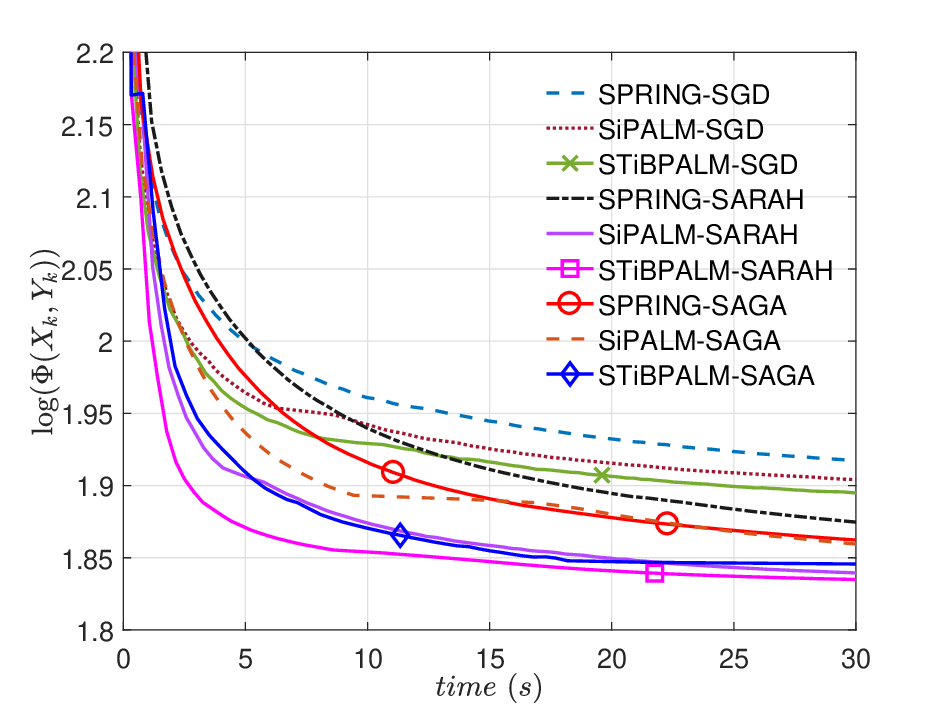}%
    \label{Wall-clock time comparison}}

    \caption{Objective decrease comparison of S-NMF with $s = 25\%$ on Yale dataset.}
    \label{fig4}
\end{figure*}

\begin{figure*}[!t]
    \centering
    \subfloat[Epoch counts comparison]{\includegraphics[width=2.0in]{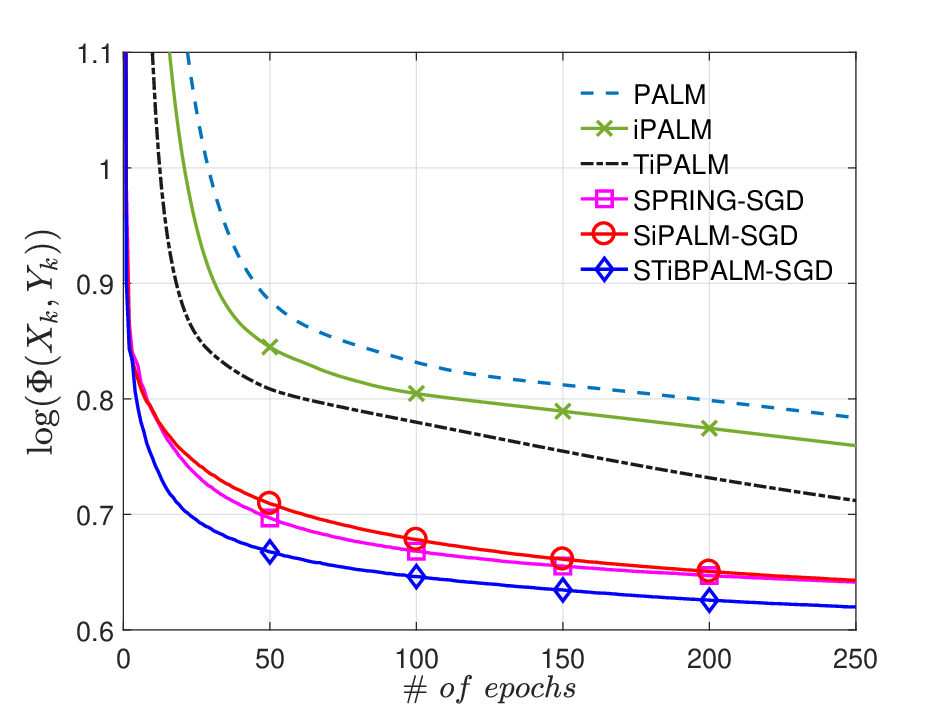}
    \label{Epoch counts comparison}}
\hfil
\subfloat[Epoch counts comparison]{\includegraphics[width=2.0in]{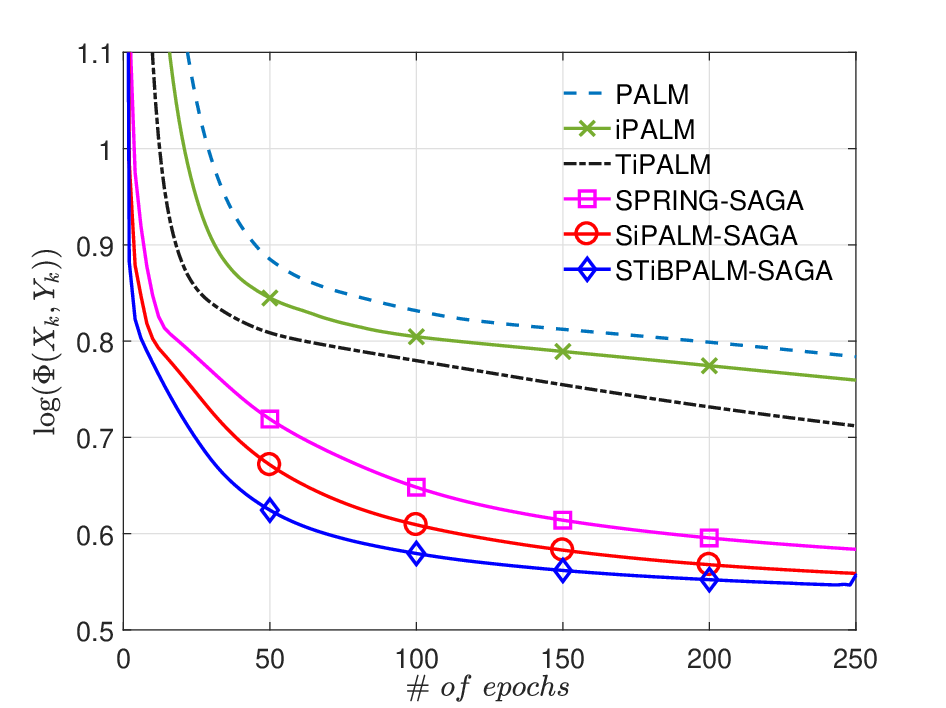}%
    \label{Wall-clock time comparison}}
\hfil
    \subfloat[Epoch counts comparison]{\includegraphics[width=2.0in]{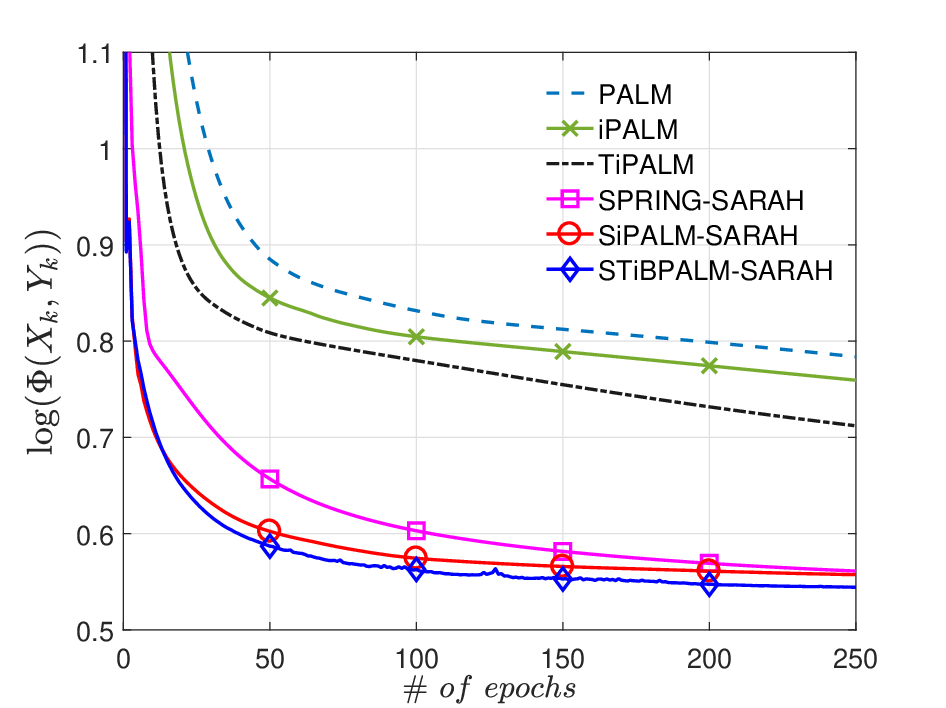}
    \label{Epoch counts comparison}}
\hfil
\subfloat[Wall-clock time comparison]{\includegraphics[width=2.0in]{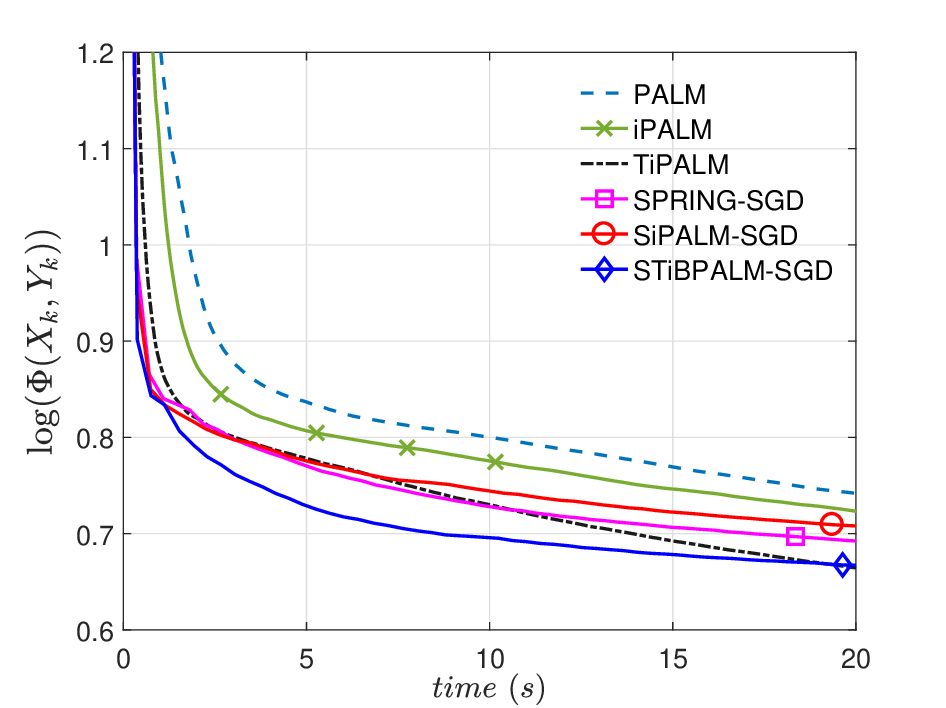}%
    \label{Wall-clock time comparison}}
\hfil
    \subfloat[Wall-clock time comparison]{\includegraphics[width=2.0in]{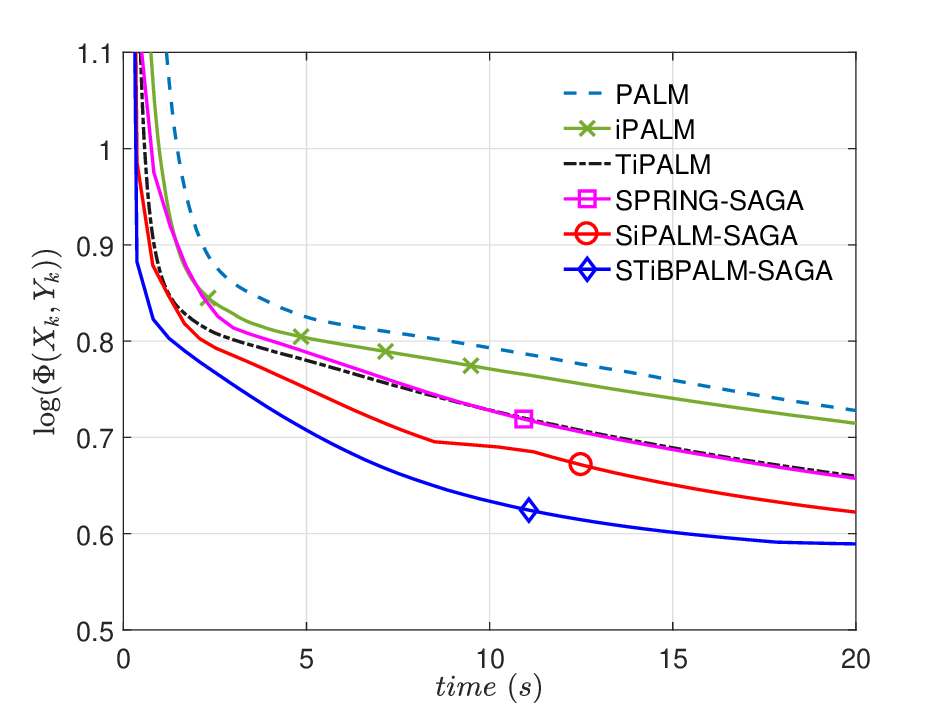}
    \label{Epoch counts comparison}}
\hfil
\subfloat[Wall-clock time comparison]{\includegraphics[width=2.0in]{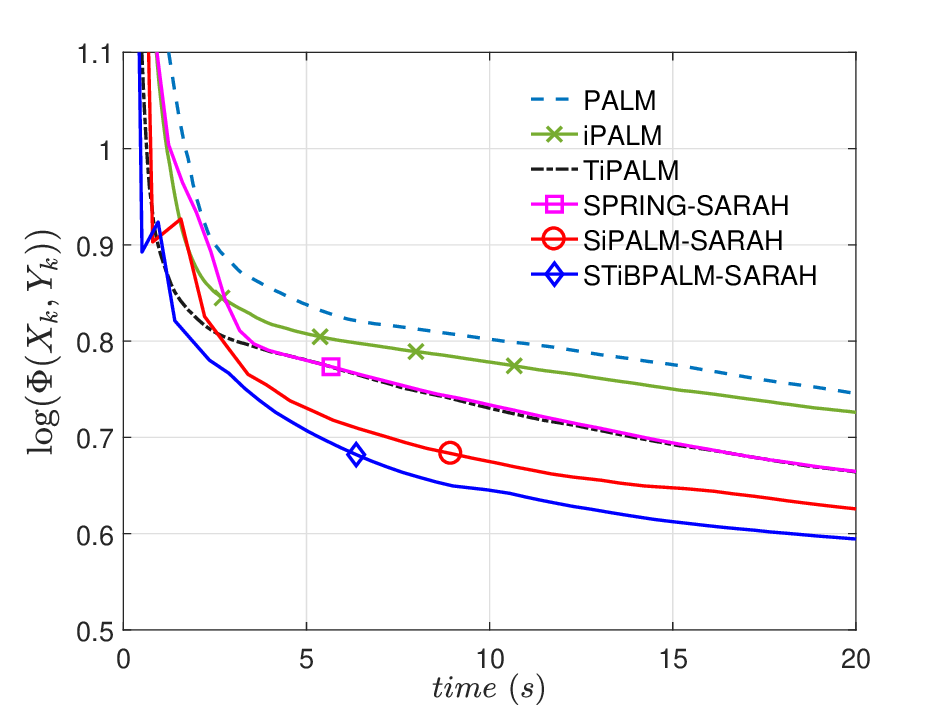}%
    \label{Wall-clock time comparison}}
    \caption{Objective decrease comparison of S-NMF with $s = 25\%$ on ORL dataset. From left column to right column are the results of SGD, SAGA and SARAH, respectively.}
    \label{fig3}
\end{figure*}

\begin{figure*}[!t]
    \centering
    %\subfloat[Epoch counts comparison]{\includegraphics[width=3.0in]{orl2.eps}
    \subfloat[Epoch counts comparison]{\includegraphics[width=3.0in]{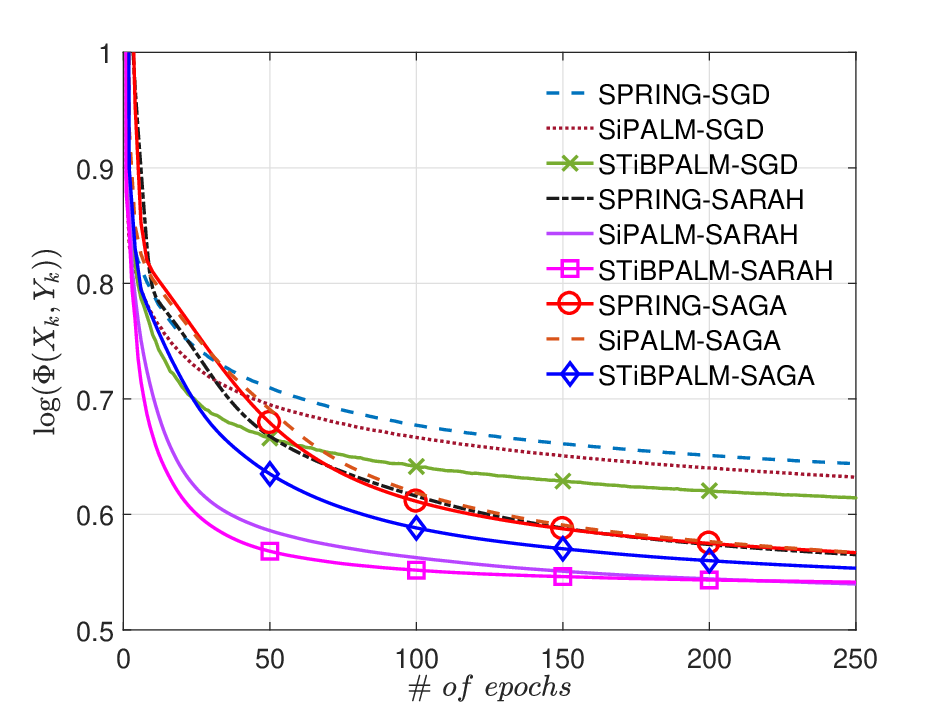}
    \label{Epoch counts comparison}}
\hfil
%\subfloat[Wall-clock time comparison]{\includegraphics[width=3.0in]{orl3.eps}%
\subfloat[Wall-clock time comparison]{\includegraphics[width=3.0in]{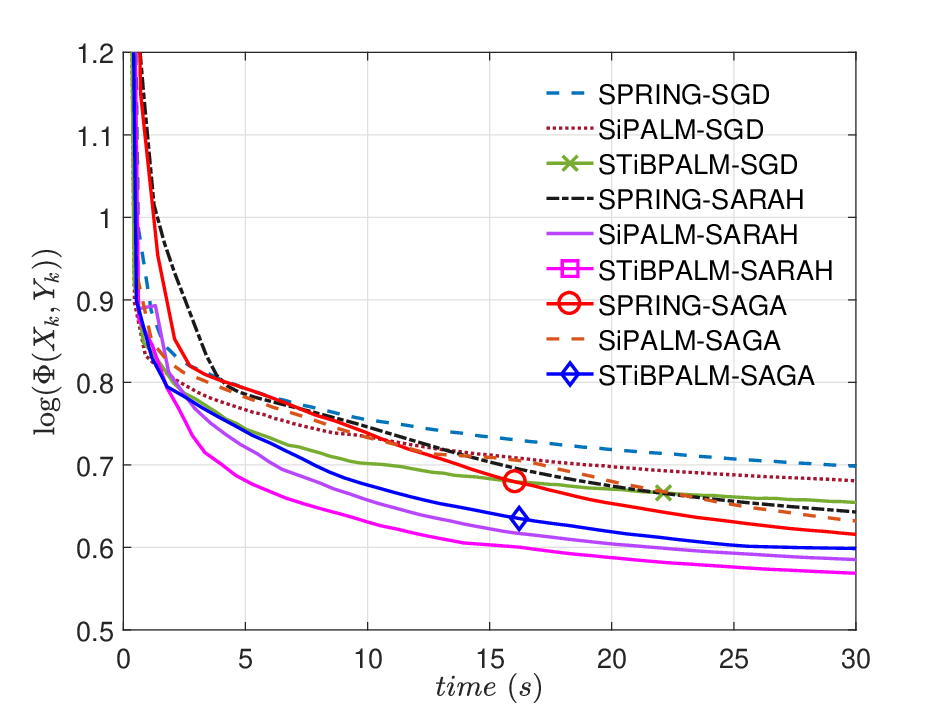}%
    \label{Wall-clock time comparison}}

    \caption{Objective decrease comparison of S-NMF with $s = 25\%$ on ORL dataset.}
    \label{fig5}
\end{figure*}

\begin{figure*}[!t]
    \centering
    %\subfloat[Epoch counts comparison]{\includegraphics[width=3.0in]{BS1.eps}
    \subfloat[Epoch counts comparison]{\includegraphics[width=3.0in]{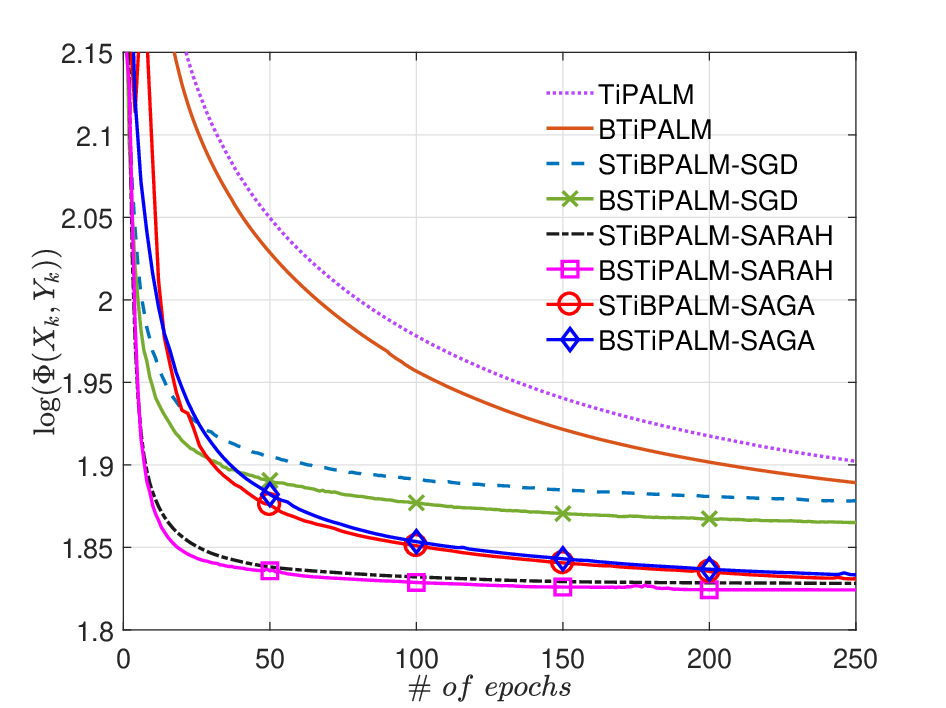}
    \label{Epoch counts comparison}}
\hfil
%\subfloat[Wall-clock time comparison]{\includegraphics[width=3.0in]{BS2.eps}%
\subfloat[Wall-clock time comparison]{\includegraphics[width=3.0in]{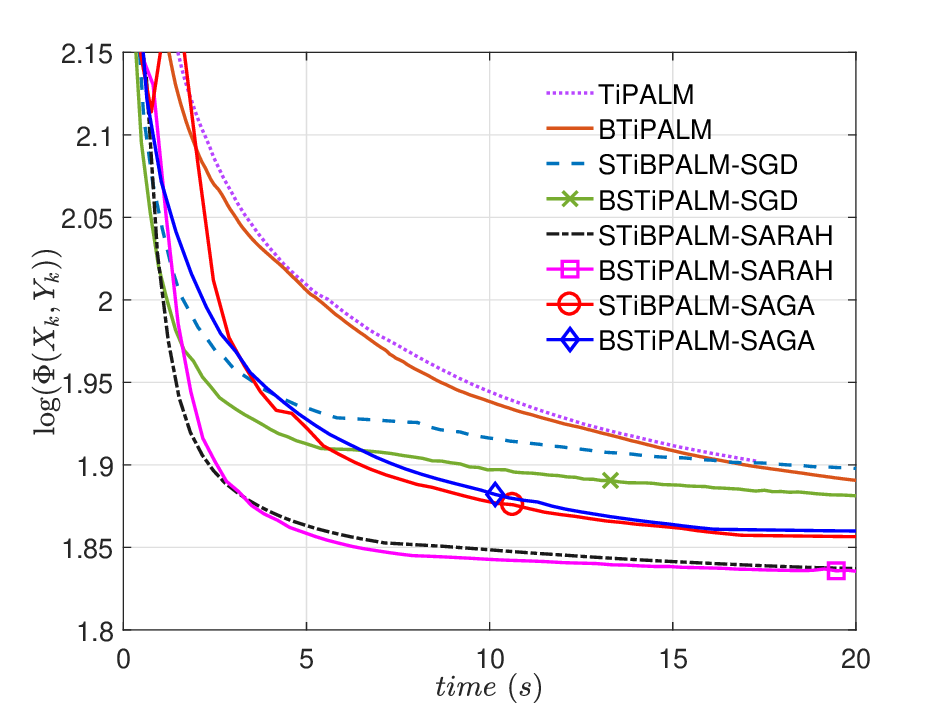}%
    \label{Wall-clock time comparison}}

    \caption{Objective decrease comparison of S-NMF with $s = 25\%$ on Yale dataset with different Brengman distance.}
    \label{fig6}
\end{figure*}

\begin{figure*}[!t]
    \centering
    \subfloat[Epoch counts comparison]{\includegraphics[width=3.0in]{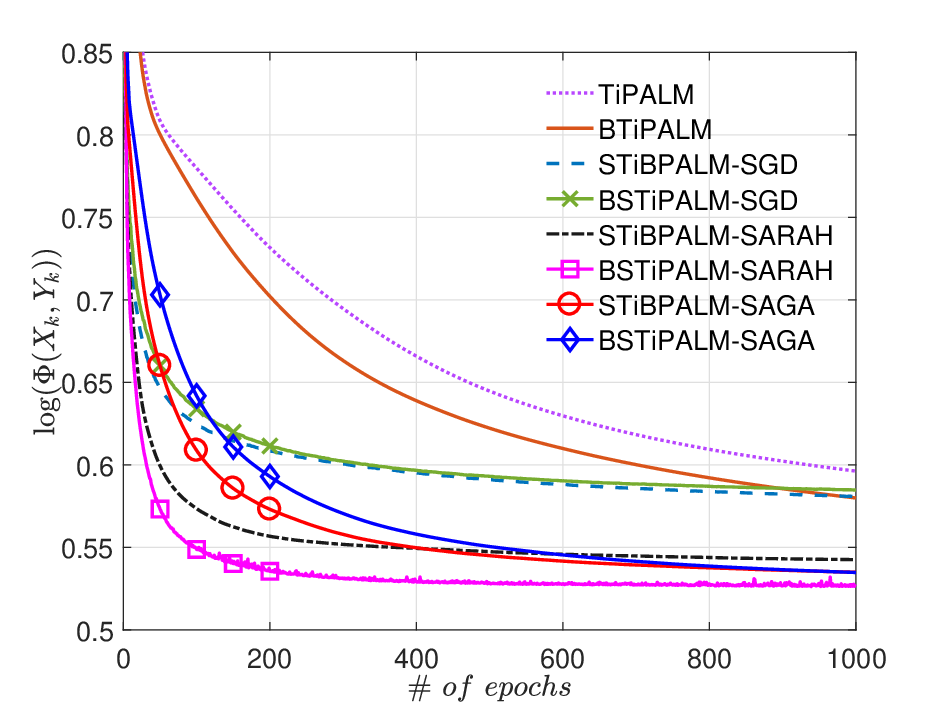}
    \label{Epoch counts comparison}}
\hfil
\subfloat[Wall-clock time comparison]{\includegraphics[width=3.0in]{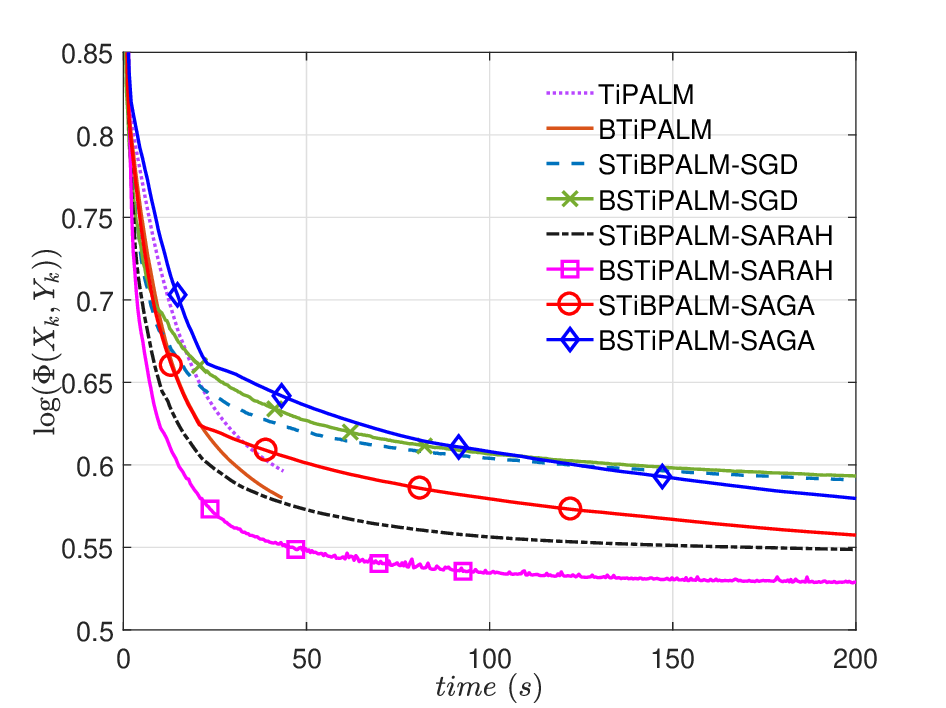}%
    \label{Wall-clock time comparison}}

    \caption{Objective decrease comparison of S-NMF with $s = 25\%$ on ORL dataset with different Brengman distance.}
    \label{fig7}
\end{figure*}

\begin{figure*}[!t]
    \centering
   % \subfloat[iPALM]{\includegraphics[width=2.0in]{os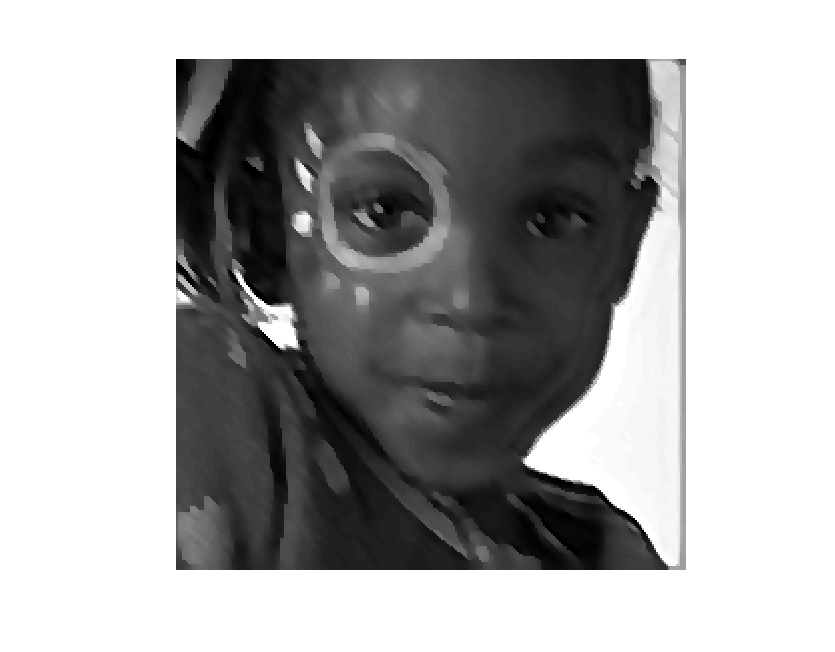}
     \subfloat{\includegraphics[width=1.5in]{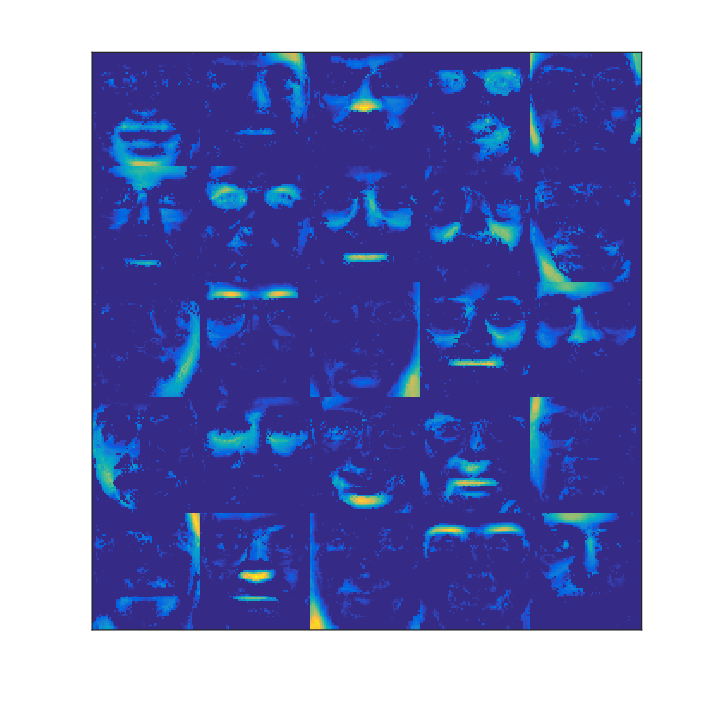}
    \label{fig_first_case}}
\hfil
\subfloat{\includegraphics[width=1.5in]{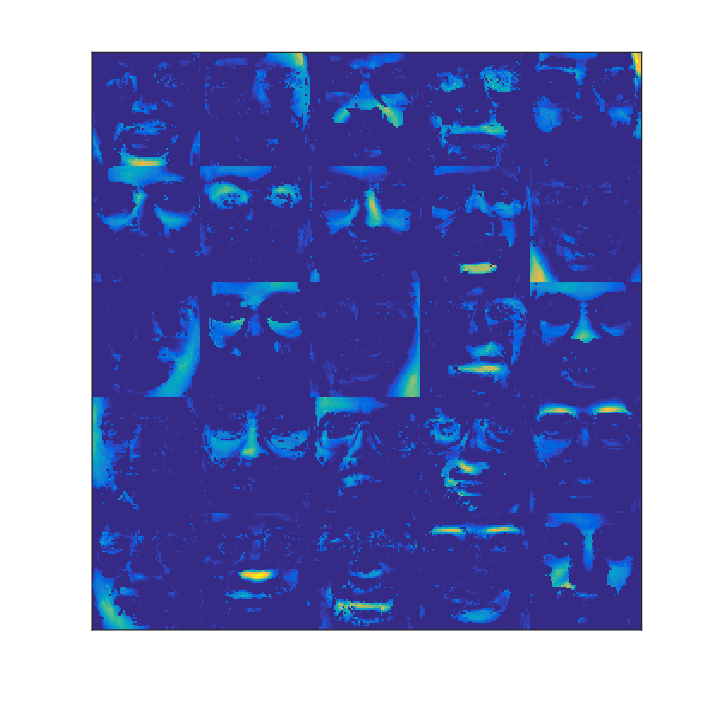}%
    \label{fig_second_case}}
\hfil
\subfloat{\includegraphics[width=1.5in]{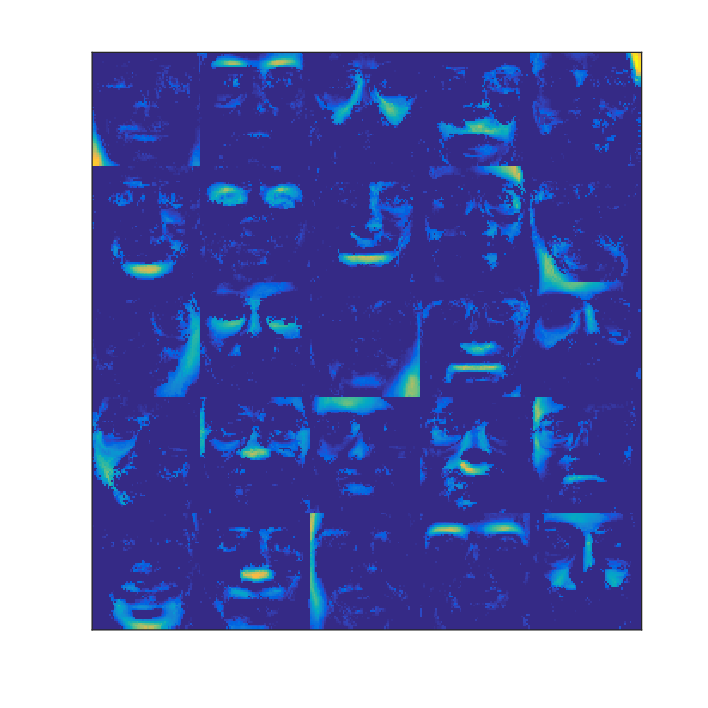}%
    \label{fig_thrid_case}}
\hfil    
    \subfloat{\includegraphics[width=1.5in]{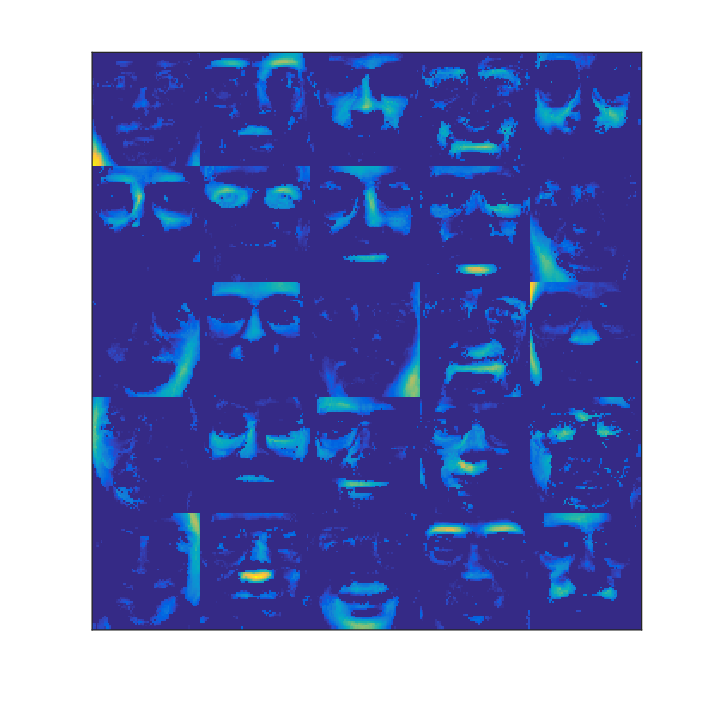}%
    \label{fig_four_case}}
\hfil
\subfloat{\includegraphics[width=1.5in]{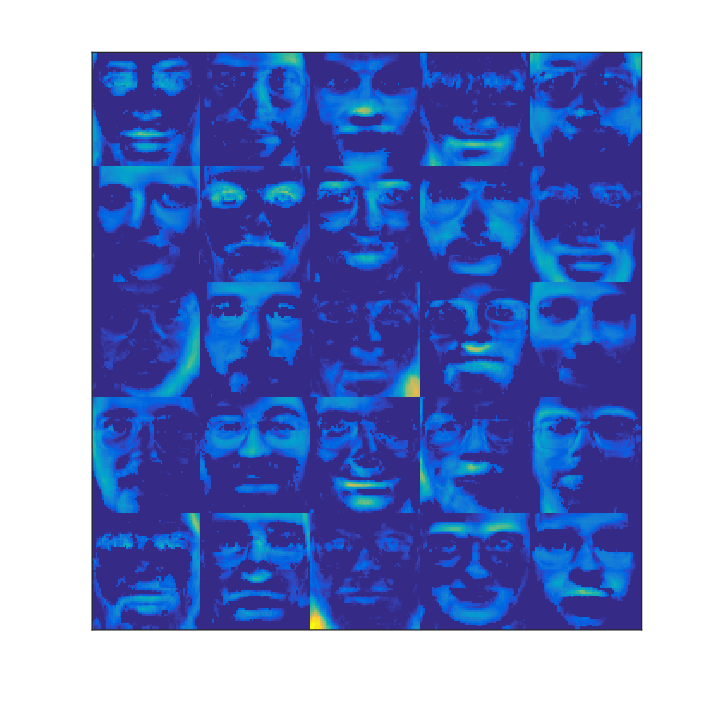}%
    \label{fig_five_case}}
\hfil
\subfloat{\includegraphics[width=1.5in]{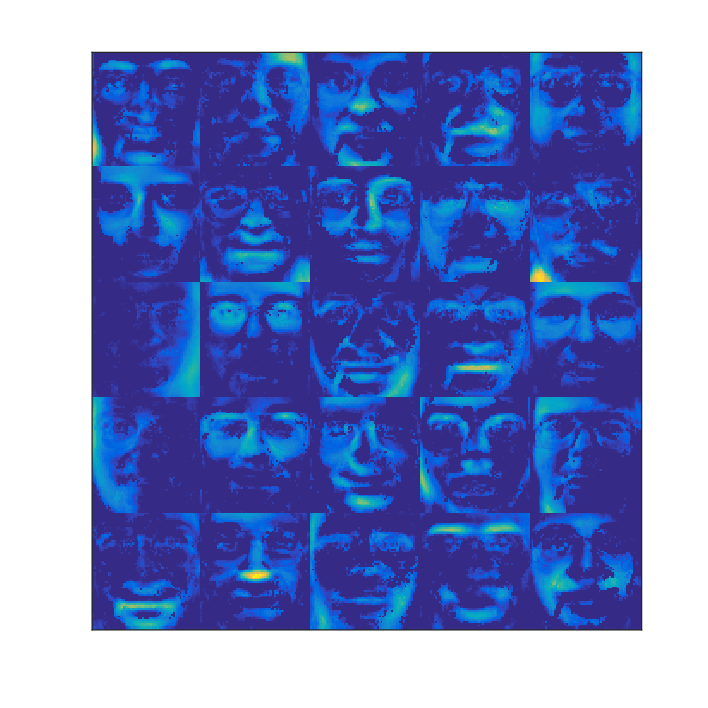}%
    \label{fig_six_case}}
    \hfil
\subfloat{\includegraphics[width=1.5in]{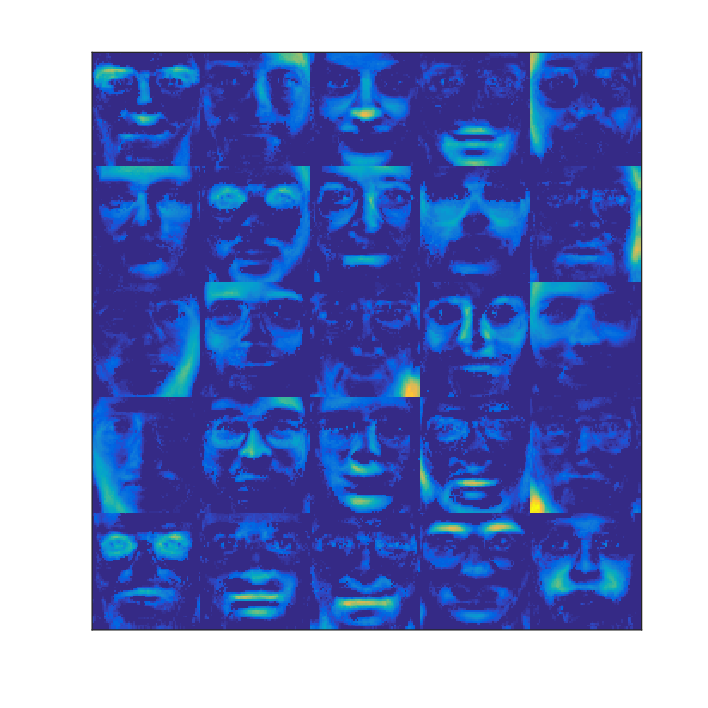}%
    \label{fig_seven_case}}
    \hfil
\subfloat{\includegraphics[width=1.5in]{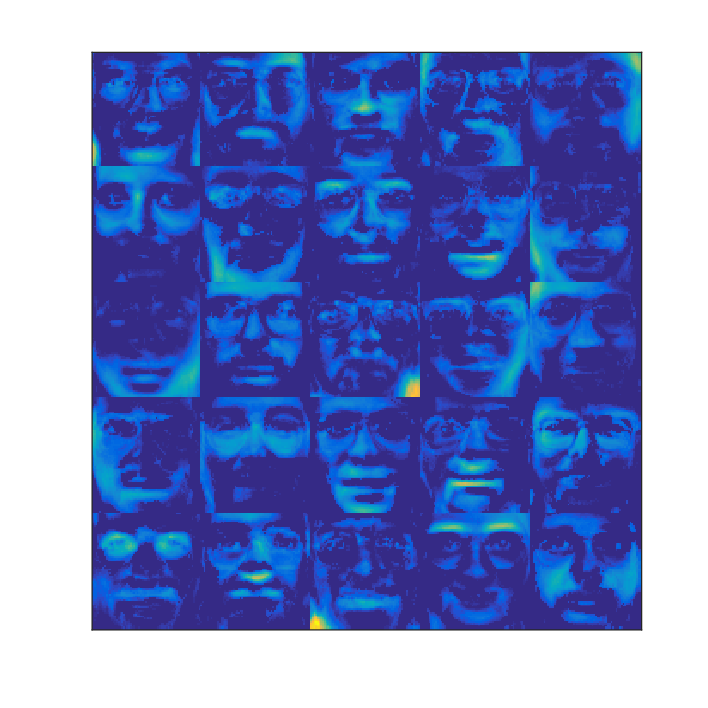}%
    \label{fig_eight_case}}   
    \caption{ The results for 25 basis faces using different sparsity settings. From left column to right column are the results of TiPALM, STiBPALM-SGD, STiBPALM-SAGA, and STiBPALM-SARAH, respectively. From top row to bottom row are the result of $s = 25\%$ and $s = 50\%$, respectively. }
    \label{fig8}
\end{figure*}
\subsection{Blind image-deblurring}\label{sect52}
\ \par Let $A$ be a blurred image, the problem of blind deconvolution is given by
%Let $A$ be the observed blurred image, and let $X$ be the unknown sharp image of the same size. Furthermore, let $Y$ denote a small unknown blur kernel, a typical variational formulation of the blind deconvolution problem is given by
\begin{equation}
\label{(5.2)}
\underset{X,Y}{\min}  \left \{\frac{1}{2} \left \| A-X\odot Y \right \| _{F}^{2}+\eta\sum_{r=1}^{2d} R([D(X)]_r) : \ 0\le X\le 1,\ 0\le Y\le 1,\ \left \| Y \right \| _1\le 1\right \}.
\end{equation}
%where $\eta>0$, $\odot$ is the two-dimensional convolution operator, $R(\cdot )$ is an image regularization term, $D(\cdot )$ is the two-dimensional differential operator.
%, $X$ is the image to recover, and $Y$ is the blur kernel to estimate. That is, the frist term is a data fitting term that ensures that the recovered solution approximates the given blurry image. $R$ is an image regularization term, that imposes sparsity on the image gradient and hence favoring sharp images. 
In numerical experiment, we choose $R(v)= \log(1 + \sigma v^2)$ as in \cite{TS}, where $\sigma=10^3$ and $\eta=5\times 10^{-5}$. 
%$D(\cdot )$ is the two-dimensional differential operator, computing the horizontal and vertical gradients for each pixel. This model \eqref{(5.2)} can be converted to \eqref{MP}, where $H(X,Y)=\frac{1}{2}\left \| A-X\odot Y \right \| _{F}^{2}+\eta\sum_{r=1}^{2d} R([D(X)]_r)$, $f(X)=\iota_{0\le X\le 1}(X)$, $g(Y)=\left \| Y \right \| _1+\iota_{0\le Y\le 1}(Y)$, $\iota_C$ is the indicator function on $C$.
\par We consider two images, Kodim08 and Kodim15, of size $256 \times 256$ for testing. For each image, two blur kernels---linear motion blur and out-of-focus blur---are considered with additional additive Gaussian noise. In this numerical experiment, we mainly use SARAH gradient estimator, and set $p=\frac{1}{64}$. We take $\alpha_{1k}=\beta _{1k}=\gamma_{1k}=\mu_{1k}=\frac{k-1}{k+2}$, $\alpha_{2k}=\beta _{2k}=\gamma_{2k}=\mu_{2k}=\frac{k-1}{k+2}$ in TiPALM and STiBPALM and $\alpha_{1k}=\beta _{1k}=\gamma_{1k}=\mu_{1k}=\frac{k-1}{k+2}$ in iPALM.
\par The convergence comparisons of the algorithms for both images with motion blur are provided in Figure \ref{fig9} and Figure \ref{fig10}, from which we observe STiBPALM-SARAH is faster than the other methods. Figure \ref{fig11} and Figure \ref{fig12} provide comparisons of the recovered image and blur kernel. We observe superior performance of stochastic algorithms over deterministic algorithms in these figures as well.  In particular, when comparing the estimated blur kernels of the two algorithms every 20 epochs, we clearly see that STiBPALM-SARAH more quickly recovers more accurate solutions than TiPALM.
\begin{figure*}[!t]
    \centering
    \subfloat[Epoch counts comparison]{\includegraphics[width=3.0in]{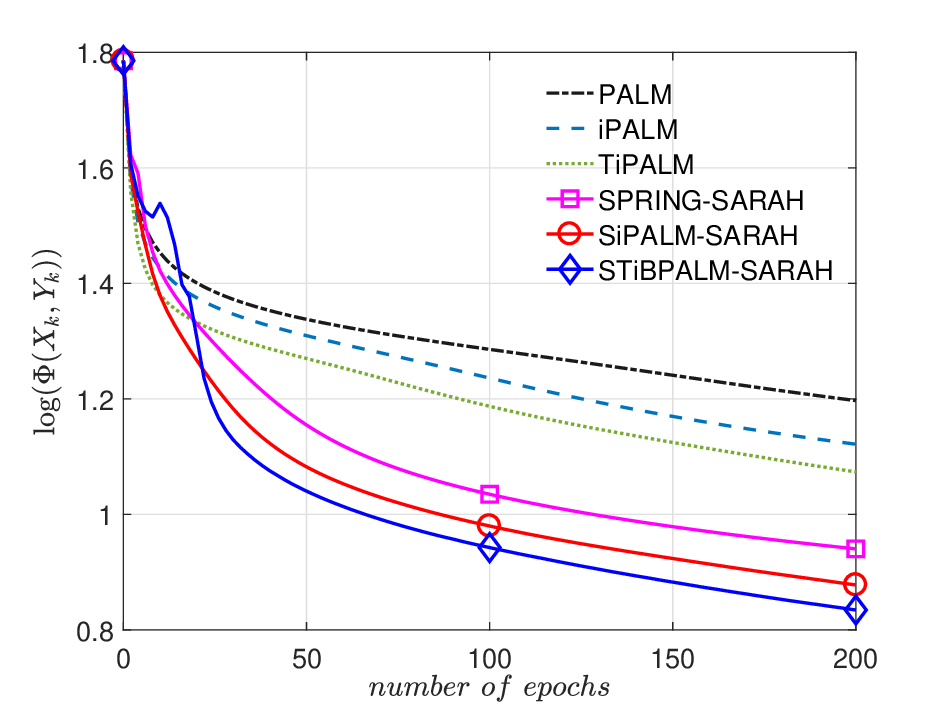}
    \label{Epoch counts comparison}}
\hfil
\subfloat[Wall-clock time comparison]{\includegraphics[width=3.0in]{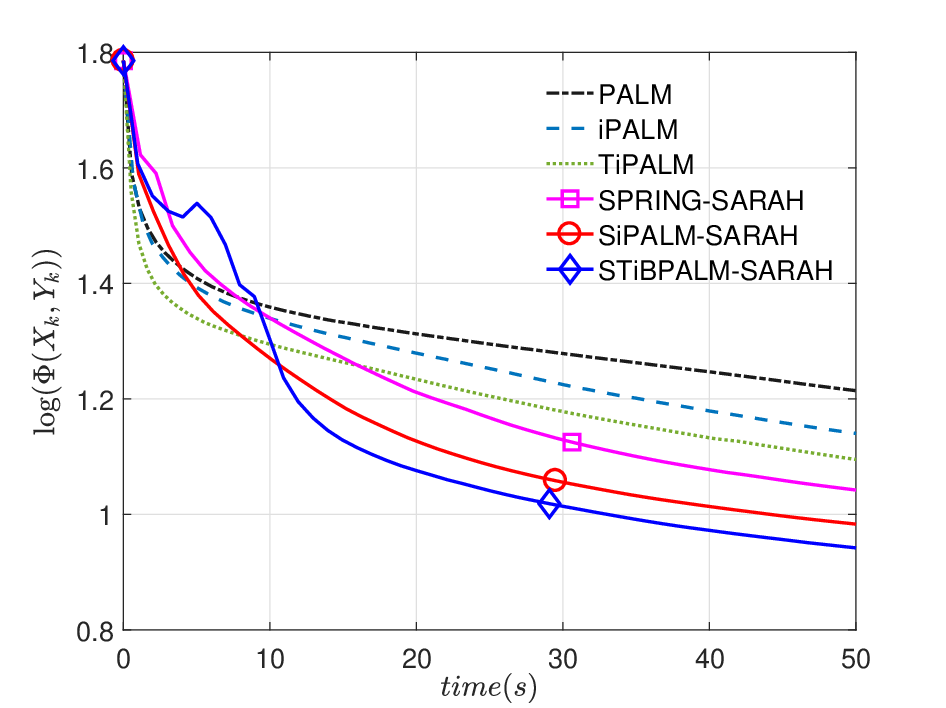}%
    \label{Wall-clock time comparison}}

    \caption{Objective decrease comparison (epoch counts) of blind image-deconvolution experiment on Kodim08 image using an $11 \times 11$ motion blur kernel.}
    \label{fig9}
\end{figure*}

\begin{figure*}[!t]
    \centering
    \subfloat[Epoch counts comparison]{\includegraphics[width=3.0in]{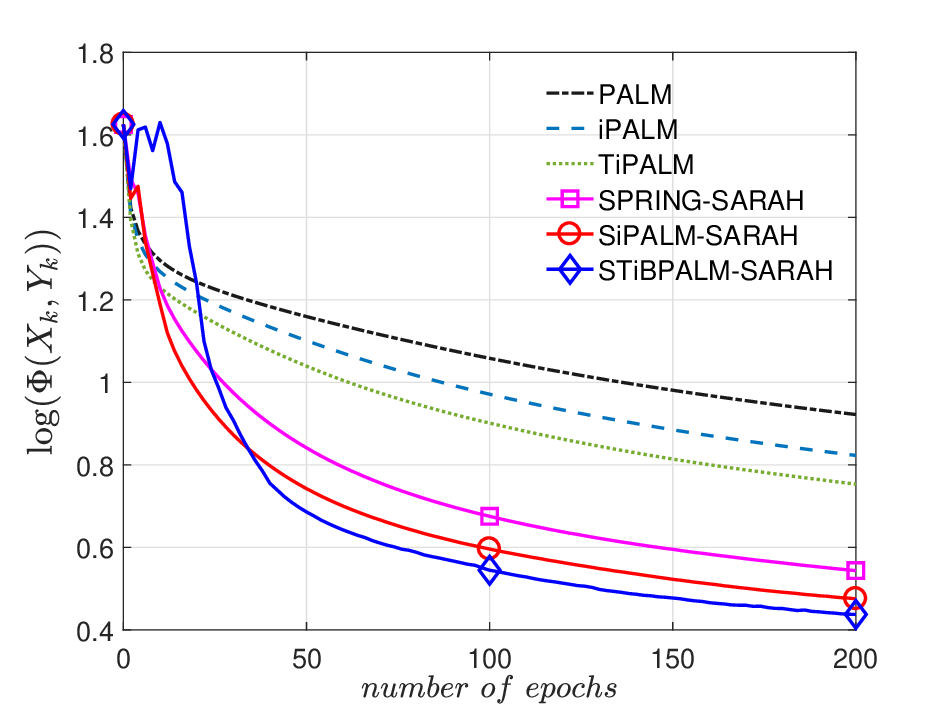}
    \label{Epoch counts comparison}}
\hfil
\subfloat[Wall-clock time comparison]{\includegraphics[width=3.0in]{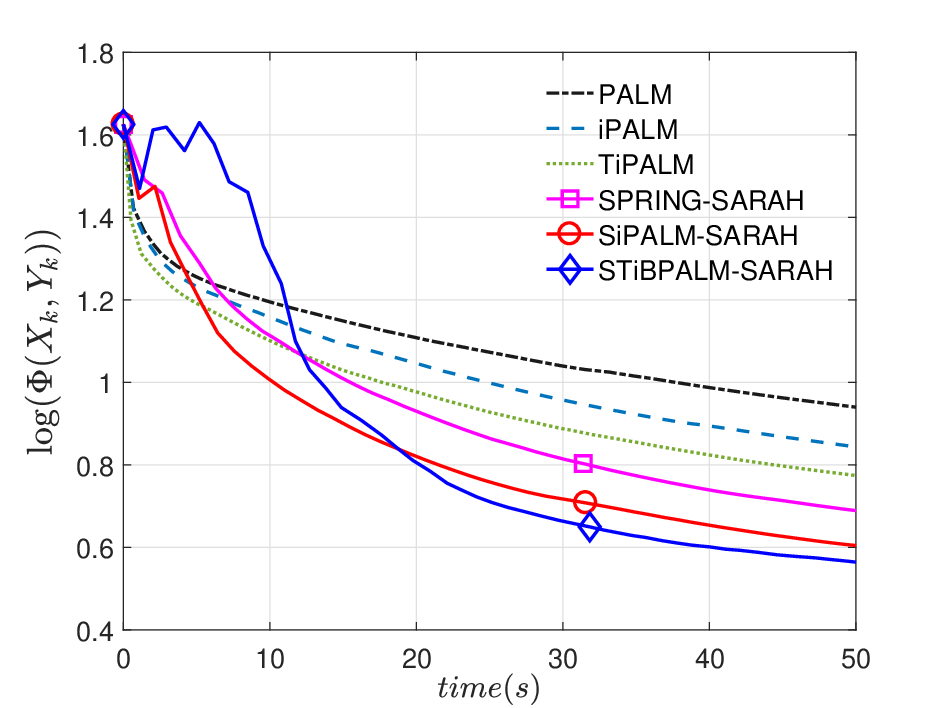}%
    \label{Wall-clock time comparison}}

    \caption{Objective decrease comparison (epoch counts) of blind image-deconvolution experiment on Kodim15 image using an $11 \times 11$ motion blur kernel.}
    \label{fig10}
\end{figure*}

\begin{figure*}[!t]
    \centering
   % \subfloat[iPALM]{\includegraphics[width=2.0in]{oss1.eps}
     \subfloat[Original image and kernel]{\includegraphics[width=1.5in]{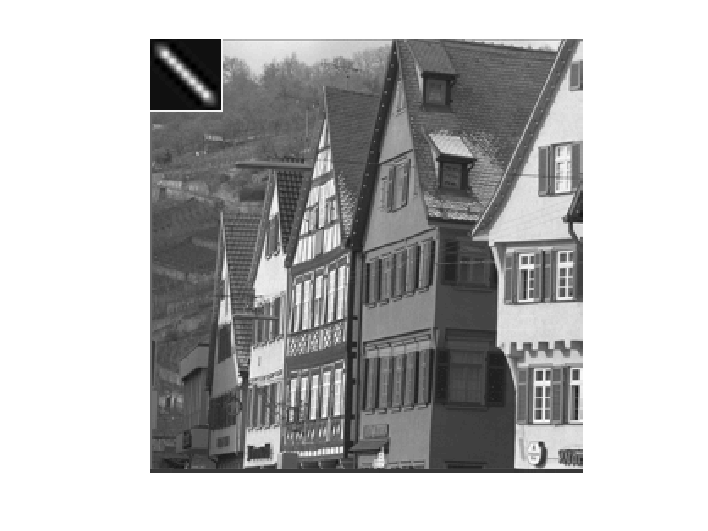}
    \label{fig_first_case}}
\hfil
\subfloat[Blurred image]{\includegraphics[width=1.5in]{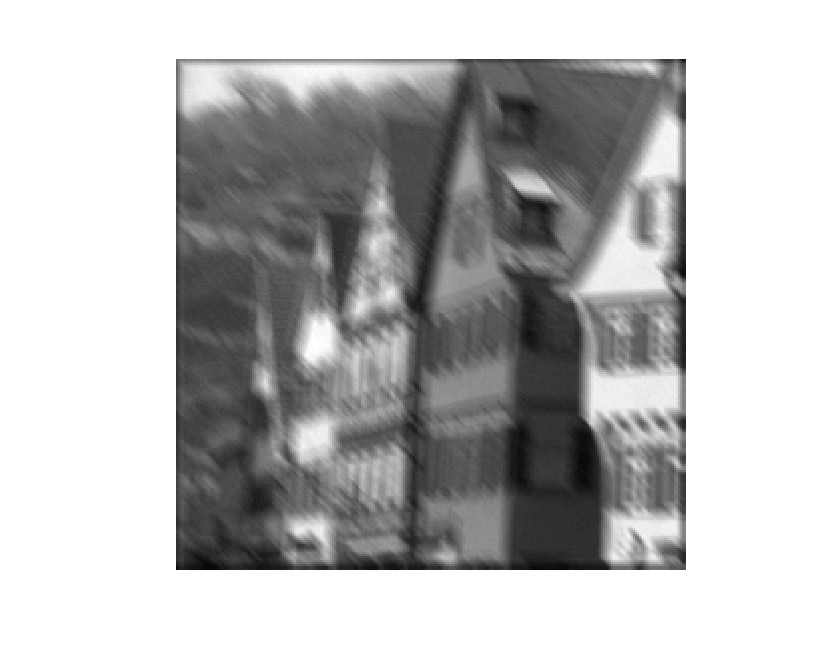}%
    \label{fig_second_case}}
\hfil
\subfloat[Recovered by PALM]{\includegraphics[width=1.5in]{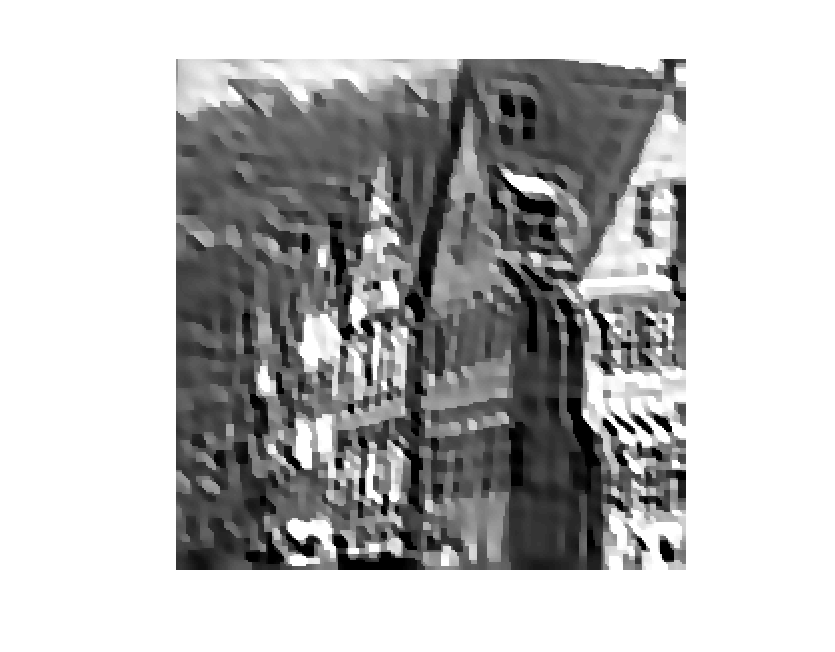}%
    \label{fig_thrid_case}}
\hfil    
    \subfloat[Recovered by iPALM]{\includegraphics[width=1.5in]{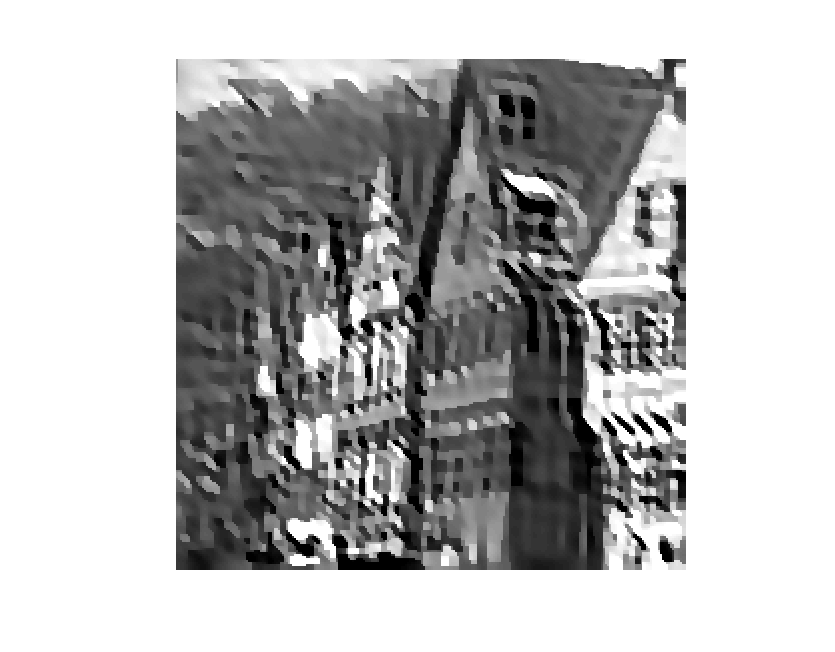}%
    \label{fig_four_case}}
\hfil
\subfloat[Recovered by TiPALM]{\includegraphics[width=1.5in]{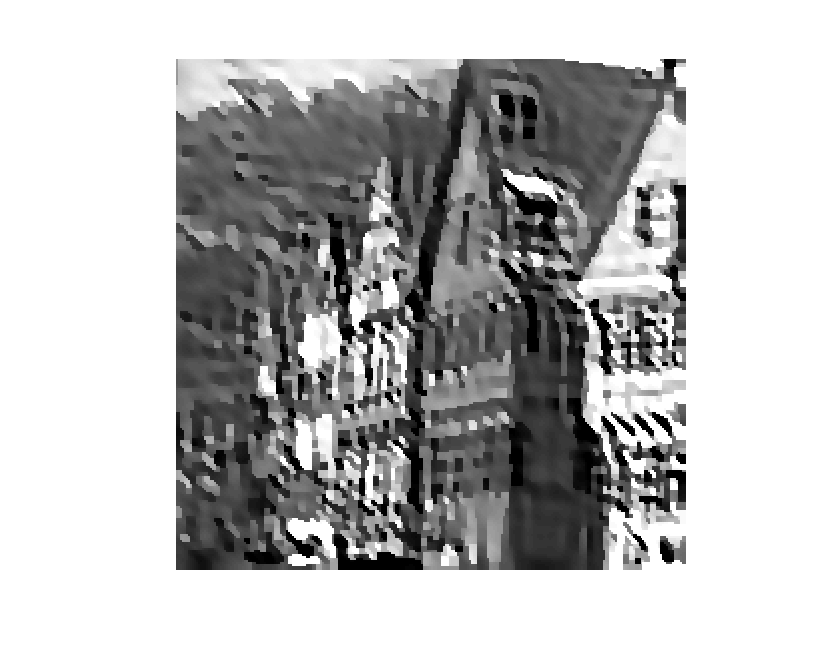}%
    \label{fig_five_case}}
\hfil
\subfloat[Recovered by SPRING-SARAH]{\includegraphics[width=1.5in]{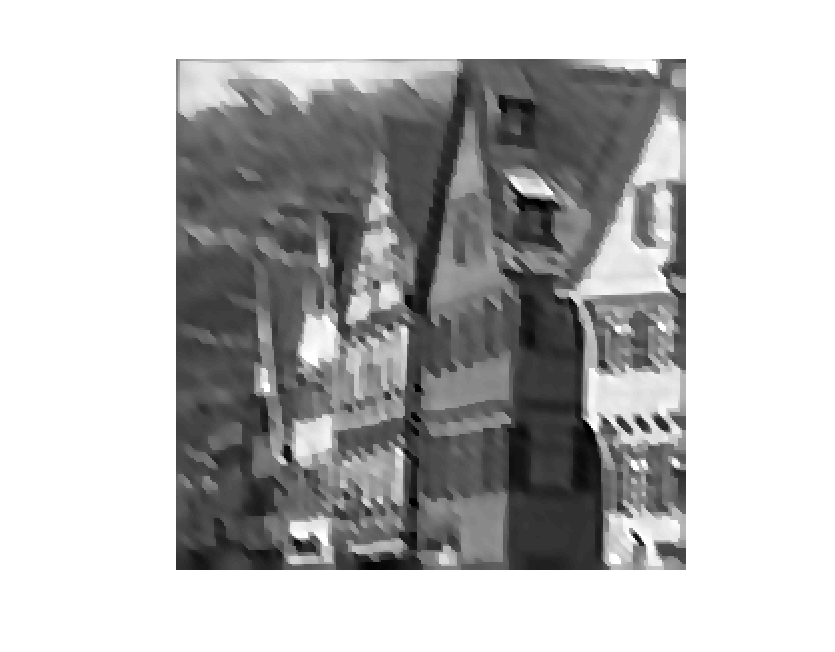}%
    \label{fig_six_case}}
    \hfil
\subfloat[Recovered by SiPALM-SARAH]{\includegraphics[width=1.5in]{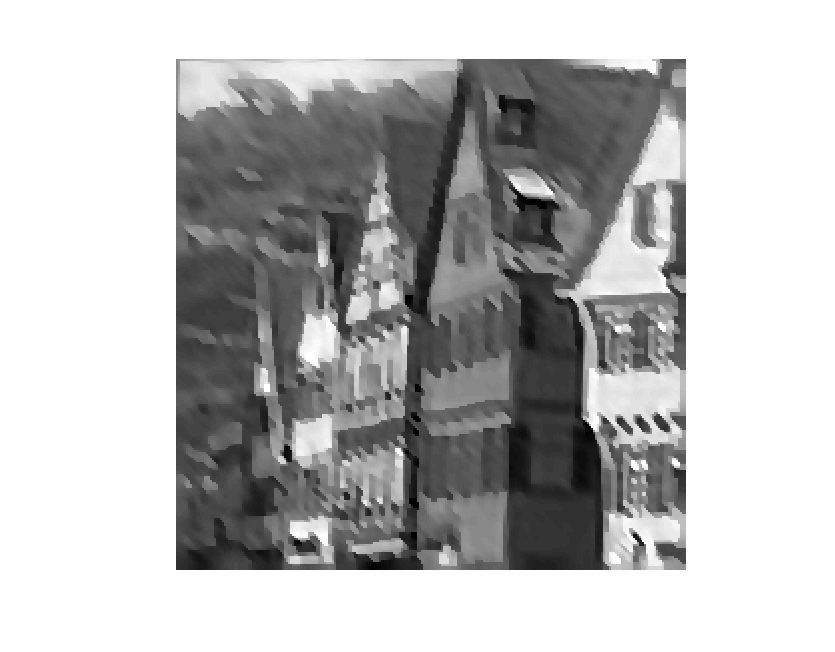}%
    \label{fig_seven_case}}
    \hfil
\subfloat[Recovered by STiB\\ PALM-SARAH]{\includegraphics[width=1.5in]{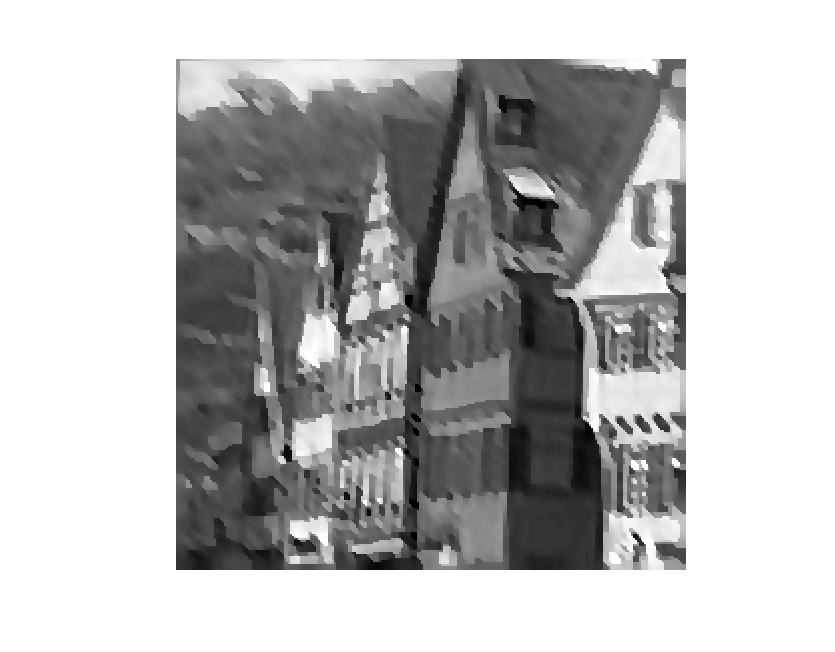}%
    \label{fig_eight_case}}  
   \hfil
\subfloat[ Estimated kernel by TiPALM]{\includegraphics[width=3in]{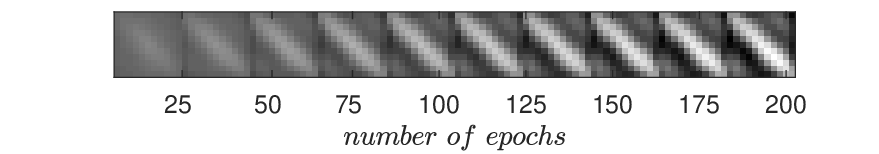}%
    \label{fig_nine_case}}     
       \hfil
\subfloat[ Estimated kernel by STiBPALM-SARAH]{\includegraphics[width=3in]{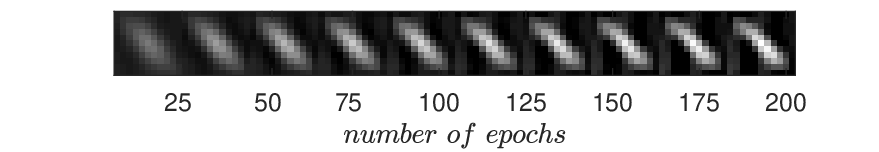}%
    \label{fig_ten_case}}  
    \caption{Image and kernel reconstructions from the blind image-deconvolution experiment on the Kodim08 image using an $11 \times 11$ motion blur kernel.}
    \label{fig11}
\end{figure*}

\begin{figure*}[!t]
    \centering
   % \subfloat[iPALM]{\includegraphics[width=2.0in]{oss1.eps}
     \subfloat[Original image and kernel]{\includegraphics[width=1.5in]{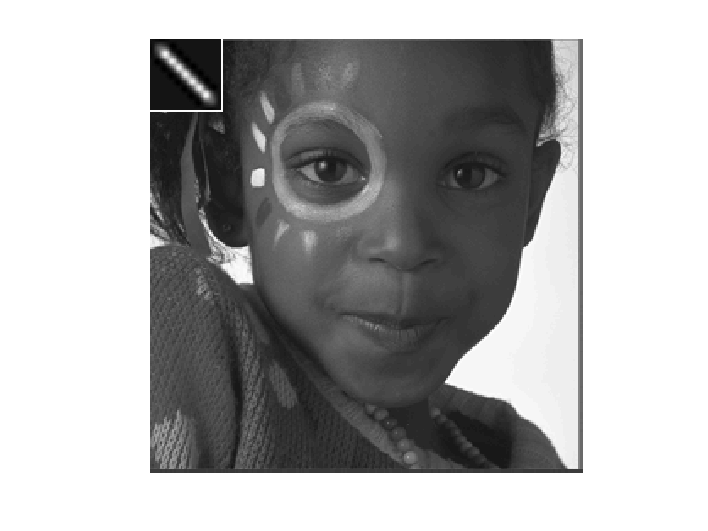}
    \label{fig_first_case}}
\hfil
\subfloat[Blurred image]{\includegraphics[width=1.5in]{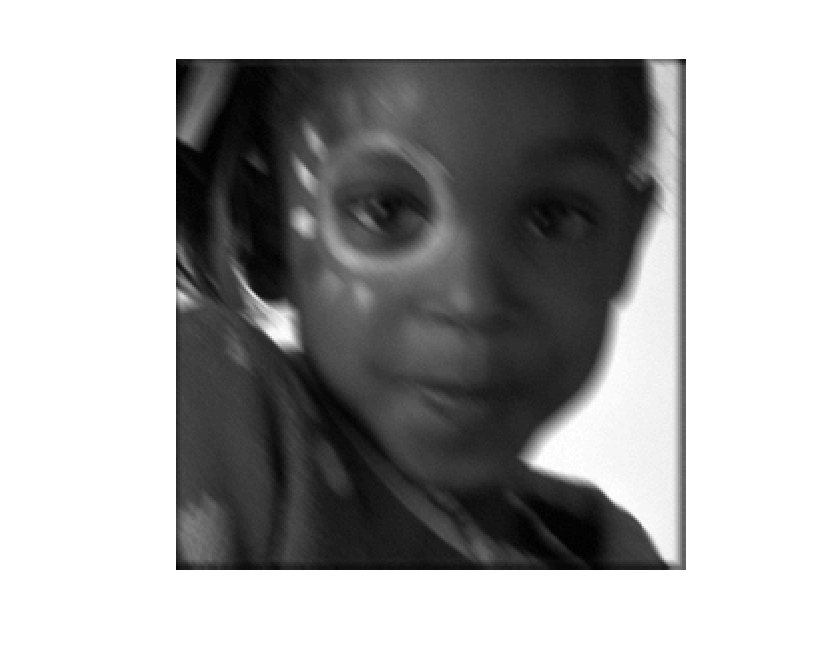}%
    \label{fig_second_case}}
\hfil
\subfloat[Recovered by PALM]{\includegraphics[width=1.5in]{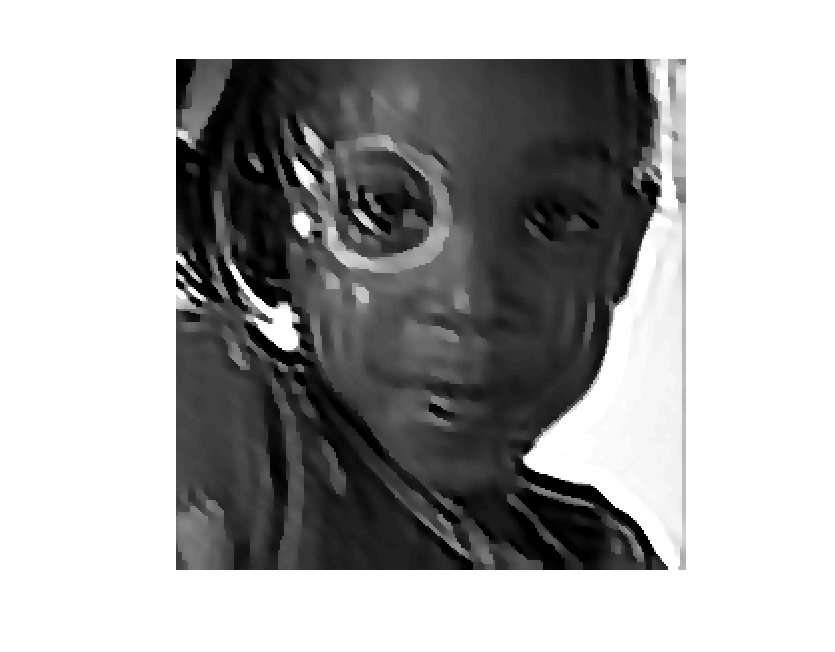}%
    \label{fig_thrid_case}}
\hfil    
    \subfloat[Recovered by iPALM]{\includegraphics[width=1.5in]{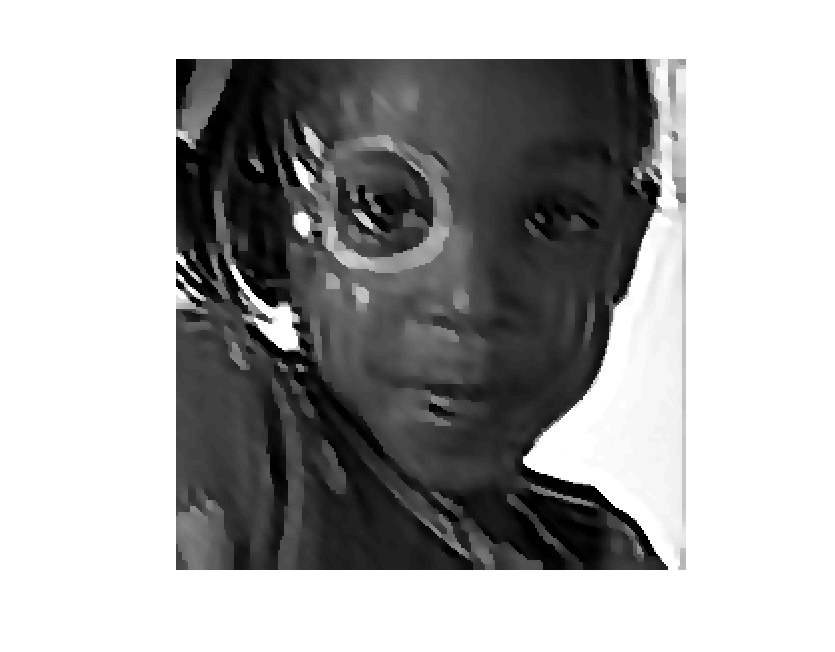}%
    \label{fig_four_case}}
\hfil
\subfloat[Recovered by TiPALM]{\includegraphics[width=1.5in]{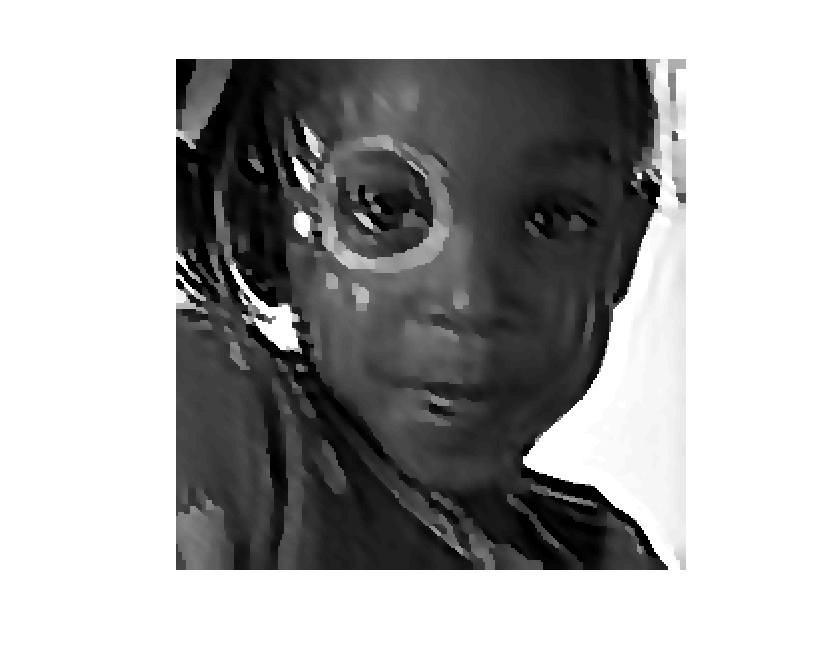}%
    \label{fig_five_case}}
\hfil
\subfloat[Recovered by SPRING-SARAH]{\includegraphics[width=1.5in]{s1.eps}%
    \label{fig_six_case}}
    \hfil
\subfloat[Recovered by SiPALM-SARAH]{\includegraphics[width=1.5in]{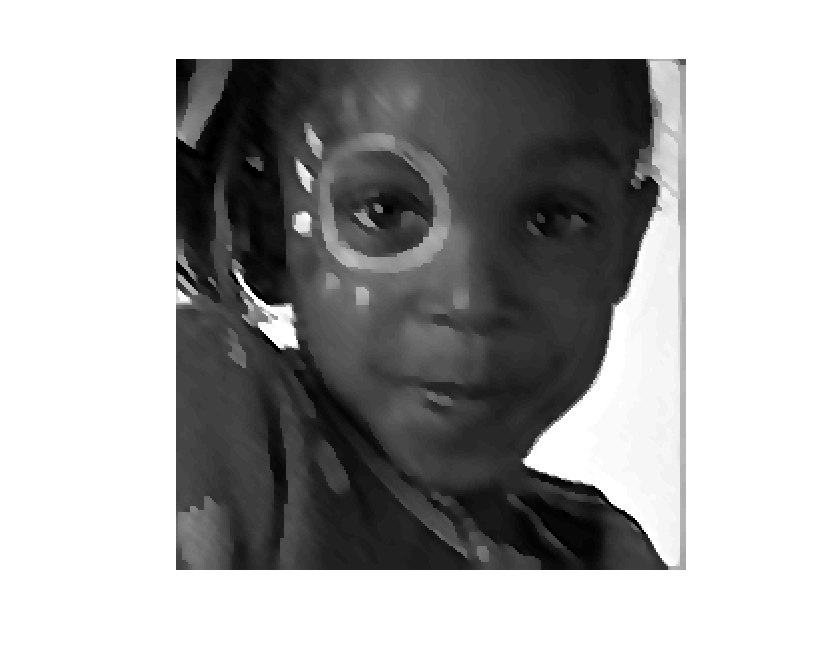}%
    \label{fig_seven_case}}
    \hfil
\subfloat[Recovered by STiB\\  PALM-SARAH]{\includegraphics[width=1.5in]{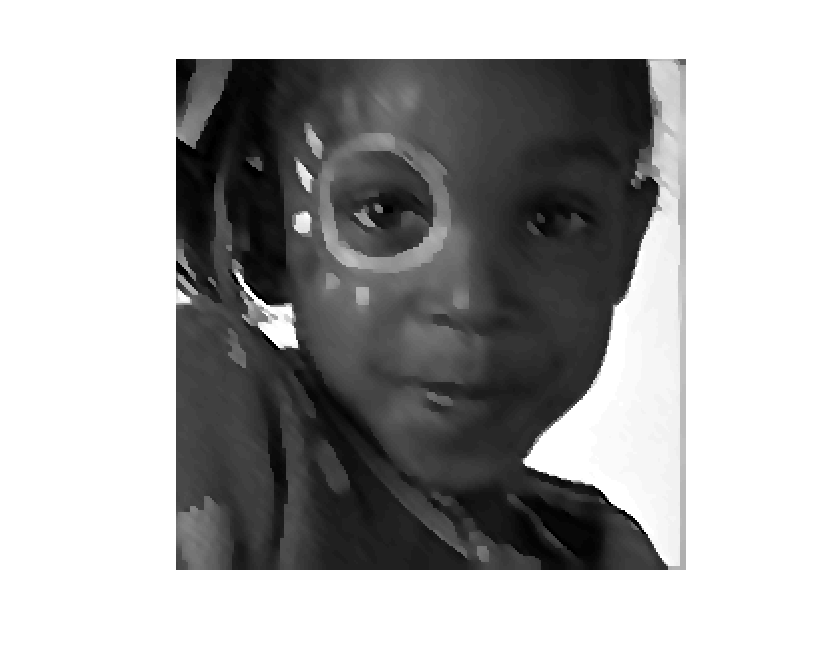}%
    \label{fig_eight_case}}  
    \hfil 
    \subfloat[ Estimated kernel by TiPALM]{\includegraphics[width=3in]{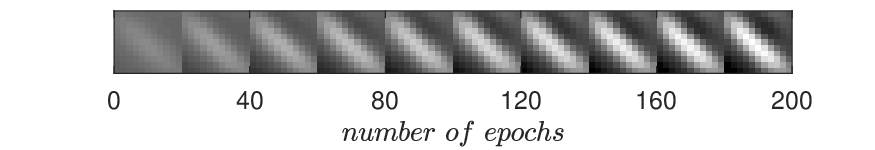}%
    \label{fig_nine_case}}     
       \hfil
\subfloat[ Estimated kernel by STiBPALM-SARAH]{\includegraphics[width=3in]{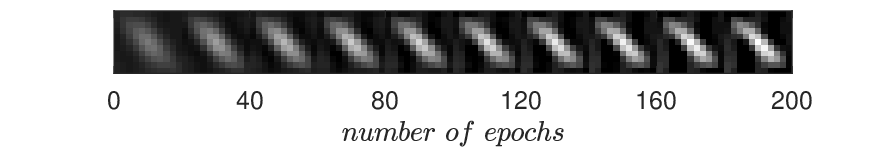}%
    \label{fig_ten_case}}  
    \caption{Image and kernel reconstructions from the blind image-deconvolution experiment on the Kodim08 image using an $11 \times 11$ motion blur kernel.}
    \label{fig12}
\end{figure*}
\section{Conclusion}
{In this paper, we propose a stochastic two-step inertial Bregman proximal alternating linearized minimization (STiBPALM) algorithm with the variance-reduced gradient estimator to solve a class of nonconvex nonsmooth optimization problems. Under some mild conditions, we analyze the convergence properties of STiBPALM when using a variety of variance-reduced gradient estimators, and prove specific convergence rates using the SAGA and SARAH estimators. We also implement the STiBPALM algorithm to sparse nonnegative matrix factorization and blind image-deblurring problems, and perform some numerical experiments to demonstrate the effectiveness of the proposed algorithm.}\\
\noindent {\bf Conflict of Interest:} The authors declared that they have no conflict of interest.
\\
\\
\\
\\
\\
\\
\\
\\
\\
\section*{Appendix}\label{sect6}
\setcounter{equation}{0}
\setcounter{lemma}{0}
\setcounter{subsection}{0}
\renewcommand\theequation{A.\arabic{equation}}
\renewcommand\thelemma{A.\arabic{lemma}}
\renewcommand\thesubsection{A}
\subsection{SAGA Variance Bound}\label{71}
We define the SAGA gradient estimators $\widetilde{\nabla}_x(u_k,y_k)$ and $\widetilde{\nabla}_y(x_{k+1},v_k)$ as follows:
\begin{equation}
\label{(A.1)}
\aligned
\widetilde{\nabla}_x(u_k,y_k)=&\frac{1}{b}\sum_{i\in I_k^x}\left (  \nabla _xH_i(u_k,y_k)- \nabla _xH_i(\varphi _{k}^{i},y_{k}) \right ) + \frac{1}{n}\sum_{j=1}^n\nabla _xH_j(\varphi _{k}^{j},y_{k}), \\
\widetilde{\nabla}_y(x_{k+1},v_k)=&\frac{1}{b}\sum_{i\in I_k^y}\left (  \nabla _yH_i(x_{k+1},v_k)- \nabla _yH_i(x_{k+1},\xi _{k}^{i}) \right ) + \frac{1}{n}\sum_{j=1}^n\nabla _yH_j(x_{k+1},\xi _{k}^{j}),
\endaligned
\end{equation}
where $I_k^x$ and $I_k^y$ are mini-batches containing $b$ indices. The variables $\varphi _{k}^{i}$ and $\xi _{k}^{i}$ follow the update rules $\varphi _{k+1}^{i}=u_k$
if $i\in I_k^x$ and $\varphi _{k+1}^{i}=\varphi _{k}^{i}$ otherwise, and $\xi _{k+1}^{i}=v_k$ if $i\in I_k^y$ and $\xi _{k+1}^{i}=\xi _{k}^{i}$ otherwise.
\par To prove our variance bounds, we require the following lemma.
\begin{lemma}
\label{lem51}
Suppose $X_1,\cdots ,X_t$ are independent random variables satisfying $\mathbb{E}_{k}X_i=0$ for $1\le i\le t$. Then
\begin{equation}
\label{(A.2)}
\mathbb{E}_{k}\left \| X_1+\cdots +X_t  \right \| ^2=\mathbb{E}_{k}\left [  \left \| X_1 \right \|^2 +\cdots +\left \| X_t  \right \|  ^2 \right ].
\end{equation}
\begin{proof}
Our hypotheses on these random variables imply $\mathbb{E}_{k}\left \langle X_i,X_j \right \rangle =0$ for $i\ne j$. Therefore,
\begin{equation*}
%\label{(A.2)}
\mathbb{E}_{k}\left \| X_1+\cdots +X_t  \right \| ^2= \mathbb{E}_{k}\sum_{i,j=1}^{t}\left \langle X_i,X_j \right \rangle =\mathbb{E}_{k}\left [  \left \| X_1 \right \|^2 +\cdots +\left \| X_t  \right \|  ^2 \right ].
\end{equation*}
\end{proof}
\end{lemma}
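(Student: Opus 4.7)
The plan is to expand the squared norm by bilinearity and then use independence together with the zero mean hypothesis to annihilate the cross terms. This is the standard argument showing that the variance of a sum of independent random variables equals the sum of variances, transplanted to the conditional expectation $\mathbb{E}_k$.

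First I would write
\[
\left\|X_1+\cdots+X_t\right\|^2 = \sum_{i,j=1}^{t}\langle X_i,X_j\rangle,
\]
which is valid since the norm is induced by an inner product. Applying $\mathbb{E}_k$ and using linearity of (conditional) expectation gives
\[
\mathbb{E}_k\left\|X_1+\cdots+X_t\right\|^2 = \sum_{i=1}^{t}\mathbb{E}_k\|X_i\|^2 + \sum_{i\neq j}\mathbb{E}_k\langle X_i,X_j\rangle.
\]
The goal is then reduced to showing that every off-diagonal term vanishes.

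For the cross terms, I would use that if $i\neq j$ then $X_i$ and $X_j$ are independent (given the information up to step $k$), so $\mathbb{E}_k\langle X_i,X_j\rangle = \langle \mathbb{E}_k X_i,\mathbb{E}_k X_j\rangle = 0$, the last equality coming from the hypothesis $\mathbb{E}_k X_i = 0$ for every $i$. Substituting this into the displayed expansion leaves only the diagonal terms $\sum_{i=1}^{t}\mathbb{E}_k\|X_i\|^2$, which by linearity equals $\mathbb{E}_k\bigl[\|X_1\|^2+\cdots+\|X_t\|^2\bigr]$, as claimed.

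There is no real obstacle here; this is a textbook identity. The only subtlety worth flagging is that one must be careful that independence is interpreted with respect to the $\sigma$-algebra underlying $\mathbb{E}_k$, so that the product rule $\mathbb{E}_k\langle X_i,X_j\rangle=\langle \mathbb{E}_k X_i,\mathbb{E}_k X_j\rangle$ is legitimate; once this is granted, the computation is mechanical.
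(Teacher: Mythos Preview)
Your proposal is correct and follows essentially the same approach as the paper: expand $\|X_1+\cdots+X_t\|^2$ as $\sum_{i,j}\langle X_i,X_j\rangle$ and use independence together with the zero-mean hypothesis to kill the off-diagonal terms. The paper's proof is in fact terser than yours, simply asserting $\mathbb{E}_k\langle X_i,X_j\rangle=0$ for $i\neq j$ without writing out the factorization step.
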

We are now prepared to prove that the SAGA gradient estimator is variance-reduced.
\begin{lemma}{} 
\label{lem52}
The SAGA gradient estimator satisfies
\begin{equation}
\label{(A.3)}
\aligned
&\mathbb{E}_{k}\left \| \widetilde{\nabla}_x(u_k,y_k)-\nabla _xH(u_k,y_k) \right \| ^2\le\frac{1}{bn}\sum_{j=1}^n\left \| \nabla _xH_j(u_k,y_k)- \nabla _xH_j(\varphi _{k}^{j},y_{k}) \right \| ^2,\\
&\mathbb{E}_{k}\left \| \widetilde{\nabla}_y(x_{k+1},v_k)-\nabla _yH(x_{k+1},v_k) \right \| ^2\le\frac{4}{bn}\sum_{j=1}^n\left \| \nabla _yH_j(x_k,v_k)- \nabla _yH_j(x_{k},\xi _{k}^{j}) \right \| ^2\\
&+\frac{16N^2\gamma^2}{b}\left(\mathbb{E}_{k}\left \|z_{k+1}-z_{k}\right \| ^2+\left \|z_{k}-z_{k-1}\right \| ^2+\left\|z_{k-1}-z_{k-2}\right \| ^2\right),
\endaligned
\end{equation}
as well as
\begin{equation}
\label{(A.4)}
\aligned
&\mathbb{E}_{k}\left \| \widetilde{\nabla}_x(u_k,y_k)-\nabla _xH(u_k,y_k) \right \| \le\frac{1}{\sqrt{bn}}\sum_{j=1}^n\left \| \nabla _xH_j(u_k,y_k)- \nabla _xH_j(\varphi _{k}^{j},y_{k}) \right \|,\\
&\mathbb{E}_{k}\left \| \widetilde{\nabla}_y(x_{k+1},v_k)-\nabla _yH(x_{k+1},v_k) \right \| \le\frac{2}{\sqrt{bn}}\sum_{j=1}^n\left \| \nabla _yH_j(x_k,v_k)- \nabla _yH_j(x_{k},\xi _{k}^{j}) \right \| \\
&+\frac{4N\gamma}{\sqrt{b}}\left(\mathbb{E}_{k}\left \|z_{k+1}-z_{k}\right \|+\left \|z_{k}-z_{k-1}\right \|+\left\|z_{k-1}-z_{k-2}\right \| \right),
\endaligned
\end{equation}
where $N=\max\left \{M,L \right \} $, $\gamma=\max\left \{ \gamma_1,\gamma_2 \right \} $.
\begin{proof}
According to \eqref{(A.1)}, we have
\begin{equation}
\label{(A.5)}
\aligned
&\mathbb{E}_{k}\left \| \widetilde{\nabla}_x(u_k,y_k)-\nabla _xH(u_k,y_k) \right \| ^2\\
=&\mathbb{E}_{k}\left \|\frac{1}{b} \sum_{i\in I_k^x}\left ( \nabla _xH_i(u_k,y_k)- \nabla _xH_i(\varphi _{k}^{i},y_{k}) \right ) -\nabla _xH(u_k,y_k)+ \frac{1}{n}\sum_{j=1}^n\nabla _xH_j(\varphi _{k}^{j},y_{k})  \right \| ^2\\
\overset{(1)}{\le } &\frac{1}{b^2}\mathbb{E}_{k}\sum_{i\in I_k^x}\left \| \nabla _xH_i(u_k,y_k)- \nabla _xH_i(\varphi _{k}^{i},y_{k}) \right \| ^2\\
=&\frac{1}{bn}\sum_{j=1}^n\left \| \nabla _xH_j(u_k,y_k)- \nabla _xH_j(\varphi _{k}^{j},y_{k}) \right \| ^2.
\endaligned
\end{equation}
Inequality (1) follows from Lemma \ref{lem51}. By the Jensen's inequality, we can say that
\begin{equation}
\label{(A.6)}
\aligned
\mathbb{E}_{k}\left \| \widetilde{\nabla}_x(u_k,y_k)-\nabla _xH(u_k,y_k) \right \|
\le&\sqrt{\mathbb{E}_{k}\left \| \widetilde{\nabla}_x(u_k,y_k)-\nabla _xH(u_k,y_k) \right \| ^2}\\
\le &\frac{1}{\sqrt{bn}}\sqrt{\sum_{j=1}^n\left \| \nabla _xH_j(u_k,y_k)- \nabla _xH_j(\varphi _{k}^{j},y_{k}) \right \| ^2}\\
\le &\frac{1}{\sqrt{bn}}\sum_{j=1}^n\left \| \nabla _xH_j(u_k,y_k)- \nabla _xH_j(\varphi _{k}^{j},y_{k}) \right \| .
\endaligned
\end{equation}
We use an analogous argument for $\widetilde{\nabla}_y(x_{k+1},v_k)$.  Let $\mathbb{E}_{k,x}$ denote the expectation conditional on the first $k$ iterations and $I_k^x$. By the same reasoning as in \eqref{(A.5)}, applying the Lipschitz continuity of $\nabla _yH_j$, we obtain that
\begin{equation*}
%\label{(A.7)}
\aligned
&\mathbb{E}_{k,x}\left \| \widetilde{\nabla}_y(x_{k+1},v_k)-\nabla _yH(x_{k+1},v_k) \right \| ^2\\
\le&\frac{1}{bn}\sum_{j=1}^n\left \| \nabla _yH_j(x_{k+1},v_k)- \nabla _yH_j(x_{k+1},\xi _{k}^{j}) \right \| ^2\\
\le&\frac{4}{bn}\sum_{j=1}^n\left \| \nabla _yH_j(x_{k+1},v_k)- \nabla _yH_j(x_{k},y_{k}) \right \| ^2+\frac{4}{bn}\sum_{j=1}^n\left \| \nabla _yH_j(x_{k},y_k)- \nabla _yH_j(x_{k},v_{k}) \right \| ^2\\
&+\frac{4}{bn}\sum_{j=1}^n\left \| \nabla _yH_j(x_{k},v_k)- \nabla _yH_j(x_{k},\xi _{k}^{j}) \right \| ^2+\frac{4}{bn}\sum_{j=1}^n\left \| \nabla _yH_j(x_{k},\xi _{k}^{j})- \nabla _yH_j(x_{k+1},\xi _{k}^{j}) \right \| ^2\\
\le&\frac{4M^2}{b}\left \|x_{k+1}-x_{k}\right \| ^2+\frac{4M^2}{b}\left \|v_{k}-y_{k}\right \| ^2+\frac{4L^2}{b}\left \|y_{k}-v_{k}\right \| ^2\\
&+\frac{4}{bn}\sum_{j=1}^n\left \| \nabla _yH_j(x_{k},v_k)- \nabla _yH_j(x_{k},\xi _{k}^{j}) \right \| ^2+\frac{4M^2}{b}\left \|x_{k+1}-x_{k}\right \| ^2\\
\le&\frac{4}{bn}\sum_{j=1}^n\left \| \nabla _yH_j(x_{k},v_k)- \nabla _yH_j(x_{k},\xi _{k}^{j}) \right \| ^2+\frac{8M^2}{b}\left \|x_{k+1}-x_{k}\right \| ^2\\
&+\frac{4(M^2+L^2)}{b}\left (2\gamma_1^2\left \|y_{k}-y_{k-1}\right \| ^2+2\gamma_2^2\left\|y_{k-1}-y_{k-2}\right \| ^2\right)\\
\endaligned
\end{equation*}
%\le&\frac{4}{bn}\sum_{j=1}^n\left \| \nabla _yH_j(x_{k},v_k)- \nabla _yH_j(x_{k},\xi _{k}^{j}) \right \| ^2+\frac{8M^2}{b}\left \|x_{k+1}-x_{k}\right \| ^2+\frac{8(M^2+L^2)\gamma_1^2}{b}\left \|y_{k}-y_{k-1}\right \| ^2\\
%&+\frac{8(M^2+L^2)\gamma_2^2}{b}\left\|y_{k-1}-y_{k-2}\right \| ^2\\
\begin{equation}
\label{(A.7)}
\aligned
\le&\frac{4}{bn}\sum_{j=1}^n\left \| \nabla _yH_j(x_{k},v_k)- \nabla _yH_j(x_{k},\xi _{k}^{j}) \right \| ^2+\frac{16N^2\gamma^2}{b}\left(\left \|z_{k+1}-z_{k}\right \| ^2+\left \|z_{k}-z_{k-1}\right \| ^2\right.\\
&\left.+\left\|z_{k-1}-z_{k-2}\right \| ^2\right),
\endaligned
\end{equation}
where $N=\max\left \{M,L \right \} $, $\gamma=\max\left \{ \gamma_1,\gamma_2 \right \} $. Also, by the same reasoning as in \eqref{(A.6)},
\begin{equation}
\label{(A.8)}
\aligned
&\mathbb{E}_{k,x}\left \| \widetilde{\nabla}_y(x_{k+1},v_k)-\nabla _yH(x_{k+1},v_k) \right \|\\
\le&\sqrt{\mathbb{E}_{k,x}\left \| \widetilde{\nabla}_y(x_{k+1},v_k)-\nabla _yH(x_{k+1},v_k) \right \| ^2}\\
\le&\frac{2}{\sqrt{bn}}\sum_{j=1}^n\left \| \nabla _yH_j(x_{k},v_k)- \nabla _yH_j(x_{k},\xi _{k}^{j}) \right \|+\frac{4N\gamma}{\sqrt{b}}\left(\left \|z_{k+1}-z_{k}\right \|+\left \|z_{k}-z_{k-1}\right \| \right.\\
&\left.+\left\|z_{k-1}-z_{k-2}\right \| \right),
\endaligned
\end{equation}
Applying the operator $\mathbb{E}_{k}$ to \eqref{(A.7)} and \eqref{(A.8)}, we get the desired result.
\end{proof}
\end{lemma}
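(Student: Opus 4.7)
The plan is to handle the $x$- and $y$-blocks separately, each by a centering argument on the minibatch followed by explicit Lipschitz estimates, and then to translate the squared-$L^2$ bounds to $L^1$ bounds via Jensen.

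For the $x$-block, I would introduce $Y_i := \nabla_x H_i(u_k,y_k) - \nabla_x H_i(\varphi_k^i,y_k)$. Uniform sampling of $i$ from $\{1,\dots,n\}$ gives $\mathbb{E}_k Y_i = \nabla_x H(u_k,y_k) - \tfrac{1}{n}\sum_j \nabla_x H_j(\varphi_k^j,y_k)$, so that the SAGA error is exactly the centered empirical average $\tfrac{1}{b}\sum_{i\in I_k^x}(Y_i - \mathbb{E}_k Y_i)$. Applying Lemma \ref{lem51} to these zero-mean, exchangeable summands (and using that centering only decreases second moments) yields $\mathbb{E}_k\|\widetilde{\nabla}_x(u_k,y_k) - \nabla_x H(u_k,y_k)\|^2 \le \tfrac{1}{b^2}\mathbb{E}_k\sum_{i\in I_k^x}\|Y_i\|^2 = \tfrac{1}{bn}\sum_j\|\nabla_x H_j(u_k,y_k) - \nabla_x H_j(\varphi_k^j,y_k)\|^2$, the claimed $x$-MSE inequality. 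The $L^1$-version then follows from the chain $\mathbb{E}\|\cdot\|\le\sqrt{\mathbb{E}\|\cdot\|^2}$ together with $\sqrt{\sum a_j^2}\le\sum a_j$ for $a_j\ge 0$.

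For the $y$-block, I would condition first on $I_k^x$ to freeze $x_{k+1}$ inside the inner expectation $\mathbb{E}_{k,x}$, then repeat the same centering argument over $I_k^y$. This gives $\mathbb{E}_{k,x}\|\widetilde{\nabla}_y(x_{k+1},v_k) - \nabla_y H(x_{k+1},v_k)\|^2 \le \tfrac{1}{bn}\sum_j\|\nabla_y H_j(x_{k+1},v_k) - \nabla_y H_j(x_{k+1},\xi_k^j)\|^2$. To convert the summand into the form required by Definition \ref{Def211}, i.e.\ with anchor $(x_k,\xi_k^j)$, I would telescope through the intermediate points $(x_k,y_k)$, $(x_k,v_k)$, $(x_k,\xi_k^j)$, apply $\|a_1+a_2+a_3+a_4\|^2\le 4\sum_i\|a_i\|^2$, and estimate the three bridging pieces by Lipschitz continuity: the first and fourth using the joint-Lipschitz constant $M$ of each $\nabla H_j$ on the bounded trajectory, the second using the partial-Lipschitz constant $L$ of Assumption \ref{Assumption21}(ii). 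The third piece is retained as the structural term. The final step is then to expand the inertial displacement $v_k - y_k = \mu_{1k}(y_k-y_{k-1}) + \mu_{2k}(y_{k-1}-y_{k-2})$, giving $\|v_k - y_k\|^2 \le 2\gamma^2(\|z_k-z_{k-1}\|^2 + \|z_{k-1}-z_{k-2}\|^2)$ with $\gamma=\max\{\gamma_1,\gamma_2\}$, together with $\|x_{k+1}-x_k\|^2\le\|z_{k+1}-z_k\|^2$; upper bounding $M^2, L^2$ uniformly by $N^2=\max\{M,L\}^2$ and taking the outer expectation over $I_k^x$ recovers the stated bound, and the $L^1$ version is then one Jensen step combined with $\sqrt{a+b}\le\sqrt{a}+\sqrt{b}$.

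The main pitfall I expect is in the quadruple Lipschitz decomposition: the first and fourth summands call for per-$H_j$ joint Lipschitz continuity with constant $M$, whereas the preamble before Assumption \ref{Assumption41} literally posits $M$-Lipschitzness only for the aggregate $\nabla H$ on the iterate trajectory. This transfer is harmless once one uses boundedness of the trajectory and $C^1$-smoothness of each $H_i$, but it should be flagged rather than glossed over. Beyond that, the argument reduces to careful tracking of the constants (the $4$ from the quadruple split, the $2$ from squaring the inertial combination, and the small overestimates absorbed into $16$) to recover the advertised $16N^2\gamma^2/b$ and $4N\gamma/\sqrt{b}$ without unnecessary loss.
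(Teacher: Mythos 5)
Your proposal follows essentially the same route as the paper's proof: the centering argument via Lemma \ref{lem51} for the $x$-block, the conditional expectation $\mathbb{E}_{k,x}$ for the $y$-block, the identical four-point telescoping through $(x_k,y_k)$, $(x_k,v_k)$, $(x_k,\xi_k^j)$ with the factor-$4$ split and the $M$/$L$ Lipschitz estimates, the expansion of $v_k-y_k$ via the inertial coefficients, and Jensen for the $L^1$ versions. The issue you flag about per-$H_j$ joint Lipschitz continuity with constant $M$ is a real imprecision in the paper's own argument (which silently applies $M$ to each $\nabla_y H_j$ individually), and your resolution via boundedness of the trajectory is the right one.
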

Now define
\begin{equation}
\label{(A.9)}
\aligned
\Upsilon _{k+1}=&\frac{1}{bn}\sum_{j=1}^n \left (\left \| \nabla _xH_j(u_{k+1},y_{k+1})- \nabla _xH_j(\varphi _{k+1}^{j},y_{k+1}) \right \| ^2 \right.\\
&\left .+4\left \| \nabla _yH_j(x_{k+1},v_{k+1})- \nabla _yH_j(x_{k+1},\xi _{k+1}^{j}) \right \| ^2 \right),\\
\Gamma _{k+1}=&\frac{1}{\sqrt{bn}}\sum_{j=1}^n \left (\left \| \nabla _xH_j(u_{k+1},y_{k+1})- \nabla _xH_j(\varphi _{k+1}^{j},y_{k+1}) \right \| ^2 \right.\\
&\left .+2\left \| \nabla _yH_j(x_{k+1},v_{k+1})- \nabla _yH_j(x_{k+1},\xi _{k+1}^{j}) \right \| ^2 \right).
\endaligned
\end{equation}
By  Lemma \ref{lem52}, we have
\begin{equation*}
\aligned
&\mathbb{E}_k\left [  \left \| \widetilde{\nabla}_x(u_k,y_k)-\nabla _xH(u_k,y_k) \right \| ^2+\left \| \widetilde{\nabla}_y(x_{k+1},v_k)-\nabla _yH(x_{k+1},v_k) \right \| ^2\right ]\\
\le &\Upsilon _k+V_1\left (\mathbb{E}_k\left \| z_{k+1}-z_{k} \right \| ^{2}+\left \| z_{k}-z_{k-1} \right \| ^{2} +\left \| z_{k-1}-z_{k-2} \right \| ^{2}\right ),
\endaligned
\end{equation*}
and
\begin{equation*}
\aligned
&\mathbb{E}_k\left [  \left \| \widetilde{\nabla}_x(u_k,y_k)-\nabla _xH(u_k,y_k) \right \| +\left \| \widetilde{\nabla}_y(x_{k+1},v_k)-\nabla _yH(x_{k+1},v_k) \right \| \right ]\\
\le& \Gamma_k+V_2\left (\mathbb{E}_k\left \| z_{k+1}-z_{k} \right \|+\left \| z_{k}-z_{k-1} \right \| +\left \| z_{k-1}-z_{k-2} \right \|\right ).
\endaligned
\end{equation*}
This is exactly the MSE bound, where $V_{1}=\frac{16N^2\gamma^2}{b}$ and $V_{2}=\frac{4N\gamma}{\sqrt{b}}$.

\begin{lemma}{\rm (Geometric decay)} 
\label{lem53}
Let $\Upsilon _{k}$ be defined as in \eqref{(A.9)}, then we can establish the geometric decay property:
\begin{equation}
\label{(A.10)}
\aligned
\mathbb{E}_{k}\Upsilon _{k+1}\le\left ( 1-\rho  \right )\Upsilon _{k}+V_{\Upsilon}\left (\mathbb{E}_k\left \| z_{k+1}-z_{k} \right \| ^{2}+\left \| z_{k}-z_{k-1} \right \| ^{2} +\left \| z_{k-1}-z_{k-2} \right \| ^{2}\right ),
\endaligned
\end{equation}
where $\rho=\frac{b}{2n}$, $V_{\Upsilon}=\frac{408nN^2(1+2\gamma_1^2+\gamma_2^2)}{b^2}$.
\begin{proof}
We show that $\mathbb{E}_{k}\Upsilon _{k+1}$ is decreasing at a geometric rate. By applying the inequality $\left \| a-c \right \| ^2\le (1+\varepsilon )\left \| a-b \right \| ^2+(1+\varepsilon ^{-1} )\left \| b-c \right \| ^2$ twice, it follows that
\begin{equation}
\label{(A.11)}
\aligned
&\frac{1}{bn}\sum_{j=1}^n \mathbb{E}_{k}\left \| \nabla _xH_j(u_{k+1},y_{k+1})- \nabla _xH_j(\varphi _{k+1}^{j},y_{k+1}) \right \| ^2\\
\le&\frac{1+\varepsilon }{bn}\sum_{j=1}^n \mathbb{E}_{k}\left \| \nabla _xH_j(u_{k},y_{k})- \nabla _xH_j(\varphi _{k+1}^{j},y_{k+1}) \right \| ^2+\frac{1+\varepsilon ^{-1}}{bn}\sum_{j=1}^n\mathbb{E}_{k} \left \| \nabla _xH_j(u_{k+1},y_{k+1})- \nabla _xH_j(u_{k},y_{k})\right \| ^2\\
\le&\frac{(1+\varepsilon)^2 }{bn}\sum_{j=1}^n \mathbb{E}_{k}\left \| \nabla _xH_j(u_{k},y_{k})- \nabla _xH_j(\varphi _{k+1}^{j},y_{k}) \right \| ^2\\
&+\frac{(1+\varepsilon)(1+\varepsilon ^{-1} ) }{bn}\sum_{j=1}^n \mathbb{E}_{k}\left \|\nabla _xH_j(\varphi _{k+1}^{j},y_{k})- \nabla _xH_j(\varphi _{k+1}^{j},y_{k+1}) \right \| ^2\\
&+\frac{1+\varepsilon ^{-1}}{bn}\sum_{j=1}^n \mathbb{E}_{k}\left \| \nabla _xH_j(u_{k+1},y_{k+1})- \nabla _xH_j(u_{k},y_{k})\right \| ^2\\
\le&\frac{(1+\varepsilon)^2 (1-b/n)}{bn}\sum_{j=1}^n \left \| \nabla _xH_j(u_{k},y_{k})- \nabla _xH_j(\varphi _{k}^{j},y_{k}) \right \| ^2+\frac{(1+\varepsilon)(1+\varepsilon ^{-1} )M^2 }{b}\mathbb{E}_{k}\left \|y_{k}- y_{k+1} \right \| ^2\\
&+\frac{(1+\varepsilon ^{-1})M^2}{b}\mathbb{E}_{k}\left (\left \|u_{k+1}-u_{k}\right \| ^2+\left \|y_{k+1}-y_{k}\right \|^2\right)\\
\le&\frac{(1+\varepsilon)^2 (1-b/n)}{bn}\sum_{j=1}^n \left \| \nabla _xH_j(u_{k},y_{k})- \nabla _xH_j(\varphi _{k}^{j},y_{k}) \right \| ^2+\frac{(2+\varepsilon)(1+\varepsilon ^{-1} )M^2 }{b}\mathbb{E}_{k}\left \|y_{k+1}- y_{k} \right \| ^2\\
&+\frac{(1+\varepsilon ^{-1})M^2}{b}\mathbb{E}_{k}\left (3\left \|u_{k+1}-x_{k+1}\right \| ^2+3\left \|x_{k+1}-x_{k}\right \|^2+3\left \|x_{k}-u_{k}\right \|^2\right)\\
\le&\frac{(1+\varepsilon)^2 (1-b/n)}{bn}\sum_{j=1}^n \left \| \nabla _xH_j(u_{k},y_{k})- \nabla _xH_j(\varphi _{k}^{j},y_{k}) \right \| ^2+\frac{(2+\varepsilon)(1+\varepsilon ^{-1} )M^2 }{b}\mathbb{E}_{k}\left \|y_{k+1}- y_{k} \right \| ^2\\
%&+\frac{3M^2(1+\varepsilon ^{-1})(1+2\gamma_{1,k+1}^2)}{b}\mathbb{E}_{k}\left \|x_{k+1}-x_{k}\right \| ^2+\frac{6M^2(1+\varepsilon ^{-1})(\gamma_{1,k}^2+\gamma_{2,k+1}^2)}{b}\left \|x_{k}-x_{k-1}\right \|^2\\
%&+\frac{6M^2(1+\varepsilon ^{-1})\gamma_{2,k}^2}{b}\left \|x_{k-1}-x_{k-2}\right \|^2.
&+\frac{3M^2(1+\varepsilon ^{-1})(1+2\gamma_{1}^2)}{b}\mathbb{E}_{k}\left \|x_{k+1}-x_{k}\right \| ^2+\frac{6M^2(1+\varepsilon ^{-1})(\gamma_{1}^2+\gamma_{2}^2)}{b}\left \|x_{k}-x_{k-1}\right \|^2\\
&+\frac{6M^2(1+\varepsilon ^{-1})\gamma_{2}^2}{b}\left \|x_{k-1}-x_{k-2}\right \|^2.
\endaligned
\end{equation}
Similarly,
\begin{equation}
\label{(A.12)}
\aligned
&\frac{1}{bn}\sum_{j=1}^n \mathbb{E}_{k}\left \| \nabla _yH_j(x_{k+1},v_{k+1})- \nabla _yH_j(x_{k+1},\xi _{k+1}^{j}) \right \| ^2\\
\le&\frac{1+\varepsilon }{bn}\sum_{j=1}^n \mathbb{E}_{k}\left \| \nabla _yH_j(x_{k+1},v_{k})- \nabla _yH_j(x_{k+1},\xi _{k+1}^{j}) \right \| ^2\\
&+\frac{1+\varepsilon ^{-1}}{bn}\sum_{j=1}^n \mathbb{E}_{k}\left \| \nabla _yH_j(x_{k+1},v_{k+1})- \nabla _yH_j(x_{k+1},v_{k})\right \| ^2\\
%=&\frac{(1+\varepsilon) (1-b/n)}{bn}\sum_{j=1}^n \mathbb{E}_{k}\left \| \nabla _yH_j(x_{k+1},v_{k})- \nabla _yH_j(x_{k+1},\xi _{k}^{j}) \right \| ^2\\
%&+\frac{1+\varepsilon ^{-1}}{bn}\sum_{j=1}^n \mathbb{E}_{k}\left \| \nabla _yH_j(x_{k+1},v_{k+1})- \nabla _yH_j(x_{k+1},v_{k})\right \| ^2\\
\le&\frac{(1+\varepsilon)^2 (1-b/n)}{bn}\sum_{j=1}^n \mathbb{E}_{k}\left \| \nabla _yH_j(x_{k},v_{k})- \nabla _yH_j(x_{k+1},\xi _{k}^{j}) \right \| ^2\\
&+\frac{(1+\varepsilon)(1+\varepsilon ^{-1}) (1-b/n)}{bn}\sum_{j=1}^n \mathbb{E}_{k}\left \| \nabla _yH_j(x_{k+1},v_{k})- \nabla _yH_j(x_{k},v_{k}) \right \| ^2\\
&+\frac{1+\varepsilon ^{-1}}{bn}\sum_{j=1}^n \mathbb{E}_{k}\left \| \nabla _yH_j(x_{k+1},v_{k+1})- \nabla _yH_j(x_{k+1},v_{k})\right \| ^2\\
\le&\frac{(1+\varepsilon)^3 (1-b/n)}{bn}\sum_{j=1}^n \left \| \nabla _yH_j(x_{k},v_{k})- \nabla _yH_j(x_{k},\xi _{k}^{j}) \right \| ^2\\
&+\frac{(1+\varepsilon)^2(1+\varepsilon ^{-1}) (1-b/n)}{bn}\sum_{j=1}^n \mathbb{E}_{k}\left \| \nabla _yH_j(x_{k},\xi _{k}^{j})- \nabla _yH_j(x_{k+1},\xi _{k}^{j}) \right \| ^2\\
&+\frac{(1+\varepsilon)(1+\varepsilon ^{-1}) (1-b/n)}{bn}\sum_{j=1}^n \mathbb{E}_{k}\left \| \nabla _yH_j(x_{k+1},v_{k})- \nabla _yH_j(x_{k},v_{k}) \right \| ^2\\
&+\frac{1+\varepsilon ^{-1}}{bn}\sum_{j=1}^n \mathbb{E}_{k}\left \| \nabla _yH_j(x_{k+1},v_{k+1})- \nabla _yH_j(x_{k+1},v_{k})\right \| ^2\\
\le&\frac{(1+\varepsilon)^3 (1-b/n)}{bn}\sum_{j=1}^n \left \| \nabla _yH_j(x_{k},v_{k})- \nabla _yH_j(x_{k},\xi _{k}^{j}) \right \| ^2+\frac{(1+\varepsilon)^2(1+\varepsilon ^{-1}) (1-b/n)M^2}{b}\\
&\mathbb{E}_{k}\left \| x_{k+1}-x_{k} \right \| ^2+\frac{(1+\varepsilon)(1+\varepsilon ^{-1}) (1-b/n)M^2}{b}\mathbb{E}_{k}\left \| x_{k+1}-x_{k} \right \| ^2+\frac{(1+\varepsilon ^{-1})L^2}{b}\mathbb{E}_{k}\left \| v_{k+1}-v_{k} \right \| ^2\\
\le&\frac{(1+\varepsilon)^3 (1-b/n)}{bn}\sum_{j=1}^n \left \| \nabla _yH_j(x_{k},v_{k})- \nabla _yH_j(x_{k},\xi _{k}^{j}) \right \| ^2+\frac{(2+\varepsilon)(1+\varepsilon)(1+\varepsilon ^{-1}) (1-b/n)M^2}{b}\\
&\mathbb{E}_{k}\left \| x_{k+1}-x_{k} \right \| ^2+\frac{(1+\varepsilon ^{-1})L^2}{b}\mathbb{E}_{k}\left (3\left \|v_{k+1}-y_{k+1}\right \| ^2+3\left \|y_{k+1}-y_{k}\right \|^2+3\left \|y_{k}-v_{k}\right \|^2\right)\\
\le&\frac{(1+\varepsilon)^3 (1-b/n)}{bn}\sum_{j=1}^n \left \| \nabla _yH_j(x_{k},v_{k})- \nabla _yH_j(x_{k},\xi _{k}^{j}) \right \| ^2+\frac{(2+\varepsilon)(1+\varepsilon)(1+\varepsilon ^{-1}) (1-b/n)M^2}{b}\\
&\mathbb{E}_{k}\left \| x_{k+1}-x_{k} \right \| ^2+\frac{3L^2(1+\varepsilon ^{-1})(1+2\gamma_{1}^2)}{b}\mathbb{E}_{k}\left \|y_{k+1}-y_{k}\right \| ^2+\frac{6L^2(1+\varepsilon ^{-1})(\gamma_{1}^2+\gamma_{2}^2)}{b}\\
&\left \|y_{k}-y_{k-1}\right \|^2+\frac{6L^2(1+\varepsilon ^{-1})\gamma_{2}^2}{b}\left \|y_{k-1}-y_{k-2}\right \|^2.
\endaligned
\end{equation}
With
\begin{equation*}
\aligned
\Upsilon _{k+1}=&\frac{1}{bn}\sum_{j=1}^n \left (\left \| \nabla _xH_j(u_{k+1},y_{k+1})- \nabla _xH_j(\varphi _{k+1}^{j},y_{k+1}) \right \| ^2 \right.\\
&\left .+4\left \| \nabla _yH_j(x_{k+1},v_{k+1})- \nabla _yH_j(x_{k+1},\xi _{k+1}^{j}) \right \| ^2 \right),
\endaligned
\end{equation*}
adding \eqref{(A.11)} and \eqref{(A.12)}, we can obtain
\begin{equation*}
%\label{(A.13)}
\aligned
&\mathbb{E}_{k}\Upsilon _{k+1}\\
\le&(1+\varepsilon)^3 (1-b/n)\Upsilon _{k}+\frac{(2+\varepsilon)(1+\varepsilon ^{-1} )M^2 }{b}\mathbb{E}_{k}\left \|y_{k+1}- y_{k} \right \| ^2+\frac{3M^2(1+\varepsilon ^{-1})(1+2\gamma_{1}^2)}{b}\\
&\mathbb{E}_{k}\left \|x_{k+1}-x_{k}\right \| ^2+\frac{6M^2(1+\varepsilon ^{-1})(\gamma_{1}^2+\gamma_{2}^2)}{b}\left \|x_{k}-x_{k-1}\right \|^2+\frac{6M^2(1+\varepsilon ^{-1})\gamma_{2}^2}{b}\\
&\left \|x_{k-1}-x_{k-2}\right \|^2+\frac{4(1+\varepsilon)(1+\varepsilon ^{-1}) (1-b/n)M^2(2+\varepsilon)}{b}\mathbb{E}_{k}\left \| x_{k+1}-x_{k} \right \| ^2\\
&+\frac{12L^2(1+\varepsilon ^{-1})(1+2\gamma_{1}^2)}{b}\mathbb{E}_{k}\left \|y_{k+1}-y_{k}\right \| ^2+\frac{24L^2(1+\varepsilon ^{-1})(\gamma_{1}^2+\gamma_{2}^2)}{b}\left \|y_{k}-y_{k-1}\right \|^2\\
&+\frac{24L^2(1+\varepsilon ^{-1})\gamma_{2}^2}{b}\left \|y_{k-1}-y_{k-2}\right \|^2\\
\le&(1+\varepsilon)^3 (1-b/n)\Upsilon _{k}+\frac{13N^2(1+\varepsilon)(2+\varepsilon)(1+\varepsilon ^{-1} )(1+2\gamma_1^2)}{b}\mathbb{E}_{k}\left \|z_{k+1}- z_{k} \right \| ^2\\
&+\frac{24N^2(1+\varepsilon ^{-1})(\gamma_1^2+\gamma_{2}^2)}{b}\left \|z_{k}-z_{k-1}\right \| ^2+\frac{24N^2\gamma_{2}^2(1+\varepsilon ^{-1})}{b}\left \|z_{k-1}-z_{k-2}\right \|^2\\
\le&(1+\varepsilon)^3 (1-b/n)\Upsilon _{k}+\frac{24N^2(1+\varepsilon)(2+\varepsilon)(1+\varepsilon ^{-1})(1+2\gamma_1^2+\gamma_2^2)}{b}\left (\mathbb{E}_k\left \| z_{k+1}-z_{k} \right \| ^{2}\right.\\
&\left.+\left \| z_{k}-z_{k-1} \right \| ^{2} +\left \| z_{k-1}-z_{k-2} \right \| ^{2}\right ),
\endaligned
\end{equation*}
where $N=\max \left \{ M,L \right \}$.  Choosing $\varepsilon =\frac{b}{6n}$, we have $(1+\varepsilon )^3(1-\frac{b}{n}  ) \le 1-\frac{b}{2n}$, producing the inequality
\begin{equation}
\label{(A.13)}
\aligned
\mathbb{E}_{k}\Upsilon _{k+1}\le&(1-\frac{b}{2n})\Upsilon _{k}+\frac{24N^2(1+\frac{b}{6n})(2+\frac{b}{6n})(1+\frac{6n}{b})(1+2\gamma_1^2+\gamma_2^2)}{b}\left (\mathbb{E}_k\left \| z_{k+1}-z_{k} \right \| ^{2}\right.\\
&\left. +\left \| z_{k}-z_{k-1} \right \| ^{2}+\left \| z_{k-1}-z_{k-2} \right \| ^{2}\right )\\
\le&(1-\frac{b}{2n})\Upsilon _{k}+\frac{408nN^2(1+2\gamma_1^2+\gamma_2^2)}{b^2}\left (\mathbb{E}_k\left \| z_{k+1}-z_{k} \right \| ^{2}+\left \| z_{k}-z_{k-1} \right \| ^{2} +\left \| z_{k-1}-z_{k-2} \right \| ^{2}\right ).
\endaligned
\end{equation}
This completes the proof.
\end{proof}
\end{lemma}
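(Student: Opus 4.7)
The overall strategy is to bound $\mathbb{E}_k\Upsilon_{k+1}$ by a contracted copy of $\Upsilon_k$ plus controlled movement terms, using three ingredients: (a) the three-point expansion $\|a-c\|^2\le (1+\varepsilon)\|a-b\|^2+(1+\varepsilon^{-1})\|b-c\|^2$ to split gradient differences, (b) the SAGA update rule $\varphi^{j}_{k+1}=u_k$ with probability $b/n$ and $\varphi^{j}_{k+1}=\varphi^{j}_k$ otherwise (and analogously for $\xi^{j}_{k+1}$), and (c) the Lipschitz bounds from Assumption~\ref{Assumption21}(ii)--(iii) and Assumption~\ref{Assumption41} to convert gradient differences into iterate differences.

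First I would split $\Upsilon_{k+1}$ defined in \eqref{(A.9)} into its $x$-part and $y$-part and treat them separately. For the $x$-part, I would insert the intermediate point $(u_k,y_k)$ and apply the three-point inequality twice, writing
\[
\bigl\|\nabla_x H_j(u_{k+1},y_{k+1})-\nabla_x H_j(\varphi^{j}_{k+1},y_{k+1})\bigr\|^2
\]
in terms of $\|\nabla_x H_j(u_k,y_k)-\nabla_x H_j(\varphi^{j}_{k+1},y_k)\|^2$ plus two perturbation terms measuring the movement from $(u_k,y_k)$ to $(u_{k+1},y_{k+1})$ and from $y_k$ to $y_{k+1}$. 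Taking the conditional expectation over the minibatch $I^{x}_k$ and using the SAGA rule gives the contraction: the indices $j\in I^{x}_k$ (a fraction $b/n$) satisfy $\varphi^{j}_{k+1}=u_k$ and hence contribute zero, while the remaining fraction $1-b/n$ retain $\varphi^{j}_{k+1}=\varphi^{j}_k$, producing the factor $(1+\varepsilon)^2(1-b/n)$ in front of $\frac{1}{bn}\sum_j\|\nabla_x H_j(u_k,y_k)-\nabla_x H_j(\varphi^{j}_k,y_k)\|^2$. I would then use Lipschitz continuity to bound the perturbation terms, and crucially expand $\|u_{k+1}-u_k\|^2$ using the definition $u_k=x_k+\gamma_{1k}(x_k-x_{k-1})+\gamma_{2k}(x_{k-1}-x_{k-2})$ together with Young's inequality so that it is dominated by
\[
3\|x_{k+1}-x_k\|^2+6(\gamma_1^2+\gamma_2^2)\|x_k-x_{k-1}\|^2+6\gamma_2^2\|x_{k-1}-x_{k-2}\|^2 .
\]
The analogous computation for the $y$-part is the same, with the extra point $(x_{k+1},v_k)$ inserted to account for the fact that $\xi^{j}_{k+1}$ is referenced against $v_k$ but the squared term also involves the $x$-update.

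After adding the $x$- and $y$-bounds, every leading constant can be consolidated by setting $N=\max\{M,L\}$ and $\gamma=\max\{\gamma_1,\gamma_2\}$, which yields
\[
\mathbb{E}_k\Upsilon_{k+1}\le (1+\varepsilon)^3(1-b/n)\,\Upsilon_k+\frac{C(\varepsilon)N^2(1+2\gamma_1^2+\gamma_2^2)}{b}\bigl(\mathbb{E}_k\|z_{k+1}-z_k\|^2+\|z_k-z_{k-1}\|^2+\|z_{k-1}-z_{k-2}\|^2\bigr)
\]
for an explicit rational function $C(\varepsilon)$ of $\varepsilon$ and $\varepsilon^{-1}$. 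The final step is to pick $\varepsilon=b/(6n)$ so that the elementary inequality $(1+b/(6n))^3(1-b/n)\le 1-b/(2n)$ holds (verified by expansion), which gives $\rho=b/(2n)$. A bookkeeping of the constants with this choice of $\varepsilon$ then produces $V_\Upsilon=408nN^2(1+2\gamma_1^2+\gamma_2^2)/b^2$.

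The main obstacle is not any single inequality but the careful \emph{ordering} of the three-point expansions so that (i) the SAGA table contracts by exactly $(1-b/n)$ after taking the minibatch expectation, and (ii) all residual terms involve only $\|u_{k+1}-u_k\|$, $\|v_{k+1}-v_k\|$, and differences in $x_k,y_k$ themselves, which can then be bounded by the three consecutive $z$-increments required by the statement. An additional subtlety is that the $x$-update on the $y$-part forces us to insert an extra $(x_{k+1},v_k)$ step, producing the factor $4$ (rather than $1$) in front of the $y$-term in the definition of $\Upsilon_{k+1}$; getting this scaling right is what keeps the two contributions balanced when collected into the single constant $V_\Upsilon$.
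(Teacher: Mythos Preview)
Your plan is correct and follows essentially the same route as the paper: repeated use of the three-point inequality $\|a-c\|^2\le(1+\varepsilon)\|a-b\|^2+(1+\varepsilon^{-1})\|b-c\|^2$ to peel off the movement in $(u_k,y_k)$ and in the table entries, the SAGA update rule to extract the $(1-b/n)$ contraction, Lipschitz bounds to convert gradient differences into iterate differences, expansion of $\|u_{k+1}-u_k\|^2$ and $\|v_{k+1}-v_k\|^2$ via the inertial definitions, and finally the choice $\varepsilon=b/(6n)$ to obtain $(1+\varepsilon)^3(1-b/n)\le 1-b/(2n)$. One small bookkeeping slip: your displayed bound on $\|u_{k+1}-u_k\|^2$ should read $3(1+2\gamma_1^2)\|x_{k+1}-x_k\|^2$ rather than $3\|x_{k+1}-x_k\|^2$, since $u_{k+1}-x_{k+1}$ also contributes a $\gamma_1$ term at level $k+1$; this is exactly what produces the $(1+2\gamma_1^2)$ factor in the paper's constants.
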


\begin{lemma}{\rm (Convergence of estimator)} 
\label{lem56}
If $\left \{ z_k  \right \} _{k\in \mathbb{N} }$ satisfies $\lim_{k \to \infty } \mathbb{E}\left \| z_{k}-z_{k-1} \right \| ^{2}=0$, then $ \mathbb{E}\Upsilon _k\to 0$ and $\mathbb{E}\Gamma _k\to 0$ as $k\to \infty$.
\begin{proof}
We frist show that $\sum_{j=1}^n \mathbb{E}\left \| \nabla _xH_j(u_{k},y_{k})- \nabla _xH_j(\varphi _{k}^{j},y_{k}) \right \| ^2\to 0$ as $k\to\infty$. Indeed,
\begin{equation}
\label{(A.14)}
\aligned
&\sum_{j=1}^n \mathbb{E}\left \| \nabla _xH_j(u_{k},y_{k})- \nabla _xH_j(\varphi _{k}^{j},y_{k}) \right \| ^2\le L^2\sum_{j=1}^n \mathbb{E}\left \| u_{k}-\varphi _{k}^{j} \right \| ^2\\
\le&nL^2(1+\frac{2n}{b})\mathbb{E}\left \| u_{k}-u_{k-1}\right \| ^2+L^2(1+\frac{b}{2n})\sum_{j=1}^n \mathbb{E}\left \|u_{k-1}-\varphi _{k}^{j}\right \| ^2\\
\le&nL^2(1+\frac{2n}{b})\mathbb{E}\left \| u_{k}-u_{k-1}\right \| ^2+L^2(1+\frac{b}{2n})(1-\frac{b}{n})\sum_{j=1}^n \mathbb{E}\left \|u_{k-1}-\varphi _{k-1}^{j}\right \| ^2\\
\le&nL^2(1+\frac{2n}{b})\mathbb{E}\left \| u_{k}-u_{k-1}\right \| ^2+L^2(1-\frac{b}{2n})\sum_{j=1}^n \mathbb{E}\left \|u_{k-1}-\varphi _{k-1}^{j}\right \| ^2\\
\le&nL^2(1+\frac{2n}{b})\sum_{l=1}^k(1-\frac{b}{2n})^{k-l} \mathbb{E}\left \| u_{l}-u_{l-1}\right \| ^2.
\endaligned
\end{equation}
As $\mathbb{E}\left \| z_{k}-z_{k-1} \right \| ^{2}\to0$, so $\mathbb{E}\left \| u_{k}-u_{k-1} \right \| ^{2}\to0$, it is clear that $\sum_{l=1}^k(1-\frac{b}{2n})^{k-l} \mathbb{E}\left \| u_{l}-u_{l-1}\right \| ^2 \to 0$, and hence $\sum_{j=1}^n \mathbb{E}\left \| \nabla _xH_j(u_{k},y_{k})- \nabla _xH_j(\varphi _{k}^{j},y_{k}) \right \| ^2 \to 0$ as $k\to \infty $. An analogous argument shows that $\sum_{j=1}^n \mathbb{E}\left \| \nabla _yH_j(x_{k},v_{k})- \nabla _yH_j(x_{k},\xi _{k}^{j}) \right \| ^2\to 0$ as $k\to \infty $. So $\mathbb{E}\Upsilon _k\to 0$ as $k\to \infty $. Similarly, we can get $\mathbb{E}\Gamma _k\to 0$ as $k\to \infty $. Indeed, 
\begin{equation}
\label{(A.15)}
\aligned
&\sum_{j=1}^n \mathbb{E}\left \| \nabla _xH_j(u_{k},y_{k})- \nabla _xH_j(\varphi _{k}^{j},y_{k}) \right \|\le L\sum_{j=1}^n \mathbb{E}\left \| u_{k}-\varphi _{k}^{j} \right \|\\
\le&nL\mathbb{E}\left \| u_{k}-u_{k-1}\right \| +L\sum_{j=1}^n \mathbb{E}\left \|u_{k-1}-\varphi _{k}^{j}\right \|\\
\le&nL\mathbb{E}\left \| u_{k}-u_{k-1}\right \| +L(1-\frac{b}{n})\sum_{j=1}^n \mathbb{E}\left \|u_{k-1}-\varphi _{k-1}^{j}\right \|\\
\le&nL\sum_{l=1}^k(1-\frac{b}{n})^{k-l} \mathbb{E}\left \| u_{l}-u_{l-1}\right \|.
\endaligned
\end{equation}
Because $\mathbb{E}\left \| z_{k}-z_{k-1} \right \| ^{2}\to0$, it follows that $\mathbb{E}\left \| z_{k}-z_{k-1} \right \|\to0$ (because Jensen's inequality implies $\mathbb{E}\left \| z_{k}-z_{k-1} \right \|\le\sqrt{\mathbb{E}\left \| z_{k}-z_{k-1} \right \| ^{2}}\to 0$). So $\mathbb{E}\left \| u_{k}-u_{k-1} \right \|\to0$, then it follows that the bound on the right goes to zero as $k\to \infty $, hence $\mathbb{E}\Gamma _k\to 0$.
\end{proof}
\end{lemma}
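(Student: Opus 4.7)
The plan is to show $\mathbb{E}\Upsilon_k \to 0$ via a contractive recursion driven by the SAGA refresh rule, and then to derive $\mathbb{E}\Gamma_k \to 0$ by an almost identical argument with norms in place of squared norms. Since $\Upsilon_k$ decomposes into terms of the form $\|\nabla_x H_j(u_k, y_k) - \nabla_x H_j(\varphi_k^j, y_k)\|^2$ and $\|\nabla_y H_j(x_k, v_k) - \nabla_y H_j(x_k, \xi_k^j)\|^2$, and the partial gradients are Lipschitz (Assumptions \ref{Assumption21}(ii) and \ref{Assumption41}), the claim reduces to showing $\sum_{j=1}^n \mathbb{E}\|u_k - \varphi_k^j\|^2 \to 0$ and $\sum_{j=1}^n \mathbb{E}\|v_k - \xi_k^j\|^2 \to 0$.

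For the first sum, I would write $u_k - \varphi_k^j = (u_k - u_{k-1}) + (u_{k-1} - \varphi_k^j)$, apply Young's inequality $\|a+b\|^2 \le (1+\epsilon^{-1})\|a\|^2 + (1+\epsilon)\|b\|^2$, and take conditional expectations. The SAGA update rule refreshes $\varphi_k^j$ to $u_{k-1}$ with probability $b/n$ and keeps it equal to $\varphi_{k-1}^j$ otherwise, so $\mathbb{E}\|u_{k-1} - \varphi_k^j\|^2 = (1 - b/n)\,\mathbb{E}\|u_{k-1} - \varphi_{k-1}^j\|^2$. Choosing $\epsilon$ small enough that $(1+\epsilon)(1 - b/n) \le 1 - b/(2n)$ delivers the contractive recursion
\begin{equation*}
\sum_{j=1}^n \mathbb{E}\|u_k - \varphi_k^j\|^2 \le c_0\, n\,\mathbb{E}\|u_k - u_{k-1}\|^2 + \Bigl(1 - \tfrac{b}{2n}\Bigr)\sum_{j=1}^n \mathbb{E}\|u_{k-1} - \varphi_{k-1}^j\|^2,
\end{equation*}
whose unrolling gives $\sum_j \mathbb{E}\|u_k - \varphi_k^j\|^2 \le c_0\, n \sum_{l=1}^k (1 - b/(2n))^{k-l}\, \mathbb{E}\|u_l - u_{l-1}\|^2$.

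To close the argument, I observe that $u_l - u_{l-1}$ is a fixed linear combination of $z_l - z_{l-1}$, $z_{l-1}-z_{l-2}$, and $z_{l-2}-z_{l-3}$ with coefficients controlled by $\gamma_1, \gamma_2$, so the hypothesis $\mathbb{E}\|z_l - z_{l-1}\|^2 \to 0$ forces $\mathbb{E}\|u_l - u_{l-1}\|^2 \to 0$. A standard discrete-convolution fact (if $a_l \to 0$ and $r \in [0,1)$, then $\sum_{l=1}^k r^{k-l} a_l \to 0$) then yields $\sum_j \mathbb{E}\|u_k - \varphi_k^j\|^2 \to 0$. The identical reasoning applied to the $y$-block (with $v_k, \xi_k^j$) completes $\mathbb{E}\Upsilon_k \to 0$. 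For $\mathbb{E}\Gamma_k \to 0$, the argument is essentially the same: the triangle inequality replaces Young's, the contraction factor becomes $1 - b/n$, and Jensen's inequality $\mathbb{E}\|u_l - u_{l-1}\| \le \sqrt{\mathbb{E}\|u_l - u_{l-1}\|^2}$ transports the null-sequence hypothesis to the $L^1$ setting. The main technical point is the choice of $\epsilon$ in Young's inequality that preserves a genuine contraction $(1+\epsilon)(1 - b/n) < 1$; this is possible precisely because $b/n > 0$, and it is the step where a naive splitting would fail.
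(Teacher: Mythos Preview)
Your proposal is correct and follows essentially the same approach as the paper: Lipschitz reduction to $\sum_j \mathbb{E}\|u_k-\varphi_k^j\|^2$, the split $u_k-\varphi_k^j=(u_k-u_{k-1})+(u_{k-1}-\varphi_k^j)$ with Young's inequality, the SAGA refresh identity $\mathbb{E}\|u_{k-1}-\varphi_k^j\|^2=(1-b/n)\mathbb{E}\|u_{k-1}-\varphi_{k-1}^j\|^2$, the contraction $(1+\epsilon)(1-b/n)\le 1-b/(2n)$ (the paper takes $\epsilon=b/(2n)$ explicitly), unrolling to a geometric convolution, and the discrete Ces\`aro-type fact that $a_l\to 0$ forces $\sum_{l\le k} r^{k-l}a_l\to 0$. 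The treatment of $\Gamma_k$ via the triangle inequality and Jensen is likewise identical to the paper's.
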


\setcounter{equation}{0}
\setcounter{lemma}{0}
\setcounter{subsection}{0}
\renewcommand\theequation{B.\arabic{equation}}
\renewcommand\thelemma{B.\arabic{lemma}}
\renewcommand\thesubsection{B}
\subsection{SARAH Variance Bound}\label{72}
\ \par As in the previous section, we use $I_k^x$ and $I_k^y$ to denote the mini-batches used to approximate $\nabla _xH(u_k,y_k) $ and $\nabla _yH(x_{k+1},v_k)$, respectively.
\begin{lemma}{}%{\rm (MSE bound)} 
\label{lem54}
The SARAH gradient estimator satisfies
\begin{equation*}
\aligned
&\mathbb{E}_{k}\left (\left \| \widetilde{\nabla}_x(u_k,y_k)-\nabla _xH(u_k,y_k) \right \| ^2+\left \| \widetilde{\nabla}_y(x_{k+1},v_k)-\nabla _yH(x_{k+1},v_k) \right \| ^2\right)\\
\le&\left ( 1-\frac{1}{p}  \right )\left ( \left \| \widetilde{\nabla}_x(u_{k-1},y_{k-1})-\nabla_xH(u_{k-1},y_{k-1})\right \|^2+ \left \| \widetilde{\nabla}_y(x_{k},v_{k-1})-\nabla_yH(x_{k},v_{k-1})\right \|^2\right )\\
&+V_{1}\left (\mathbb{E}_k\left \| z_{k+1}-z_{k} \right \| ^{2}+\left \| z_{k}-z_{k-1} \right \| ^{2} +\left \| z_{k-1}-z_{k-2} \right \| ^{2}+\left \| z_{k-2}-z_{k-3} \right \| ^{2}\right ),
\endaligned
\end{equation*}
as well as
\begin{equation*}
\aligned
&\mathbb{E}_k\left (  \left \| \widetilde{\nabla}_x(u_k,y_k)-\nabla _xH(u_k,y_k) \right \| +\left \| \widetilde{\nabla}_y(x_{k+1},v_k)-\nabla _yH(x_{k+1},v_k) \right \| \right )\\
\le& \sqrt{1-\frac{1}{p} } \left (\left \| \widetilde{\nabla}_x(u_k,y_k)-\nabla _xH(u_k,y_k) \right \| +\left \| \widetilde{\nabla}_y(x_{k+1},v_k)-\nabla _yH(x_{k+1},v_k) \right \|\right )\\
&+V_2\left (\mathbb{E}_k\left \| z_{k+1}-z_{k} \right \|+\left \| z_{k}-z_{k-1} \right \| +\left \| z_{k-1}-z_{k-2} \right \|+\left \| z_{k-2}-z_{k-3} \right \|\right ),
\endaligned
\end{equation*}
where $V_{1}=6\left ( 1-\frac{1}{p}  \right )M^2(1+2\gamma_{1}^2+\gamma_{2}^2)$ and $V_{2}=M\sqrt{6(1-\frac{1}{p})(1+2\gamma_{1}^2+\gamma_{2}^2) }$.
%is variance reduced with
%\begin{equation}
%\label{(B.1)}
%\aligned
%\Upsilon _{k+1}=& \left \| \widetilde{\nabla}_x(u_k,y_k)-\nabla _xH(u_k,y_k) \right \| ^2+\left \| \widetilde{\nabla}_y(x_{k+1},v_k)-\nabla _yH(x_{k+1},v_k) \right \| ^2,\\
%\Gamma _{k+1}=& \left \| \widetilde{\nabla}_x(u_k,y_k)-\nabla _xH(u_k,y_k) \right \| +\left \| \widetilde{\nabla}_y(x_{k+1},v_k)-\nabla _yH(x_{k+1},v_k) \right \| ,
%\endaligned
%\end{equation}
%and constants $\rho =$, $V_1=V_\Upsilon =$, , and $V_2=$.
\begin{proof}
Let $\mathbb{E}_{k,p}$ denote the expectation conditional on the first $k$ iterations and the event that we do not compute the full gradient at iteration $k$. The conditional expectation of the SARAH gradient estimator in this case is
\begin{equation}
\label{(B.2)}
\aligned
\mathbb{E}_{k,p}\widetilde{\nabla}_x(u_k,y_k)=&\frac{1}{b}\mathbb{E}_{k,p}\left ( \sum_{i\in I_k^x} \nabla _xH_i(u_k,y_k)- \nabla _xH_i(u_{k-1},y_{k-1}) \right ) +\widetilde{\nabla}_x(u_{k-1},y_{k-1}) \\
=&\nabla _xH(u_k,y_k)-\nabla _xH(u_{k-1},y_{k-1})+\widetilde{\nabla}_x(u_{k-1},y_{k-1}),
\endaligned
\end{equation}
and further
\begin{equation}
\label{(B.3)}
\aligned
&\mathbb{E}_{k,p}\left \| \widetilde{\nabla}_x(u_k,y_k) -\nabla_xH(u_k,y_k) \right \|^2 \\
=&\mathbb{E}_{k,p}\left \| \widetilde{\nabla}_x(u_{k-1},y_{k-1})-\nabla_xH(u_{k-1},y_{k-1})+\nabla_xH(u_{k-1},y_{k-1}) -\nabla_xH(u_k,y_k)\right.\\
&\left.+\widetilde{\nabla}_x(u_k,y_k)-\widetilde{\nabla}_x(u_{k-1},y_{k-1}) \right \|^2 \\
=&\left \| \widetilde{\nabla}_x(u_{k-1},y_{k-1})-\nabla_xH(u_{k-1},y_{k-1})\right \|^2+\left \| \nabla_xH(u_{k-1},y_{k-1}) -\nabla_xH(u_k,y_k) \right \|^2 \\
&+\mathbb{E}_{k,p}\left \| \widetilde{\nabla}_x(u_k,y_k)-\widetilde{\nabla}_x(u_{k-1},y_{k-1})\right \|^2 \\
&+2\left \langle \widetilde{\nabla}_x(u_{k-1},y_{k-1})-\nabla_xH(u_{k-1},y_{k-1}), \nabla_xH(u_{k-1},y_{k-1}) -\nabla_xH(u_k,y_k) \right \rangle\\
&-2\left \langle \nabla_xH(u_{k-1},y_{k-1})-\widetilde{\nabla}_x(u_{k-1},y_{k-1}), \mathbb{E}_{k,p}\left ( \widetilde{\nabla}_x(u_{k},y_{k})-\widetilde{\nabla}_x(u_{k-1},y_{k-1}) \right ) \right \rangle \\
&-2\left \langle \nabla_xH(u_k,y_k)-\nabla_xH(u_{k-1},y_{k-1}), \mathbb{E}_{k,p}\left ( \widetilde{\nabla}_x(u_{k},y_{k})-\widetilde{\nabla}_x(u_{k-1},y_{k-1}) \right ) \right \rangle.
\endaligned
\end{equation}
By \eqref{(B.2)}, we see that
$$\mathbb{E}_{k,p}\left ( \widetilde{\nabla}_x(u_{k},y_{k})-\widetilde{\nabla}_x(u_{k-1},y_{k-1}) \right )=\nabla_xH(u_{k},y_{k})-\nabla_xH(u_{k-1},y_{k-1}).$$
Thus, the first two inner products in \eqref{(B.3)} sum to zero and the third one is equal to
\begin{equation*}
%\label{(B.4)}
\aligned
&-2\left \langle \nabla_xH(u_k,y_k)-\nabla_xH(u_{k-1},y_{k-1}), \mathbb{E}_{k,p}\left ( \widetilde{\nabla}_x(u_{k},y_{k})-\widetilde{\nabla}_x(u_{k-1},y_{k-1}) \right ) \right \rangle\\
=&-2\left \langle \nabla_xH(u_k,y_k)-\nabla_xH(u_{k-1},y_{k-1}), \nabla_xH(u_{k},y_{k})-\nabla_xH(u_{k-1},y_{k-1})\right \rangle\\
=&-2\left \| \nabla_xH(u_k,y_k)-\nabla_xH(u_{k-1},y_{k-1}) \right \| ^2.
\endaligned
\end{equation*}
This yields
\begin{equation*}
\aligned
&\mathbb{E}_{k,p}\left \| \widetilde{\nabla}_x(u_k,y_k) -\nabla_xH(u_k,y_k) \right \|^2 \\
=&\left \| \widetilde{\nabla}_x(u_{k-1},y_{k-1})-\nabla_xH(u_{k-1},y_{k-1})\right \|^2-\left \| \nabla_xH(u_{k-1},y_{k-1}) -\nabla_xH(u_k,y_k) \right \|^2 \\
&+\mathbb{E}_{k,p}\left \| \widetilde{\nabla}_x(u_k,y_k)-\widetilde{\nabla}_x(u_{k-1},y_{k-1})\right \|^2\\
\le&\left \| \widetilde{\nabla}_x(u_{k-1},y_{k-1})-\nabla_xH(u_{k-1},y_{k-1})\right \|^2+\mathbb{E}_{k,p}\left \| \widetilde{\nabla}_x(u_k,y_k)-\widetilde{\nabla}_x(u_{k-1},y_{k-1})\right \|^2.
\endaligned
\end{equation*}
We can bound the second term by computing the expectation.
\begin{equation*}
\aligned
&\mathbb{E}_{k,p}\left \| \widetilde{\nabla}_x(u_k,y_k)-\widetilde{\nabla}_x(u_{k-1},y_{k-1})\right \|^2\\
=&\mathbb{E}_{k,p}\left \| \frac{1}{b}\left ( \sum_{i\in I_k^x} \nabla _xH_i(u_k,y_k)- \nabla _xH_i(u_{k-1},y_{k-1}) \right )\right \|^2\\
\le&\frac{1}{b}\mathbb{E}_{k,p}\left [ \sum_{i\in I_k^x} \left \| \nabla _xH_i(u_k,y_k)- \nabla _xH_i(u_{k-1},y_{k-1}) \right \|^2 \right ]\\
=&\frac{1}{n} \sum_{j=1}^{n} \left \| \nabla _xH_j(u_k,y_k)- \nabla _xH_j(u_{k-1},y_{k-1}) \right \|^2.
\endaligned
\end{equation*}
The inequality is due to the convexity of the function $x\mapsto \left \| x \right \| ^2$. This results in the recursive inequality
\begin{equation*}
\aligned
&\mathbb{E}_{k,p}\left \| \widetilde{\nabla}_x(u_k,y_k) -\nabla_xH(u_k,y_k) \right \|^2 \\
\le&\left \| \widetilde{\nabla}_x(u_{k-1},y_{k-1})-\nabla_xH(u_{k-1},y_{k-1})\right \|^2+\frac{1}{n} \sum_{j=1}^{n} \left \| \nabla _xH_j(u_k,y_k)- \nabla _xH_j(u_{k-1},y_{k-1}) \right \|^2.
\endaligned
\end{equation*}
This bounds the MSE under the condition that the full gradient is not computed. When the full gradient is computed, the MSE is equal to zero, so taking the $M$-Lipschitz continuity of the gradients of the $H_j$ into account, we get
\begin{equation*}
\aligned
&\mathbb{E}_{k}\left \| \widetilde{\nabla}_x(u_k,y_k) -\nabla_xH(u_k,y_k) \right \|^2 \\
\le&\left ( 1-\frac{1}{p}  \right )\left (  \left \| \widetilde{\nabla}_x(u_{k-1},y_{k-1})-\nabla_xH(u_{k-1},y_{k-1})\right \|^2+\frac{1}{n} \sum_{j=1}^{n} \left \| \nabla _xH_j(u_k,y_k)- \nabla _xH_j(u_{k-1},y_{k-1}) \right \|^2 \right )\\
\le&\left ( 1-\frac{1}{p}  \right )\left ( \left \| \widetilde{\nabla}_x(u_{k-1},y_{k-1})-\nabla_xH(u_{k-1},y_{k-1})\right \|^2+M^2\left \| (u_k,y_k)- (u_{k-1},y_{k-1}) \right \|^2 \right ).
\endaligned
\end{equation*}
Using $(a+b+c) ^2\le 3(a^2+b^2+c^2)$, we can estimate
\begin{equation*}
\aligned
&\left \| (u_k,y_k)- (u_{k-1},y_{k-1}) \right \|^2=\left \| u_k-u_{k-1}\right \|^2+\left \| y_k-y_{k-1}\right \|^2\\
\le&3\left \| u_k-x_{k}\right \|^2+3\left \| x_k-x_{k-1}\right \|^2+3\left \| x_{k-1}-u_{k-1}\right \|^2+\left \| y_k-y_{k-1}\right \|^2\\
%\le&3(1+2\gamma_{1k}^2)\left \| x_k-x_{k-1}\right \|^2+6(\gamma_{1,k-1}^2+\gamma_{2,k}^2)\left \| x_{k-1}-x_{k-2}\right \|^2+6\gamma_{2,k-1}^2\left \| x_{k-2}-x_{k-3}\right \|^2\\
%&+\left \| y_k-y_{k-1}\right \|^2\\
\le&3(1+2\gamma_{1}^2)\left \| x_k-x_{k-1}\right \|^2+6(\gamma_{1}^2+\gamma_{2}^2)\left \| x_{k-1}-x_{k-2}\right \|^2+6\gamma_{2}^2\left \| x_{k-2}-x_{k-3}\right \|^2+\left \| y_k-y_{k-1}\right \|^2.
\endaligned
\end{equation*}
Substitute into the above inequality, we can obtain
\begin{equation}
\label{(B.4)}
\aligned
&\mathbb{E}_{k}\left \| \widetilde{\nabla}_x(u_k,y_k) -\nabla_xH(u_k,y_k) \right \|^2 \\
%\le&\left ( 1-\frac{1}{p}  \right )\left ( \left \| \widetilde{\nabla}_x(u_{k-1},y_{k-1})-\nabla_xH(u_{k-1},y_{k-1})\right \|^2+3M^2(1+2\gamma_{1k}^2)\left \| x_k-x_{k-1}\right \|^2\right.\\
%&\left .+6M^2(\gamma_{1,k-1}^2+\gamma_{2,k}^2)\left \| x_{k-1}-x_{k-2}\right \|^2+6M^2\gamma_{2,k-1}^2\left \| x_{k-2}-x_{k-3}\right \|^2+M^2\left \| y_k-y_{k-1}\right \|^2   \right ).
\le&\left ( 1-\frac{1}{p}  \right )\left ( \left \| \widetilde{\nabla}_x(u_{k-1},y_{k-1})-\nabla_xH(u_{k-1},y_{k-1})\right \|^2+3M^2(1+2\gamma_{1}^2)\left \| x_k-x_{k-1}\right \|^2\right.\\
&\left .+6M^2(\gamma_{1}^2+\gamma_{2}^2)\left \| x_{k-1}-x_{k-2}\right \|^2+6M^2\gamma_{2}^2\left \| x_{k-2}-x_{k-3}\right \|^2+M^2\left \| y_k-y_{k-1}\right \|^2   \right ).
\endaligned
\end{equation}
By symmetric arguments, it holds
\begin{equation}
\label{(B.5)}
\aligned
&\mathbb{E}_{k}\left \| \widetilde{\nabla}_y(x_{k+1},v_k) -\nabla_yH(x_{k+1},v_k) \right \|^2 \\
\le&\left ( 1-\frac{1}{p}  \right )\left (  \left \| \widetilde{\nabla}_y(x_{k},v_{k-1})-\nabla_yH(x_{k},v_{k-1})\right \|^2+M^2\mathbb{E}_{k}\left \| (x_{k+1},v_k)- (x_{k},v_{k-1}) \right \|^2 \right )\\
\le&\left ( 1-\frac{1}{p}  \right )\left (  \left \| \widetilde{\nabla}_y(x_{k},v_{k-1})-\nabla_yH(x_{k},v_{k-1})\right \|^2+M^2\mathbb{E}_{k}\left \| x_{k+1}-x_{k}\right \|^2+3M^2(1+2\mu_{1k}^2)\right.\\
&\left .\left \| y_k-y_{k-1}\right \|^2+6M^2(\mu_{1,k-1}^2+\mu_{2k}^2)\left \| y_{k-1}-y_{k-2}\right \|^2+6M^2\mu_{2,k-1}^2\left \| y_{k-2}-y_{k-3}\right \|^2   \right )\\
\le&\left ( 1-\frac{1}{p}  \right )\left (  \left \| \widetilde{\nabla}_y(x_{k},v_{k-1})-\nabla_yH(x_{k},v_{k-1})\right \|^2+M^2\mathbb{E}_{k}\left \| x_{k+1}-x_{k}\right \|^2+3M^2(1+2\gamma_{1}^2)\right.\\
&\left .\left \| y_k-y_{k-1}\right \|^2+6M^2(\gamma_{1}^2+\gamma_{2}^2)\left \| y_{k-1}-y_{k-2}\right \|^2+6M^2\gamma_{2}^2\left \| y_{k-2}-y_{k-3}\right \|^2   \right ).
\endaligned
\end{equation}
Combining \eqref{(B.4)} and \eqref{(B.5)}, we can obtain
\begin{equation*}
\aligned
&\mathbb{E}_{k}\left (\left \| \widetilde{\nabla}_x(u_k,y_k)-\nabla _xH(u_k,y_k) \right \| ^2+\left \| \widetilde{\nabla}_y(x_{k+1},v_k)-\nabla _yH(x_{k+1},v_k) \right \| ^2\right)\\
\le&\left ( 1-\frac{1}{p}  \right )\left ( \left \| \widetilde{\nabla}_x(u_{k-1},y_{k-1})-\nabla_xH(u_{k-1},y_{k-1})\right \|^2+ \left \| \widetilde{\nabla}_y(x_{k},v_{k-1})-\nabla_yH(x_{k},v_{k-1})\right \|^2\right.\\
&\left .+M^2\mathbb{E}_{k}\left \| x_{k+1}-x_{k}\right \|^2+M^2\left \| y_{k}-y_{k-1}\right \|^2+3M^2(1+2\gamma_{1}^2)\left \| z_k-z_{k-1}\right \|^2\right.\\
&\left .+6M^2(\gamma_{1}^2+\gamma_{2}^2)\left \| z_{k-1}-z_{k-2}\right \|^2+6M^2\gamma_{2}^2\left \| z_{k-2}-z_{k-3}\right \|^2   \right )\\
\le&\left ( 1-\frac{1}{p}  \right )\Upsilon _{k}+6\left ( 1-\frac{1}{p}  \right )M^2(1+2\gamma_{1}^2+\gamma_{2}^2)\left (\mathbb{E}_k\left \| z_{k+1}-z_{k} \right \| ^{2}+\left \| z_{k}-z_{k-1} \right \| ^{2}\right.\\
&\left . +\left \| z_{k-1}-z_{k-2} \right \| ^{2}+\left \| z_{k-2}-z_{k-3} \right \| ^{2}\right ).
\endaligned
\end{equation*} 
Similar bounds hold for $\Gamma _k$ due to Jensen’s inequality:
\begin{equation*}
\aligned
&\mathbb{E}_k\left (  \left \| \widetilde{\nabla}_x(u_k,y_k)-\nabla _xH(u_k,y_k) \right \| +\left \| \widetilde{\nabla}_y(x_{k+1},v_k)-\nabla _yH(x_{k+1},v_k) \right \| \right )\\
\le& \sqrt{1-\frac{1}{p} } \left (\left \| \widetilde{\nabla}_x(u_k,y_k)-\nabla _xH(u_k,y_k) \right \| +\left \| \widetilde{\nabla}_y(x_{k+1},v_k)-\nabla _yH(x_{k+1},v_k) \right \|\right )\\
&+M\sqrt{6(1-\frac{1}{p})(1+2\gamma_{1}^2+\gamma_{2}^2) }\left (\mathbb{E}_k\left \| z_{k+1}-z_{k} \right \|+\left \| z_{k}-z_{k-1} \right \| +\left \| z_{k-1}-z_{k-2} \right \|+\left \| z_{k-2}-z_{k-3} \right \|\right ).
\endaligned
\end{equation*}
This completes the proof.
\end{proof}
\end{lemma}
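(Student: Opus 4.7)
The SARAH estimator admits a convenient ``either/or'' structure: at step $k$ it either recomputes the full gradient (event $\{p_k=0\}$, probability $1/p$) making the error identically zero, or updates via a recursive mini-batch correction (event $\{p_k=1\}$, probability $1-1/p$). My plan is to analyze the mean-square error under the recursive branch using a centered decomposition that kills the cross term, then combine the two branches via the law of total expectation.

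\textbf{Step 1 (MSE under $\{p_k=1\}$).} Let $\mathbb{E}_{k,p}$ denote the expectation conditional on the first $k$ iterations and on $p_k=1$, and set $D_k^x := \widetilde{\nabla}_x(u_k,y_k) - \nabla_xH(u_k,y_k)$. The SARAH update gives $D_k^x = D_{k-1}^x + M_k^x$ where
\[
M_k^x := \tfrac{1}{b}\sum_{i\in I_k^x}\bigl[\nabla_xH_i(u_k,y_k)-\nabla_xH_i(u_{k-1},y_{k-1})\bigr] - \bigl[\nabla_xH(u_k,y_k)-\nabla_xH(u_{k-1},y_{k-1})\bigr].
\]
Because $I_k^x$ is drawn uniformly and independently of past iterates, and both $(u_k,y_k)$ and $(u_{k-1},y_{k-1})$ are measurable at this level of conditioning, one has $\mathbb{E}_{k,p}M_k^x = 0$. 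Expanding $\|D_{k-1}^x + M_k^x\|^2$ the cross term vanishes, giving $\mathbb{E}_{k,p}\|D_k^x\|^2 = \|D_{k-1}^x\|^2 + \mathbb{E}_{k,p}\|M_k^x\|^2$. Bounding the variance term via $\mathbb{E}\|W-\mathbb{E}W\|^2 \le \mathbb{E}\|W\|^2$ together with convexity of $\|\cdot\|^2$ (equivalently, Jensen's inequality applied to the uniform average over $I_k^x$) and the $M$-Lipschitz continuity of $\nabla H_j$ on bounded sets yields $\mathbb{E}_{k,p}\|M_k^x\|^2 \le M^2\|(u_k,y_k)-(u_{k-1},y_{k-1})\|^2$.

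\textbf{Step 2 (assembly and inertial bookkeeping).} Averaging over $p_k$ kills the error on the full-gradient branch and leaves a factor $1-1/p$ in front of the right-hand side. A symmetric argument handles the $y$-component, but with $x_{k+1}$ in place of $x_k$, so an $\|x_{k+1}-x_k\|^2$ contribution appears. Adding the $x$ and $y$ estimates, the only remaining task is to bound $\|u_k-u_{k-1}\|^2$ and $\|v_k-v_{k-1}\|^2$ in terms of iterate differences $\|z_\cdot-z_\cdot\|^2$. Writing $u_k - u_{k-1} = (u_k-x_k)+(x_k-x_{k-1})+(x_{k-1}-u_{k-1})$, substituting $u_k - x_k = \gamma_{1k}(x_k-x_{k-1})+\gamma_{2k}(x_{k-1}-x_{k-2})$, and applying $(a+b+c)^2\le 3(a^2+b^2+c^2)$ gives a bound in terms of $\|x_k-x_{k-1}\|^2$, $\|x_{k-1}-x_{k-2}\|^2$, $\|x_{k-2}-x_{k-3}\|^2$ with coefficients of order $\gamma_1^2+\gamma_2^2$. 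Collecting $x$- and $y$-terms and identifying $\Upsilon_k := \|D_{k-1}^x\|^2 + \|D_{k-1}^y\|^2$ produces exactly $V_1 = 6(1-1/p)M^2(1+2\gamma_1^2+\gamma_2^2)$ in front of the iterate differences.

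\textbf{Step 3 ($L^1$ bound and main obstacle).} For the norm bound I re-run the recursion with the triangle inequality $\|D_k^x\|\le \|D_{k-1}^x\|+\|M_k^x\|$, then Jensen yields $\mathbb{E}_{k,p}\|M_k^x\| \le \sqrt{\mathbb{E}_{k,p}\|M_k^x\|^2}\le M\|(u_k,y_k)-(u_{k-1},y_{k-1})\|$; averaging over $p_k$ and invoking a Cauchy--Schwarz estimate on the two-branch mean gives the $\sqrt{1-1/p}$ prefactor, and the bound on $\|u_k-u_{k-1}\|$ from Step 2 (via $\sqrt{a^2+b^2+c^2}\le a+b+c$ for nonnegative $a,b,c$) delivers $V_2 = M\sqrt{6(1-1/p)(1+2\gamma_1^2+\gamma_2^2)}=\sqrt{V_1}$. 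The main obstacle is not a serious calculation but a measurability check: the cancellation $\mathbb{E}_{k,p}M_k^x=0$ requires both $u_k$ and $u_{k-1}$ to lie in the conditioning $\sigma$-algebra, which in turn requires keeping past mini-batches in the filtration. Once this is granted, the rest reduces to Lipschitz estimates and telescoping bookkeeping on the inertial extrapolations.
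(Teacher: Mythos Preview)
Your proposal is correct and follows essentially the same route as the paper: the paper also conditions on the event $\{p_k=1\}$, expands the error, uses the mean-zero property of the mini-batch correction to kill cross terms (your $\mathbb{E}_{k,p}M_k^x=0$ is exactly their equation for $\mathbb{E}_{k,p}(\widetilde{\nabla}_x(u_k,y_k)-\widetilde{\nabla}_x(u_{k-1},y_{k-1}))$), bounds the remaining variance by Lipschitz continuity, averages over $p_k$, and then does the same $(a+b+c)^2\le 3(a^2+b^2+c^2)$ bookkeeping on $u_k-u_{k-1}$ and $v_k-v_{k-1}$. The only cosmetic difference is that for the $L^1$ bound the paper goes directly via Jensen from the squared estimate (which is the clean way to extract the $\sqrt{1-1/p}$ prefactor and the constant $V_2=\sqrt{V_1}$), whereas your triangle-inequality route actually yields the slightly stronger prefactor $1-1/p$; either way the stated inequality follows.
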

Now define
\begin{equation}
\label{(B.1)}
\aligned
\Upsilon _{k+1}=& \left \| \widetilde{\nabla}_x(u_k,y_k)-\nabla _xH(u_k,y_k) \right \| ^2+\left \| \widetilde{\nabla}_y(x_{k+1},v_k)-\nabla _yH(x_{k+1},v_k) \right \| ^2,\\
\Gamma _{k+1}=& \left \| \widetilde{\nabla}_x(u_k,y_k)-\nabla _xH(u_k,y_k) \right \| +\left \| \widetilde{\nabla}_y(x_{k+1},v_k)-\nabla _yH(x_{k+1},v_k) \right \|.
\endaligned
\end{equation}
By  Lemma \ref{lem54}, we have
\begin{equation*}
\aligned
&\mathbb{E}_k\left [  \left \| \widetilde{\nabla}_x(u_k,y_k)-\nabla _xH(u_k,y_k) \right \| ^2+\left \| \widetilde{\nabla}_y(x_{k+1},v_k)-\nabla _yH(x_{k+1},v_k) \right \| ^2\right ]\\
\le &\Upsilon _k+V_1\left (\mathbb{E}_k\left \| z_{k+1}-z_{k} \right \| ^{2}+\left \| z_{k}-z_{k-1} \right \| ^{2} +\left \| z_{k-1}-z_{k-2} \right \| ^{2}+\left \| z_{k-2}-z_{k-3} \right \| ^{2}\right ),
\endaligned
\end{equation*}
and
\begin{equation*}
\aligned
&\mathbb{E}_k\left [  \left \| \widetilde{\nabla}_x(u_k,y_k)-\nabla _xH(u_k,y_k) \right \| +\left \| \widetilde{\nabla}_y(x_{k+1},v_k)-\nabla _yH(x_{k+1},v_k) \right \| \right ]\\
\le& \Gamma_k+V_2\left (\mathbb{E}_k\left \| z_{k+1}-z_{k} \right \|+\left \| z_{k}-z_{k-1} \right \| +\left \| z_{k-1}-z_{k-2} \right \|+\left \| z_{k-2}-z_{k-3} \right \|\right ).
\endaligned
\end{equation*}
This is exactly the MSE bound, where $V_{1}=6\left ( 1-\frac{1}{p}  \right )M^2(1+2\gamma_{1}^2+\gamma_{2}^2)$ and \\
$V_{2}=M\sqrt{6(1-\frac{1}{p})(1+2\gamma_{1}^2+\gamma_{2}^2) }$. 
\begin{lemma}{\rm (Geometric decay)} 
\label{lem55}
Let $\Upsilon _{k}$ be defined as in \eqref{(B.1)}, then we can establish the geometric decay property:
\begin{equation}
\label{(B.6)}
\aligned
\mathbb{E}_{k}\Upsilon _{k+1}\le\left ( 1-\rho  \right )\Upsilon _{k}+V_{\Upsilon}\left (\mathbb{E}_k\left \| z_{k+1}-z_{k} \right \| ^{2}+\left \| z_{k}-z_{k-1} \right \| ^{2} +\left \| z_{k-1}-z_{k-2} \right \| ^{2}+\left \| z_{k-2}-z_{k-3} \right \| ^{2}\right ),
\endaligned
\end{equation}
where $\rho= \frac{1}{p}$, $V_{\Upsilon}=6\left ( 1-\frac{1}{p}  \right )M^2(1+2\gamma_{1}^2+\gamma_{2}^2)$.
\begin{proof}
This is a direct result of Lemma \ref{lem54}.
\end{proof}
\end{lemma}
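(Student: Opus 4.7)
The plan is to observe that the claim is essentially a rebranding of the MSE bound already established in Lemma B.1, so the only work is to match the notation in the definition of $\Upsilon_k$ from (B.1) against the two sides of the inequality proved there.

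First I would recall that, by the definition in (B.1), the quantity $\Upsilon_{k+1}$ is precisely
\begin{equation*}
\Upsilon_{k+1} = \left\|\widetilde{\nabla}_x(u_k,y_k)-\nabla_x H(u_k,y_k)\right\|^2 + \left\|\widetilde{\nabla}_y(x_{k+1},v_k)-\nabla_y H(x_{k+1},v_k)\right\|^2,
\end{equation*}
so that, after shifting the index by one, $\Upsilon_k$ equals the squared-error sum at step $k-1$, namely
\begin{equation*}
\Upsilon_k = \left\|\widetilde{\nabla}_x(u_{k-1},y_{k-1})-\nabla_x H(u_{k-1},y_{k-1})\right\|^2 + \left\|\widetilde{\nabla}_y(x_{k},v_{k-1})-\nabla_y H(x_{k},v_{k-1})\right\|^2.
\end{equation*}

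Next I would apply the first inequality in Lemma \ref{lem54}. Its left-hand side is exactly $\mathbb{E}_k\Upsilon_{k+1}$, and the leading term on its right-hand side is $(1-1/p)\Upsilon_k$, while the remaining term is $V_1$ times the sum of the four squared consecutive-iterate distances. Identifying $\rho=1/p$ and $V_\Upsilon=V_1=6(1-1/p)M^2(1+2\gamma_1^2+\gamma_2^2)$ gives the stated geometric decay inequality verbatim.

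There is no real obstacle; the only thing to be careful about is the index shift in the definition of $\Upsilon_k$, which guarantees that the ``old noise'' term appearing on the right of Lemma \ref{lem54} is precisely $\Upsilon_k$ (and not some related but distinct quantity). Once that bookkeeping is done, the desired inequality follows immediately without any further estimation.
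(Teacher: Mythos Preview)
Your proposal is correct and follows exactly the same approach as the paper, which simply states that the result is a direct consequence of Lemma~\ref{lem54}. You have merely made explicit the index-shift bookkeeping that identifies the left-hand side of Lemma~\ref{lem54} with $\mathbb{E}_k\Upsilon_{k+1}$ and the leading right-hand term with $(1-1/p)\Upsilon_k$, which is precisely the content of the paper's one-line proof.
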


\begin{lemma}{\rm (Convergence of estimator)} 
\label{lem56}
If $\left \{ z_k  \right \} _{k\in \mathbb{N} }$ satisfies $\lim_{k \to \infty } \mathbb{E}\left \| z_{k}-z_{k-1} \right \| ^{2}=0$, then $ \mathbb{E}\Upsilon _k\to 0$ and $\mathbb{E}\Gamma _k\to 0$ as $k \to \infty$.
\begin{proof}
By \eqref{(B.6)}, we have
\begin{equation*}
\aligned
&\mathbb{E}\Upsilon _{k}\\
\le&\left ( 1-\rho  \right )\mathbb{E}\Upsilon _{k-1}+V_{\Upsilon}\mathbb{E}\left (\left \| z_{k}-z_{k-1} \right \| ^{2} +\left \| z_{k-1}-z_{k-2} \right \| ^{2}+\left \| z_{k-2}-z_{k-3} \right \| ^{2}+\left \| z_{k-3}-z_{k-4} \right \| ^{2}\right )\\
\le&V_{\Upsilon}\sum_{l=1}^{k}\left ( 1-\rho  \right )^{k-l}\mathbb{E}\left (\left \| z_{l}-z_{l-1} \right \| ^{2} +\left \| z_{l-1}-z_{l-2} \right \| ^{2}+\left \| z_{l-2}-z_{l-3} \right \| ^{2}+\left \| z_{l-3}-z_{l-4} \right \| ^{2}\right ), 
\endaligned
\end{equation*}
which implies $ \mathbb{E}\Upsilon _k\to 0$ as $k \to \infty$. By Jensen’s inequality, we have $\mathbb{E}\Gamma _k\to 0$ as $k \to \infty$.
\end{proof}
\end{lemma}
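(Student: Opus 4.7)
The plan is to derive $\mathbb{E}\Upsilon_k\to 0$ directly from the geometric decay established in Lemma~\ref{lem55}, and then obtain $\mathbb{E}\Gamma_k\to 0$ by Jensen's inequality together with the elementary bound $\Gamma_k^2\le 2\Upsilon_k$.

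First I would take the full expectation of the one-step inequality \eqref{(B.6)} to get
$$\mathbb{E}\Upsilon_k\le(1-\rho)\,\mathbb{E}\Upsilon_{k-1}+V_\Upsilon\,R_k,$$
where I abbreviate $R_k:=\mathbb{E}\bigl(\|z_k-z_{k-1}\|^2+\|z_{k-1}-z_{k-2}\|^2+\|z_{k-2}-z_{k-3}\|^2+\|z_{k-3}-z_{k-4}\|^2\bigr)$. Unrolling this linear recursion starting from some fixed index yields
$$\mathbb{E}\Upsilon_k\le(1-\rho)^{k}\mathbb{E}\Upsilon_0+V_\Upsilon\sum_{l=1}^{k}(1-\rho)^{k-l}R_l.$$
Since $\rho=1/p\in(0,1)$, the first summand tends to zero. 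For the second, I would invoke the standard fact that a geometric convolution against a null sequence is null: given $\varepsilon>0$, pick $K$ with $R_l<\varepsilon$ for $l\ge K$, so the tail is bounded by $\varepsilon\sum_{l=K}^{k}(1-\rho)^{k-l}\le \varepsilon/\rho$, while the head $\sum_{l=1}^{K-1}(1-\rho)^{k-l}R_l$ decays like $(1-\rho)^{k-K}$ because the sequence $R_l$ is bounded (it converges).

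Boundedness of $R_l$ follows because $\mathbb{E}\|z_{k}-z_{k-1}\|^2\to 0$ by hypothesis, so the individual terms are uniformly bounded in $l$. Combining the two pieces, $\mathbb{E}\Upsilon_k\to 0$.

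Finally, for $\Gamma_k$, the elementary inequality $(a+b)^2\le 2(a^2+b^2)$ applied to the two norms defining $\Gamma_{k+1}$ in \eqref{(B.1)} gives $\Gamma_k^2\le 2\Upsilon_k$, and Jensen's inequality then yields
$$\mathbb{E}\Gamma_k\le\sqrt{\mathbb{E}\Gamma_k^2}\le\sqrt{2\,\mathbb{E}\Upsilon_k}\to 0.$$
No real obstacle is expected: the whole argument is a one-line recursion followed by the standard convolution-against-null-sequence lemma; the main thing to be careful about is confirming that the four squared-difference terms appearing in $R_l$ each satisfy the hypothesis (which is immediate since a finite shift of a null sequence is still null), so the iteration genuinely forces $\mathbb{E}\Upsilon_k$ to zero.
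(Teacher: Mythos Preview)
Your proposal is correct and follows essentially the same route as the paper: take full expectation of the geometric decay inequality \eqref{(B.6)}, unroll the recursion into a geometric convolution, and conclude $\mathbb{E}\Upsilon_k\to 0$; then deduce $\mathbb{E}\Gamma_k\to 0$ via Jensen. Your write-up is in fact more explicit than the paper's---you retain the $(1-\rho)^k\mathbb{E}\Upsilon_0$ term and spell out the head/tail splitting for the convolution, and you make precise the link $\Gamma_k^2\le 2\Upsilon_k$ that the paper leaves implicit in its one-line ``by Jensen's inequality'' remark.
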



\begin{thebibliography}{99}
\bibitem{MDC} Chao M.T., Han D.R., Cai X.J., Convergence of the Peaceman-Rachford splitting method for a class of nonconvex programs, Numer. Math., Theory Methods Appl., 2021, 14(2), 438-460.

\bibitem{XH} Fu X., Huang K., Sidiropoulos N.D., Ma W., Nonnegative matrix factorization for signal and data analytics: identifiability, algorithms, and applications,
IEEE Signal Process. Mag., 2019, 36(2), 59-80.

\bibitem{PT}  Paatero P., Tapper U., Positive matrix factorization: a nonnegative factor model with optimal utilization of error estimates of data values, Environmetrics 5, 1994, 111-126.

\bibitem{LS}  Lee D.D., Seung H.S., Learning the parts of objects by nonnegative matrix factorization, Nature 401, 1999, 788-791.

\bibitem{MH} Ma Y., Hu X., He T., Jiang X., Clustering and integrating of heterogeneous microbiome data by joint symmetric nonnegative matrix factorization with
Laplacian regularization, IEEE/ACM Trans. Comput. Biol. Bioinform., 2020, 17(3), 788-795.

\bibitem{TS} Pock T., Sabach S., Inertial proximal alternating linearized minimization (iPALM) for nonconvex and nonsmooth problems, SIAM J. Imaging Sci., 2017, 9, 1756-1787.

\bibitem{ALM} Aspremont A., Ghaoui L. E., Jordan M. I., Lanckriet G. R., A direct formulation for sparse PCA using semidefinite programming, in Advances in Neural Information Processing Systems, 2005, 41-48.

\bibitem{HR} Zou H., Hastie T., Tibshirani R., Sparse principal component analysis, J. Comput. Graph. Statist., 2006, 15, 265-286.

\bibitem{ABSS} Attouch H., Bolte J., Svaiter B.F., Convergence of descent methods for semi-algebraic and tame problems: proximal algorithms, forward-backward splitting, and regularized Guass-Seidel methods, Math. Program., 2013, 137, 91-129.
 
\bibitem{DC} Donoho D.L., Compressed sensing, IEEE Trans. Inform. Theory, 2006, 4, 1289-1306.

\bibitem{JSM} Bolte J., Sabach S., Teboulle M., Proximal alternating linearised minimization for nonconvex and nonsmooth problems, Math. Program., 2014, 146, 459-494.

\bibitem{ABR} Attouch H., Bolte J., Redont P., Soubeyran A., Proximal alternating minimization and projection methods for nonconvex problems: an approach based on the Kurdyka--{\L}ojasiewicz inequality, Math. Oper. Res., 2010, 35, 438-457.

\bibitem{AI} Attouch H., Bolte J., On the convergence of the proximal algorithm for nonsmooth functions involving analytic features, Mathematical Programming, A Publication of the Mathematical Programming Society, 2009, 116(1-2), 5-16.

%\bibitem{PS} Pock T., Sabach S., Inertial proximal alternating linearized minimization (iPALM) for nonconvex and nonsmooth problems, SIAM J. Imaging Sci., 2017, 9, 1756-1787.

\bibitem{GCH} Gao X., Cai X.J., Han D.R., A Gauss-Seidel type inertial proximal alternating linearized minimization for a class of nonconvex optimization problems,  J. Glob. Optim., 2020, 76, 863-887.

\bibitem{XX} Wang Q. X., Han D. R., A generalized inertial proximal alternating linearized minimization method for nonconvex nonsmooth problems, Appl. Numer. Math., 2023, 189, 66-87.

\bibitem{ZQ} Zhao J., Dong Q.L., Michael Th.R., Wang F.H., Two-step inertial Bregman alternating minimization algorithm for nonconvex and nonsmooth problems, J. Glob. Optim., 2022, 84, 941-966.

\bibitem{GZ} Guo C.Z., Zhao J., Two-step inertial Bregman proximal alternating linearized minimization algorithm for nonconvex and nonsmooth problems, 2023, arXiv:2306.07614v1. 

\bibitem{Z} Chao M.T., Nong F.F., Zhao M.Y., An inertial alternating minimization with Bregman distance for a class of nonconvex and nonsmooth problems, J. Appl. Math. Comput., 2023, 69, 1559-1581.

\bibitem{MP} Mukkamala M.C., Ochs P., Pock T., Sabach S., Convex-concave backtracking for inertial Bregman proximal gradient algorithms in nonconvex optimization, SIAM J. Math. Data Sci., 2020, 2, 658-682.

\bibitem{ML} Ahookhosh M., Hien L.T.K., Gillis N., Patrinos P., A block inertial Bregman proximal algorithm for nonsmooth nonconvex problems with application to symmetric nonnegative matrix tri-factorization, J. Optim. Theory Appl., 2021, 190, 234-258.

\bibitem{B} Bottou L., In: Large-scale machine learning with stochastic gradient descent, In Proceedings of COMPSTAT’2010, 2010, 1, 177-186. 

\bibitem{XW} Xu Y., Yin W., Block stochastic gradient iteration for convex and nonconvex optimization, SIAM J. Optim., 2015, 25(3), 1686-1716.

\bibitem{DT} Driggs D., Tang J.Q., Liang J.W., Davies M, Schonlieb C.B., SPRING: a stochastic proximal alternating minimization for nonsmooth and nonconvex
optimization, SIAM J. Imaing Sci., 2021, 4, 1932-1970.

\bibitem{JG} Hertrich, J., Steidl, G., Inertial stochastic PALM and applications in machine learning, Sampl. Theory Signal Process. Data Anal., 2022, 20, https://doi.org/10.1007/s43670-022-00021-x.

\bibitem{SR} Schmidt M.,  Le Roux N., Bach F., Minimizing finite sums with the stochastic average gradient, Math. Program., 2017, 162, 83-112.

\bibitem{JZ} Johnson R., Zhang T., Accelerating stochastic gradient descent using predictive variance reduction, in Advances in Neural Information Processing Systems, 2013, 315-323.

\bibitem{KL} Konecny J., Liu J., Richtarik P., Takac M., Mini-batch semi-stochastic gradient descent in the proximal setting, IEEE J. Sel. Top. Signal Process., 2016,  10, 242-255.

\bibitem{AFS} Defazio A., Bach F., Lacoste-Julien S., SAGA: A fast incremental gradient method with support for non-strongly convex composite objectives, in Advances in Neural Information Processing Systems, 2014, 1646-1654.

\bibitem{LM}  Li B., Ma M., Giannakis G. B., On the convergence of SARAH and beyond, in International Conference on Artificial Intelligence and Statistics, PMLR, 2020, 223-233.

\bibitem{NL} Nguyen L. M., Liu J., Scheinberg K., and M. Tak$\acute{a}\hat{c}$, SARAH: A novel method for machine learning problems using stochastic recursive gradient, in Proceedings of the 34th International Conference on Machine Learning, 2017, 2613-2621.

\bibitem{JAA} Bolte J., Daniilidis A., Lewis A., The {\L}ojasiewicz inequality for nonsmooth subanalytic functions with applications to subgradient dynamical systems, SIAM J. Optim., 2007, 17, 1205-1223.

\bibitem{JAO} Bolte J., Daniilidis A., Ley O., Mazet L., Characterizations of Lojasiewicz inequalities: Subgradient flows, talweg, convexity, Trans. Amer. Math. Soc., 2010, 362, 3319-3363.

\bibitem{BT} Bertsekas D.P., Tsitsiklis J.N., Parallel and Distributed Computation: Numerical Methods, Prentice hall, Englewood Cliffs, NJ, 1989.

\bibitem{RS} Robbins H., Siegmund D., A convergence theorem for non-negative almost supermartingales and some applications. Optimizing Methods in Statistics, Academic Press, New York, 1971, 233-257.

\bibitem{D} Damek D., The Asynchronous Palm Algorithm for Nonsmooth Nonconvex Problems, 2016, arXiv:1604.00526.

\bibitem{AB} Attouch H., Bolte J., On the convergence of the proximal algorithm for nonsmooth functions involving analytic features, Math. Program. B, 2007, 116, 5-16.

%\bibitem{SM} Bolte J., Sabach S., and Teboulle M., Proximal alternating linearised minimization for nonconvex and nonsmooth problems, Math. Program., 2014, 146, 459-494.

%\bibitem{MB} Moulines E. and Bach F. R., Non-asymptotic analysis of stochastic approximation algorithms for machine learning, in Advances in Neural Information Processing Systems, 2011, 451-459.

\bibitem{DH} Lee D.D, Seung H.S., Learning the parts of objects by non-negative matrix factorization, Nature, 1999, 788-791.

\bibitem{JN} Pan J., Gillis N., Generalized separable nonnegative matrix factorization, IEEE Trans. Pattern Anal. Mach. Intell., 2021, 43(5), 1546-1561.

\bibitem{FF} Rousset F., Peyrin F., Ducros N., A semi nonnegative matrix factorization technique for pattern generalization in single-pixel imaging, IEEE Trans.
Comput. Imaging, 2018, 4(2), 284-294.

\bibitem{RF} Peharz R., Pernkopf F., Sparse nonnegative matrix factorization with $l_0$-constraints, Neurocomputing, 2012, 80, 38-46.

%\bibitem{XX} Gao X., Cai X., Han D., A Gauss-Seidel type inertial proximal alternating linearized minimization for a class of nonconvex optimization problems, J.
%Glob. Optim., 2020, 76(4), 863-887.

%\bibitem{TS} Pock T., Sabach S., Inertial proximal alternating linearized minimization (iPALM) for nonconvex and nonsmooth problems, SIAM J. Imaging Sci., 2016, 9(4),1756-1787.
\end{thebibliography}
\end{document}